\newtheorem{theorem}{Theorem}[section]
\newtheorem{lemma}[theorem]{Lemma}
\newtheorem{definition}[theorem]{Definition}
\newtheorem{remark}[theorem]{Remark}
\newtheorem{corollary}[theorem]{Corollary}
\newcommand{\downn}{\downarrow\!\! n}
\newcommand{\F}{\mathfrak F}
\newcommand{\G}{\mathfrak G}
\newcommand{\HH}{\mathfrak H}
\newcommand{\T}{\mathfrak T}
\newcommand{\LL}{\mathfrak L}
\newcommand{\X}{\mathfrak X}
\newcommand{\Y}{\mathfrak Y}
\newcommand{\nR}{R\!\!\!\!\! \diagup}
\begin{document}

\title{Topological completeness of logics above {\bf S4}}
\author{Guram Bezhanishvili, David Gabelaia, Joel Lucero-Bryan}
\date{}

\subjclass[2000]{03B45; 03B55}

\keywords{Modal logic; topological semantics; completeness; countable model property; infinite binary tree; intuitionistic logic}

\begin{abstract}
It is a celebrated result of McKinsey and Tarski \cite{MT44} that {\bf S4} is the logic of the closure algebra $X^+$ over any dense-in-itself separable metrizable space. In particular, {\bf S4} is the logic of the closure algebra over the reals $\mathbf R$, the rationals $\mathbf Q$, or the Cantor space $\mathbf C$. By \cite{BG11}, each logic above {\bf S4} that has the finite model property is the logic of a subalgebra of $\mathbf Q^+$, as well as the logic of a subalgebra of $\mathbf C^+$. This is no longer true for $\mathbf R$, and the main result of \cite{BG11} states that each connected logic above {\bf S4} with the finite model property is the logic of a subalgebra of the closure algebra $\mathbf R^+$.

In this paper we extend these results to all logics above {\bf S4}. Namely, for a normal modal logic $L$, we prove that the following conditions are equivalent: (i)~$L$ is above {\bf S4}, (ii)~$L$ is the logic of a subalgebra of $\mathbf Q^+$, (iii)~$L$ is the logic of a subalgebra of $\mathbf C^+$. We introduce the concept of a well-connected logic above {\bf S4} and prove that the following conditions are equivalent: (i)~$L$ is a well-connected logic, (ii)~$L$ is the logic of a subalgebra of the closure algebra $\T_2^+$ over the infinite binary tree, (iii)~$L$ is the logic of a subalgebra of the closure algebra ${\bf L}_2^+$ over the infinite binary tree with limits equipped with the Scott topology. Finally, we prove that a logic $L$ above {\bf S4} is connected iff $L$ is the logic of a subalgebra of $\mathbf R^+$, and transfer our results to the setting of intermediate logics.

Proving these general completeness results requires new tools. We introduce the countable general frame property (CGFP) and prove that each normal modal logic has the CGFP. We introduce general topological semantics for {\bf S4}, which generalizes topological semantics the same way general frame semantics generalizes Kripke semantics. We prove that the categories of descriptive frames for {\bf S4} and descriptive spaces are isomorphic. It follows that every logic above {\bf S4} is complete with respect to the corresponding class of descriptive spaces. We provide several ways of realizing the infinite binary tree with limits, and prove that when equipped with the Scott topology, it is an interior image of both $\mathbf C$ and $\mathbf R$. Finally, we introduce gluing of general spaces and prove that the space obtained by appropriate gluing involving certain quotients of ${\bf L}_2$ is an interior image of $\mathbf R$.
\end{abstract}

\maketitle

\tableofcontents

\section{Introduction}\label{SecIntro}

Topological semantics for modal logic was developed by McKinsey and Tarski in 1930s and 1940s. They proved that if we interpret modal diamond as topological closure, and hence modal box as topological interior, then {\bf S4} is the logic of the class of all topological spaces. Their celebrated topological completeness result states that {\bf S4} is the logic of any dense-in-itself separable metrizable space \cite{MT44}. In particular, {\bf S4} is the logic of the real line $\mathbf R$, rational line $\mathbf Q$, or Cantor space $\mathbf C$.

In 1950s and 1960s, Kripke semantics for modal logic was introduced \cite{Kri59,Kri63} and it started to play a dominant role in the studies of modal logic. In 1970s it was realized that there exist Kripke incomplete logics \cite{Tho72}. To remedy this, general Kripke semantics was developed, and it was shown that each normal modal logic is complete with respect to the corresponding class of descriptive Kripke frames (see, e.g., \cite{Gol76a}).

Kripke frames for {\bf S4} can be viewed as special topological spaces, the so-called Alexandroff spaces, in which each point has a least open neighborhood. So topological semantics for {\bf S4} is stronger than Kripke semantics for {\bf S4}, but as with Kripke semantics, there are topologically incomplete logics above {\bf S4} \cite{Ger75}. It is only natural to generalize topological semantics along the same lines as Kripke semantics.

For a topological space $X$, let $X^+$ be the closure algebra associated with $X$; that is, $X^+=(\wp(X),{\bf c})$, where $\wp(X)$ is the powerset of $X$ and {\bf c} is the closure operator of $X$. We define a general space to be a pair $(X,\mathcal P)$, where $X$ is a topological space and $\mathcal P$ is a subalgebra of the closure algebra $X^+$; that is, $\mathcal P$ is a field of sets over $X$ closed under topological closure. As in general frame semantics, we introduce descriptive general spaces and prove that the category of descriptive spaces is isomorphic to the category of descriptive frames for {\bf S4}. This yields completeness of each logic above {\bf S4} with respect to the corresponding class of descriptive spaces.

Since descriptive spaces are in 1-1 correspondence with descriptive frames for {\bf S4}, it is natural to ask whether we gain much by developing general topological semantics. One of the main goals of this paper is to demonstrate some substantial gains. For a general space $(X,\mathcal P)$, we have that $\mathcal P$ is a subalgebra of the closure algebra $X^+$, thus general spaces over $X$ correspond to subalgebras of $X^+$. In \cite{BG11} it was shown that if $X$ is taken to be $\mathbf Q$ or $\mathbf C$, then every logic above {\bf S4} with the finite model property (FMP) is the logic of some subalgebra of $X^+$. In this paper, we strengthen this result by showing that all logics above {\bf S4} can be captured this way. Put differently, each logic above {\bf S4} is the logic of a general space over either $\mathbf Q$ or $\mathbf C$. Thus, such natural spaces as $\mathbf Q$ or $\mathbf C$ determine entirely the lattice of logics above {\bf S4} in that each such logic is the logic of a general space over $\mathbf Q$ or $\mathbf C$.

We are not aware of similar natural examples of Kripke frames. In fact, one of the most natural examples would be the infinite binary tree $\T_2$. We introduce the concept of a well-connected logic above {\bf S4} and prove that a logic $L$ above {\bf S4} is the logic of a general frame over $\T_2$ iff $L$ is well-connected, so $\T_2$ is capable of capturing well-connected logics above {\bf S4}.

We recall \cite{BG11} that a logic $L$ above {\bf S4} is connected if $L$ is the logic of a connected closure algebra. The main result of \cite{BG11} establishes that each connected logic above {\bf S4} with the FMP is the logic of a subalgebra of $\mathbf R^+$. Put differently, a connected logic $L$ above {\bf S4} with the FMP is the logic of a general space over $\mathbf R$. We strengthen this result by proving that a logic $L$ above {\bf S4} is connected iff $L$ is the logic of a general space over $\mathbf R$. This is equivalent to being the logic of a subalgebra of $\mathbf R^+$, which solves \cite[p.~306, Open Problem 2]{BG11}. We conclude the paper by transferring our results to the setting of intermediate logics.

We discuss briefly the methodology we developed to obtain our results. It is well known (see, e.g., \cite{CZ97}) that many modal logics have neither the FMP nor the countable frame property (CFP). We introduce a weaker concept of the countable general frame property (CGFP) and prove that each normal modal logic has the CGFP. This together with the fact \cite{BLB12a} that countable rooted {\bf S4}-frames are interior images of $\mathbf Q$ yields that a normal modal logic $L$ is a logic above {\bf S4} iff it is the logic of a general space over $\mathbf Q$, which is equivalent to being the logic of a subalgebra of $\mathbf Q^+$.

Every countable rooted {\bf S4}-frame is also a p-morphic image of $\T_2$. However, in the case of $\T_2$, only a weaker result holds. Namely, a normal modal logic $L$ is a logic above {\bf S4} iff it is the logic of a subalgebra of a homomorphic image of $\T_2^+$. Homomorphic images can be dropped from the theorem only for well-connected logics above {\bf S4}. In this case we obtain that a logic $L$ above {\bf S4} is well-connected iff it is the logic of a subalgebra of $\T_2^+$, which is equivalent to being the logic of a general frame over $\T_2$.

Since $\T_2$ is not an interior image of $\mathbf C$, in order to obtain our completeness results for general spaces over $\mathbf C$, we work with the infinite binary tree with limits $\LL_2$. This uncountable tree has been an object of recent interest \cite{Lan12,Kre13}. In particular, Kremer \cite{Kre13} proved that if we equip $\LL_2$ with the Scott topology, and denote the result by ${\bf L}_2$, then $\T_2^+$ is isomorphic to a subalgebra of ${\bf L}_2^+$. Utilizing Kremer's theorem and the CGFP yields that a logic $L$ above {\bf S4} is well-connected iff it is the logic of a general space over ${\bf L}_2$, which is equivalent to being the logic of a subalgebra of ${\bf L}_2^+$. Since ${\bf L}_2$ is an interior image of $\mathbf C$ and the one-point compactification of the countable sum of homeomorphic copies of ${\bf C}$ is homeomorphic to ${\bf C}$, we further obtain that a normal modal logic $L$ is a logic above {\bf S4} iff it is the logic of a general space over $\mathbf C$, which is equivalent to being the logic of a subalgebra of $\mathbf C^+$.

For $\mathbf R$, only a weaker result holds. Namely, a normal modal logic $L$ is a logic above {\bf S4} iff it is the logic of a subalgebra of a homomorphic image of $\mathbf R^+$. Homomorphic images can be dropped from the theorem only for connected logics above {\bf S4}. In this case we obtain that a logic $L$ above {\bf S4} is connected iff it is the logic of a subalgebra of $\mathbf R^+$, which is equivalent to being the logic of a general space over $\mathbf R$. To prove this result we again use the CGFP, a theorem of Kremer that $\T_2^+$ is isomorphic to a subalgebra of ${\bf L}_2^+$, the fact that certain quotients of ${\bf L}_2$ are interior images of $\mathbf R$, and a generalization of the gluing technique developed in \cite{BG11}.

The paper is organized as follows. In Section~\ref{SecPrelims} we provide all the necessary preliminaries. In Section~\ref{SecGenTopSem} we introduce general topological semantics, and show that the category of descriptive spaces is isomorphic to the category of descriptive frames for {\bf S4}. In Section~\ref{SecRationals} we introduce the CGFP, and prove that each normal modal logic has the CGFP. This paves the way for our first general completeness result: a normal modal logic $L$ is a logic above {\bf S4} iff it is the logic of a general space over $\mathbf Q$, which is equivalent to being the logic of a subalgebra of $\mathbf Q^+$. In Section~\ref{BinaryTreeSection} we prove our second general completeness result: a normal modal logic $L$ is a logic above {\bf S4} iff it is the logic of a subalgebra of a homomorphic image of $\T_2^+$. We also introduce well-connected logics above {\bf S4} and prove that a logic above {\bf S4} is well-connected iff it is the logic of a general frame over $\T_2$, which is equivalent to being the logic of a subalgebra of $\T_2^+$. In Section~\ref{SecL2} we give several characterizations of ${\bf L}_2$ and prove our third general completeness result: a normal modal logic $L$ is a logic above {\bf S4} iff it is the logic of a subalgebra of a homomorphic image of $\mathbf L_2^+$. We also show that a logic above {\bf S4} is well-connected iff it is the logic of a general space over ${\bf L}_2$, which is equivalent to being the logic of a subalgebra of ${\bf L}_2^+$. In Section~\ref{CantorSection} we prove that ${\bf L}_2$ is an interior image of $\mathbf C$. This yields our fourth general completeness result: a normal modal logic $L$ is a logic above {\bf S4} iff it is the logic of a general space over $\mathbf C$, which is equivalent to being the logic of a subalgebra of $\mathbf C^+$. In Section~\ref{SecOpenSubspacesOfRealLine} we prove that ${\bf L}_2$ is an interior image of $\mathbf R$. From this we derive our fifth general completeness result: a normal modal logic $L$ is a logic above {\bf S4} iff it is the logic of a subalgebra of a homomorphic image of $\mathbf R^+$. In Section~\ref{SecRealLineAndConnLogs} we generalize the gluing technique of \cite{BG11} and prove the main result of the paper: a logic $L$ above {\bf S4} is connected iff it is the logic of a general space over $\mathbf R$, which is equivalent to being the logic of a subalgebra of $\mathbf R^+$. This solves \cite[p.~306, Open~Problem~2]{BG11}. Finally, in Section~\ref{SecIntLogs} we transfer our results to the setting of intermediate logics.

\section{Preliminaries}\label{SecPrelims}

In this section we recall some of the basic definitions and facts, and fix the notation. Some familiarity with modal logic and its Kripke semantics is assumed; see, e.g., \cite{CZ97} or \cite{BRV01}.

Modal formulas are built recursively from the countable set of propositional letters $\textrm{Prop}=\{p_1,p_2,\dots\}$ with the help of usual Boolean connectives $\land, \lor, \neg, \to, \leftrightarrow$, the constants $\top,\bot$, and the unary modal operators $\Diamond, \Box$. We denote the set of all modal formulas by $\mathfrak{Form}$. A set of modal formulas $L\subseteq\mathfrak{Form}$ is called a \emph{normal modal logic} if it contains all tautologies, the schemata $\Diamond(\varphi\vee\psi)\leftrightarrow(\Diamond\varphi\vee\Diamond\psi)$ and $\Box\varphi\leftrightarrow\neg\Diamond\neg\varphi$, the formula $\Diamond\bot\leftrightarrow\bot$, and is closed under Modus Ponens and Necessitation $\varphi / \Box\varphi$. The least normal modal logic is denoted by ${\mathbf K}$. Among the many normal extensions of $\mathbf K$, our primary interest is in {\bf S4} and its normal extensions. The logic $\mathbf{S4}$ is axiomatized by adding the following schemata to $\mathbf K$:
\[
\Diamond\Diamond \varphi\to\Diamond \varphi\ \ \ \ \ \ \ \ \ \varphi\to\Diamond \varphi.
\]
We will refer to normal extensions of {\bf S4} as {\em logics above ${\bf S4}$}.

The algebraic semantics for modal logic is provided by modal algebras. A \emph{modal algebra} is a structure $\mathfrak A=(A,\Diamond)$, where $A$ is a Boolean algebra and $\Diamond: A\to A$ is a unary function satisfying $\Diamond (a\vee b)=\Diamond a\vee\Diamond b$ and $\Diamond 0=0$. The unary function $\Box:A\to A$ is defined as $\Box a=\neg\Diamond\neg a$. It is quite obvious how modal formulas can be seen as polynomials over a modal algebra (see, e.g., \cite[Sec.~5.2]{BRV01} wherein polynomials are referred to as terms). We will say that a modal formula $\varphi(p_1,\dots,p_n)$ is \emph{universally true} (or \emph{valid}) in a modal algebra $\mathfrak A$ if $\varphi(a_1,\dots,a_n)=1$ for any tuple of elements $a_1,\dots,a_n\in A$ (in other words, when the polynomial $\varphi$ always evaluates to $1$ in $\mathfrak A$). In such a case, we may write $\mathfrak A\models\varphi$. We may also view an equation $\varphi=\psi$ in the signature of modal algebras as the corresponding modal formula $\varphi\leftrightarrow\psi$. Then the equation holds in a modal algebra $\mathfrak A$ iff the corresponding modal formula is valid in $\mathfrak A$. This yields a standard fact in (algebraic) modal logic that normal modal logics correspond to \emph{equational classes} of modal algebras, i.e.~classes of modal algebras defined by equations. By the celebrated Birkhoff theorem (see, e.g., \cite[Thm.~11.9]{BS81}), equational classes correspond to \emph{varieties}, i.e.~classes of algebras closed under homomorphic images, subalgebras, and products. For a normal modal logic $L$, we denote by $\mathcal V(L)$ the corresponding variety of modal algebras: $\mathcal V(L)=\{\mathfrak A : \mathfrak A\models L\}$. Conversely, for a class $\mathcal K$ of modal algebras, we denote by $L(\mathcal K)$ the modal logic corresponding to this class: $L(\mathcal K)=\{\varphi : \mathcal K\models \varphi\}$. The adequacy of algebraic semantics for modal logic can then be succinctly expressed as the equality $L=L(\mathcal V(L))$.

Of particular importance for us are modal algebras corresponding to {\bf S4}. These are known as closure algebras (or interior algebras or topological Boolean algebras or {\bf S4}-algebras). A modal algebra $\mathfrak A=(A,\Diamond)$ is a \emph{closure algebra} if $a\leq \Diamond a$ and $\Diamond\Diamond a\le\Diamond a$ for each $a\in A$.

Natural examples of modal algebras are provided by Kripke frames. A \emph{Kripke frame} is a pair $\F=(W,R)$, where $W$ is a set and $R$ is a binary relation on $W$. The binary relation $R$ gives rise to the modal operator acting on the Boolean algebra $\wp(W)$: for $U\subseteq W$, set $R^{-1}(U)=\{w\in W : wRv\textrm{ for some }v\in U\}$. We denote the modal algebra $(\wp(W), R^{-1})$ arising from $\F$ by $\F^+$. This enables one to interpret modal formulas in Kripke frames. Namely, to compute the meaning of a modal formula $\varphi(p_1,\dots,p_n)$ in a Kripke frame $\mathfrak F$, when the meaning of the propositional letters is specified by assigning a subset $U_i$ to the letter $p_i$, we simply compute the corresponding element $\varphi(U_1,\dots,U_n)$ in the modal algebra $\F^+$. A mapping $\nu:\textrm{Prop}\to\wp(W)$ is called a \emph{valuation} and the tuple $\mathfrak M=(\mathfrak F,\nu)$ is called a \emph{Kripke model}. A valuation $\nu$ extends naturally to $\mathfrak{Form}$, specifically $\nu(\varphi(p_1,\dots,p_n))$ is the element $\varphi(\nu(p_1),\dots,\nu(p_n))$ in $\F^+$. The notion of validity in a frame is defined as dictated by the corresponding notion for algebras. Given a normal modal logic $L$, we say that a frame $\mathfrak F$ is a \emph{frame for $L$} if all theorems of $L$ are valid in $\mathfrak F$ (notation: $\mathfrak F\models L$). It is easy to check that $\F^+$ is a closure algebra exactly when $R$ is a quasi-order; that is, when $R$ is reflexive and transitive.

It is well known that Kripke semantics is not fully adequate for modal logic. There exist modal logics (including logics above $\mathbf{S4}$) that are not complete with respect to Kripke frames (see, e.g., \cite[Sec.~6]{CZ97}). Algebraically this means that some varieties of modal algebras are not generated by their members of the form $\F^+$. To overcome this shortcoming, it is customary to augment Kripke frames with additional structure by specifying a subalgebra $\mathcal P$ of $\mathfrak F^+$. This brings us to the notion of a general frame. We recall that a \emph{general frame} is a tuple $\F=(W,R,\mathcal P)$ consisting of a Kripke frame $(W,R)$ and a set of possible values $\mathcal P\subseteq \wp(W)$ which forms a subalgebra of $(W,R)^+$. In particular, a Kripke frame $(W,R)$ is also viewed as the general frame $(W,R,\wp(W))$, and so we use the same notation $\F,\G,\HH,\dots$ for both Kripke frames and general frames. Valuations in general frames take values in the modal algebra $\mathcal P$ of possible values, so for a general frame $\F$ and a modal formula $\varphi$, we have $\F\models\varphi$ iff $\mathcal P\models\varphi$. We say that $\F$ is a \emph{general frame for} a normal modal logic $L$ if $\F\models L$. If $L$ is exactly the set of formulas valid in $\F$, we write $L=L(\F)$. It is well known that general frames provide a fully adequate semantics for modal logic (see, e.g., \cite[Sec.~8]{CZ97}). Namely, for every normal modal logic $L$, there is a general frame $\F$ such that $L=L(\F)$.

The gist of this theorem becomes evident once we extend the celebrated Stone duality to modal algebras. Let $\mathfrak A=(A,\Diamond)$ be a modal algebra and let $X$ be the Stone space of $A$ (i.e.~$X$ is the set of ultrafilters of $A$ topologized by the basis $A^*=\{a^*:a\in A\}$, where $a^*=\{x\in X : a\in x\}$). Define $R$ on $X$ by
\[
xRy \ \ \ \ \mathrm{ iff }\ \ \ \ (\forall a\in A)(a\in y\Rightarrow \Diamond a\in x).
\]
Then $(X,R,A^*)$ is a general frame. It is a special kind of general frame, called \emph{descriptive}.

For a set $X$, we recall that a {\em field} of sets over $X$ is a Boolean subalgebra $\mathcal P$ of the powerset $\wp(X)$. A field of sets $\mathcal P$ is {\em reduced} provided $x\ne y$ implies there is $A\in\mathcal P$ with $x\in A$ and $y\notin A$ and {\em perfect} provided for each family $\mathcal F\subseteq\mathcal P$ with the finite intersection property, $\bigcap\mathcal F\ne\varnothing$ (see, e.g., \cite{Sik60}).

\begin{definition} [see, e.g., \cite{CZ97}]
{\em
Let $\F=(W,R,\mathcal P)$ be a general frame.
\begin{enumerate}
\item We call $\F$ \emph{differentiated} if $\mathcal P$ is reduced.
\item We call $\F$ \emph{compact} if $\mathcal P$ is perfect.
\item We call $\F$ \emph{tight} if $w\nR v$ implies there is $A\in\mathcal P$ with $v\in A$ and $w\notin R^{-1}(A)$.
\item We call $\F$ \emph{descriptive} if $\F$ is differentiated, compact, and tight.
\end{enumerate}
}
\end{definition}

It is well known that descriptive frames provide a full duality for modal algebras much as in the case of Stone spaces and Boolean algebras. In fact, if we generate the topology $\tau_{\mathcal P}$ on $W$ from $\mathcal P$, then $(W,\tau_{\mathcal P})$ becomes the Stone space of $\mathcal P$ precisely when $(W,R,\mathcal P)$ is differentiated and compact. Thus, a descriptive frame $(W,R,\mathcal P)$ can be viewed as the Stone space of $\mathcal P$ equipped with a binary relation $R$ which satisfies the condition of tightness. This is equivalent to the $R$-image $R(w)=\{v\in W:wRv\}$ of each $w\in W$ being closed in the Stone topology. Consequently, descriptive frames can equivalently be viewed as pairs $(W,R)$ such that $W$ is a Stone space, $R(w)$ is closed for each $w\in W$, and $R^{-1}(A)$ is clopen for each clopen $A$ of $W$.

Given general frames $\F=(W,R,\mathcal P)$ and $\G=(V,S,\mathcal Q)$, a map $f:W\to V$ is called a \emph{p-morphism} if (a)~$wRw'$ implies $f(w)Sf(w')$; (b)~$f(w)S v$ implies there exists $w'$ with $wRw'$ and $f(w'){=}v$; and (c)~$A\in\mathcal Q$ implies $f^{-1}(A)\in\mathcal P$. It is well known that conditions (a) and (b) are equivalent to the condition $f^{-1}\left(S^{-1}(v)\right)=R^{-1}\left(f^{-1}(v)\right)$ for each $v\in V$. It is also well known that $f$ is a p-morphism between descriptive frames iff $f^{-1}:\mathcal Q\to\mathcal P$ is a modal algebra homomorphism. In fact, the category of modal algebras and modal algebra homomorphisms is dually equivalent to the category of descriptive frames and p-morphisms (see, e.g.~\cite[Sec.~8]{CZ97}).

Part of this duality survives when we switch to a broader class of general frames. Namely, p-morphic images, generated subframes, and disjoint unions give rise to subalgebras, homomorphic images, and products, respectively. Given general frames $\F=(W,R,\mathcal P)$ and $\G=(V,S,\mathcal Q)$, we say that $\G$ is a \emph{p-morphic} image of $\F$ if there is an onto p-morphism $f:W\to V$. If $f$ is 1-1, then we call the $f$-image of $\F$ a \emph{generated subframe} of $\G$. Generated subframes of $\G$ are characterized by the property that when they contain a world $v$, then they contain $S(v)$. If $f:W\to V$ is a p-morphism, then $f^{-1}:\mathcal Q\to\mathcal P$ is a modal algebra homomorphism. Moreover, if $f$ is onto, then $f^{-1}$ is 1-1 and if $f$ is 1-1, then $f^{-1}$ is onto. Thus, if $\G$ is a p-morphic image of $\F$, then $\mathcal Q$ is isomorphic to a subalgebra of $\mathcal P$, and if $\G$ is a generated subframe of $\F$, then $\mathcal Q$ is a homomorphic image of $\mathcal P$. Lastly, suppose $\F_i=(W_i,R_i,\mathcal P_i)$ are general frames indexed by some set $I$. For convenience, we assume that the $W_i$ are pairwise disjoint (otherwise we can always work with disjoint copies of the $W_i$). The \emph{disjoint union} $\F=(W,R,\mathcal P)$ of the $\F_i$ is defined by setting $W=\bigcup_{i\in I}W_i$, $R=\bigcup_{i\in I}R_i$, and $A\in \mathcal P$ iff $A\cap W_i\in\mathcal P_i$. Then the modal algebra $\mathcal P$ is isomorphic to the product of the modal algebras $\mathcal P_i$. We will utilize these well-known throughout the paper.

\section{General topological semantics}\label{SecGenTopSem}

As we pointed out in the introduction, topological semantics predates Kripke semantics. Moreover, Kripke semantics for {\bf S4} is subsumed in topological semantics. To see this, let $\F=(W,R)$ be an {\bf S4}-frame; that is, $\F$ is a Kripke frame and $R$ is reflexive and transitive. We call an underlying set of a generated subframe $\G$ of $\F$ an \emph{$R$-upset}. So $V\subseteq W$ is an $R$-upset if $w\in V$ and $wRv$ imply $v\in V$. The $R$-upsets form a topology $\tau_R$ on $W$, in which each point $w$ has a least open neighborhood $R(w)$. This topology is called an \emph{Alexandroff topology}, and can equivalently be described as a topology in which the intersection of any family of opens is again open. Thus, {\bf S4}-frames correspond to Alexandroff spaces. Consequently, each logic above {\bf S4} that is Kripke complete is also topologically complete. But as with Kripke semantics, topological semantics is not fully adequate since there exist logics above {\bf S4} that are topologically incomplete \cite{Ger75}.

As we saw in Section~\ref{SecPrelims}, Kripke incompleteness is remedied by introducing general Kripke semantics, and proving that each normal modal logic is complete with respect to this more general semantics. In this section we do the same with topological semantics. Namely, we introduce general topological spaces, their subclass of descriptive spaces, and prove that the category of descriptive spaces is isomorphic to the category of descriptive frames for {\bf S4}. This yields that general spaces provide a fully adequate semantics for logics above {\bf S4}.

For a topological space $X$, we recall that $X^+$ is the closure algebra $(\wp(X),{\bf c})$, where {\bf c} is the closure operator of $X$.

\begin{definition}
{\em
We call a pair $\X=(X,\mathcal P)$ a \emph{general topological space} or simply a \emph{general space} if $X$ is a topological space and $\mathcal P$ is a subalgebra of the closure algebra $X^+$.
}
\end{definition}

In other words, $\X=(X,\mathcal P)$ is a general space if $X$ is a topological space and $\mathcal P$ is a field of sets over $X$ that is closed under topological closure. In particular, we may view each topological space $X$ as the general space $(X,X^+)$. The definition of a general space is clearly analogous to the definition of a general frame. We continue this analogy in the next definition. In the remainder of the paper, when we need to emphasize or specify a certain topology $\tau$ on $X$, we will write $(X,\tau)$ as well as $(X,\tau,\mathcal P)$.
\begin{definition}\label{def:3.2}
{\em
Let $\X=(X,\tau,\mathcal P)$ be a general space.
\begin{enumerate}
\item We call $\X$ \emph{differentiated} if $\mathcal P$ is reduced.
\item We call $\X$ \emph{compact} if $\mathcal P$ is perfect.
\item Let $\mathcal P_\tau=\mathcal P\cap\tau$. We call $\X$ \emph{tight} if $\mathcal P_\tau$ is a basis for $\tau$.
\item We call $\X$ \emph{descriptive} if $\X$ is differentiated, compact, and tight.
\end{enumerate}
}
\end{definition}

\begin{remark}
{\em
Since $X\in\mathcal P_\tau$ and $\mathcal P_\tau$ is closed under finite intersections, $\mathcal P_\tau$ is a basis for some topology, and Definition~\ref{def:3.2}(3) says that this topology is $\tau$.
}
\end{remark}

For a topological space $(X,\tau)$, we let $R_\tau$ denote the \emph{specialization order} of $(X,\tau)$. We recall that $x R_\tau y$ iff $x\in{\bf c}(y)$, and that $R_\tau$ is reflexive and transitive, so $(X,R_\tau)$ is an {\bf S4}-frame.

\begin{lemma}\label{lem:3.3}
Let $(X,\tau,\mathcal P)$ be a compact tight general space. Then $R_\tau(x)=\bigcap\{A\in\mathcal P_\tau:x\in A\}$. Moreover, for each $A\in\mathcal P$ we have ${\bf c}(A)=R_\tau^{-1}(A)$. Consequently, $(X,R_\tau,\mathcal P)$ is a compact tight general {\bf S4}-frame. In particular, if $(X,\tau,\mathcal P)$ is a descriptive space, then $(X,R_\tau,\mathcal P)$ is a descriptive {\bf S4}-frame.
\end{lemma}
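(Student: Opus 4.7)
The plan is to establish the two displayed identities first, then read off the general frame properties. Throughout, I will use that $x R_\tau y \Leftrightarrow$ every open neighborhood of $x$ contains $y$.

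For the identity $R_\tau(x)=\bigcap\{A\in\mathcal P_\tau:x\in A\}$, the inclusion $\subseteq$ is immediate since every $A\in\mathcal P_\tau$ is open. For $\supseteq$, I invoke tightness: if $U\in\tau$ contains $x$, then by Definition~\ref{def:3.2}(3) there exists $A\in\mathcal P_\tau$ with $x\in A\subseteq U$, so any $y$ lying in all such $A$ lies in $U$; hence $y\in R_\tau(x)$.

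For ${\bf c}(A)=R_\tau^{-1}(A)$ when $A\in\mathcal P$, the easy direction $R_\tau^{-1}(A)\subseteq{\bf c}(A)$ follows from $x\in{\bf c}(\{y\})\subseteq{\bf c}(A)$ whenever $xR_\tau y$ and $y\in A$. The reverse inclusion is the only place where the full strength of the hypotheses is used, and I expect it to be the main obstacle. Given $x\in{\bf c}(A)$, I will consider the family
\[
\mathcal F=\{A\cap B : B\in\mathcal P_\tau,\ x\in B\}\subseteq\mathcal P.
\]
Since $\mathcal P_\tau$ is closed under finite intersections and every such $B$ is an open neighborhood of $x$ (hence meets $A$), $\mathcal F$ has the finite intersection property. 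Compactness (perfectness of $\mathcal P$) then gives some $y\in\bigcap\mathcal F$. By construction $y\in A$ and $y$ belongs to every $A'\in\mathcal P_\tau$ with $x\in A'$; the first identity then yields $y\in R_\tau(x)$, so $x\in R_\tau^{-1}(A)$.

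Finally, I assemble the consequences. Since ${\bf c}(A)=R_\tau^{-1}(A)$ for every $A\in\mathcal P$ and $\mathcal P$ is closed under ${\bf c}$, the algebra $\mathcal P$ is a subalgebra of $(X,R_\tau)^+$, so $(X,R_\tau,\mathcal P)$ is a general frame; reflexivity and transitivity of $R_\tau$ (standard for the specialization order) make it a general {\bf S4}-frame. Compactness transfers verbatim since it is a property of $\mathcal P$ alone. For tightness: if $x\not R_\tau y$, the first identity produces $B\in\mathcal P_\tau$ with $x\in B$ and $y\notin B$; then $A:=X\setminus B\in\mathcal P$ contains $y$, and since $B$ is open, $A$ is closed, so $R_\tau^{-1}(A)={\bf c}(A)=A\not\ni x$, witnessing tightness. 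The descriptive case then follows because the remaining condition, differentiatedness, is also a property of $\mathcal P$ alone.
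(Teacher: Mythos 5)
Your proof is correct and follows essentially the same route as the paper's: tightness gives the identification of $R_\tau(x)$ with the intersection of its basic $\mathcal P_\tau$-neighborhoods, perfectness of $\mathcal P$ supplies the nontrivial inclusion ${\bf c}(A)\subseteq R_\tau^{-1}(A)$, and tightness of the resulting frame is witnessed by complements of sets in $\mathcal P_\tau$. The only difference is cosmetic: you run the compactness step directly (a family with the finite intersection property has a common point) where the paper argues contrapositively (an empty intersection forces a finite subfamily with empty intersection).
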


\begin{proof}
For $A\in\mathcal P_\tau$ it is obvious that $x\in A$ implies $R_\tau(x)\subseteq A$, so $R_\tau(x)\subseteq\bigcap\{A\in\mathcal P_\tau:x\in A\}$. If $y\notin R_\tau(x)$, then $x\notin{\bf c}(y)$. Since $(X,\tau,\mathcal P)$ is tight, there exists $A\in\mathcal P_\tau$ such that $x\in A$ and $y\notin A$. Thus, $R_\tau(x)=\bigcap\{A\in\mathcal P_\tau:x\in A\}$. Next, if $x\in R_\tau^{-1}(A)$, then there is $y\in A$ with $x R_\tau y$, so $x\in{\bf c}(y)\subseteq{\bf c}(A)$, and so $R_\tau^{-1}(A)\subseteq{\bf c}(A)$ for each $A\subseteq X$. Conversely, if $A\in\mathcal P$ and $x\notin R_\tau^{-1}(A)$, then $R_\tau(x)\cap A=\varnothing$. Therefore, $\bigcap\{U\in\mathcal P_\tau:x\in U\}\cap A=\varnothing$. Thus, by compactness, there are $U_1,\dots,U_n\in\mathcal P_\tau$ such that $x\in U_1\cap\dots\cap U_n$ and $U_1\cap\dots\cap U_n\cap A=\varnothing$. Let $U=U_1\cap\dots\cap U_n$. Then $U$ is an open neighborhood of $x$ missing $A$, so $x\notin{\bf c}(A)$. This yields that ${\bf c}(A)=R_\tau^{-1}(A)$ for each $A\in\mathcal P$. Consequently, $(X,R_\tau,\mathcal P)$ is a compact general {\bf S4}-frame. To see that it is tight, let $x\nR_\tau y$. Then there is $A\in\mathcal P_\tau$ such that $x\in A$ and $y\notin A$. Let $B=-A$, where we use $-$ to denote set-theoretic complement. Clearly $B$ is closed, so ${\bf c}(B)=B$. Therefore, since $A\in\mathcal P$, we have $R_\tau^{-1}(B)=B$. Thus, there is $B\in\mathcal P$ with $y\in B$ and $x\notin R_\tau^{-1}(B)$, and hence $(X,R_\tau,\mathcal P)$ is tight. In particular, if $(X,\tau,\mathcal P)$ is a descriptive space, then $(X,R_\tau,\mathcal P)$ is a descriptive {\bf S4}-frame.
\end{proof}

For a general {\bf S4}-frame $(X,R,\mathcal P)$, let $\mathcal P_R=\{A\in\mathcal P:A$ is an $R$-upset$\}$, and let $\tau_R$ be the topology generated by the basis $\mathcal P_R$. That $\mathcal P_R$ forms a basis is clear because $X\in\mathcal P_R$ and $\mathcal P_R$ is closed under finite intersections. We let ${\bf c}_R$ denote the closure operator in $(X,\tau_R)$.

\begin{lemma}\label{lem:3.4}
Let $(X,R,\mathcal P)$ be a compact tight general {\bf S4}-frame. Then $xRy$ iff $x\in{\bf c}_R(y)$. Moreover, for each $A\in\mathcal P$ we have $R^{-1}(A)={\bf c}_R(A)$. Consequently, $(X,\tau_R,\mathcal P)$ is a compact tight general space. In particular, if $(X,R,\mathcal P)$ is a descriptive {\bf S4}-frame, then $(X,\tau_R,\mathcal P)$ is a descriptive space.
\end{lemma}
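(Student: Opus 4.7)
The plan is to mirror the structure of Lemma~\ref{lem:3.3}, exchanging the roles played by the topology and the specialization order. The single technical observation that drives the whole argument is the following: for any $A\in\mathcal P$, the set $U_A:=X\setminus R^{-1}(A)$ lies in $\mathcal P_R$. It belongs to $\mathcal P$ because $\mathcal P$ is a subalgebra of the frame algebra $(X,R)^+$ (hence closed under $R^{-1}$ and complement), and it is an $R$-upset thanks to transitivity of $R$: if $x\in U_A$, $xRy$, and $y\notin U_A$, then $yRz$ for some $z\in A$, whence $xRz$, contradicting $x\notin R^{-1}(A)$. This is the one place where the $\mathbf{S4}$ hypothesis is genuinely used.

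With this in hand, I verify the biconditional $xRy\iff x\in{\bf c}_R(y)$. The forward direction is immediate: every basic open $U\in\mathcal P_R$ containing $x$ is an $R$-upset, hence contains $y$. For the converse, I argue by contrapositive. If $xRy$ fails, tightness of the general frame supplies some $A\in\mathcal P$ with $y\in A$ and $x\notin R^{-1}(A)$. Then $U_A$ is a basic open neighborhood of $x$ in $\tau_R$, and reflexivity gives $y\in R^{-1}(A)$, so $y\notin U_A$; therefore $x\notin{\bf c}_R(y)$.

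The identity $R^{-1}(A)={\bf c}_R(A)$ for $A\in\mathcal P$ now follows with little additional work. The inclusion $R^{-1}(A)\subseteq{\bf c}_R(A)$ is a direct consequence of the biconditional just established, and the reverse inclusion uses $U_A$ once more: if $x\notin R^{-1}(A)$, then $U_A$ is a basic open neighborhood of $x$, and reflexivity forces $U_A\cap A=\varnothing$, so $x\notin{\bf c}_R(A)$.

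The remaining structural conditions transfer essentially for free. Compactness of the space is verbatim the same condition as compactness of the frame, namely perfection of $\mathcal P$. For tightness of $(X,\tau_R,\mathcal P)$, one needs $\mathcal P\cap\tau_R$ to be a basis for $\tau_R$; this is immediate because $\mathcal P_R\subseteq\mathcal P\cap\tau_R$ and $\mathcal P_R$ is a basis for $\tau_R$ by the very definition of $\tau_R$. In the descriptive case, differentiatedness of the space is identical to differentiatedness of the frame, so the final clause comes for free. I do not anticipate any real obstacle beyond the bookkeeping already indicated; the argument is a clean dualization of Lemma~\ref{lem:3.3}, with transitivity of $R$ doing the analogous work that closure-under-${\bf c}$ did there.
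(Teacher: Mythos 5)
Your proof is correct, and it diverges from the paper's in one substantive place. Both arguments handle the biconditional $xRy\iff x\in{\bf c}_R(y)$ the same way in substance (the paper's unproved assertion that tightness gives $R(x)=\bigcap\{A\in\mathcal P_R:x\in A\}$ is exactly your observation that $X\setminus R^{-1}(A)\in\mathcal P_R$, which you spell out). The difference is in the inclusion ${\bf c}_R(A)\subseteq R^{-1}(A)$: the paper argues that $R(x)\cap A=\varnothing$ means the filtered family $\{U\in\mathcal P_R:x\in U\}\cup\{A\}$ has empty intersection, and then invokes compactness (perfection of $\mathcal P$) to extract a single basic open neighborhood of $x$ missing $A$; you instead exhibit that neighborhood directly as $U_A=X\setminus R^{-1}(A)$, using reflexivity to see $A\subseteq R^{-1}(A)$ and hence $U_A\cap A=\varnothing$. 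Your route is more elementary and slightly more general --- it shows the identity ${\bf c}_R(A)=R^{-1}(A)$ on $\mathcal P$ holds for any tight general {\bf S4}-frame, with compactness entering only as the hypothesis that transfers verbatim to the conclusion. This is also pleasantly asymmetric with Lemma~\ref{lem:3.3}, where the analogous inclusion genuinely does need compactness because the relevant intersection $\bigcap\{U\in\mathcal P_\tau:x\in U\}$ need not itself be open; here the complement of $R^{-1}(A)$ is automatically an upset by transitivity, which is the structural fact your proof exploits and the paper's does not.
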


\begin{proof}
If $(X,R,\mathcal P)$ is tight, then $R(x)=\bigcap\{A\in\mathcal P_R:x\in A\}$ for each $x\in X$. Thus, $xRy$ iff $x\in{\bf c}_R(y)$. Let $x\notin{\bf c}_R(A)$. Then there is $U\in\mathcal P_R$ such that $x\in U$ and $U\cap A=\varnothing$. As $R(x)\subseteq U$, we have $R(x)\cap A=\varnothing$, which means that $x\notin R^{-1}(A)$. Thus, $R^{-1}(A)\subseteq{\bf c}_R(A)$ for each $A\subseteq X$. Conversely, if $A\in\mathcal P$ and $x\notin R^{-1}(A)$, then $R(x)\cap A=\varnothing$. Since $(X,R,\mathcal P)$ is tight, $R(x)=\bigcap\{U\in\mathcal P_R:x\in U\}$ and so $\bigcap\{U\in\mathcal P_R:x\in U\}\cap A=\varnothing$. By compactness, there exist $U_1,\dots,U_n\in\mathcal P_R$ such that $x\in U_1\cap\dots\cap U_n$ and $U_1\cap\dots\cap U_n\cap A=\varnothing$. Let $U=U_1\cap\dots\cap U_n$. Then $U\in\mathcal P_R$, $x\in U$, and $U\cap A=\varnothing$. Thus, there is an open $\tau_R$-neighborhood of $x$ missing $A$, so $x\notin{\bf c}_R(A)$. This proves that $R^{-1}(A)={\bf c}_R(A)$ for each $A\in\mathcal P$. Consequently, $(X,\tau_R,\mathcal P)$ is a compact general space, and it is tight because $\mathcal P_R \subseteq \mathcal P \cap \tau_R \subseteq \tau_R$. In particular, if $(X,R,\mathcal P)$ is a descriptive {\bf S4}-frame, then $(X,\tau_R,\mathcal P)$ is a descriptive space.
\end{proof}

\begin{lemma}\label{lem:3.5}
\begin{enumerate}
\item[]
\item If $(X,\tau,\mathcal P)$ is a compact tight general space, then $\tau=\tau_{R_\tau}$.
\item If $(X,R,\mathcal P)$ is a compact tight general {\bf S4}-frame, then $R=R_{\tau_R}$.
\end{enumerate}
\end{lemma}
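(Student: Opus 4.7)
The plan is to derive both parts almost entirely from the two preceding lemmas (3.3 and 3.4), using the key correspondence they establish between the closure operator and the $R_\tau^{-1}$/$R^{-1}$ operators on members of $\mathcal P$.

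For part (1), I would first observe that tightness gives $\mathcal P_\tau$ as a basis for $\tau$, and by definition $\mathcal P_{R_\tau}$ is a basis for $\tau_{R_\tau}$; hence it suffices to show $\mathcal P_\tau = \mathcal P_{R_\tau}$. Using Lemma \ref{lem:3.3}, for any $A \in \mathcal P$ we have ${\bf c}(A) = R_\tau^{-1}(A)$, and applying this to $-A \in \mathcal P$ yields: $A$ is $\tau$-open iff $-A$ is $\tau$-closed iff ${\bf c}(-A) = -A$ iff $R_\tau^{-1}(-A) = -A$ iff $-A$ is an $R_\tau$-downset iff $A$ is an $R_\tau$-upset. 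This chain of equivalences immediately gives $\mathcal P_\tau = \mathcal P_{R_\tau}$ and therefore $\tau = \tau_{R_\tau}$.

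For part (2), the statement follows directly from Lemma \ref{lem:3.4}, which says that under the compact-tight hypothesis, $xRy$ iff $x \in {\bf c}_R(y)$. But $R_{\tau_R}$ is defined to be exactly the specialization order of $\tau_R$, i.e.\ $x R_{\tau_R} y$ iff $x \in {\bf c}_R(y)$. Comparing the two conditions gives $R = R_{\tau_R}$.

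I do not anticipate a genuine obstacle here, since the substantive work has already been done in Lemmas \ref{lem:3.3} and \ref{lem:3.4}; this lemma is essentially a bookkeeping step showing that the two translations $\tau \mapsto R_\tau$ and $R \mapsto \tau_R$ are mutual inverses on compact tight general spaces/frames. The only minor point to be careful about in part (1) is to invoke the identity ${\bf c}(A) = R_\tau^{-1}(A)$ on the complement $-A$ (which lies in $\mathcal P$ since $\mathcal P$ is a Boolean subalgebra), rather than on $A$ itself, so that the equivalence of openness and being an upset comes out cleanly.
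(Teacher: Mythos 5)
Your proposal is correct and follows essentially the same route as the paper: part (1) reduces to showing $\mathcal P_\tau=\mathcal P_{R_\tau}$ via the identity ${\bf c}(A)=R_\tau^{-1}(A)$ from Lemma~\ref{lem:3.3}, and part (2) is immediate from Lemma~\ref{lem:3.4} and the definition of the specialization order. The only difference is that you spell out the complementation argument that the paper leaves implicit, which is a welcome clarification rather than a deviation.
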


\begin{proof}
(1) It follows from Lemma~\ref{lem:3.3} that $\mathcal P_\tau=\mathcal P_{R_\tau}$ because $\mathbf c(A)=R_\tau^{-1}(A)$ for $A\in\mathcal P$. Since $\mathcal P_\tau$ is a basis for $\tau$ and $\mathcal P_{R_\tau}$ is a basis for $\tau_{R_\tau}$, we obtain that $\tau=\tau_{R_\tau}$.

(2) By definition, $xR_{\tau_R}y$ iff $x\in{\bf c}_R(y)$, and by Lemma~\ref{lem:3.4}, $x\in{\bf c}_R(y)$ iff $xRy$. Thus, $R=R_{\tau_R}$.
\end{proof}

Let {\bf DS} denote the category whose objects are descriptive spaces and whose morphisms are maps $f:X\to Y$ between descriptive spaces $\X=(X,\tau,\mathcal P)$ and $\Y=(Y,\tau,\mathcal Q)$ such that $A\in\mathcal Q$ implies $f^{-1}(A)\in\mathcal P$ and $f^{-1}{\bf c}(y)={\bf c}f^{-1}(y)$ for each $y\in Y$. We also let {\bf DF} denote the category whose objects are descriptive {\bf S4}-frames and whose morphisms are p-morphisms between them.

\begin{theorem}\label{thm:3.7}
{\bf DS} is isomorphic to {\bf DF}.
\end{theorem}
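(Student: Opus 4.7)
The plan is to build mutually inverse functors $F\colon\mathbf{DS}\to\mathbf{DF}$ and $G\colon\mathbf{DF}\to\mathbf{DS}$, each acting as the identity on underlying set maps of morphisms. On objects I set $F(X,\tau,\mathcal{P})=(X,R_\tau,\mathcal{P})$ and $G(X,R,\mathcal{P})=(X,\tau_R,\mathcal{P})$. Because each functor preserves underlying set maps, functoriality (preservation of identities and composition) is automatic, and the composition identities $F\circ G=\mathrm{id}_{\mathbf{DF}}$ and $G\circ F=\mathrm{id}_{\mathbf{DS}}$ reduce on objects to $R=R_{\tau_R}$ and $\tau=\tau_{R_\tau}$, which are exactly Lemma~\ref{lem:3.5}.

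Well-definedness of $F$ and $G$ on objects is the content of Lemmas~\ref{lem:3.3} and~\ref{lem:3.4} respectively. What remains is to check that $F$ and $G$ are well-defined on morphisms: a set map $f$ is a $\mathbf{DS}$-morphism between two descriptive spaces if and only if the same map is a p-morphism between the corresponding descriptive $\mathbf{S4}$-frames.

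The morphism-level equivalence is the main technical point. The algebra clause $A\in\mathcal{Q}\Rightarrow f^{-1}(A)\in\mathcal{P}$ appears verbatim in both definitions, so the work concentrates on showing that $f^{-1}\mathbf{c}(y)=\mathbf{c}f^{-1}(y)$ for every $y\in Y$ is equivalent to the p-morphism clauses (a) and (b). By the definition of the specialization order, $\mathbf{c}(y)=R_{\tau'}^{-1}(y)$ for singletons, so the $\mathbf{DS}$-identity rewrites as $f^{-1}(R_{\tau'}^{-1}(y))=\mathbf{c}(f^{-1}(y))$. Clause (a) then follows by chasing $xR_\tau x'$ through the inclusion $x\in\mathbf{c}(x')\subseteq\mathbf{c}(f^{-1}(f(x')))=f^{-1}\mathbf{c}(f(x'))$. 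Clause (b), starting from $f(x)\in\mathbf{c}(v)$, requires producing an $x'\in R_\tau(x)\cap f^{-1}(v)$; here I would combine the characterization $R_\tau(x)=\bigcap\{A\in\mathcal{P}_\tau:x\in A\}$ from Lemma~\ref{lem:3.3} with $\{v\}=\bigcap\{B\in\mathcal{Q}:v\in B\}$ coming from differentiatedness, observe that the family $\{A\in\mathcal{P}_\tau:x\in A\}\cup\{f^{-1}(B):B\in\mathcal{Q},\,v\in B\}$ lies in $\mathcal{P}$ and has the finite intersection property (each basic open $A$ meeting $f^{-1}(v)\subseteq f^{-1}(B)$ because $x\in\mathbf{c}(f^{-1}(v))$), and then invoke perfectness of $\mathcal{P}$ to extract $x'$ in the total intersection. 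The converse direction is symmetric: Lemma~\ref{lem:3.4} turns the p-morphism identity $f^{-1}(S^{-1}(y))=R^{-1}(f^{-1}(y))$ into $f^{-1}\mathbf{c}(y)=R^{-1}(f^{-1}(y))$, and a parallel compactness-plus-tightness argument shows $R^{-1}(f^{-1}(y))=\mathbf{c}_R(f^{-1}(y))$, recovering the $\mathbf{DS}$-condition. Once this morphism equivalence is established, the isomorphism of categories follows.
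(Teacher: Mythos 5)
Your proof is correct and follows essentially the same route as the paper: the same functors $F$ and $G$ on objects, well-definedness via Lemmas~\ref{lem:3.3} and~\ref{lem:3.4}, and mutual inversion via Lemma~\ref{lem:3.5}. The only difference is that you spell out the morphism-level equivalence (in particular the back condition, which genuinely needs perfectness of $\mathcal P$ together with differentiatedness of $\mathcal Q$) in much more detail than the paper, which compresses this to the observation that $R_\tau^{-1}(x)={\bf c}(x)$; your fuller argument is sound.
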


\begin{proof}
Define a functor $F:{\bf DS}\to{\bf DF}$ as follows. For a descriptive space $(X,\tau,\mathcal P)$, let $F(X,\tau,\mathcal P)=(X,R_\tau,\mathcal P)$. For a {\bf DS}-morphism $f:X\to Y$, let $F(f)=f$. By Lemma~\ref{lem:3.3}, $F(X,\tau,\mathcal P)\in{\bf DF}$. Moreover, since $R_\tau^{-1}(x)={\bf c}(x)$, it follows that $F(f)$ is a p-morphism. Thus, $F$ is well-defined.

Define a functor $G:{\bf DF}\to{\bf DS}$ as follows. For a descriptive frame $(X,R,\mathcal P)$, let $G(X,R,\mathcal P)=(X,\tau_R,\mathcal P)$. For a p-morphism $f:X\to Y$, let $G(f)=f$. By Lemma~\ref{lem:3.4}, $G(X,R,\mathcal P)\in{\bf DS}$. Lemma~\ref{lem:3.4} also implies that ${\bf c}_R(x)=R^{-1}(x)$, and hence it follows that $G(f)$ is a {\bf DS}-morphism. Thus, $G$ is well-defined.

Now apply Lemma~\ref{lem:3.5} to conclude the proof.
\end{proof}

\begin{remark}
{\em
Theorem~\ref{thm:3.7} holds true in a more general setting, between the categories of compact tight general spaces and compact tight general {\bf S4}-frames. However, we will not need it in such generality.
}
\end{remark}

Since each logic above {\bf S4} is complete with respect to the corresponding class of descriptive {\bf S4}-frames, as an immediate consequence of Theorem~\ref{thm:3.7}, we obtain:

\begin{corollary}
Each logic above {\bf S4} is complete with respect to the corresponding class of descriptive spaces.
\end{corollary}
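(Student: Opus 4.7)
The plan is to combine Theorem~\ref{thm:3.7} with the standard completeness of modal logic with respect to descriptive frames, which was recalled in Section~\ref{SecPrelims}. Fix a logic $L$ above $\mathbf{S4}$, and let $\mathcal C(L)$ be the class of descriptive $\mathbf{S4}$-frames validating $L$ and $\mathcal D(L)$ the class of descriptive spaces validating $L$. By the completeness of general frame semantics for normal modal logics, $L = L(\mathcal C(L))$; the goal is to transfer this equality to $L = L(\mathcal D(L))$.

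The key step is to observe that under the isomorphism of Theorem~\ref{thm:3.7}, validity of modal formulas is preserved in both directions. Indeed, a modal formula is valid in a general frame (resp.\ general space) iff it is valid in the underlying modal algebra, which in both cases is $\mathcal P$ equipped with a unary operator: $R^{-1}$ on the frame side, and the closure $\mathbf c$ restricted to $\mathcal P$ on the space side. Lemma~\ref{lem:3.4} shows that $R^{-1}(A) = \mathbf c_R(A)$ for each $A \in \mathcal P$ when $(X,R,\mathcal P)$ is a descriptive frame, and Lemma~\ref{lem:3.3} shows the dual identity $\mathbf c(A) = R_\tau^{-1}(A)$ for each $A \in \mathcal P$ when $(X,\tau,\mathcal P)$ is a descriptive space. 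Thus the functors $F$ and $G$ from the proof of Theorem~\ref{thm:3.7} preserve the modal algebra of possible values on the nose, not just up to isomorphism of carrier sets.

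It follows that $\F \models \varphi$ iff $G(\F) \models \varphi$ for every descriptive $\mathbf{S4}$-frame $\F$ and every modal formula $\varphi$, and symmetrically for $\X$ and $F(\X)$. Consequently $G$ restricts to a bijection $\mathcal C(L) \to \mathcal D(L)$ (with inverse $F$) which preserves and reflects validity of formulas. Therefore
\[
\varphi \in L \iff (\forall \F \in \mathcal C(L))\ \F \models \varphi \iff (\forall \X \in \mathcal D(L))\ \X \models \varphi,
\]
which is precisely completeness of $L$ with respect to $\mathcal D(L)$.

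There is no real obstacle here: the substantive content sits in Lemmas~\ref{lem:3.3}--\ref{lem:3.5} and Theorem~\ref{thm:3.7}, and what remains is the bookkeeping observation that validity is an algebraic notion depending only on the pair $(\mathcal P, \Diamond)$, which is literally the same data on the frame side and on the space side. The only point worth stressing in the write-up is this identification of operators, so that the corollary does not appear to rely on more than the categorical isomorphism actually gives.
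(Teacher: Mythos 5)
Your proposal is correct and matches the paper's argument: the corollary is stated there as an immediate consequence of Theorem~\ref{thm:3.7} together with completeness for descriptive frames, and the validity-preservation you spell out (that the modal algebra of possible values is literally the same on both sides, via the identities $\mathbf c(A)=R_\tau^{-1}(A)$ and $R^{-1}(A)=\mathbf c_R(A)$ from Lemmas~\ref{lem:3.3} and~\ref{lem:3.4}) is exactly the bookkeeping the paper leaves implicit. Nothing is missing.
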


Since the category {\bf CA} of closure algebras is dually equivalent to {\bf DF}, another immediate consequence of Theorem~\ref{thm:3.7} is the following:

\begin{corollary}\label{cor:3.10}
{\bf DS} is dually equivalent to {\bf CA}.
\end{corollary}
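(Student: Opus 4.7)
The strategy is to chain together two results: the isomorphism $\mathbf{DS} \cong \mathbf{DF}$ established in Theorem~\ref{thm:3.7}, together with the already-known dual equivalence between $\mathbf{DF}$ and $\mathbf{CA}$ (the Jónsson--Tarski style duality for closure algebras mentioned in Section~\ref{SecPrelims}). Since composition of an isomorphism with a dual equivalence is again a dual equivalence, the corollary follows immediately. So the plan is to describe the composite functors and verify naturality on a purely formal level, without any new constructions.

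Concretely, I would define $H:\mathbf{DS}\to\mathbf{CA}^{op}$ by sending a descriptive space $(X,\tau,\mathcal P)$ to the closure algebra $(\mathcal P,\mathbf c)$ (which is a closure algebra since $\mathcal P$ is a subalgebra of $X^+$ by definition of a general space), and sending a $\mathbf{DS}$-morphism $f:X\to Y$ to the inverse-image map $f^{-1}:\mathcal Q\to\mathcal P$. The condition $f^{-1}\mathbf c(y)=\mathbf c f^{-1}(y)$ in the definition of $\mathbf{DS}$-morphism, coupled with the fact that closures of singletons together with finite joins generate all closures (in the descriptive setting, via compactness and tightness), will ensure $f^{-1}$ commutes with $\mathbf c$. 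In the other direction, I would define $K:\mathbf{CA}^{op}\to\mathbf{DS}$ as the composition of the standard dual functor $\mathbf{CA}^{op}\to\mathbf{DF}$ (taking a closure algebra to its Stone space equipped with the dual relation $R$) with the functor $G:\mathbf{DF}\to\mathbf{DS}$ from the proof of Theorem~\ref{thm:3.7}.

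The main verification is that $H$ and $K$ are quasi-inverse. On objects, starting from $(X,\tau,\mathcal P)\in\mathbf{DS}$, applying $F$ from Theorem~\ref{thm:3.7} gives $(X,R_\tau,\mathcal P)\in\mathbf{DF}$, whose $\mathbf{CA}$-dual is $(\mathcal P,R_\tau^{-1})=(\mathcal P,\mathbf c)$ by Lemma~\ref{lem:3.3}; so $H$ indeed equals the composition of $F$ with the $\mathbf{DF}\to\mathbf{CA}^{op}$ duality. Conversely, Lemma~\ref{lem:3.4} ensures that when we take a closure algebra $\mathfrak A$ to its descriptive frame $(X,R,\mathfrak A^*)$ and then pass to $(X,\tau_R,\mathfrak A^*)$, the closure operator $\mathbf c_R$ recovers $\Diamond$ on $\mathfrak A^*\cong\mathfrak A$. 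On morphisms, the correspondence between $\mathbf{DS}$-morphisms and $\mathbf{DF}$-p-morphisms provided by Theorem~\ref{thm:3.7} (together with the standard fact that p-morphisms between descriptive frames correspond bijectively to closure-algebra homomorphisms in the reverse direction) closes the loop.

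There is no real obstacle here; every nontrivial step has been absorbed into Theorem~\ref{thm:3.7} and the preliminaries. The only care needed is bookkeeping: making sure that the $\mathbf{DS}$-morphism condition $f^{-1}\mathbf c(y)=\mathbf c f^{-1}(y)$ translates, under the isomorphism of Theorem~\ref{thm:3.7}, to the p-morphism condition $f^{-1}(R^{-1}(y))=R^{-1}(f^{-1}(y))$, which is immediate from $\mathbf c(y)=R_\tau^{-1}(y)$ established in Lemma~\ref{lem:3.3}. Thus the entire proof reduces to citing Theorem~\ref{thm:3.7} and the well-known duality $\mathbf{CA}\equiv\mathbf{DF}^{op}$.
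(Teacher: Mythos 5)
Your proposal is correct and follows exactly the paper's route: the paper derives Corollary~\ref{cor:3.10} as an immediate consequence of Theorem~\ref{thm:3.7} combined with the standard dual equivalence between {\bf CA} and {\bf DF}, which is precisely your composition of the isomorphism ${\bf DS}\cong{\bf DF}$ with the known duality. The extra bookkeeping you describe (identifying $\mathbf c$ with $R_\tau^{-1}$ via Lemma~\ref{lem:3.3} and transporting morphism conditions) is exactly what the paper leaves implicit.
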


\begin{remark}
{\em
As we already pointed out, by Stone duality, having a reduced and perfect field of sets amounts to having a Stone space. Therefore, having a descriptive frame amounts to having a Stone space with a binary relation that satisfies the following two conditions: $R(x)$ is closed for each $x\in X$ and $R^{-1}(U)$ is clopen for each clopen $U$ of $X$.

Descriptive spaces can also be treated similarly. Namely, if $(X,\tau,\mathcal P)$ is a descriptive space, then announcing $\mathcal P$ as a basis yields a Stone topology on $X$, which we denote by $\tau_S$. Thus, we arrive at the bitopological space $(X,\tau_S,\tau)$, where $(X,\tau_S)$ is a Stone space. Moreover, since $(X,\tau,\mathcal P)$ is tight, $\mathcal P_\tau$ is a basis for $\tau$, so $\tau\subseteq\tau_S$. As $\mathcal P$ is the clopens of $(X,\tau_S)$, each member of $\mathcal P$ is compact in $(X,\tau_S)$, hence compact in the weaker topology $(X,\tau)$. Consequently, the members of $\mathcal P_\tau$ are compact open in $(X,\tau)$. Conversely, if $U$ is compact open in $(X,\tau)$, then since $\mathcal P_\tau$ is a basis for $\tau$ that is closed under finite unions, it follows easily that $U\in\mathcal P_\tau$. Thus, $\mathcal P_\tau$ is exactly the compact open subsets of $(X,\tau)$, and so the compact opens of $(X,\tau)$ are closed under finite intersections and form a basis for $\tau$. As $\mathcal P_\tau\subseteq\mathcal P$, we see that the compact opens of $(X,\tau)$ are clopens in $(X,\tau_S)$. Finally, $U$ clopen in $(X,\tau_S)$ implies that ${\bf c}(U)$ is clopen in $(X,\tau_S)$.

These considerations yield an equivalent description of descriptive spaces as bitopological spaces $(X,\tau_S,\tau)$ such that $(X,\tau_S)$ is a Stone space, $\tau\subseteq\tau_S$, the compact opens of $(X,\tau)$ are closed under finite intersections and form a basis for $\tau$, the compact opens of $(X,\tau)$ are clopens in $(X,\tau_S)$, and $U$ clopen in $(X,\tau_S)$ implies that ${\bf c}(U)$ is clopen in $(X,\tau_S)$. Thus, our notion of a descriptive space corresponds to the bitopological spaces defined in \cite[Def.~3.7]{BMM08}, Theorem~\ref{thm:3.7} corresponds to \cite[Thm.~3.8(2)]{BMM08}, and Corollary~\ref{cor:3.10} to \cite[Cor.~3.9(1)]{BMM08}. In fact, for a descriptive space $(X,\tau,\mathcal P)$, the closure algebra $X^+$ is the topo-canonical completion of the closure algebra $\mathcal P$ \cite{BMM08}.
}
\end{remark}

We recall that the basic truth-preserving operations for general frames are the operations of taking generated subframes, p-morphic images, and disjoint unions (see, e.g., \cite[Sec.~8.5]{CZ97}). We conclude this section by discussing analogous operations for general spaces. Recall (see, e.g., \cite{Gab01,vBBG03}) that for topological spaces interior maps are analogues of p-morphisms, where a map $f:X\rightarrow Y$ between topological spaces is an \emph{interior map} if it is continuous (inverse images of opens are open) and open (direct images of opens are open). It is well known that $f:X\rightarrow Y$ is an interior map iff $f^{-1}:Y^+\to X^+$ is a homomorphism of closure algebras. Moreover, if $f$ is onto, then $f^{-1}$ is 1-1 and if $f$ is 1-1, then $f^{-1}$ is onto. It follows that for topological spaces, open subspaces correspond to generated subframes and interior images correspond to p-morphic images. In addition, topological sums correspond to disjoint unions.

\begin{definition}
\begin{enumerate}
\item[]
\item Let $\X=(X,\mathcal P)$ and $\Y=(Y,\mathcal Q)$ be general spaces.
\begin{enumerate}
\item We say that a map $f:X\to Y$ is an \emph{interior map between $\X$ and $\Y$} if $f:X\to Y$ is an interior map and $A\in\mathcal Q$ implies $f^{-1}(A)\in\mathcal P$.
\item We call \emph{$\Y$ an open subspace of $\X$} if $Y$ is an open subspace of $X$ and the inclusion map $Y\to X$ is an interior map between the general spaces $\Y$ and $\X$.
\item We say that \emph{$\Y$ is an interior image of $\X$} if there is an onto interior map between the general spaces $\X$ and $\Y$.
\end{enumerate}
\item Let $\X_i=(X_i,\mathcal P_i)$ be general spaces indexed by some set $I$, and for convenience, we assume that the $X_i$ are pairwise disjoint. Let $X$ be the topological sum of the $X_i$. Define $\mathcal P\subseteq\wp(X)$ by $A\in\mathcal P$ iff $A\cap X_i\in\mathcal P_i$. Then it is straightforward to see that $\X=(X,\mathcal P)$ is a general space, which we call the \emph{sum of the general spaces $\X_i=(X_i,\mathcal P_i)$}.
\end{enumerate}
\end{definition}

Observe that an interior map $f$ between descriptive spaces is a {\bf DS}-map because it satisfies $f^{-1}{\bf c}(y)={\bf c}f^{-1}(y)$. Also, given general spaces $\X=(X,\mathcal P)$ and $\Y=(Y,\mathcal Q)$, if $\Y$ is an interior image of $\X$, then $\mathcal Q$ is isomorphic to a subalgebra of $\mathcal P$, and if $\Y$ is an open subspace of $\X$, then $\mathcal Q$ is a homomorphic image of $\mathcal P$. It is also clear that if a general space $\X=(X,\mathcal P)$ is the sum of a family of general spaces $\X_i=(X_i,\mathcal P_i)$, $i\in I$, then $\mathcal P$ is isomorphic to the product $\prod_{i\in I}\mathcal P_i$.

The definitions of a valuation in a general space $\X$, of $\X\models L$, and of $L(\X)$ are the same as in the case of general frames. So if a general space $\Y$ is an interior image of a general space $\X$, then $L(\X)\subseteq L(\Y)$. Similarly, if $\Y$ is an open subspace of $\X$, then $L(\X)\subseteq L(\Y)$. Finally, if $\X$ is the sum of the $\X_i$, then $L(\X)=\bigcap_{i\in I}L(\X_i)$.

\section{Countable general frame property and completeness for general spaces over $\mathbf Q$}\label{SecRationals}

By Theorem~\ref{thm:3.7}, descriptive spaces are the same as descriptive {\bf S4}-frames, but as we will see in what follows, it is the perspective of general spaces (rather than general {\bf S4}-frames) that allows us to obtain some strong general completeness results for logics above {\bf S4}. In this section we introduce one of our key tools for yielding these general completeness results, the countable general frame property. We then consider the rational line $\mathbf Q$, and prove our first general completeness result: a normal modal logic is a logic above {\bf S4} iff it is the logic of some general space over $\mathbf Q$, which is equivalent to being the logic of some subalgebra of~$\mathbf Q^+$.

Let $L$ be a normal modal logic. We recall that $L$ has the \emph{finite model property} (FMP) if each non-theorem of $L$ is refuted on a finite frame for $L$. This property has proved to be extremely useful in modal logic. The existence of sufficiently many finite models makes the study of a particular modal system easier. Unfortunately, a large number of modal logics do not have this property. This can be a major obstacle for investigating a particular modal system, as well as for proving general theorems encompassing all modal logics. A natural weakening of FMP is the \emph{countable frame property} (CFP): each non-theorem is refuted on a countable frame for the logic. But there are modal logics that do not have CFP either (see, e.g., \cite[Sec.~6]{CZ97}). We weaken further CFP to the \emph{countable general frame property} (CGFP) and show that all normal modal logics possess the CGFP.

\begin{definition}
{\em
Let $L$ be a normal modal logic. We say that $L$ has the {\em countable general frame property} (CGFP) provided for each non-theorem $\varphi$ of $L$ there exists a countable general frame $\mathfrak F$ for $L$ refuting $\varphi$.
}
\end{definition}

\begin{theorem}\label{CGFP}
Each normal modal logic $L$ has the {\em CGFP}.
\end{theorem}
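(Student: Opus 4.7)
I would prove the CGFP via a countable L\"owenheim--Skolem variant of the J\'onsson--Tarski representation. First, reduce to a countable refuting algebra. Fix a non-theorem $\varphi$ of $L$. By algebraic completeness, choose $\mathfrak{A}\in\mathcal V(L)$ and a valuation $\nu$ with $\nu(\varphi)\ne 1$, and let $\mathfrak{B}$ be the modal subalgebra of $\mathfrak{A}$ generated by the finitely many values $\nu(p)$ for $p$ a propositional letter of $\varphi$. Since modal algebras have countably many term operations and $\mathfrak{B}$ is finitely generated, $\mathfrak{B}$ is countable; because $\mathcal V(L)$ is closed under subalgebras we still have $\mathfrak{B}\models L$, and $\nu$ continues to refute $\varphi$ in $\mathfrak{B}$. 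Thus the problem reduces to: turn any countable $\mathfrak{B}\models L$ with a refuting valuation into a countable general frame that validates $L$ and refutes $\varphi$.

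To do this, I would cut down the Stone dual of $\mathfrak{B}$ to a countable, suitably closed set of ultrafilters. Let $X$ be the set of ultrafilters of $\mathfrak{B}$ and let $\eta:\mathfrak{B}\to\wp(X)$, $\eta(b)=\{x\in X:b\in x\}$, be the standard embedding. I want to choose a countable $W\subseteq X$ that is simultaneously \emph{separating} --- every nonzero $b\in\mathfrak{B}$ is contained in some $x\in W$ --- and \emph{witnessing} --- whenever $x\in W$ and $\Diamond a\in x$, there is $y\in W$ with $a\in y$ and with $\{b:\Box b\in x\}\subseteq y$. Construct $W=\bigcup_n W_n$ inductively: start with a countable separating $W_0$, possible since $\mathfrak{B}$ has only countably many pairs of distinct elements and each can be separated by a single ultrafilter; at stage $n$, for every pair $(x,a)\in W_n\times\mathfrak{B}$ with $\Diamond a\in x$, adjoin one witness ultrafilter extending the filter generated by $\{a\}\cup\{b:\Box b\in x\}$, which is proper by the standard J\'onsson--Tarski calculation. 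Each stage adjoins only countably many ultrafilters, so $W$ is countable.

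Now define $R$ on $W$ by $xRy$ iff $(\forall a\in B)(a\in y\Rightarrow\Diamond a\in x)$ and let $\mathcal P=\{\eta(b)\cap W:b\in\mathfrak{B}\}$. Separation makes $b\mapsto \eta(b)\cap W$ a Boolean injection, and the witnessing property together with the definition of $R$ yields $R^{-1}(\eta(a)\cap W)=\eta(\Diamond a)\cap W$ for every $a$; hence this map is a modal-algebra isomorphism $\mathfrak{B}\cong(\mathcal P,R^{-1})$. Therefore $(W,R,\mathcal P)$ is a countable general frame with $\mathcal P\models L$, and the valuation $b\mapsto \eta(\nu(b))\cap W$ refutes $\varphi$ on it. The main obstacle is engineering $W$ so that separation and witnessing coexist in a countable set: witnessing demands a fresh ultrafilter for every existential claim $\Diamond a\in x$ created at every stage, and one must check that the countably many such demands can all be discharged without growing past $\aleph_0$. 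Once this is in hand, everything else is a routine Stone/J\'onsson--Tarski verification.
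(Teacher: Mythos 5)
Your proof is correct, but it takes a genuinely different route from the paper's. The paper argues model-theoretically: starting from a general frame $\F=(W,R,\mathcal P)$ for $L$ that refutes $\varphi$ under a valuation $\nu$, it performs a L\"owenheim--Skolem selection of a countable $V\subseteq W$ closed under witnesses for all formulas $\Diamond\psi$ true under $\nu$, and takes as admissible sets the countable family $\mathcal Q=\{\mu(\psi):\psi\in\mathfrak{Form}\}$ of definable subsets of $V$, where $\mu$ is the restriction of $\nu$; validity of $L$ on the resulting general frame is then secured by a substitution argument (any valuation into $\mathcal Q$ assigns definable sets to the letters, so evaluating a theorem $\chi(p_1,\dots,p_n)$ amounts to evaluating the substitution instance $\chi(\psi_1,\dots,\psi_n)$, which is again a theorem). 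You instead argue algebraically: pass to a countable (finitely generated) subalgebra $\mathfrak B\models L$ still refuting $\varphi$, then run a countable J\'onsson--Tarski representation by selecting countably many ultrafilters that separate $\mathfrak B$ and witness all diamonds; here validity of $L$ is automatic because the algebra of admissible sets is literally isomorphic to $\mathfrak B$. Your separation/witnessing bookkeeping is the standard one, the properness of the filter generated by $\{a\}\cup\{b:\Box b\in x\}$ is the usual J\'onsson--Tarski computation, and the closure in $\omega$ stages stays countable, so the argument goes through; as a bonus your countable general frame is differentiated and tight. What the paper's version buys --- and why it is set up as a selection \emph{inside} a given frame --- is the flexibility exploited later in Theorem~\ref{LowSklWithSet} and in the proofs of Theorem~\ref{WellConLogAndT2} and the Main Result: there one needs the countable general frame to be a subframe of a prescribed (descriptive) frame containing prescribed points (a root, a point of a maximal cluster) and to refute all non-theorems simultaneously. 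Your construction lives on ultrafilters of a subalgebra rather than on points of $\F$, so it proves the CGFP itself perfectly well but would need reworking to deliver those refinements.
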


\begin{proof}
Suppose $\varphi\notin L$. Then there is a general frame $\F=(W,R,\mathcal P)$ for $L$ refuting $\varphi$. Therefore, there is a valuation $\nu$ on $\F$ and $w\in W$ such that $w\not\in\nu(\varphi)$. We next select a countable subframe $\G$ of $\F$ that refutes $\varphi$. Our selection procedure is basically the same as the one found in \cite[Thm.~6.29]{CZ97}, where the L\"owenheim-Skolem Theorem for modal logic is proved. Set $V_0=\{w\}$. Suppose $V_n$ is defined. For each $\psi\in\mathfrak{Form}$ and $v\in V_n$ with $v\in\nu(\Diamond\psi)$, there is $u_{v,\Diamond\psi}\in R(v)$ with $u_{v,\Diamond\psi}\in\nu(\psi)$. We select one such $u_{v,\Diamond\psi}$ and let $V_{n+1}$ be the set of the selected $u_{v,\Diamond\psi}$. Finally, set $V=\bigcup_{n\in\omega} V_n$. Clearly $V$ is countable. Let $S$ be the restriction of $R$ to $V$ and $\mu$ be the restriction of $\nu$ to $V$. An easy induction on the complexity of modal formulas gives that for each $\psi\in\mathfrak{Form}$ and $v\in V$,
$$
v\in\nu(\psi)\text{ iff }v\in\mu(\psi).
$$
Therefore, $\mathfrak N=(V,S,\mu)$ is a countable submodel of $\mathfrak M=(W,R,\nu)$ such that $\mathfrak N$ is a model for $L$ and $\mathfrak N$ refutes $\varphi$. In fact, $\varphi$ is refuted at $w$. Set $\mathcal Q=\{\mu(\psi):\psi\in\mathfrak{Form}\}$ and $\G=(V,S,\mathcal Q)$. Then $\G$ is a countable general frame, and as $\mathfrak N$ refutes $\varphi$, so does $\G$. It remains to show that $\G$ is a frame for $L$. Let $\lambda$ be an arbitrary valuation on $\G$. It is sufficient to show that each theorem $\chi(p_1,\dots,p_n)$ of $L$ is true in $(\G,\lambda)$. Since each $\lambda(p_i)\in\mathcal Q$, there is $\psi_i\in\mathfrak{Form}$ such that $\lambda(p_i)=\mu(\psi_i)$. As $\chi(p_1,\dots,p_n)\in L$, we have $\chi(\psi_1,\dots,\psi_n)\in L$. Because $\mathfrak N$ is a model for $L$, it follows that $\chi(\psi_1,\dots,\psi_n)$ is true in $\mathfrak N$. Therefore, $\chi(p_1,\dots,p_n)$ is true in $(\G,\lambda)$. Thus, $\G$ is a frame for $L$, and so $L$ has the CGFP.
\end{proof}

\begin{remark}\label{EnlargingV0forCountableSet}
{\em
In the proof of Theorem~\ref{CGFP}, if we start the selection procedure by adding to $V_0$ a fixed countable subset $U$ of $W$, then the resulting countable general frame $\G$ will contain $U$. The details are provided in Theorem~\ref{LowSklWithSet}(1).
}
\end{remark}

\begin{remark}\label{EnlargingV0forL}
{\em
If we start the proof of Theorem~\ref{CGFP} with a general frame $\F$ such that $L=L(\F)$, then it is possible to perform the selection procedure in such a way that we obtain a countable general frame $\G$ with $L=L(\G)$. The details are provided in Theorem~\ref{LowSklWithSet}(2).
}
\end{remark}

\begin{remark}
{\em
Since descriptive frames provide adequate semantics, one may wish to introduce the \emph{countable descriptive frame property} (CDFP), which could be stated as follows: A normal modal logic $L$ has the CDFP provided every non-theorem of $L$ is refuted on a countable descriptive frame for $L$. We leave it as an open problem whether every normal modal logic has the CDFP.
}
\end{remark}

We now turn our attention to $\mathbf Q$. This is our first example of how we can use the CGFP to obtain some general completeness results about logics above {\bf S4}. In fact, we prove that a normal modal logic $L$ is a logic above {\bf S4} iff $L$ is the logic of a general space over $\mathbf Q$, which is equivalent to being the logic of a subalgebra of the closure algebra $\mathbf Q^+$. This is achieved by combining the CGFP with known results concerning interior images of $\mathbf Q$.

\begin{lemma}\label{GenTopFromGFAndIntMap}
Let $X$ and $Y$ be topological spaces and let $f:X\rightarrow Y$ be an onto interior map. For a general space $\mathfrak Y=(Y,\mathcal P)$ over $Y$, set $\mathcal Q=\{f^{-1}(A):A\in\mathcal P\}$. Then $\X=(X,\mathcal Q)$ is a general space over $X$ such that $L(\X)=L(\Y)$.
\end{lemma}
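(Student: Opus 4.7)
The plan is to establish two things: first, that $\mathcal Q$ is actually a subalgebra of the closure algebra $X^+$ (so $\X$ is indeed a general space), and second, that the restricted preimage map $h:\mathcal P\to\mathcal Q$ defined by $h(A)=f^{-1}(A)$ is an isomorphism of closure algebras. The second fact will immediately yield $L(\X)=L(\Y)$ since isomorphic closure algebras validate the same modal formulas.

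For the first claim, I would invoke the fact (stated earlier in the excerpt, in the paragraph introducing interior maps) that $f:X\rightarrow Y$ being an interior map is equivalent to $f^{-1}:Y^+\to X^+$ being a homomorphism of closure algebras. In particular, $f^{-1}$ preserves all Boolean operations and commutes with topological closure, that is, ${\bf c}(f^{-1}(A))=f^{-1}({\bf c}(A))$ for every $A\subseteq Y$. Consequently, if $B=f^{-1}(A)\in\mathcal Q$ for some $A\in\mathcal P$, then ${\bf c}(B)=f^{-1}({\bf c}(A))\in\mathcal Q$ because $\mathcal P$ is closed under closure. Similarly, $\mathcal Q$ inherits closure under the Boolean operations from $\mathcal P$. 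Hence $\mathcal Q$ is a subalgebra of $X^+$, and $(X,\mathcal Q)$ is a general space.

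For the second claim, the map $h:\mathcal P\to\mathcal Q$, $A\mapsto f^{-1}(A)$, is surjective by the very definition of $\mathcal Q$. It is injective because $f$ is onto: if $f^{-1}(A)=f^{-1}(A')$ then applying $f$ and using surjectivity gives $A=A'$. Being the restriction of the closure algebra homomorphism $f^{-1}:Y^+\to X^+$ to $\mathcal P$ with codomain $\mathcal Q$, the map $h$ is itself a homomorphism of closure algebras. Combined with bijectivity, this shows $h$ is an isomorphism $\mathcal P\cong\mathcal Q$. Therefore the two closure algebras satisfy exactly the same modal equations, and $L(\X)=L(\mathcal Q)=L(\mathcal P)=L(\Y)$.

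The argument is essentially a book-keeping consequence of the fact that interior maps dualize to closure algebra homomorphisms; the main (and only) point requiring care is the commutativity of $f^{-1}$ with closure, which the excerpt supplies. There is no substantial obstacle.
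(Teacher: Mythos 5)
Your proposal is correct and follows essentially the same route as the paper: both reduce the claim to the fact that an onto interior map $f$ dualizes to a closure algebra embedding $f^{-1}:Y^+\to X^+$, whose restriction to $\mathcal P$ is then an isomorphism onto $\mathcal Q$, giving $L(\X)=L(\Y)$. The paper states this in one sentence; you merely spell out the bookkeeping (closure of $\mathcal Q$ under the operations, injectivity of $f^{-1}$ from surjectivity of $f$) explicitly.
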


\begin{proof}
Since $f:X\to Y$ is an onto interior map, $f^{-1}:Y^+\to X^+$ is a closure algebra embedding, so the restriction of $f^{-1}$ to $\mathcal P$ is a closure algebra isomorphism from $\mathcal P$ onto $\mathcal Q$. Thus, $\X=(X,\mathcal Q)$ is a general space over $X$ such that $L(\X)=L(\Y)$.
\end{proof}

\begin{remark}
{\em
We will frequently use a special case of the lemma, when $Y$ is the Alexandroff space of an {\bf S4}-frame.
}
\end{remark}

Let $\F=(W,R)$ be an {\bf S4}-frame. We recall that $\F$ is {\em rooted} if there is $w\in W$ such that $W=R(w)$, and that such a $w$ is called a {\em root} of $\F$. A general frame $(W,R,\mathcal P)$ is rooted iff $(W,R)$ is rooted. We next show that the {\bf S4}-version of the Main Lemma in \cite[Lem.~3.1]{BLB12a} gives that each countable rooted {\bf S4}-frame is an interior image of $\mathbf Q$. The lemma is well-known in the finite case (see, e.g., \cite[Sec.~2]{BBCS06}).

\begin{lemma}\label{InteriorImageOfQ}
Each countable rooted {\bf S4}-frame is an interior image of $\mathbf Q$.
\end{lemma}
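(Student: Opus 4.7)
The plan is to build an onto interior map $f\colon \mathbf Q \to W$ by a stage-by-stage approximation procedure, adapting the construction of \cite[Lem.~3.1]{BLB12a} to the S4 setting. I would fix a root $w_0$ of $\F=(W,R)$ together with enumerations $W=\{w_n : n\in\omega\}$ and $\mathbf Q = \{q_n : n\in\omega\}$, and define $f$ in countably many stages so that at stage $n$ the map is declared on a finite set $D_n\subseteq\mathbf Q$, with $D_n \subseteq D_{n+1}$ and $\bigcup_n D_n = \mathbf Q$.

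The two invariants to maintain are: (i) for each $q\in D_n$ there is a rational open interval $I_q\ni q$ with $f(q')\in R(f(q))$ for every $q'\in D_n\cap I_q$; and (ii) a queue of ``openness obligations'' of the form ``insert in $(q-\delta,q+\delta)$ a point with $f$-value $w$'', one for each triple $(q,w,\delta)$ with $q\in D_n$, $w\in R(f(q))$, and $\delta$ positive rational. At each stage I would dovetail three tasks: (a) define $f$ at $q_n$ compatibly with~(i); (b) insert a new rational (if needed) mapping to $w_n$; (c) discharge the next pending obligation by inserting a new rational in the specified interval with the specified value. Density and unboundedness of $\mathbf Q$ ensure that between any two already-placed rationals there is always room to insert fresh ones, and reflexivity and transitivity of $R$ guarantee that new assignments remain consistent with (i).

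Once $f$ is totally defined on $\mathbf Q$, I would verify: surjectivity by (b); continuity into the Alexandroff topology on $(W,R)$ by (i), since $R(w)$ is the least open neighborhood of $w$, so $I_q$ is mapped into $R(f(q))$; and openness by (ii), since for every $q$ and every rational interval $I\ni q$ the set $f(I)$ contains $R(f(q))$ and hence is an $R$-upset, which suffices by the basic characterization of open maps into an Alexandroff space.

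The main obstacle, and the heart of the construction, is the bookkeeping needed to dovetail the three countable streams of tasks while preserving (i): each new assignment spawns infinitely many new obligations and may force previously chosen intervals $I_{q'}$ to shrink. The S4 hypothesis simplifies matters substantially compared with the K4 setting of \cite[Lem.~3.1]{BLB12a}, because clusters of $R$ are collapsed by reflexivity and one need not separately manage irreflexive jumps within a cluster, so the required adaptation amounts to deleting the cluster-management machinery from the cited construction.
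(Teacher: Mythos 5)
Your overall strategy is viable and, at its combinatorial core, matches the paper's: every point labelled $w$ must be surrounded only by points labelled from $R(w)$ (continuity), with each $v\in R(w)$ occurring arbitrarily close to it (openness), transitivity making the local constraints cohere and reflexivity plus rootedness giving surjectivity. The difference is organizational: the paper builds a \emph{tree} of nested intervals (geometrically, via triangles whose hypotenuses converge to the parent point from both sides), so that the continuity neighborhood of a point is fixed once and for all at the moment the point is created --- every later point inserted into it is a descendant and automatically carries a value in $R(w)$ --- and then identifies the resulting countable dense unbounded order with $\mathbf Q$ by Cantor's theorem. You instead work on a fixed copy of $\mathbf Q$ with a dovetailed queue, and this is where your sketch has a genuine unresolved gap rather than a routine omission.

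Concretely: your invariant (i) only controls $f$ on the finite set $D_n$, and you explicitly allow the witnesses $I_q$ to shrink as the construction proceeds. For continuity of the \emph{final} map you need, for each $q$, a single open interval $J\ni q$ with $f(J)\subseteq R(f(q))$, i.e.\ each $I_q$ must be shrunk only finitely often and then respected by \emph{all} subsequent insertions. Nothing in the sketch guarantees this. The pressure to shrink is real: when you discharge an obligation by inserting $q''$ near $q$ with value $w\in R(f(q))$, transitivity protects you against any $I_{q'}$ that contains $q$ itself, but an interval $I_{q'}$ with an endpoint strictly between $q$ and $q''$ can contain $q''$ without containing $q$, and then $w\in R(f(q'))$ is not guaranteed; avoiding all such intervals may be impossible without shrinking them, and since each point generates infinitely many obligations clustering at $q$, a fixed $I_{q'}$ abutting $q$ is threatened infinitely often. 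To close the gap you need an extra structural invariant --- e.g.\ that the assigned intervals form a laminar (nested-or-disjoint) family mirroring the order in which points are created, with each new point receiving its own subinterval disjoint from all non-ancestral intervals --- at which point you have essentially reconstructed the tree-of-intervals construction that the paper imports from \cite{BLB12a}. Your closing remark about the adaptation from that reference is also slightly off: the issue there is not cluster management but that reflexivity makes the recursion non-terminating, which is precisely what makes the resulting order dense and unbounded, hence isomorphic to $\mathbf Q$ rather than to a proper subspace.
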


\begin{proof}
(Sketch) Let $\F=(W,R)$ be a countable rooted {\bf S4}-frame. We briefly describe the recursive construction from \cite{BLB12a}. Since $\F$ is reflexive, the construction yields a homeomorphic copy $X$ of $\mathbf Q$ (rather than of a subspace of $\mathbf Q$, as happens in \cite{BLB12a}) and an onto interior map $f:X\to W$.

Let $l$ be a (horizontal) line in the plane and let $P$ be the open lower half plane below $l$. For each $p\in P$, consider the right isosceles triangle in $P\cup l$ such that the vertex at the right angle is $p$ and the hypotenuse lies along $l$. Viewing the hypotenuse as a closed interval in $l$ gives a bijective correspondence between $P$ and the closed (non-trivial) intervals in $l$.

We start our construction with any fixed $p_0\in P$ together with its corresponding triangle. Orthogonally project $p_0$ to the point $l(p_0)$ in $l$. Using successive triangles we now build two sequences (in $l$) converging to $l(p_0)$ (one increasing and one decreasing). Figure~\ref{FigDefining f from X to F} demonstrates this recursive step in which these sequences are built (notice the orthogonal projection of the vertices into $l$). Since $\F$ is reflexive, this recursive process does not terminate (unlike the setting of \cite{BLB12a}). Let $X$ be the set of points in $l$ that are projections of vertices. Induce an ordering of $X$ by restricting the ordering of $l$ (which one may now wish to view as $\mathbf R$). Since $\F$ is reflexive, $X$ is a countable dense linear ordering without endpoints. By Cantor's theorem (see, e.g., \cite[p.~217,~Thm.~2]{KM76}), $X$ is order-isomorphic to $\mathbf Q$, and hence when equipped with the interval topology, $X$ is homeomorphic to $\mathbf Q$. We now define $f:X\to W$.

It is convenient to identify $l(p)$ in $X$ with the point $p\in P$. Set $f(p_0)$ to be a root of $\F$. Assuming that $f(p)=w$, the vertices of the `next' triangles (shown in the dashed box in the picture below) are mapped onto $R(w)$ so that for each $v\in R(w)$ the set $f^{-1}(v)$ is infinite. This can be achieved by utilizing that $R(w)$ is countable and that there is a sequence $\theta:\omega\to\omega$ such that $\theta^{-1}(n)$ is infinite for each $n\in\omega$. It follows from \cite[Lem.~3.1]{BLB12a} that $f:X\to W$ is an onto interior map. Since $X$ and $\mathbf Q$ are homeomorphic, we conclude that $\F$ is an interior image of $\mathbf Q$.
\begin{figure}[h]
\[
\begin{picture}(320,200)(0,0)
\put(42,122){\makebox(0,0){$\bullet$}}
\put(42,162){\makebox(0,0){{\tiny$\bullet$}}}
\put(102,142){\makebox(0,0){$\bullet$}}
\put(102,162){\makebox(0,0){{\tiny$\bullet$}}}
\put(132,152){\makebox(0,0){$\bullet$}}
\put(132,162){\makebox(0,0){{\tiny$\bullet$}}}
\put(162,162){\makebox(0,0){{\tiny$\bullet$}}}
\put(158,170){$l(p)$}
\put(162,2){\makebox(0,0){$\bullet$}}
\put(150,0){$p$}
\put(170,0){$\longrightarrow w$}
\put(192,152){\makebox(0,0){$\bullet$}}
\put(192,162){\makebox(0,0){{\tiny$\bullet$}}}
\put(222,142){\makebox(0,0){$\bullet$}}
\put(222,162){\makebox(0,0){{\tiny$\bullet$}}}
\put(282,122){\makebox(0,0){$\bullet$}}
\put(282,162){\makebox(0,0){{\tiny$\bullet$}}}
\put(144,153){$\ldots$}
\put(168,153){$\ldots$}
\put(164,162){\vector(-1,0){174}}
\put(-18,160){$l$}
\put(-18,80){$P$}
\put(164,162){\vector(1,0){170}}
\put(162,2){\line(-1,1){160}}
\put(162,2){\line(1,1){160}}
\put(42,122){\line(1,1){40}}
\put(102,142){\line(-1,1){20}}
\put(102,142){\line(1,1){20}}
\put(222,142){\line(-1,1){20}}
\put(222,142){\line(1,1){20}}
\put(282,122){\line(-1,1){40}}
\put(132,152){\line(-1,1){10}}
\put(132,152){\line(1,1){10}}
\put(192,152){\line(-1,1){10}}
\put(192,152){\line(1,1){10}}
\put(37,117){\dashbox{2}(250,40){}}
\put(295,115){$\longrightarrow R(w)$}
\end{picture}
\]
\caption{Defining $X$ and $f:X\to \F$}
\label{FigDefining f from X to F}
\end{figure}
\end{proof}

\begin{lemma}\label{RefutingOneFormInGenTopQ}
Let $L$ be a logic above ${\bf S4}$. If $\varphi\notin L$, then there is a general space over $\mathbf Q$ validating $L$ and refuting $\varphi$.
\end{lemma}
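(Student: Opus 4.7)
The plan is to combine the countable general frame property with the fact that every countable rooted ${\bf S4}$-frame is an interior image of $\mathbf Q$. First, since $L$ extends ${\bf S4}$ and $\varphi\notin L$, algebraic completeness together with Stone duality for closure algebras (recall that a descriptive general frame validates ${\bf S4}$ iff its accessibility relation is a quasi-order) yields a descriptive ${\bf S4}$-frame $\F=(W,R,\mathcal P)$ for $L$ that refutes $\varphi$; so I fix a valuation $\nu$ on $\F$ and a point $w_0\in W$ with $w_0\notin\nu(\varphi)$.

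Next I would run the selection procedure of Theorem~\ref{CGFP} starting from $V_0=\{w_0\}$ to extract a countable general frame $\G=(V,S,\mathcal Q)$ for $L$ that still refutes $\varphi$ at $w_0$, where $S$ is the restriction of $R$ to $V$, $\mu$ is the restriction of $\nu$ to $V$, and $\mathcal Q=\{\mu(\psi):\psi\in\mathfrak{Form}\}$. Since $R$ is a quasi-order, so is $S$; and because each $V_{n+1}$ is chosen inside $R(V_n)$, transitivity of $R$ forces $V\subseteq R(w_0)$, while reflexivity gives $w_0\in R(w_0)$. Consequently $w_0$ is a root of $(V,S)$, so $(V,S)$ is a countable rooted ${\bf S4}$-frame.

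Now I would invoke Lemma~\ref{InteriorImageOfQ} to obtain an onto interior map $f:\mathbf Q\to(V,\tau_S)$, where $\tau_S$ is the Alexandroff topology on $V$. In $\tau_S$ the closure operator coincides pointwise with $S^{-1}$, so the algebra $\mathcal Q$, being closed under $S^{-1}$, is also a subalgebra of the closure algebra $(V,\tau_S)^+$. Hence $\Y=((V,\tau_S),\mathcal Q)$ is a genuine general space, and because the underlying closure algebra of $\Y$ is literally the same modal algebra as $\mathcal Q$ viewed inside $\G$, one has $L(\Y)=L(\G)$. Finally, Lemma~\ref{GenTopFromGFAndIntMap} applied to $f$ delivers the desired general space $\X=(\mathbf Q,\mathcal R)$ over $\mathbf Q$, with $\mathcal R=\{f^{-1}(A):A\in\mathcal Q\}$ and $L(\X)=L(\Y)=L(\G)$. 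Since $\G$ validates $L$ and refutes $\varphi$, so does $\X$.

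I do not expect any serious obstacle; everything assembles from results already in place. The only mildly delicate point is arranging \emph{rootedness} of the countable frame extracted by the CGFP, since Lemma~\ref{InteriorImageOfQ} applies only to rooted ${\bf S4}$-frames. Seeding the selection procedure with the single refuting point $w_0$, combined with reflexivity and transitivity of $R$, is precisely what confines $V$ to $R(w_0)$ and supplies the needed root.
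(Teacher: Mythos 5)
Your proposal is correct and follows essentially the same route as the paper: extract a countable rooted general frame via the CGFP selection procedure seeded at the refuting point, apply Lemma~\ref{InteriorImageOfQ} to realize it as an interior image of $\mathbf Q$, and transfer the algebra of admissible sets via Lemma~\ref{GenTopFromGFAndIntMap}. Your explicit verification that $V\subseteq R(w_0)$ (hence rootedness) is exactly the point the paper leaves implicit when it says the rooted countable frame ``follows from the proof of Theorem~\ref{CGFP}.''
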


\begin{proof}
It follows from the proof of Theorem~\ref{CGFP} that there is a countable rooted general frame $\mathfrak F=(W,R,\mathcal P)$ for $L$ that refutes $\varphi$. Lemma~\ref{InteriorImageOfQ} gives an onto interior map $f:\mathbf Q\rightarrow W$. Set $\mathcal S=\{f^{-1}(A):A\in\mathcal P\}$. By Lemma~\ref{GenTopFromGFAndIntMap}, $(\mathbf Q,\mathcal S)$ is a general space for $L$ that refutes $\varphi$.
\end{proof}

We are ready to prove our first general completeness result for logics above {\bf S4}. For a closure algebra $\mathfrak A$, let ${\bf S}(\mathfrak A)$ be the collection of all subalgebras of $\mathfrak A$.

\begin{theorem}\label{OneLogicOneGenTopOnQ}
Let $L$ be a normal modal logic. The following are equivalent.
\begin{enumerate}
\item $L$ is a logic above {\bf S4}.
\item There is a general space over $\mathbf Q$ whose logic is $L$.
\item There is $\mathfrak A\in{\bf S}(\mathbf Q^+)$ such that $L=L(\mathfrak A)$.
\end{enumerate}
\end{theorem}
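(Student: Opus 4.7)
The plan is to prove the cycle $(3) \Rightarrow (1) \Rightarrow (2) \Rightarrow (3)$. The equivalence $(2) \Leftrightarrow (3)$ is essentially just unpacking the definition: a general space over $\mathbf Q$ is, by definition, a pair $(\mathbf Q,\mathcal P)$ with $\mathcal P \in \mathbf S(\mathbf Q^+)$, and its logic is $L(\mathcal P)$. For $(3) \Rightarrow (1)$, note that $\mathbf Q^+$ is a closure algebra, and the defining inequalities $a \leq \Diamond a$ and $\Diamond \Diamond a \leq \Diamond a$ are equations preserved under subalgebras; hence any $\mathfrak A \in \mathbf S(\mathbf Q^+)$ is itself a closure algebra and validates $\mathbf{S4}$, so $L(\mathfrak A) \supseteq \mathbf{S4}$.

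The real content is in $(1) \Rightarrow (2)$. Assume $L$ is a (consistent) logic above $\mathbf{S4}$. Since $\mathfrak{Form}$ is countable, I enumerate the non-theorems of $L$ as $\varphi_1, \varphi_2, \ldots$. For each $n$, Lemma~\ref{RefutingOneFormInGenTopQ} produces a general space $\X_n = (\mathbf Q, \mathcal S_n)$ over $\mathbf Q$ validating $L$ and refuting $\varphi_n$. I then let $\X$ be the sum of the $\X_n$ in the sense defined at the end of Section~\ref{SecGenTopSem}. Its underlying topological space is $\bigsqcup_{n \geq 1} \mathbf Q$, a countable metrizable space without isolated points, which by Sierpi\'nski's classical topological characterization of the rationals is homeomorphic to $\mathbf Q$. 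Pushing the algebra of $\X$ across such a homeomorphism (which is an interior map, so Lemma~\ref{GenTopFromGFAndIntMap} applies) produces a general space $(\mathbf Q, \mathcal S)$ over $\mathbf Q$ with the same logic as $\X$, namely $\bigcap_{n} L(\X_n)$. Since each $L(\X_n) \supseteq L$, this intersection contains $L$; conversely, every non-theorem $\varphi$ of $L$ appears as some $\varphi_n$ and is refuted by $\X_n$, hence by $\X$. Therefore $L(\mathbf Q, \mathcal S) = L$, as required.

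The substantive step has been packaged into Lemma~\ref{RefutingOneFormInGenTopQ}, which itself rests on the CGFP (Theorem~\ref{CGFP}) together with the interior-image Lemma~\ref{InteriorImageOfQ} that every countable rooted $\mathbf{S4}$-frame is an interior image of $\mathbf Q$. Beyond this, the only non-obvious ingredient is the observation that a countable topological sum of copies of $\mathbf Q$ is again $\mathbf Q$, supplied by Sierpi\'nski's theorem. I expect this same pattern — per-formula refutation on a countable rooted general frame, transferred to $\mathbf Q$ via an interior image, then glued together via a topological sum and a characterization of the ambient space — to be the workhorse driving the analogous completeness theorems for $\mathbf C$, $\T_2$, $\mathbf L_2$, and $\mathbf R$ announced in the introduction, with the main added difficulty in those later theorems being the construction of the requisite interior maps into the target space.
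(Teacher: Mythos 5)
Your proposal is correct and follows essentially the same route as the paper: (1)$\Rightarrow$(2) via Lemma~\ref{RefutingOneFormInGenTopQ} applied to an enumeration of the non-theorems, followed by taking the sum of the resulting general spaces and using that a countable sum of copies of $\mathbf Q$ is homeomorphic to $\mathbf Q$ (which the paper asserts without naming Sierpi\'nski's characterization), while (2)$\Rightarrow$(3) and (3)$\Rightarrow$(1) are the same definitional unpackings. Your parenthetical restriction to consistent $L$ is a reasonable (and slightly more careful) reading of the degenerate case, and your closing remark correctly anticipates the structure of the later completeness theorems.
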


\begin{proof}
(1)$\Rightarrow$(2): Let $\{\varphi_n:n\in\omega\}$ be an enumeration of the non-theorems of $L$. By Lemma~\ref{RefutingOneFormInGenTopQ}, for each $n\in\omega$, there is a general space over $\mathbf Q$ validating $L$ and refuting $\varphi_n$. Let $\X_n=(X_n,\mathcal P_n)$ be a copy of this general space, and without loss of generality we assume that $X_n\cap X_m=\varnothing$ whenever $n\ne m$. Let $\X=(X,\mathcal P)$ be the sum of the $\X_n$. Then $\X$ is a general space. As sums preserve validity, $\X$ validates $L$. Because $\varphi_n$ is refuted in $\X_n$, it is clear that $\varphi_n$ is refuted in the sum $\X$. Therefore, $L=L(\X)$. Since the countable sum of $\mathbf Q$ is homeomorphic to $\mathbf Q$, we have that $X$ is homeomorphic to $\mathbf Q$. Thus, up to homeomorphism, $\X$ is a general space over $\mathbf Q$. Consequently, $L$ is the logic of a general space over $\mathbf Q$.

(2)$\Rightarrow$(3): If $L=L(\mathbf Q,\mathcal P)$, then $L=L(\mathcal P)$ and $\mathcal P$ is a subalgebra of $\mathbf Q^+$.

(3)$\Rightarrow$(1): This is clear since a subalgebra of $\mathbf Q^+$ is a closure algebra and the logic of a closure algebra is a logic above {\bf S4}.
\end{proof}

A key ingredient in the proof of Theorem~\ref{OneLogicOneGenTopOnQ} is the interior mapping provided by Lemma~\ref{InteriorImageOfQ}. An alternative proof, sketched in Section~\ref{BinaryTreeSection}, can be realized by an interior map which factors through the infinite binary tree. With this in mind, we ask whether Theorem~\ref{OneLogicOneGenTopOnQ} holds for the infinite binary tree. Section~\ref{BinaryTreeSection} is dedicated to answering this question.

\section{Well-connected logics and completeness for general frames over $\T_2$}\label{BinaryTreeSection}

For each nonzero $\alpha\in\omega+1$, we view the infinite $\alpha$-ary tree $T_\alpha$ as the set of finite $\alpha$-valued sequences, including the empty sequence. Thus, if $\downn=\{m\in\omega\ : m\leq n\}$, then
$$
T_\alpha = \left\{a:S\rightarrow\alpha\ :\ S=\varnothing \mbox{ or } S=\downn \mbox{ for some } n\in\omega\right\}.
$$
We also consider the infinite $\alpha$-ary tree with limits $L_\alpha$ by setting
$$
L_\alpha = \left\{a:S\rightarrow\alpha\ :\ S=\varnothing, \ S=\downn \mbox{ for some } n\in\omega, \mbox{ or } S=\omega\right\}.
$$
That is, $L_\alpha = T_\alpha\cup\left\{a:\omega\rightarrow\alpha\right\}$. Define a partial order on $L_\alpha$ by
$$
a\le b \text{ iff } \mathrm{dom}(a)\subseteq\mathrm{dom}(b) \text{ and } a(n)=b(n) \text{ for all } n\in\mathrm{dom}(a).
$$
Since $T_\alpha\subseteq L_\alpha$, we also use $\le$ to denote the restriction of this order to $T_\alpha$. We let $\T_\alpha=(T_\alpha,\le)$ and $\LL_\alpha=(L_\alpha,\le)$. We call $\T_\alpha$ the {\em infinite $\alpha$-ary tree}, and we call $\LL_\alpha$ the {\em infinite $\alpha$-ary tree with limits}.

\begin{remark}
{\em
\begin{enumerate}
\item[]
\item The empty sequence, i.e.~the sequence whose domain is empty, is the root of both $\T_\alpha$ and $\LL_\alpha$.
\item In $\LL_\alpha$ each infinite sequence is a leaf.
\item $\T_\alpha$ has no leaves.
\item Each $T_\alpha$ is countable.
\item If $\alpha>1$, then $L_\alpha$ is uncountable.
\item $L_\alpha-T_\alpha$ consists of exactly the infinite $\alpha$-valued sequences.
\end{enumerate}
}
\end{remark}

In this section we are primarily interested in $\T_2$, although our results hold true for any $\alpha\ge 2$. Let $a\leq b$ in $\T_2$. Suppose that $\mathrm{dom}(b)$ has exactly one more element than $\mathrm{dom}(a)$. Call $b$ the {\em left child} of $a$ if the last occurring value in $b$ is $0$, and the {\em right child} of $a$ if the last occurring value in $b$ is $1$. In these cases, we write $b=l(a)$ and $b=r(a)$, respectively. We also put $l^0(a)=a$ and $l^{k+1}(a)=l(l^k(a))$ for $k\in\omega$, as well as $r^0(a)=a$ and $r^{k+1}(a)=r(r^k(a))$.

The next lemma is well known. The finite version of it was proved independently by Gabbay and van Benthem (see, e.g., \cite{gol:dio80}). The countable version of it can be found in Kremer \cite{Kre13}. We give our own proof of the lemma since the technique is useful in later considerations. It is based on the $t$-comb labeling of \cite[Sec.~4]{ABB03}. With careful unpacking, one may realize that our proof is a condensed version of Kremer's proof.

\begin{lemma}\label{ImagesOfT2}
Any countable rooted {\bf S4}-frame $\mathfrak F$ is a p-morphic image of $\T_2$.
\end{lemma}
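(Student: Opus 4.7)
The plan is to adapt the $t$-comb labeling of \cite[Sec.~4]{ABB03} to the reflexive setting and construct a surjective p-morphism $f:T_2\to W$ by a recursive ``combing'' of $\T_2$. Let $\mathfrak F=(W,R)$ be a countable rooted {\bf S4}-frame with a root $r_0$. For each $w\in W$, fix an enumeration $(v^w_0,v^w_1,v^w_2,\ldots)$ of $R(w)$ in which every element of $R(w)$ occurs infinitely often; such an enumeration exists because $R(w)$ is nonempty and countable (precompose a surjective enumeration of $R(w)$ with a sequence $\theta:\omega\to\omega$ taking each value infinitely often, exactly as in the proof of Lemma~\ref{InteriorImageOfQ}).

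I will designate certain nodes of $T_2$ as \emph{heads} and assign them values recursively. Declare $\varnothing$ a head and set $f(\varnothing)=r_0$. Whenever a head $a$ has been assigned the value $f(a)=w$, set
\[
f(l^n(a))=w \quad\text{and}\quad f(r(l^n(a)))=v^w_n \quad\text{for every } n\in\omega,
\]
and declare each $r(l^n(a))$ a new head, to which the recursive rule is then applied. Every $b\in T_2$ has a unique representation $b=l^k(c)$ where $k\in\omega$ and $c$ is either $\varnothing$ or a right child (obtained from $b$ by stripping trailing $l$'s from the sequence coding $b$), so $b$ has a unique head ancestor and $f$ is well-defined on all of $T_2$.

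Next I check the p-morphism conditions. \emph{Monotonicity:} it suffices to verify $f(a)\,R\,f(b)$ for immediate children $b$ of $a$. If $b=l(a)$, then $f(a)$ and $f(b)$ share the same head-value, so $f(a)\,R\,f(b)$ by reflexivity. If $b=r(a)$ with $a=l^n(c)$ for the head $c$ of $a$, then $f(a)=f(c)=w$ and $f(b)=v^w_n\in R(w)$, yielding $f(a)\,R\,f(b)$; transitivity of $R$ then handles arbitrary $a\le b$. \emph{Back condition:} given $a$ with $f(a)=w$ and $w\,R\,v$, let $c$ be the head of $a$, so $a=l^k(c)$ and $f(c)=w$; since $v$ appears as $v^w_n$ for infinitely many $n$, pick such an $n\ge k$ and take $b=r(l^n(c))$. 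Then $b\ge a$ and $f(b)=v$. \emph{Surjectivity:} apply the back condition with $a=\varnothing$, using $R(r_0)=W$.

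\textbf{Anticipated obstacle.} The only real subtleties are combinatorial: justifying the unique-head-ancestor decomposition $b=l^k(c)$ (so that $f$ is unambiguously defined over the whole tree) and insisting on the infinite-multiplicity condition on each enumeration of $R(w)$. The latter is what allows the back condition to succeed for a node $a$ lying arbitrarily deep down a left spine below its head $c$, rather than only at the head itself; without it, the values $v^w_0,\dots,v^w_{k-1}$ ``used up'' above $a$ might never reappear among descendants of $a$. Neither point involves genuine difficulty beyond careful notation.
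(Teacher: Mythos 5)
Your proof is correct and follows essentially the same $t$-comb labeling as the paper's own argument. The one place you go beyond the paper is the insistence that each enumeration of $R(w)$ hit every element infinitely often, and this is a genuine improvement rather than a cosmetic one: the paper's verification of the back condition at a node $a=l^k(c)$ lying strictly below its head $c$ tacitly uses $L(r(l^n(a)))=\theta_{L(a)}(n)$, which holds only when $a$ is itself a head (for non-heads one gets $\theta_{L(a)}(n+k)$), and with a merely onto enumeration the tail $\{\theta_w(j):j\ge k\}$ can fail to reach some $v\in R(w)$ from deep in a left spine, exactly the obstacle you identified.
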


\begin{proof}
Let $\mathfrak F=(W,R)$ be a countable rooted {\bf S4}-frame. For each $w\in W$ let $\theta_w:\omega\rightarrow R(w)$ be an onto map. Label the elements of $T_2$ as follows. Denote the label of $a\in T_2$ by $L(a)$. Label the root of $\T_2$ by a root of $\mathfrak F$. Suppose $L(a)=w$. For all $n\in\omega$ label $l^n(a)$ by $w$ and $r(l^n(a))$ by $\theta_w(n)$ provided such elements of $T_2$ are not yet labeled; see Figure~\ref{figCombWithLabels}.

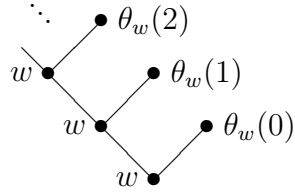
\begin{figure}[h]
\begin{center}
\begin{picture}(100,100)(0,0)
\put(70,0){\makebox(0,0){$\bullet$}}
\put(70,0){\line(-1,1){50}}
\put(70,0){\line(1,1){20}}
\put(90,20){\makebox(0,0){$\bullet$}}
\put(50,20){\makebox(0,0){$\bullet$}}
\put(50,20){\line(1,1){20}}
\put(70,40){\makebox(0,0){$\bullet$}}
\put(30,40){\makebox(0,0){$\bullet$}}
\put(30,40){\line(1,1){20}}
\put(50,60){\makebox(0,0){$\bullet$}}
\put(60,0){\makebox(0,0){$w$}}
\put(40,20){\makebox(0,0){$w$}}
\put(20,40){\makebox(0,0){$w$}}
\put(110,20){\makebox(0,0){$\theta_w(0)$}}
\put(90,40){\makebox(0,0){$\theta_w(1)$}}
\put(70,60){\makebox(0,0){$\theta_w(2)$}}
\put(30,60){\makebox(0,0){.}}
\put(27,63){\makebox(0,0){.}}
\put(24,66){\makebox(0,0){.}}
\end{picture}
\end{center}
\caption{Labeling scheme for a $t$-comb}
\label{figCombWithLabels}
\end{figure}

This labeling induces an onto p-morphism, namely $L:T_2\rightarrow W$. To see that $L$ is a p-morphism, observe that $L(l(a)),L(r(a))\in R(L(a))$ for each $a\in T_2$. Therefore, $a\le b$ in $\T_2$ implies $L(a)RL(b)$ in $\mathfrak F$. Suppose $L(a)Rw$. Then $w\in R(L(a))$ and there is $n\in\omega$ such that $\theta_{L(a)}(n)=w$. Thus, $a\le r(l^n(a))$ and $L(r(l^n(a)))=w$. This shows that $L$ is a p-morphism. That $L$ is onto is obvious because if $w$ is a root of $\mathfrak F$, then $W=R(w)$ and $\theta_w:\omega\to R(w)$ is onto. Consequently, $\mathfrak F$ is a p-morphic image of $\T_2$.
\end{proof}

\begin{remark}
{\em
As promised at the end of Section~\ref{SecRationals}, we give an alternative proof of Lemma~\ref{InteriorImageOfQ}. Let $\mathfrak F=(W,R)$ be a countable rooted {\bf S4}-frame. By Lemma~\ref{ImagesOfT2}, there is an onto p-morphism $f:T_2\rightarrow W$. By \cite[Claim 2.6]{BBCS06}, there is an onto interior map $g:\mathbf Q\rightarrow T_2$. Thus, the composition $f\circ g:\mathbf Q\rightarrow W$ is an onto interior map that factors through $T_2$.
}
\end{remark}

We now show that an analogue of Theorem~\ref{OneLogicOneGenTopOnQ} does not hold for $\T_2$. For this we recall the notion of a connected logic from \cite{BG11}. Let $\mathfrak A=(A,\Diamond)$ be a closure algebra. Call $a\in A$ {\em clopen} if $\Box a=a=\Diamond a$ (that is, $a$ is both open and closed). We say $\mathfrak A$ is {\em connected} provided the only clopen elements are $0$ and $1$, and that a logic $L$ above {\bf S4} is {\em connected} provided $L=L(\mathfrak A)$ for some connected closure algebra $\mathfrak A$.

For a topological space $X$, it is clear that $X^+$ is connected iff $X$ is connected. For an {\bf S4}-frame $\mathfrak F$, it is also well known that $\mathfrak F^+$ is connected iff $\mathfrak F$ is path-connected (see, e.g., \cite[Lem.~3.4]{BG11}). Because $\T_2$ is rooted, $\T_2$ is path-connected. Therefore, $\T_2^+$ is a connected closure algebra, and hence each subalgebra of $\T_2^+$ is also connected. Thus, the logic of any subalgebra of $\T_2^+$ is a connected logic (we will strengthen this result at the end of this section). Since there exist logics above {\bf S4} that are not connected \cite[p.~306]{BG11}, it follows that subalgebras of $\T_2^+$ do not give rise to all logics above {\bf S4}. Therefore, the direct analogue of Theorem~\ref{OneLogicOneGenTopOnQ} obtained by substituting $\T_2$ for $\mathbf Q$ does not hold. But there is a weaker analogue that does hold for $\T_2$.

\begin{lemma}\label{RefuteOneFormInGenFrmOnT2}
Let $L$ be a logic above {\bf S4}. If $\varphi\notin L$, then there is a general frame over $\T_2$ validating $L$ and refuting $\varphi$.
\end{lemma}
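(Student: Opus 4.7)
The plan is to mirror the proof of Lemma~\ref{RefutingOneFormInGenTopQ}, replacing the interior map from $\mathbf Q$ supplied by Lemma~\ref{InteriorImageOfQ} with the p-morphism from $\T_2$ supplied by Lemma~\ref{ImagesOfT2}, and then pulling back the algebra of possible values along this p-morphism to land inside $\T_2^+$.

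First, since $\varphi\notin L$, I would invoke Theorem~\ref{CGFP} to produce a countable general frame $\F=(W,R,\mathcal P)$ for $L$ that refutes $\varphi$. Inspecting the selection procedure in the proof of Theorem~\ref{CGFP}, refutation occurs at the distinguished point $w$ that seeds $V_0=\{w\}$, and every element of $V=\bigcup_n V_n$ lies in $R(w)$ because each $V_{n+1}$ consists of $R$-successors of points in $V_n$ and $R$ is reflexive and transitive ($L\supseteq\mathbf{S4}$). Thus I may assume without loss of generality that $\F$ is rooted with root $w$.

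Second, Lemma~\ref{ImagesOfT2} supplies an onto p-morphism $f:T_2\to W$. Set
\[
\mathcal S=\{f^{-1}(A):A\in\mathcal P\}\subseteq\wp(T_2),
\]
and let $\G=(T_2,\le,\mathcal S)$. Since $f:\T_2\to(W,R)$ is an onto p-morphism, $f^{-1}:(W,R)^+\to\T_2^+$ is a closure-algebra embedding (this is the general-frame analogue of Lemma~\ref{GenTopFromGFAndIntMap}, and is among the well-known facts recalled at the end of Section~\ref{SecPrelims}), so its restriction to $\mathcal P$ is an isomorphism from $\mathcal P$ onto $\mathcal S$. In particular $\mathcal S$ is a subalgebra of $\T_2^+$, so $\G$ is a general frame over $\T_2$, and $L(\G)=L(\mathcal S)=L(\mathcal P)=L(\F)$. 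It follows at once that $\G\models L$ and $\G\not\models\varphi$, which is the conclusion.

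There is no serious obstacle here; the only point that needs attention is the passage from the possibly non-rooted general frame produced by a blackbox application of the CGFP to a rooted one, and as noted above this is immediate from the structure of the selection procedure. The rest is the routine observation that an onto p-morphism induces an embedding of the possible-values algebra in the opposite direction, which preserves the logic.
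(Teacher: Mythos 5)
Your proposal is correct and follows exactly the paper's route: the paper likewise extracts a countable \emph{rooted} general frame from the CGFP selection procedure (rootedness being immediate since $V\subseteq R(w)$ by transitivity), applies Lemma~\ref{ImagesOfT2} to get an onto p-morphism from $\T_2$, and pulls back $\mathcal P$ along $f^{-1}$ to obtain a subalgebra of $\T_2^+$ with the same logic. No gaps.
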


\begin{proof}
The proof is the same as the proof of Lemma~\ref{RefutingOneFormInGenTopQ}, with the only differences being $\mathbf Q$ should be replaced with $\T_2$ and Lemma~\ref{InteriorImageOfQ} should be replaced with Lemma~\ref{ImagesOfT2}.
\end{proof}

The following lemma is a weaker version for $\T_2$ of the fact that a countable sum of $\mathbf Q$ is homeomorphic to $\mathbf Q$.

\begin{lemma}\label{CountableDisUnionT2inItself}
A countable disjoint union of $\T_2$ is isomorphic to a generated subframe of $\T_2$.
\end{lemma}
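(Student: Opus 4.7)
The plan is to exhibit an explicit up-set $U$ of $\T_2$ that decomposes into countably many pairwise disjoint subtrees, each isomorphic to $\T_2$. Since generated subframes of an {\bf S4}-frame are exactly up-sets in its quasi-order, producing such a $U$ immediately yields the desired isomorphism.

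Concretely, I would take the nodes $a_n:=r(l^n(\varnothing))$ for $n\in\omega$, where $\varnothing$ denotes the root of $\T_2$. Viewing elements of $T_2$ as finite $\{0,1\}$-sequences, each $a_n$ is the sequence consisting of $n$ zeros followed by a single $1$. For $n\ne m$, the sequences $a_n$ and $a_m$ are incomparable in $\le$, because the shorter one ends in $1$ at a position where the longer one has $0$. Set $U_n:=\{b\in T_2:a_n\le b\}$ and $U:=\bigcup_{n\in\omega}U_n$.

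The three facts I need to verify are all essentially by inspection. First, $U$ is an $\le$-upset (a union of principal upsets), hence is the underlying set of a generated subframe of $\T_2$. Second, the $U_n$ are pairwise disjoint: any common element would have both $a_n$ and $a_m$ as prefixes, contradicting their incomparability. Third, each $U_n$ with its induced order is isomorphic to $\T_2$ via the map that strips the prefix $a_n$; this is an order-isomorphism between the full binary tree rooted at $a_n$ and the full binary tree rooted at $\varnothing$, i.e., $\T_2$ itself.

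Putting these pieces together, $U$ as a generated subframe of $\T_2$ is the disjoint union of the subframes $U_n\cong\T_2$ for $n\in\omega$, which is the statement of the lemma. There is no real obstacle here: the only subtlety is choosing an antichain $\{a_n\}_{n\in\omega}$ in $\T_2$ whose principal upsets exhaust a natural up-set, and the ``right spine'' antichain $\{0^n1:n\in\omega\}$ does the job without further bookkeeping.
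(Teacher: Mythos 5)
Your proposal is correct and is essentially the paper's own proof: the paper uses exactly the same antichain $a_n$ (the sequence of $n$ zeros followed by a $1$, i.e.~$r(l^n(\varnothing))$), observes that the principal upsets ${\uparrow}a_n$ are pairwise disjoint and that their union is a generated subframe, and exhibits the isomorphism ${\uparrow}a_n\cong\T_2$ via the prefix-shifting bijection. The only cosmetic difference is that the paper writes the isomorphism recursively from $\T_2$ onto ${\uparrow}a_n$, whereas you describe its inverse (stripping the prefix).
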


\begin{proof}
Define $a_n:{\downarrow}n\to 2$ by
\[
a_n(k)=\left\{\begin{array}{ll}
0 & k < n,\\
1 & k=n.
\end{array}\right.
\]
Then $\{a_n:n\in\omega\}\subset T_2$. Clearly $\bigcup\nolimits_{n\in\omega}{\uparrow}a_n$ is a generated subframe of $\T_2$ and the family $\{{\uparrow}a_n:n\in\omega\}$ is pairwise disjoint; see Figure~\ref{figCombForDisjointUnion}. Furthermore, the generated subframe of $\T_2$ whose underlying set is ${\uparrow}a_n$ is isomorphic to $\T_2$. To see this observe that the following recursively defined function is a bijective p-morphism from $\T_2$ onto ${\uparrow}a_n$:
$$f(a)=\left\{\begin{array}{ll}
a_n & a\text{ is the root of }\T_2,\\
l(f(b)) & a=l(b),\\
r(f(b)) & a=r(b).
\end{array}\right.$$
\begin{figure}[h]
\begin{center}
\begin{picture}(100,100)(0,0)
\put(70,0){\makebox(0,0){$\bullet$}}
\put(70,0){\line(-1,1){50}}
\put(70,0){\line(1,1){20}}
\put(90,20){\makebox(0,0){$\bullet$}}
\put(94,12){\makebox(0,0){$a_0$}}
\put(50,20){\makebox(0,0){$\bullet$}}
\put(50,20){\line(1,1){20}}
\put(70,40){\makebox(0,0){$\bullet$}}
\put(74,32){\makebox(0,0){$a_1$}}
\put(30,40){\makebox(0,0){$\bullet$}}
\put(30,40){\line(1,1){20}}
\put(50,60){\makebox(0,0){$\bullet$}}
\put(54,52){\makebox(0,0){$a_2$}}
\put(90,20){\line(1,4){10}}
\put(90,20){\line(-1,4){10}}
\put(70,40){\line(1,4){10}}
\put(70,40){\line(-1,4){10}}
\put(50,60){\line(1,4){10}}
\put(50,60){\line(-1,4){10}}
\put(30,60){\makebox(0,0){.}}
\put(27,63){\makebox(0,0){.}}
\put(24,66){\makebox(0,0){.}}
\end{picture}
\end{center}
\caption{Depicting ${\uparrow}a_n$'s}\label{figCombForDisjointUnion}
\end{figure}
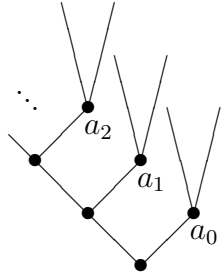
\end{proof}

Let $\mathfrak A$ be a closure algebra. We recall that ${\bf S}(\mathfrak A)$ is the collection of all subalgebras of $\mathfrak A$. We also let ${\bf H}(\mathfrak A)$ be the collection of all homomorphic images of $\mathfrak A$, and ${\bf SH}(\mathfrak A)$ be the collection of all subalgebras of homomorphic images of $\mathfrak A$.

\begin{theorem}\label{OneLogicOneGenSubInT2}
For a normal modal logic $L$, the following conditions are equivalent.
\begin{enumerate}
\item $L$ is a logic above {\bf S4}.
\item There is a general {\bf S4}-frame $\F=(W,R,\mathcal P)$ such that $(W,R)$ is a generated subframe of $\T_2$ and $L=L(\F)$.
\item There is a closure algebra $\mathfrak A\in{\bf SH}(\T_2^+)$ such that $L=L(\mathfrak A)$.
\end{enumerate}
\end{theorem}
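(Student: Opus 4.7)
The plan is to prove the cyclic chain $(1)\Rightarrow(2)\Rightarrow(3)\Rightarrow(1)$, mirroring the argument of Theorem~\ref{OneLogicOneGenTopOnQ} with $\mathbf Q$ replaced by $\T_2$ and with the self-absorption property supplied by Lemma~\ref{CountableDisUnionT2inItself} playing the role previously played by the fact that a countable sum of copies of $\mathbf Q$ is homeomorphic to $\mathbf Q$. The work is concentrated in $(1)\Rightarrow(2)$; the other two implications are essentially structural.

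For $(1)\Rightarrow(2)$, I would enumerate the non-theorems of $L$ as $\{\varphi_n:n\in\omega\}$ and, invoking Lemma~\ref{RefuteOneFormInGenFrmOnT2}, pick for each $n$ a general frame $\F_n=(T_2,\le,\mathcal P_n)$ over $\T_2$ such that $\F_n\models L$ and $\F_n\not\models\varphi_n$. Form the disjoint union $\F$ of (pairwise disjoint copies of) the $\F_n$. Since disjoint unions preserve validity and since $\varphi_n$ is refuted in the $n$th summand of $\F$, we obtain $L=L(\F)$. By Lemma~\ref{CountableDisUnionT2inItself}, the underlying Kripke frame of $\F$ --- a countable disjoint union of copies of $\T_2$ --- is isomorphic to the generated subframe $(W,R)$ of $\T_2$ with $W=\bigcup_{n\in\omega}{\uparrow}a_n$. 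Transferring the algebra of possible values of $\F$ along this isomorphism yields $\mathcal P\subseteq\wp(W)$ such that $(W,R,\mathcal P)$ is a general frame with logic $L$, as required.

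For $(2)\Rightarrow(3)$, suppose $\F=(W,R,\mathcal P)$ is a general frame with $L=L(\F)$ and $(W,R)$ a generated subframe of $\T_2$. As noted at the end of Section~\ref{SecPrelims}, inclusion of a generated subframe induces a surjective modal algebra homomorphism at the level of complex algebras, hence $(W,R)^+\in{\bf H}(\T_2^+)$. Since $\mathcal P$ is by definition a subalgebra of $(W,R)^+$, we get $\mathcal P\in{\bf SH}(\T_2^+)$, and $L(\mathcal P)=L(\F)=L$. Finally, $(3)\Rightarrow(1)$ is immediate since every closure algebra validates {\bf S4}, so its logic is a logic above {\bf S4}.

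I do not expect a substantial obstacle here, because all of the combinatorial content has already been packaged: Lemma~\ref{ImagesOfT2} (feeding Lemma~\ref{RefuteOneFormInGenFrmOnT2}) provides the $\T_2$-frames, Theorem~\ref{CGFP} guarantees we may work with countable general frames to begin with, and Lemma~\ref{CountableDisUnionT2inItself} absorbs countable disjoint unions back into a single generated subframe of $\T_2$. The most delicate bookkeeping point is simply to carry the collection of possible-value algebras $\mathcal P_n$ faithfully across the isomorphism of Lemma~\ref{CountableDisUnionT2inItself}; this is routine once one observes that the isomorphism is defined on underlying sets and pulls back the $\mathcal P_n$ to subsets of $\bigcup_{n\in\omega}{\uparrow}a_n$ whose union with respect to the disjoint union construction is again a subalgebra of $(W,R)^+$.
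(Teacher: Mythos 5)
Your proposal is correct and follows essentially the same route as the paper's proof: both establish $(1)\Rightarrow(2)$ by taking the disjoint union of the refutation frames supplied by Lemma~\ref{RefuteOneFormInGenFrmOnT2} and absorbing it into $\T_2$ via Lemma~\ref{CountableDisUnionT2inItself}, and both handle $(2)\Rightarrow(3)$ and $(3)\Rightarrow(1)$ by the same structural observations about generated subframes, subalgebras, and closure algebras. No gaps.
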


\begin{proof}
(1)$\Rightarrow$(2): Let $L$ be a logic above {\bf S4}, and let $\{\varphi_n:n\in\omega\}$ be an enumeration of the non-theorems of $L$. By Lemma~\ref{RefuteOneFormInGenFrmOnT2}, for each $n\in\omega$, there is a general frame over $\T_2$ validating $L$ and refuting $\varphi_n$. Let $\F_n=(W_n,\mathcal P_n)$ be a copy of this general frame, and without loss of generality we assume that $W_n\cap W_m=\varnothing$ whenever $n\ne m$. Let $\F=(W,R,\mathcal P)$ be the disjoint union of the $\F_n$. Because disjoint unions of general frames preserve validity, $\F$ is a general frame for $L$, and clearly $\F$ refutes each $\varphi_n$. Thus, $L=L(\F)$, and by Lemma~\ref{CountableDisUnionT2inItself}, $(W,R)$ is isomorphic to a generated subframe of $\T_2$.

(2)$\Rightarrow$(3): Since $(W,R)$ is a generated subframe of $\T_2$, we have $(W,R)^+\in{\bf H}(\T_2^+)$. As $\mathcal P$ is a subalgebra of $(W,R)^+$, we have $\mathcal P\in{\bf SH}(\T_2^+)$. Finally, $L=L(\F)$ yields $L=L(\mathcal P)$.

(3)$\Rightarrow$(1): This is clear since $\mathfrak A$ is a closure algebra and $L=L(\mathfrak A)$.
\end{proof}

The next natural question is to characterize those logics above {\bf S4} which arise from subalgebras of $\T_2^+$. We recall \cite[Def.~1.10]{MT44} that a closure algebra $\mathfrak A=(A,\Diamond)$ is {\em well-connected} if $\Diamond a\wedge\Diamond b=0$ implies $a=0$ or $b=0$. Equivalently, $\Box a \vee \Box b = 1$ implies $a=1$ or $b=1$. It is easy to see that a well-connected closure algebra is connected, and that a subalgebra of a well-connected closure algebra is also well-connected.

For an example of a connected closure algebra that is not well-connected, let $\F=(W,R)$ be a finite {\bf S4}-frame. Then $\F^+$ is connected iff $\F$ is path-connected and $\F^+$ is well-connected iff $\F$ is rooted (see, e.g., \cite[Sec.~2]{BG05a}). So a finite path-connected $\F$ that is not rooted gives rise to a connected closure algebra that is not well-connected.

\begin{definition}
{\em
We call a logic $L$ above {\bf S4} {\em well-connected} if $L=L(\mathfrak A)$ for some well-connected closure algebra $\mathfrak A$.
}
\end{definition}

It is easy to see that if $\F=(W,R)$ is a rooted {\bf S4}-frame, then $\F^+$ is a well-connected closure algebra. Therefore, since $\T_2$ is rooted, it follows that $\T_2^+$ is well-connected. Thus, each $\mathfrak A\in{\bf S}(\T_2^+)$ is well-connected. This implies that $L(\mathfrak A)$ is a well-connected logic above {\bf S4} for each $\mathfrak A\in{\bf S}(\T_2^+)$. To prove the converse, we need the following lemma.

\begin{lemma}\label{revariablizing}
Let $\{\varphi_n:n\in\omega\}$ be a set of formulas, $\F=(W,R)$ be a frame, and $w\in W$. Suppose that for each $n\in\omega$ there is a valuation $\nu_n$ on $\F$ such that $w\notin\nu_n(\varphi_n)$. Then there is a single valuation $\nu$ on $\F$ such that $w\notin\nu\left(\widehat{\varphi_n}\right)$ for each $n\in\omega$, where $\widehat{\varphi_n}$ is obtained from $\varphi_n$ via substitution involving only propositional letters.
\end{lemma}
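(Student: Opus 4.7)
The plan is a straightforward renaming-of-variables argument, exploiting the fact that $\mathrm{Prop}$ is countably infinite and each $\varphi_n$ uses only finitely many propositional letters.

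First, I would partition the countable set $\mathrm{Prop}=\{p_1,p_2,\dots\}$ into a family $\{Q_n : n\in\omega\}$ of pairwise disjoint infinite subsets, and for each $n\in\omega$ fix an injection $\sigma_n$ from the (finite) set of propositional letters occurring in $\varphi_n$ into $Q_n$. Then let $\widehat{\varphi_n}$ be the formula obtained by simultaneously substituting $\sigma_n(p)$ for each letter $p$ occurring in $\varphi_n$. By construction each $\widehat{\varphi_n}$ arises from $\varphi_n$ by a substitution that replaces propositional letters by propositional letters, and crucially the sets of letters appearing in the various $\widehat{\varphi_n}$'s are pairwise disjoint because the $Q_n$'s are.

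Next, I would define a single valuation $\nu$ on $\F$ by setting, for each $q\in Q_n$ that lies in the image of $\sigma_n$, $\nu(q)=\nu_n(\sigma_n^{-1}(q))$; any remaining propositional letters may be assigned an arbitrary value, say $\varnothing$. Since the $Q_n$'s are pairwise disjoint, no conflict arises in this definition.

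Finally, I would verify by a routine induction on the complexity of formulas that for each $n\in\omega$ and each subformula $\psi$ of $\varphi_n$, the value $\nu(\widehat{\psi})$ equals $\nu_n(\psi)$, where $\widehat{\psi}$ denotes the image of $\psi$ under the substitution $\sigma_n$. The base case is immediate from the definition of $\nu$, and the Boolean and modal induction steps are immediate because $\nu$ and $\nu_n$ agree on the relevant letters and the frame $\F$ is the same in both cases. Applying this with $\psi=\varphi_n$ gives $\nu(\widehat{\varphi_n})=\nu_n(\varphi_n)$, so $w\notin\nu(\widehat{\varphi_n})$ for every $n$, as required. There is no genuine obstacle here; the only point requiring care is the disjointness of the $Q_n$'s, which is precisely what guarantees that a single global valuation can simultaneously realize all the $\nu_n$'s on their respective (renamed) supports.
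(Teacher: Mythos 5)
Your proposal is correct and follows essentially the same route as the paper: rename the letters of each $\varphi_n$ injectively into pairwise disjoint blocks of $\mathrm{Prop}$ so that distinct $\widehat{\varphi_n}$ share no letters, then assemble the single valuation $\nu$ from the $\nu_n$ via the renaming and check $\nu(\widehat{\varphi_n})=\nu_n(\varphi_n)$ by induction. The only cosmetic difference is that the paper uses one bijection $\sigma:\bigcup_n\mathrm P_n\times\{\varphi_n\}\to\mathrm{Prop}$ instead of a partition of $\mathrm{Prop}$ into infinite pieces.
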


\begin{proof}
We build $\widehat{\varphi_n}$ so that distinct formulas in $\{\widehat{\varphi_n}:n\in\omega\}$ have no propositional letters in common. Let $\mathrm P_n$ be the set of propositional letters occurring in $\varphi_n$. Since the disjoint union of countably many finite sets is countably infinite, there is a bijection $\sigma:\bigcup\nolimits_{n\in\omega}\mathrm P_n\times\{\varphi_n\}\to\mathrm{Prop}$. Thus, $\sigma$ assigns each propositional letter $p$ in $\varphi_n$ to a new propositional letter so that no letter in $\varphi_n$ is assigned to the same letter, and no two occurrences of $p$ in distinct formulas are assigned to the same letter. We let $\widehat{\varphi_n}$ be the substitution instance of $\varphi_n$ obtained by substituting each occurrence of $p$ in $\varphi_n$ with $\sigma(p,\varphi_n)$. Then distinct formulas in $\{\widehat{\varphi_n}:n\in\omega\}$ have no propositional letters in common.

Define $\nu$ by $\nu(\sigma(p,\varphi_n))=\nu_n(p)$. Then for any $v\in W$ we have
$$
v\in\nu_n(\varphi_n)\text{ iff }v\in\nu\left(\widehat{\varphi_n}\right).
$$
In particular, $w\notin\nu\left(\widehat{\varphi_n}\right)$ for each $n\in\omega$.
\end{proof}

We are ready to prove the main result of this section.

\begin{theorem}\label{WellConLogAndT2}
Let $L$ be a logic above {\bf S4}. The following conditions are equivalent.
\begin{enumerate}
\item $L$ is well-connected.
\item $L$ is the logic of a general frame over $\T_2$.
\item $L=L(\mathfrak A)$ for some $\mathfrak A\in{\bf S}(\T_2^+)$.
\end{enumerate}
\end{theorem}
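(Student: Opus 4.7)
The implication (2)$\Rightarrow$(3) is immediate: if $L=L(T_2,\le,\mathcal P)$ then $L=L(\mathcal P)$ with $\mathcal P\in{\bf S}(\T_2^+)$. For (3)$\Rightarrow$(1), recall from the discussion preceding the theorem that $\T_2^+$ is well-connected (since $\T_2$ is rooted) and that subalgebras of well-connected closure algebras are well-connected, so every $\mathfrak A\in{\bf S}(\T_2^+)$ is well-connected, and thus $L(\mathfrak A)$ is a well-connected logic.

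The content lies in (1)$\Rightarrow$(2). My plan is first to build a countable \emph{rooted} general frame $\G$ for $L$ with $L=L(\G)$, and then to realize $L$ over $\T_2$ by pulling $\G$ back along an onto p-morphism $T_2\to V$ supplied by Lemma~\ref{ImagesOfT2}. To start, fix a well-connected closure algebra $\mathfrak A$ with $L=L(\mathfrak A)$ and let $\F_{\mathfrak A}=(W,R,\mathcal P)$ be its descriptive dual. Enumerate the non-theorems $\{\varphi_n:n\in\omega\}$ of $L$, pick valuations $\nu_n$ on $\F_{\mathfrak A}$ with $\nu_n(\neg\varphi_n)\ne\varnothing$, and apply Lemma~\ref{revariablizing} to produce substitution instances $\widehat{\varphi_n}$ with pairwise disjoint propositional letters together with a single valuation $\nu:\textrm{Prop}\to\mathcal P$ on $\F_{\mathfrak A}$ satisfying $\nu(\widehat{\varphi_n})=\nu_n(\varphi_n)$, so that $\nu(\neg\widehat{\varphi_n})\ne\varnothing$ for every $n$.

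The key step is to locate a single point $x\in W$ with $x\in\Diamond\nu(\neg\widehat{\varphi_n})$ for all $n$ simultaneously. For this I will establish an iterated well-connectedness principle: if $\mathfrak B$ is well-connected and $a_1,\dots,a_k\in\mathfrak B$ are nonzero, then $\Diamond a_1\wedge\cdots\wedge\Diamond a_k\ne 0$. The induction step exploits that $b=\Diamond a_1\wedge\cdots\wedge\Diamond a_{k-1}$ is a meet of closed elements and hence itself closed, so $\Diamond b=b$; then well-connectedness applied to $b$ and $a_k$ (both nonzero) gives $b\wedge\Diamond a_k=\Diamond b\wedge\Diamond a_k\ne 0$. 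This yields the finite intersection property for the clopens $\{\Diamond\nu(\neg\widehat{\varphi_n}):n\in\omega\}$ in the Stone space $W$, and compactness of $W$ then produces the desired $x$. I expect this iterated principle combined with the compactness argument to be the main obstacle, as the bare definition of well-connectedness only handles pairwise intersections.

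With $x$ in hand, I run the selection procedure of the proof of Theorem~\ref{CGFP} starting with $V_0=\{x\}$ to obtain a countable submodel $\mathfrak N=(V,S,\mu)$ in which formula-truth at points of $V$ is preserved from $(\F_{\mathfrak A},\nu)$. Because $x\in\nu(\Diamond\neg\widehat{\varphi_n})$ for each $n$, the procedure inserts a successor $u_n\in R(x)$ with $u_n\in\nu(\neg\widehat{\varphi_n})$, and a direct substitution (using the valuation $p\mapsto\mu(\sigma(p,\varphi_n))$) turns this into a refutation of $\varphi_n$ at $u_n$. Reflexivity and transitivity of $R$ force $V\subseteq R(x)$, so $(V,S)$ is rooted at $x$; setting $\mathcal Q=\{\mu(\psi):\psi\in\mathfrak{Form}\}$, the standard argument from the proof of Theorem~\ref{CGFP} shows that $\G=(V,S,\mathcal Q)$ is a general frame for $L$ with $L=L(\G)$. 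Finally Lemma~\ref{ImagesOfT2} supplies an onto p-morphism $f:T_2\to V$; then $\mathcal P_{\F}=\{f^{-1}(A):A\in\mathcal Q\}$ makes $\F=(T_2,\le,\mathcal P_{\F})$ a general frame over $\T_2$ with $\mathcal P_{\F}\cong\mathcal Q$ as closure algebras, so $L(\F)=L(\G)=L$, completing (1)$\Rightarrow$(2).
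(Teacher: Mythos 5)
Your proposal is correct, and the endgame (run the selection procedure of Theorem~\ref{CGFP} from a suitable starting point, get a countable rooted general frame with logic $L$, then pull back along the p-morphism of Lemma~\ref{ImagesOfT2}) coincides with the paper's. Where you genuinely diverge is in how you obtain the point from which everything is visible. The paper simply cites Esakia's result that the descriptive dual $\F=(W,R,\mathcal P)$ of a well-connected closure algebra is rooted, takes a root $w$, and observes that $w\notin\nu_n(\Box\varphi_n)$; it then applies Lemma~\ref{revariablizing} to the boxed formulas $\Box\varphi_n$ and refutes all of them at $w$. You instead prove from scratch that some $x\in W$ lies in $R^{-1}(\nu(\neg\widehat{\varphi_n}))$ for every $n$, via your iterated well-connectedness principle ($\Diamond a_1\wedge\cdots\wedge\Diamond a_k\neq 0$ for nonzero $a_i$, using that a finite meet of closed elements is closed) together with perfection of $\mathcal P$. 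This argument is sound --- it is in essence the compactness argument behind Esakia's rootedness theorem, specialized to the countable family at hand --- so your proof is self-contained where the paper's relies on a citation, at the cost of a short extra lemma. One small point worth flagging: Lemma~\ref{revariablizing} is \emph{stated} for valuations all refuting at a single fixed point $w$, whereas you invoke the stronger conclusion $\nu(\widehat{\varphi_n})=\nu_n(\varphi_n)$ pointwise; that stronger statement is exactly what the lemma's proof establishes, so your use is legitimate, but in a write-up you should note that you are using the displayed equivalence from its proof rather than the literal statement. The remaining implications (2)$\Rightarrow$(3) and (3)$\Rightarrow$(1) match the paper.
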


\begin{proof}
(1)$\Rightarrow$(2): Let $L$ be well-connected. Then $L=L(\mathfrak A)$ for some well-connected closure algebra $\mathfrak A=(A,\Diamond)$. Let $\F=(W,R,\mathcal P)$ be the dual descriptive frame of $\mathfrak A$. Then $L(\F)=L(\mathfrak A)=L$. Since $\mathfrak A$ is well-connected, $\F$ is rooted (see, e.g., \cite[Sec.~3]{Esa79}). Let $w$ be a root of $\F$. Suppose $\{\varphi_n:n\in\omega\}$ is an enumeration of the non-theorems of $L$. For each $n\in\omega$, there is a valuation $\nu_n$ on $\F$ refuting $\varphi_n$. Since $w$ is a root, $w\notin\nu_n(\Box\varphi_n)$. By Lemma~\ref{revariablizing}, there are a valuation $\nu$ on $\F$ and the set $\{\widehat{\Box\varphi_n}:n\in\omega\}$ such that $w\notin\nu\left(\widehat{\Box\varphi_n}\right)$ for each $n\in\omega$. By Theorem~\ref{CGFP}, there is a general frame $\G=(V,S,\mathcal Q)$ such that $\G$ is a frame for $L$, $V\subseteq W$ is countable and contains $w$, $S$ is the restriction of $R$ to $V$, $\mathcal Q=\{\mu(\varphi):\varphi\in\mathfrak{Form}\}$, where $\mu(p)=\nu(p)\cap V$ for each $p\in\mathrm{Prop}$, and $w\notin\mu\left(\widehat{\Box\varphi_n}\right)$ for each $n\in\omega$. For each propositional letter $p$ occurring in $\varphi_n$, set $\lambda(p)=\mu(\sigma(p,\varphi_n))$. Then $\lambda(\Box\varphi_n)=\mu\left(\widehat{\Box\varphi_n}\right)$, so $w\notin\lambda(\Box\varphi_n)$ since $w\notin\mu\left(\widehat{\Box\varphi_n}\right)$. Thus, each $\varphi_n$ is refuted on $\G$, so $L=L(\G)$. Since $(V,S)$ is a countable rooted {\bf S4}-frame, Lemma~\ref{ImagesOfT2} gives that there is an onto p-morphism $f:T_2\to V$. By Lemma~\ref{GenTopFromGFAndIntMap}, there is a general frame over $\T_2$ whose logic is $L$.

(2)$\Rightarrow$(3): Let $L=L(\T_2,\mathcal P)$. Then $L=L(\mathcal P)$ and $\mathcal P\in{\bf S}(\T_2^+)$.

(3)$\Rightarrow$(1): This is obvious since each $\mathfrak A\in{\bf S}(\T_2^+)$ is well-connected.
\end{proof}

\section{Completeness for general spaces over ${\bf L}_2$}\label{SecL2}

In this section we take a more careful look at the infinite binary tree with limits $\LL_2$, equip it with the Scott topology, denote the result by ${\bf L}_2$, and show that in the completeness results of Section~\ref{BinaryTreeSection}, $\T_2$ can be replaced by ${\bf L}_2$.

We begin by pointing out that $\LL_2$ is obtained from $\T_2$ by adding leaves, which we realize as limit points via multiple topologies, the first of which is the Scott topology for a directed complete partial order (DCPO). Recall (see, e.g., \cite{GHKLMS03}) that a poset is a DCPO if every directed subset has a sup, and that an upset $U$ in a DCPO is \emph{Scott open} provided for each directed set $S$, we have $S\cap U\ne\varnothing$ whenever $\sup(S)\in U$. The collection of Scott open sets forms the \emph{Scott topology}.

For each $\alpha$, it is easy to see that a directed set in $\LL_\alpha$ is a chain whose sup exists in $\LL_\alpha$. Therefore, each $\LL_\alpha$ is a DCPO. Moreover, since ${\downarrow}a$ is a finite chain for each $a\in T_\alpha$, we have that ${\uparrow}a$ is Scott open for each $a\in T_\alpha$, and so $\{{\uparrow}a:a\in T_\alpha\}$ forms a basis for the Scott topology $\tau$ on $L_\alpha$. We denote $(L_\alpha,\tau)$ by ${\bf L}_\alpha$. Kremer \cite{Kre13} proved that ${\bf S4}$ is strongly complete with respect to ${\bf L}_2$.

\begin{lemma}\label{CantorInL2}
The Cantor space ${\bf C}$ is homeomorphic to the subspace $L_2-T_2$ of ${\bf L}_2$.
\end{lemma}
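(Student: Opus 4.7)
The plan is to exhibit the obvious bijection between $L_2-T_2$ and $2^\omega$, and then match up bases.

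First, I would observe that by definition $L_2 - T_2$ is precisely the set of infinite $2$-valued sequences, i.e.\ the functions $a:\omega\to 2$. Thus there is a canonical bijection $\Phi:L_2-T_2\to 2^\omega$, which is the identity on underlying functions. Since the Cantor space $\mathbf C$ is well known to be homeomorphic to $2^\omega$ equipped with the product topology (where $2=\{0,1\}$ carries the discrete topology), it suffices to show that $\Phi$ is a homeomorphism when $L_2-T_2$ is given the subspace topology from the Scott topology on $\mathbf{L}_2$.

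Next I would compute bases on both sides. On the $\mathbf L_2$ side, the sets $\{{\uparrow}a : a\in T_2\}$ form a basis for the Scott topology, so the subspace topology on $L_2-T_2$ has the basis
\[
\mathcal B_1 = \bigl\{{\uparrow}a\cap(L_2-T_2) : a\in T_2\bigr\}.
\]
For $a\in T_2$ with $\mathrm{dom}(a)=\downn$, unwinding the definition of $\le$ in $\LL_2$ shows that ${\uparrow}a\cap(L_2-T_2)$ consists of exactly those $b:\omega\to 2$ with $b(k)=a(k)$ for all $k\le n$. On the $2^\omega$ side, the product topology has as basis the cylinder sets
\[
\mathcal B_2 = \bigl\{[a] : a\in T_2\bigr\},\qquad [a]=\{b\in 2^\omega : b\!\restriction\!\mathrm{dom}(a)=a\}.
\]
(Together with the whole space for the empty sequence, which is already included.) Then $\Phi$ sends each member of $\mathcal B_1$ to the corresponding member of $\mathcal B_2$ and vice versa.

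Finally, since $\Phi$ is a bijection carrying a basis to a basis, it is a homeomorphism. I do not expect any genuine obstacle here; the only thing to be slightly careful about is to verify that every basic Scott open is of the form ${\uparrow}a$ for $a\in T_2$ (not for $a\in L_2$), which is immediate because for $a\in T_2$ the downset ${\downarrow}a$ is finite and hence ${\uparrow}a$ is Scott open, whereas a basis of Scott opens suffices to determine the subspace topology.
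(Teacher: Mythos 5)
Your proposal is correct and takes essentially the same route as the paper: both identify $L_2-T_2$ with a standard presentation of the Cantor space (you use $2^\omega$ with its cylinder sets, the paper uses the space of infinite branches through $\T_2$ with its basic opens, which is the same thing) and then check that the traces of the Scott-basic opens ${\uparrow}a$, $a\in T_2$, correspond exactly to the basic opens on the other side. No gaps.
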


\begin{proof}
It is well known that $\bf C$ is homeomorphic to the space whose underlying set $X$ consists of infinite sequences $s=\{s_n:n\in\omega\}$ in $\T_2$ such that $s_0$ is the root and $s_{n+1}$ is a child of $s_n$, and whose topology is generated by the basic open sets $B_s^n=\{t\in X:s_k=t_k \ \ \forall k\in\downn\}$ for $s\in X$ and $n\in\omega$. With each $a\in L_2-T_2$ we associate $s\in X$ as follows:
\begin{eqnarray*}
s_0&=&\varnothing\text{ (the sequence with empty domain; i.e.~the root),}\\
s_{n+1}&=&a|_{\downarrow n}\text{ (the restriction of } a\text{ to } {\downarrow}n).
\end{eqnarray*}
Then the correspondence $a\mapsto s$ is a well-defined bijection from $L_2-T_2$ to {\bf C}, under which the basic open of $L_2-T_2$ arising from the Scott open set ${\uparrow}(a|_{\downarrow n})$ corresponds to $B_s^n$. Thus, $L_2-T_2$ is homeomorphic to ${\bf C}$.
\end{proof}

Utilizing the technique similar to the one presented in Section~\ref{SecRationals}, it is convenient to embed $L_2$ in the lower half plane as shown in Figure~\ref{figL2inPlane}, where the closed intervals formed in constructing ${\bf C}$ are depicted at the top of Figure~\ref{figL2inPlane}. The elements of $T_2$ are realized as vertices of isosceles right triangles whose hypotenuse coincides with the closed intervals and whose other sides depict the relation $\le$. Projecting the picture onto the line with arrows gives a realization of $L_2$ as a subset of $\mathbf R$ by adding to ${\bf C}$ the midpoint of each open middle third that is removed in constructing ${\bf C}$.

\begin{figure}[h]
\begin{center}
\begin{picture}(270,200)
\put(135,0,){\line(1,1){135}}
\put(135,0){\makebox(0,0){$\bullet$}}
\put(135,0){\line(-1,1){135}}
\put(225,90){\makebox(0,0){$\bullet$}}
\put(225,90){\line(-1,1){45}}
\put(45,90){\makebox(0,0){$\bullet$}}
\put(45,90){\line(1,1){45}}
\put(15,120){\makebox(0,0){$\bullet$}}
\put(15,120){\line(1,1){15}}
\put(75,120){\makebox(0,0){$\bullet$}}
\put(75,120){\line(-1,1){15}}
\put(195,120){\makebox(0,0){$\bullet$}}
\put(195,120){\line(1,1){15}}
\put(255,120){\makebox(0,0){$\bullet$}}
\put(255,120){\line(-1,1){15}}
\put(135,135){\vector(-1,0){150}}
\put(135,135){\vector(1,0){150}}
\put(0,200){\line(1,0){270}}
\put(0,200){\makebox(0,0){[}}
\put(270,200){\makebox(0,0){]}}
\put(0,185){\line(1,0){90}}
\put(0,185){\makebox(0,0){[}}
\put(90,185){\makebox(0,0){]}}
\put(180,185){\line(1,0){90}}
\put(180,185){\makebox(0,0){[}}
\put(270,185){\makebox(0,0){]}}
\put(0,170){\line(1,0){30}}
\put(0,170){\makebox(0,0){[}}
\put(30,170){\makebox(0,0){]}}
\put(60,170){\line(1,0){30}}
\put(60,170){\makebox(0,0){[}}
\put(90,170){\makebox(0,0){]}}
\put(180,170){\line(1,0){30}}
\put(180,170){\makebox(0,0){[}}
\put(210,170){\makebox(0,0){]}}
\put(240,170){\line(1,0){30}}
\put(240,170){\makebox(0,0){[}}
\put(270,170){\makebox(0,0){]}}
\put(30,185){\makebox(0,0){(}}
\put(90,200){\makebox(0,0){(}}
\put(210,185){\makebox(0,0){(}}
\put(60,185){\makebox(0,0){)}}
\put(180,200){\makebox(0,0){)}}
\put(240,185){\makebox(0,0){)}}
\put(15,155){\makebox(0,0){$\vdots$}}
\put(75,155){\makebox(0,0){$\vdots$}}
\put(195,155){\makebox(0,0){$\vdots$}}
\put(255,155){\makebox(0,0){$\vdots$}}
\end{picture}
\end{center}
\caption{Embedding $\mathfrak L_2$ in the lower half plane}\label{figL2inPlane}
\end{figure}
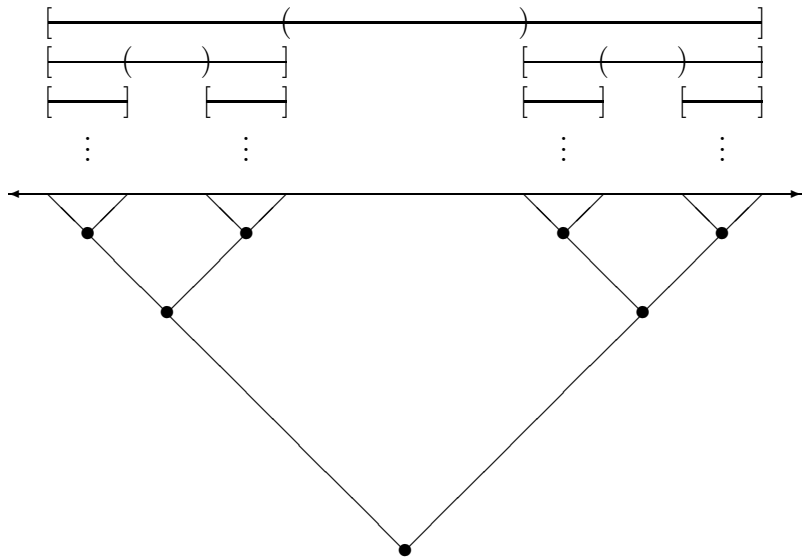

Since the Pelczynski compactification \cite{Pel65} of a countable discrete space $X$ is the compactification of $X$ whose remainder is homeomorphic to {\bf C}, we can realize $L_2$ as the Pelczynski compactification of $T_2$ viewed as a discrete space.

\begin{lemma}
Viewing $L_2$ as a subspace of $\mathbf R^2$ (or equivalently as a subspace of $\mathbf R$) gives the Pelczynski compactification of the discrete space $T_2$.
\end{lemma}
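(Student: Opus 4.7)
The plan is to verify that the embedded $L_2\subset\mathbf R$ is a compactification of $T_2$, viewed as a countable discrete space, whose remainder is homeomorphic to $\mathbf C$; this is the defining property of the Pelczynski compactification recalled just before the statement. So I need to check four things: $L_2$ is compact Hausdorff, $T_2$ is discrete in the subspace topology, $T_2$ is dense in $L_2$, and $L_2\setminus T_2\cong\mathbf C$.

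First I would make the embedding from Figure~\ref{figL2inPlane} explicit. Each infinite sequence $a\in L_2\setminus T_2$ is placed at the Cantor point $\sum_{n\in\omega} 2a(n)/3^{n+1}\in\mathbf C\subset[0,1]$, and each $a\in T_2$ with $|\mathrm{dom}(a)|=n$, which codes a unique stage-$n$ Cantor interval of length $3^{-n}$, is placed at the midpoint of the open middle third removed inside it at stage $n+1$. So the root goes to $1/2$, its left child to $1/6$, its right child to $5/6$, and so on. Under this identification $L_2\setminus T_2$ is exactly the Cantor set $\mathbf C$, and $T_2$ is exactly the set $M$ of midpoints of the removed open middle thirds $(a_k,b_k)$ that arise in the construction of $\mathbf C$.

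Next I would check the properties. Hausdorffness is inherited from $\mathbf R$. The key observation, which yields both closedness and discreteness, is that the intervals $(a_k,b_k)$ are pairwise disjoint and each is disjoint from $\mathbf C$, whence $(a_k,b_k)\cap L_2=\{m_k\}$ for every $k$. In particular, each $m_k\in T_2$ is isolated in $L_2$. Moreover, if $p\in\mathbf R$ is a limit point of $L_2$ with $p\notin\mathbf C$, then $p\in(a_k,b_k)$ for some $k$, but $(a_k,b_k)$ is a neighborhood of $p$ meeting $L_2$ in at most $\{m_k\}$, forcing $p=m_k\in L_2$. Combined with $L_2\subset[0,1]$, this shows $L_2$ is closed and bounded, hence compact. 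For density of $T_2$, given $x\in\mathbf C$ and $\varepsilon>0$, pick $n$ with $3^{-n}<\varepsilon$; the stage-$n$ Cantor interval $I$ containing $x$ has length $3^{-n}$, so the midpoint of the middle third of $I$ lies in $T_2\cap I\subset(x-\varepsilon,x+\varepsilon)$.

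Finally, the subspace topology on $L_2\setminus T_2=\mathbf C$ inherited from $\mathbf R$ is the usual topology on the Cantor set, hence homeomorphic to $\mathbf C$. Combining the four items, $L_2\subset\mathbf R$ is a compactification of the countable discrete space $T_2$ whose remainder is $\mathbf C$, and is therefore the Pelczynski compactification of $T_2$. No serious obstacle is expected: the work is purely bookkeeping — identifying the embedded picture with the ternary Cantor construction and using disjointness of the removed middle thirds to extract closedness and discreteness in one stroke.
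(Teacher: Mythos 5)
Your proposal is correct and follows essentially the same route as the paper: verify that the embedded $L_2$ is closed and bounded (hence compact Hausdorff), that each point of $T_2$ is isolated, that $T_2$ is dense, and that the remainder is homeomorphic to $\mathbf C$. The only difference is one of detail: the paper declares these facts ``clear'' from the geometric picture and invokes Lemma~\ref{CantorInL2} for the remainder, whereas you make the embedding arithmetically explicit (Cantor points via ternary expansions, elements of $T_2$ as midpoints of removed middle thirds) and derive closedness and discreteness from the disjointness of the removed intervals.
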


\begin{proof}
Clearly the image of $L_2$ under the above described embedding into the plane (or line) is closed and bounded. Therefore, if we give the image of $L_2$ the subspace topology, then it is a compact Hausdorff space. It is also clear that each point of $T_2$ is isolated in the image, and that the image of $L_2$ is the closure of the image of $T_2$. Thus, the image of $L_2$ is a compactification of the image of $T_2$, which is a countable discrete space. Finally, by Lemma~\ref{CantorInL2}, the remainder is homeomorphic to {\bf C}, so the image of $L_2$ is the Pelczynski compactification of the image of $T_2$.
\end{proof}

We denote this new topology on $L_2$ by $\tau_S$. Since the Pelczynski compactification of a countable discrete space is a Stone space, $(L_2,\tau_S)$ is a Stone space. It is clear from the figure that for $a\in T_2$, both ${\uparrow}a$ and $\{a\}$ are clopen in $(L_2,\tau_S)$, and that each open set of $(L_2,\tau_S)$ is the union of clopen sets of this form. As $\{a\}={\uparrow}a-\left({\uparrow}l(a)\cup{\uparrow}r(a)\right)$, we also see that the Boolean algebra $\mathcal P$ of all clopens of $(L_2,\tau_S)$ is generated by $\{{\uparrow}a:a\in T_2\}$, which is a basis for the Scott topology $\tau$ on $L_2$. Moreover, since for each $a\in T_2$, we have that ${\downarrow}a$ is finite and ${\downarrow}({\uparrow}a)={\uparrow}a\cup{\downarrow}a$, we see that $(\mathcal P,{\downarrow})$ is a closure algebra. Consequently, $(L_2,\le,\mathcal P)$ is a descriptive frame. Let $\tau_\le$ be the Alexandroff topology on $\LL_2$. Then, by \cite[Thm. 2.12]{BMM08}, $\tau=\tau_S\cap\tau_\le$.

\begin{remark}
{\em
As a result, we have several ways of thinking about $\LL_2$. The first way is to think about $\LL_2$ as a DCPO leading to the Scott topology $\tau$. The second way is a geometrically motivated approach that realizes $\LL_2$ as a subspace of $\mathbf R^2$, which gives the Stone topology $\tau_S$. The third way connects the first and second ways by realizing the Scott topology (which, by the way is the McKinsey-Tarski topology introduced in \cite{BMM08}) as the intersection of the Stone and Alexandroff topologies. Moreover, the Stone topology is the patch topology of the Scott topology. In fact, there is also a forth way of thinking about $\LL_2$. Let $D$ be the bounded distributive lattice generated by $\{{\uparrow}a:a\in T_2\}$. Then $(L_2,\tau_S,\le)$ is (up to homeomorphism) the Priestley space of $D$. Consequently, $(L_2,\tau_S,\le)$ is a Priestley order-compactification of the poset $(T_2,\le)$ (see \cite{BM11}).
}
\end{remark}

We are ready to prove completeness results that are similar to the ones proved in Section~\ref{BinaryTreeSection} but involve ${\bf L}_2$. For this we will take advantage of Kremer's theorem that $\T_2^+$ is isomorphic to a subalgebra of ${\bf L}_2^+$ \cite[Lem.~6.4]{Kre13}.

\begin{lemma}\label{RefuteOneFormInGenSpOnL_2}
Let $L$ be a logic above ${\bf S4}$. If $\varphi\notin L$, then there is a general space over ${\bf L}_2$ validating $L$ and refuting $\varphi$.
\end{lemma}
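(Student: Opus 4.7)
The plan is to transfer the general frame over $\T_2$ guaranteed by Lemma~\ref{RefuteOneFormInGenFrmOnT2} to a general space over ${\bf L}_2$ via Kremer's embedding $\T_2^+ \hookrightarrow {\bf L}_2^+$. Concretely, since $L$ is above ${\bf S4}$ and $\varphi\notin L$, Lemma~\ref{RefuteOneFormInGenFrmOnT2} supplies a general frame $\F=(T_2,\le,\mathcal P)$ such that $\F\models L$ and some valuation on $\F$ refutes $\varphi$. Thus $\mathcal P$, viewed as a closure subalgebra of $\T_2^+$, satisfies $L(\mathcal P)\subseteq L$ and $\mathcal P\not\models\varphi$.

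Next, I would invoke Kremer's theorem (\cite[Lem.~6.4]{Kre13}), which gives an embedding of closure algebras $\iota:\T_2^+\hookrightarrow {\bf L}_2^+$. Setting $\mathcal Q=\iota(\mathcal P)$ yields a subalgebra of ${\bf L}_2^+$ that is isomorphic (as a closure algebra) to $\mathcal P$. Hence $\Y=(L_2,\tau,\mathcal Q)$ is a general space over ${\bf L}_2$ with $L(\Y)=L(\mathcal Q)=L(\mathcal P)$. In particular $\Y\models L$ and $\Y$ refutes $\varphi$, which is exactly the desired conclusion.

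The only step requiring any thought is recognizing that the notion of logic of a general frame (resp.\ general space) depends only on the isomorphism type of its associated closure algebra, so that Kremer's embedding preserves everything we need; this is immediate from the definitions recalled in Sections~\ref{SecPrelims}~and~\ref{SecGenTopSem}. There is no genuine obstacle: the heavy lifting is carried out elsewhere, by the CGFP (Theorem~\ref{CGFP}) together with Lemma~\ref{ImagesOfT2} (which power Lemma~\ref{RefuteOneFormInGenFrmOnT2}) and by Kremer's embedding theorem.
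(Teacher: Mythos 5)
Your proof is correct and takes exactly the same route as the paper: apply Lemma~\ref{RefuteOneFormInGenFrmOnT2} to obtain a general frame $(\T_2,\mathcal P)$ for $L$ refuting $\varphi$, then transport $\mathcal P$ into a subalgebra $\mathcal Q$ of ${\bf L}_2^+$ via Kremer's embedding \cite[Lem.~6.4]{Kre13}, yielding the general space $({\bf L}_2,\mathcal Q)$. (One small slip: $\F\models L$ gives $L\subseteq L(\mathcal P)$, not $L(\mathcal P)\subseteq L$ as you wrote, but the conclusion you actually draw from it is the correct one.)
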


\begin{proof}
By Lemma~\ref{RefuteOneFormInGenFrmOnT2}, there is a general frame $(\T_2,\mathcal P)$ for $L$ refuting $\varphi$. By \cite[Lem.~6.4]{Kre13}, $\T_2^+$ is isomorphic to a subalgebra of ${\bf L}_2^+$, so $\mathcal P$ is isomorphic to some $\mathcal Q\in{\bf S}({\bf L}_2^+)$. Thus, there is a general space $({\bf L}_2,\mathcal Q)$ for $L$ refuting $\varphi$.
\end{proof}

\begin{theorem}\label{OneLogicOneGenSpaceOnL2Scott}
Let $L$ be a normal modal logic. The following are equivalent.
\begin{enumerate}
\item $L$ is a logic above {\bf S4}.
\item There is a general space over a Scott open subspace of ${\bf L}_2$ whose logic is $L$.
\item There is a closure algebra $\mathfrak A\in{\bf SH}({\bf L}_2^+)$ such that $L=L(\mathfrak A)$.
\end{enumerate}
\end{theorem}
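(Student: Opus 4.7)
The plan is to mirror the proof of Theorem~\ref{OneLogicOneGenSubInT2}, replacing $\T_2$ with ${\bf L}_2$ and ``generated subframe'' with ``Scott open subspace''. The implication (3)$\Rightarrow$(1) is immediate, since any subalgebra of a homomorphic image of the closure algebra ${\bf L}_2^+$ is again a closure algebra, whose logic is above {\bf S4}. For (2)$\Rightarrow$(3), suppose $L=L(U,\mathcal Q)$ with $U$ a Scott open subspace of ${\bf L}_2$. Then $L=L(\mathcal Q)$ and $\mathcal Q\in{\bf S}(U^+)$. Since $U$ is an open subspace of ${\bf L}_2$, the discussion in Section~\ref{SecGenTopSem} gives $U^+\in{\bf H}({\bf L}_2^+)$, so $\mathcal Q\in{\bf SH}({\bf L}_2^+)$.

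For (1)$\Rightarrow$(2), enumerate the non-theorems $\{\varphi_n : n\in\omega\}$ of $L$. By Lemma~\ref{RefuteOneFormInGenSpOnL_2}, for each $n$ there is a general space $\X_n=(({\bf L}_2)_n,\mathcal P_n)$ validating $L$ and refuting $\varphi_n$; taking disjoint copies we form the sum $\X=(X,\mathcal P)$, which satisfies $L(\X)=\bigcap_{n\in\omega}L(\X_n)=L$. The key step is to realize $X$ (the topological sum of countably many copies of ${\bf L}_2$) as a Scott open subspace of ${\bf L}_2$. To this end, take the elements $a_n\in T_2$ defined in the proof of Lemma~\ref{CountableDisUnionT2inItself}. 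Since $\mathord\downarrow\! a_n$ is finite, ${\uparrow}a_n$ is Scott open in ${\bf L}_2$, and the family $\{{\uparrow}a_n : n\in\omega\}$ is pairwise disjoint. Moreover, the bijective map $\LL_2\to{\uparrow}a_n$ from Lemma~\ref{CountableDisUnionT2inItself} extends naturally to an order isomorphism between $\LL_2$ and the upset ${\uparrow}a_n$ in $\LL_2$; since the Scott topology is determined by the poset structure, this order isomorphism is a homeomorphism between ${\bf L}_2$ and ${\uparrow}a_n$ (with its subspace topology inherited from ${\bf L}_2$). Thus $U=\bigcup_{n\in\omega}{\uparrow}a_n$ is a Scott open subspace of ${\bf L}_2$ homeomorphic to the topological sum $X$, and transferring $\mathcal P$ through this homeomorphism yields a general space over $U$ whose logic is $L$.

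The main technical point to verify is that the subspace topology on ${\uparrow}a_n$ induced from the Scott topology of ${\bf L}_2$ agrees with the Scott topology of ${\uparrow}a_n$ viewed as a DCPO on its own. One direction is automatic: intersections of Scott opens of ${\bf L}_2$ with ${\uparrow}a_n$ are Scott open in ${\uparrow}a_n$. For the other direction, given a Scott open $W$ of ${\uparrow}a_n$, $W$ is an upset in $\LL_2$ (any element above an element of $W$ remains in ${\uparrow}a_n$), and any directed $S\subseteq L_2$ with $\sup(S)\in W$ must, by Scott openness of ${\uparrow}a_n$ in ${\bf L}_2$, have a tail inside ${\uparrow}a_n$, and Scott openness of $W$ inside ${\uparrow}a_n$ then forces $S\cap W\neq\varnothing$. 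This verification, together with assembling the sum as above, constitutes the only nontrivial content; the rest of the proof is direct application of Lemma~\ref{RefuteOneFormInGenSpOnL_2} and the general-space/algebra correspondences recalled in Section~\ref{SecGenTopSem}.
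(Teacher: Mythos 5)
Your proposal is correct and follows essentially the same route as the paper: sum of countably many general spaces over ${\bf L}_2$ obtained from Lemma~\ref{RefuteOneFormInGenSpOnL_2}, realized as the Scott open set $\bigcup_{n\in\omega}{\uparrow}a_n$ via the extension of the map from Lemma~\ref{CountableDisUnionT2inItself}, with the same easy (2)$\Rightarrow$(3) and (3)$\Rightarrow$(1). Your explicit verification that the subspace topology on ${\uparrow}a_n$ coincides with its intrinsic Scott topology is a detail the paper leaves implicit, and it is carried out correctly.
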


\begin{proof}
(1)$\Rightarrow$(2): Let $\{\varphi_n:n\in\omega\}$ be an enumeration of the non-theorems of $L$. By Lemma~\ref{RefuteOneFormInGenSpOnL_2}, for each $n\in\omega$, there is a general space over ${\bf L}_2$ validating $L$ and refuting $\varphi_n$. Let $\X_n=(X_n,\mathcal P_n)$ be a copy of this general space, and without loss of generality we assume that $X_n\cap X_m=\varnothing$ whenever $n\ne m$. Thus, the sum $\X=(X,\mathcal P)$ of the general spaces $\X_n$ is a general space whose logic is $L$. The proof will be complete provided that $X$ is homeomorphic to a Scott open subspace of ${\bf L}_2$. Consider $a_n$ as in the proof of Lemma~\ref{CountableDisUnionT2inItself}. Then $\{{\uparrow}a_n:n\in\omega\}$ is a pairwise disjoint family of subsets of $L_2$ such that each ${\uparrow}a_n$ is isomorphic to $\LL_2$. To see the isomorphism, extend the map $f$ defined in the proof of Lemma~\ref{CountableDisUnionT2inItself} to $L_2-T_2$ by setting $f(a)=\sup\{f(a|_{\downarrow n}):n\in\omega\}$. Since $a_n\in T_2$, we see that $\bigcup_{n\in\omega}{\uparrow}a_n$ is Scott open in ${\bf L}_2$. As $X$ is homeomorphic to $\bigcup_{n\in\omega}{\uparrow}a_n$, we conclude that $X$ is homeomorphic to a Scott open subspace of ${\bf L}_2$, thus finishing the proof.

(2)$\Rightarrow$(3): Suppose $L=L(X,\mathcal P)$, where $X$ is a Scott open subspace of ${\bf L}_2$. Then $L=L(\mathcal P)$, $\mathcal P\in{\bf S}(X^+)$, and $X^+\in{\bf H}({\bf L}_2^+)$. Thus, there is a closure algebra $\mathcal P\in{\bf SH}({\bf L}_2^+)$ such that $L=L(\mathcal P)$.

(3)$\Rightarrow$(1): This is obvious since $\mathfrak A$ is a closure algebra.
\end{proof}

\begin{theorem}
Let $L$ be a logic above {\bf S4}. The following are equivalent.
\begin{enumerate}
\item $L$ is well-connected.
\item There is a general space over ${\bf L}_2$ whose logic is $L$.
\item $L=L(\mathfrak A)$ for some $\mathfrak A\in{\bf S}({\bf L}_2^+)$.
\end{enumerate}
\end{theorem}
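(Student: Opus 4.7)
\medskip

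The plan is to mimic the proof of Theorem~\ref{WellConLogAndT2}, replacing $\T_2$ with ${\bf L}_2$ and leveraging Kremer's embedding theorem to transfer between the two. The three implications split naturally as follows.

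For $(1)\Rightarrow(3)$, I would invoke Theorem~\ref{WellConLogAndT2} to obtain some $\mathfrak B\in{\bf S}(\T_2^+)$ with $L=L(\mathfrak B)$. By Kremer's theorem \cite[Lem.~6.4]{Kre13}, there is a closure algebra embedding $\T_2^+\hookrightarrow{\bf L}_2^+$. Composing, $\mathfrak B$ is isomorphic to a subalgebra $\mathfrak A$ of ${\bf L}_2^+$, and $L=L(\mathfrak A)$. The implication $(3)\Rightarrow(2)$ is immediate: given $\mathfrak A\in{\bf S}({\bf L}_2^+)$ with $L=L(\mathfrak A)$, the pair $({\bf L}_2,\mathfrak A)$ is by definition a general space over ${\bf L}_2$, and its logic is $L(\mathfrak A)=L$.

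The real content lies in $(2)\Rightarrow(1)$. If $L=L({\bf L}_2,\mathcal P)$, then $L=L(\mathcal P)$ and $\mathcal P\in{\bf S}({\bf L}_2^+)$, so it is enough to show that ${\bf L}_2^+$ is well-connected, since a subalgebra of a well-connected closure algebra is again well-connected. For this, recall that ${\bf L}_2$ has a least element: the empty sequence $\varnothing$, which is the root of $\LL_2$. Because $\varnothing\le x$ for every $x\in L_2$, and since the specialization order of ${\bf L}_2$ is exactly $\le$ (which can be read off from the basis $\{{\uparrow}a:a\in T_2\}$ of the Scott topology), we have $\varnothing\in\mathbf{c}(\{x\})$ for every $x\in L_2$, and hence $\varnothing\in\mathbf{c}(A)$ for every nonempty $A\subseteq L_2$. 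Consequently $\mathbf{c}(A)\cap\mathbf{c}(B)\ni\varnothing$ whenever $A,B$ are nonempty, which is precisely the well-connectedness condition $\Diamond A\wedge\Diamond B=0\Rightarrow A=0\text{ or }B=0$ in the closure algebra ${\bf L}_2^+$. Therefore $\mathcal P$ is well-connected, and $L=L(\mathcal P)$ is a well-connected logic.

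The main (but mild) obstacle is the verification that ${\bf L}_2^+$ is well-connected; once one recognizes that the root is in the closure of every nonempty set, the argument is immediate. Everything else reduces to invoking Theorem~\ref{WellConLogAndT2} together with Kremer's embedding, both of which have already been set up in the paper.
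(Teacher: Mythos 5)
Your proof is correct and takes essentially the same route as the paper: (1)$\Rightarrow$(3) via Theorem~\ref{WellConLogAndT2} combined with Kremer's embedding of $\T_2^+$ into ${\bf L}_2^+$, with (2) and (3) being trivially equivalent. The only difference is that you explicitly verify that ${\bf L}_2^+$ is well-connected (by observing that the root lies in the closure of every nonempty set, since its only Scott-open neighborhood is all of $L_2$), a fact the paper asserts without proof; your verification is sound.
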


\begin{proof}
(1)$\Rightarrow$(3): By Theorem~\ref{WellConLogAndT2}, $L=L(\mathfrak B)$ for some $\mathfrak B\in{\bf S}(\T_2^+)$. By \cite[Lem.~6.4]{Kre13}, $\mathfrak B$ is isomorphic to a subalgebra $\mathfrak A$ of ${\bf L}_2^+$. Thus, $L=L(\mathfrak B)=L(\mathfrak A)$.

(3)$\Rightarrow$(1): Since ${\bf L}_2^+$ is a well-connected closure algebra, it follows that every $\mathfrak A\in{\bf S}({\bf L}_2^+)$ is also well-connected. Thus, $L=L(\mathfrak A)$ is a well-connected logic.

Consequently, (1) and (3) are equivalent, and obviously (2) and (3) are equivalent.
\end{proof}

\section{Completeness for general spaces over $\mathbf C$}\label{CantorSection}

The key ingredient in proving that each logic above {\bf S4} is the logic of a general space over $\mathbf Q$ is that each countable rooted {\bf S4}-frame is an interior image of $\mathbf Q$. This is no longer true if we replace $\mathbf Q$ by the Cantor space $\mathbf C$ or the real line $\mathbf R$ \cite[Sec.~6]{BG02}. In this section we show that nevertheless there is an interior map from $\mathbf C$ onto ${\bf L}_2$, and utilize this fact to prove that each logic above {\bf S4} is the logic of some general space over $\mathbf C$.

In fact, we prove that ${\bf L}_\alpha$ is an interior image of ${\bf C}$ for each nonzero $\alpha\in\omega+1$. This we do by first constructing an onto interior map $f:L_2\rightarrow L_\alpha$. Then restricting $f$ to $L_2-T_2$ and applying Lemma~\ref{CantorInL2} realizes each $\mathbf L_\alpha$ as an interior image of ${\bf C}$. That ${\bf L}_2$ is an interior image of $\mathbf C$ also follows from Kremer's result \cite[Lem 8.1]{Kre13} that $\mathbf L_2$ is an interior image of any complete dense-in-itself metric space. However, our proof is different. Our approach utilizes the way that ${\bf C}$ sits inside the DCPO structure of $\LL_2$ and the aforementioned $f:L_2\to L_\alpha$ is defined by utilizing the supremum of directed sets. Whereas Kremer's method decomposes ${\bf C}$ (or any complete dense-in-itself metric space) into equivalence classes that are indexed by $L_2$ so that mapping each point in an equivalence class to the corresponding index gives an interior map onto ${\bf L}_2$.

We use the proof of Lemma~\ref{ImagesOfT2} to label the nodes of $\T_2$ by the nodes of $\T_\alpha$. Recall that we denote the labeling of $a\in T_2$ by $L(a)$, and that for each $b\in T_\alpha$ we have an onto map $\theta_b:\omega\rightarrow {\uparrow}b$. Then the root $\mathrm{r}_2$ of $\T_2$ is labeled by the root $\mathrm{r}_\alpha$ of $\T_\alpha$, and we write $L(\mathrm{r}_2)=\mathrm{r}_\alpha$. Also, if $L(a)$ is defined, then for $n\in\omega$, we have $L(l^n(a))=L(a)$ and $L(r(l^n(a)))=\theta_{L(a)}(n)$. Therefore, for each $a\in L_2-T_2$, the sequence $\{L(a|_{\downarrow n}):n\in\omega\}$ is increasing in $\T_\alpha$, and hence also increasing in the DCPO $\LL_\alpha$. Set
$$
f(a)=\left\{\begin{array}{ll}
L(a)&\mbox{ if }a\in T_2,\\
\sup\{L(a|_{\downarrow n}):n\in\omega\}&\mbox{ if }a\in L_2-T_2.
\end{array}\right.
$$
Then $f$ is a well-defined map from $L_2$ to $L_\alpha$.

\begin{lemma}\label{LalphaOntoL2}
$f$ is an interior map from ${\bf L}_2$ onto ${\bf L}_\alpha$.
\end{lemma}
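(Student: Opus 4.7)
My approach is to establish in order the following: $f$ is monotone, $f$ is surjective, $f$ is continuous, and $f$ is open. The last two will use the basis $\{{\uparrow}b : b \in T_\alpha\}$ of the Scott topology on ${\bf L}_\alpha$. Recall from the proof of Lemma~\ref{ImagesOfT2} that $L : T_2 \to T_\alpha$ is an onto p-morphism, hence order-preserving as a map between the tree orders. For monotonicity of $f$ I would run a short case analysis on $a \le a'$ in $\LL_2$: if both lie in $T_2$ the claim is immediate from monotonicity of $L$; if $a \in T_2$ and $a' \in L_2 - T_2$, then $a$ is a finite prefix of $a'$, so $f(a') = \sup_n L(a'|_{\downn}) \ge L(a) = f(a)$; and elements of $L_2 - T_2$ are maximal in $\LL_2$. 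For surjectivity, $L$ itself already hits all of $T_\alpha$; to hit $b \in L_\alpha - T_\alpha$, I would build a chain $a_0 \le a_1 \le \cdots$ in $T_2$ with $L(a_n) = b|_{\downarrow(n-1)}$ (with the convention that $b|_{\downarrow(-1)} = \mathrm r_\alpha$) via the labeling trick: having $a_n$, use surjectivity of $\theta_{L(a_n)} : \omega \to {\uparrow}L(a_n)$ in $\T_\alpha$ to find $m$ with $\theta_{L(a_n)}(m) = b|_{\downn}$ and set $a_{n+1} = r(l^m(a_n))$. Then $a := \sup_n a_n$ lies in $L_2 - T_2$ and $f(a) = b$ since the subchain $\{L(a_n)\}$ is cofinal in $\{L(a|_{\downn})\}$ and has supremum $b$.

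For continuity, I would show $f^{-1}({\uparrow}b)$ is Scott open for each $b \in T_\alpha$. By monotonicity it is an upset. Suppose $D \subseteq L_2$ is directed with $\sup D \in f^{-1}({\uparrow}b)$. Since a directed subset of the tree $\LL_2$ is a chain, either $D$ has a maximum (and we are done at once) or $d := \sup D$ lies in $L_2 - T_2$ and the elements of $D$ are exactly certain finite prefixes $d|_{\downarrow k}$ for $k$ in a cofinal subset of $\omega$. From $f(d) = \sup_n L(d|_{\downn}) \ge b$ and the Scott openness of ${\uparrow}b$ in $\LL_\alpha$, there is some $n$ with $L(d|_{\downn}) \ge b$; any $d|_{\downarrow k} \in D$ with $k \ge n$ then lies in $D \cap f^{-1}({\uparrow}b)$ by monotonicity of $L$ on $T_2$.

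For openness, it is enough to verify $f({\uparrow}a) = {\uparrow}L(a)$ for every $a \in T_2$. The inclusion $f({\uparrow}a) \subseteq {\uparrow}L(a)$ follows from monotonicity. For the reverse, given $b \in {\uparrow}L(a)$: if $b \in T_\alpha$, the back condition of the p-morphism $L$ yields $x \in {\uparrow}a$ with $L(x) = b$, giving $f(x) = b$; if $b \in L_\alpha - T_\alpha$, I would mimic the surjectivity construction but starting from $x_0 = a$, so that $L(a)$ is already a finite prefix of $b$, and then recurse $x_{n+1} = r(l^{m_n}(x_n))$ where $\theta_{L(x_n)}(m_n)$ is chosen to be the next needed prefix of $b$; the chain $\{x_n\}$ has supremum $x \in L_2 - T_2$ with $x \ge a$ and $f(x) = b$. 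I expect the main obstacle to be precisely this last construction: one must arrange that the labels $L(x_n)$ march exactly through successive finite prefixes of $b$ rather than drifting into some other branch of $\T_\alpha$, and one must check that taking suprema in $\LL_2$ and $\LL_\alpha$ commutes with $f$ in this setting; both points are handled by surjectivity of each $\theta_{L(x_n)}$ onto ${\uparrow}L(x_n)$ together with the observation that a cofinal subchain of an increasing chain in a DCPO has the same supremum.
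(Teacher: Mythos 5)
Your proposal is correct and follows essentially the same route as the paper: the heart of both arguments is the identity $f({\uparrow}a)={\uparrow}L(a)$ for $a\in T_2$ (proved for limit points $b\in L_\alpha-T_\alpha$ by the same chain construction $x_{n+1}=r(l^{m_n}(x_n))$ chasing successive prefixes of $b$ via the $\theta$'s), together with Scott-openness of ${\uparrow}b$ to reduce continuity to a finite stage. The only differences are cosmetic: the paper gets surjectivity for free from openness applied at the root, and phrases continuity as $f^{-1}({\uparrow}b)=\bigcup\{{\uparrow}a:b\le L(a)\}$ rather than verifying the directed-set condition directly.
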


\begin{proof}
The proof consists of three claims.

\medskip

\noindent{\bf Claim 1:} $f$ is open.

\smallskip

\noindent{\em Proof:} We show $f({\uparrow}a)={\uparrow}f(a)$ for each $a\in T_2$. Let $b\in {\uparrow}a$. If $b\in T_2$, then an inductive argument based on the labeling scheme gives $f(a)=L(a) \le L(b)=f(b)$. If $b\in L_2-T_2$, then $b|_{\mathrm{dom}(a)}=a$, and so $f(a)=L(a)=L(b|_{\mathrm{dom}(a)}) \le \sup\{L(b|_{\downarrow n})\}=f(b)$. Therefore, $f({\uparrow}a)\subseteq {\uparrow}f(a)$.

Conversely let $b\in {\uparrow}f(a)$. Then $L(a)=f(a) \le b$. If $b\in T_\alpha$, then $\theta_{L(a)}(n)=b$ for some $n\in\omega$ and $r(l^n(a))\in {\uparrow}a$ with
$$
f(r(l^n(a)))=L(r(l^n(a)))=\theta_{L(a)}(n)=b.
$$
Suppose $b\in L_\alpha-T_\alpha$. We build an increasing sequence $\{a_n\}$ such that $a_n\in {\uparrow}a\cap T_2$ for each $n\in\omega$ and $c=\sup\{a_n\}\in L_2-T_2$ satisfies $f(c)=b$. Let $a_0=a$. Set $b_0=f(a)$. Let $b_{n+1}\in{\downarrow}b$ be such that
$$
\mathrm{dom}(b_{n+1})=\left\{\begin{array}{ll}
  {\downarrow}(m+1) & \text{if } \mathrm{dom}(b_n)={\downarrow}m,\\
  {\downarrow}0     & \text{if } \mathrm{dom}(b_n)=\varnothing.
\end{array}\right.
$$
Then $b_{n}=b|_{\mathrm{dom}(b_{n})}$ and $b_n\in T_\alpha$ for each $n\in\omega$. Furthermore, $\sup\{b_n\}=b$ and
$$
L(a)=b_0 \le b_n \le b_{n+1}.
$$
There is $m_n$ such that $\theta_{b_n}(m_n)=b_{n+1}$. Let $a_{n+1}=r(l^{m_n}(a_n))$. Clearly $L(a_0)=L(a)=f(a)=b_0$, and assuming $L(a_n)=b_n$ it follows that
$$
L(a_{n+1})=L(r(l^{m_n}(a_n)))=\theta_{L(a_n)}(m_n)=\theta_{b_n}(m_n)=b_{n+1}.
$$
Thus, for $c=\sup\{a_n\}$, we have
$$
b=\sup\{b_n\}=\sup\{L(a_n)\} \le \sup\{L(c|_{\downarrow n})\}=f(c),
$$
giving $f(c)=b$ since $b$ is a leaf of $\LL_\alpha$. As $c\in{\uparrow}a$, we have shown $f({\uparrow}a)\supseteq {\uparrow}f(a)$.

Lastly since each $a\in T_2$ is labeled by an element of $T_\alpha$, we have that $f(a)\in T_\alpha$ whenever $a\in T_2$, giving that $f({\uparrow}a)={\uparrow}f(a)\in \tau$. Thus, $f$ sends basic opens of ${\bf L}_2$ to basic opens of ${\bf L}_\alpha$, hence $f$ is open.

\medskip

\noindent{\bf Claim 2:} $f$ is onto.

\smallskip

\noindent{\em Proof:} $f(L_2)=f({\uparrow}\mathrm{r}_2)={\uparrow}f(\mathrm{r}_2)={\uparrow}L(\mathrm{r}_2)={\uparrow}\mathrm{r}_\alpha=L_\alpha$.

\medskip

\noindent{\bf Claim 3:} $f$ is continuous.

\smallskip

\noindent{\em Proof:} We show $f^{-1}({\uparrow}b)=\bigcup\{{\uparrow}a:b\le L(a)\}$ for each $b\in T_\alpha$. Let $c\in f^{-1}({\uparrow}b)$. Then $b \le f(c)$. If $c\in T_2$, then $b \le f(c)=L(c)$ and $c\in {\uparrow}c$, giving that $c\in\bigcup\{{\uparrow}a:b\le L(a)\}$. Suppose $c\in L_2-T_2$. Then $\sup\{L(c|_{\downarrow n})\}=f(c)\in {\uparrow}b$. Since $b\in T_\alpha$, we have ${\uparrow}b$ is Scott open, so there is $n\in\omega$ such that $f(c|_{\downarrow n})=L(c|_{\downarrow n})\in {\uparrow}b$. Therefore, $b \le L(c|_{\downarrow n})$ and $c\in {\uparrow}(c|_{\downarrow n})$. Thus, $c\in \bigcup\{{\uparrow}a:b\le L(a)\}$, showing that $f^{-1}({\uparrow}b)\subseteq\bigcup\{{\uparrow}a:b\le L(a)\}$.

Conversely, let $c\in\bigcup\{{\uparrow}a:b\le L(a)\}$. Then there is $a\in T_2$ such that $b \le L(a)$ and $c\in {\uparrow}a$. Therefore,
$$
f(c)\in f({\uparrow}a)={\uparrow}f(a)={\uparrow}L(a)\subseteq {\uparrow}b.
$$
Thus, $c\in f^{-1}({\uparrow}b)$, giving $f^{-1}({\uparrow}b)\supseteq\bigcup\{{\uparrow}a:b\le L(a)\}$. This proves that $f$ is continuous.
\end{proof}

\begin{theorem}\label{LalphaInteriorImgCantor}
For each nonzero $\alpha\in\omega+1$, the space ${\bf L}_\alpha$ is an interior image of ${\bf C}$.
\end{theorem}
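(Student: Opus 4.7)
My plan is to combine the two lemmas that immediately precede the theorem. By Lemma~\ref{LalphaOntoL2}, the map $f\colon L_2\to L_\alpha$ is an interior surjection from ${\bf L}_2$ onto ${\bf L}_\alpha$, and by Lemma~\ref{CantorInL2}, the subspace $L_2-T_2$ of ${\bf L}_2$ is homeomorphic to ${\bf C}$. So the natural candidate is the restriction $g:=f|_{L_2-T_2}\colon L_2-T_2\to L_\alpha$; composing with the homeomorphism from Lemma~\ref{CantorInL2} will then yield the desired interior surjection ${\bf C}\twoheadrightarrow{\bf L}_\alpha$. The content of the proof is therefore to verify that restricting $f$ to $L_2-T_2$ preserves both surjectivity and the interior property.

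First I would show that $g$ is onto. Continuity of $g$ is automatic from the subspace topology. For surjectivity one must worry that although the image under $f$ of a leaf $a\in L_2-T_2$ can live in $T_\alpha$ (it does so precisely when the sequence $\{L(a|_{\downn})\}$ is eventually constant), we still need to produce \emph{some} preimage in $L_2-T_2$ for every $b\in L_\alpha$. For $b\in L_\alpha-T_\alpha$ this is exactly the construction of $c$ inside Claim~1 of Lemma~\ref{LalphaOntoL2}. For $b\in T_\alpha$, I would pick any $a'\in T_2$ with $L(a')=b$ (such an $a'$ exists because $f({\uparrow}\mathrm r_2)={\uparrow}\mathrm r_\alpha=L_\alpha$ and by the labeling $b\in f(T_2)$), and then take the limit point $c\in L_2-T_2$ obtained by going left forever starting from $a'$. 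Since $L(l^n(a'))=L(a')=b$ for all $n$, the sequence of labels is eventually constantly $b$, so $f(c)=b$.

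The main content of the argument is openness. I would verify that on every basic open of $L_2-T_2$ of the form $U={\uparrow}a\cap(L_2-T_2)$ with $a\in T_2$, we have
\[
g(U)={\uparrow}L(a)={\uparrow}f(a),
\]
which is a basic Scott open of ${\bf L}_\alpha$. The inclusion $g(U)\subseteq{\uparrow}f(a)$ is immediate from Claim~1 of Lemma~\ref{LalphaOntoL2}. For the reverse inclusion one takes $b\in{\uparrow}f(a)$: if $b\in L_\alpha-T_\alpha$, then the $c\in L_2-T_2$ with $c\in{\uparrow}a$ and $f(c)=b$ constructed inside Claim~1 works directly; if $b\in T_\alpha$, the $r(l^n(a))\in T_2$ produced there is not a leaf, so instead I would use the same trick as in the surjectivity step and go all left from $r(l^n(a))$ to obtain a leaf $c\in{\uparrow}a\cap(L_2-T_2)$ with $f(c)=b$. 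This is the only place the argument departs from what has already been written down, and it is the step I expect to require the most care because of the $T_\alpha$ versus $L_\alpha-T_\alpha$ dichotomy.

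Finally, assembling the pieces: $g$ is a continuous open surjection from the subspace $L_2-T_2$ of ${\bf L}_2$ onto ${\bf L}_\alpha$, and via the homeomorphism of Lemma~\ref{CantorInL2} this transports to an interior surjection ${\bf C}\to{\bf L}_\alpha$, proving the theorem.
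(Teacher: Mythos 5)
Your proposal is correct and follows essentially the same route as the paper: restrict $f$ to $L_2-T_2\cong{\bf C}$ and repair any witness lying in $T_2$ by passing to its all-left limit, which has the same image since left children preserve labels. The paper packages this slightly more economically as the single claim $g\left({\uparrow}a\cap(L_2-T_2)\right)=f({\uparrow}a)$ (taking an arbitrary preimage $c\in{\uparrow}a$ and replacing it by its all-left limit when $c\in T_2$), from which both surjectivity and openness follow at once, whereas you re-enter the $T_\alpha$ versus $L_\alpha-T_\alpha$ case split of Lemma~\ref{LalphaOntoL2}; the substance is the same.
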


\begin{proof}
By Lemma~\ref{CantorInL2}, ${\bf C}$ is homeomorphic to the subspace $L_2-T_2$ of ${\bf L}_2$. So it is enough to show that $g=f|_{L_2-T_2}$ is an onto interior map, where $f:L_2\to L_\alpha$ is the map of Lemma~\ref{LalphaOntoL2}. Clearly $g$ is continuous since it is the restriction of a continuous map. That $g$ is onto and open follows from the next claim since $f({\uparrow}a)={\uparrow}f(a)$ for each $a\in T_2$.

\medskip

\noindent{\bf Claim:} For each $a\in T_2$, we have $g\left({\uparrow}a\cap (L_2-T_2)\right)=f({\uparrow}a)$.

\smallskip

\noindent{\em Proof:} Clearly $g\left({\uparrow}a\cap (L_2-T_2)\right)=f\left({\uparrow}a\cap (L_2-T_2)\right)\subseteq f({\uparrow}a)$. Let $b\in f({\uparrow}a)$. Then there is $c\in {\uparrow}a$ such that $f(c)=b$. If $c\in L_2-T_2$, then there is nothing to prove. Suppose $c\in T_2$. Define $d\in L_2-T_2$ by $d(n)=c(n)$ when $n\in\mathrm{dom}(c)$ and $d(n)=0$ otherwise. So $d$ is the limit of the sequence $\{l^n(c):n\in\omega\}$ of the left ancestors of $c$. Then $d\in {\uparrow}a$ and since $L(l^n(c))=L(c)$, we have
$$
g(d)=f(d)=\sup\{L(d|_{\downarrow n})\}=\sup\{L(l^n(c))\}=\sup\{L(c)\}=L(c)=f(c)=b.
$$
Thus, $g\left({\uparrow}a\cap (L_2-T_2)\right)\supseteq f({\uparrow}a)$, and equality follows.
\end{proof}

As an immediate consequence, we obtain:

\begin{corollary}\label{L2InteriorImgCantor}
The space ${\bf L}_2$ is an interior image of ${\bf C}$.
\end{corollary}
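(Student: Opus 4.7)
The plan is to derive Corollary~\ref{L2InteriorImgCantor} as a direct instance of Theorem~\ref{LalphaInteriorImgCantor}. Since that theorem asserts that ${\bf L}_\alpha$ is an interior image of ${\bf C}$ for every nonzero $\alpha\in\omega+1$, I would simply specialize to $\alpha=2$, which lies in $\omega+1$ and is nonzero, and conclude that ${\bf L}_2$ is an interior image of ${\bf C}$.

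There is no real obstacle here: the corollary is a labeled case of the preceding theorem, and all the substantive work (constructing the interior map $f:L_2\to L_\alpha$ via the DCPO supremum combined with the labeling scheme inherited from Lemma~\ref{ImagesOfT2}, and restricting $f$ to $L_2-T_2\cong{\bf C}$ via Lemma~\ref{CantorInL2}) has already been carried out inside the proof of Theorem~\ref{LalphaInteriorImgCantor} and Lemma~\ref{LalphaOntoL2}. Thus the proof of the corollary should consist of a single sentence invoking $\alpha=2$ in Theorem~\ref{LalphaInteriorImgCantor}. If one wanted to say slightly more, one could observe that in this special case the map $g=f\vert_{L_2-T_2}$ from the theorem is simply the restriction of the identity-labeling interior map $f:L_2\to L_2$ (obtained from the tautological labeling $L:T_2\to T_2$), but this is purely for intuition and not required by the proof.
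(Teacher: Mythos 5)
Your proof is correct and is exactly the paper's: the corollary is presented as an immediate consequence of Theorem~\ref{LalphaInteriorImgCantor}, obtained by specializing to $\alpha=2$. (Your optional aside is inaccurate, though: for $\alpha=2$ the labeling $L:T_2\to T_2$ produced by the $t$-comb scheme of Lemma~\ref{ImagesOfT2} is a non-identity p-morphism of $\T_2$ onto itself rather than the tautological labeling, so $f$ is not the identity on $T_2$ --- but since you flagged that remark as inessential, it does not affect the proof.)
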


In order to prove the main result of this section, we need the following lemma.

\begin{lemma}\label{RefuteOneFormInGenSpOnC}
Let $L$ be a logic above ${\bf S4}$. If $\varphi\notin L$, then there is a general space over $\mathbf C$ validating $L$ and refuting $\varphi$.
\end{lemma}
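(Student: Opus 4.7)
The plan is to chain together the three tools that have already been assembled in this section (and the previous one). Concretely, given $\varphi\notin L$, I would first invoke Lemma~\ref{RefuteOneFormInGenSpOnL_2} to secure a general space $({\bf L}_2,\mathcal P)$ over ${\bf L}_2$ that validates $L$ and refutes $\varphi$. This is the content-bearing step, but it is already done for us.

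Next, I would pull this general space back along an interior surjection from ${\bf C}$. By Corollary~\ref{L2InteriorImgCantor}, there is an onto interior map $f:{\bf C}\to{\bf L}_2$. Setting $\mathcal Q=\{f^{-1}(A):A\in\mathcal P\}$, Lemma~\ref{GenTopFromGFAndIntMap} guarantees that $(\mathbf C,\mathcal Q)$ is a general space over $\mathbf C$ and that $L(\mathbf C,\mathcal Q)=L({\bf L}_2,\mathcal P)$. Since the right-hand logic validates $L$ and refutes $\varphi$, so does $(\mathbf C,\mathcal Q)$.

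There is essentially no obstacle here: all the work has been concentrated in Lemma~\ref{RefuteOneFormInGenSpOnL_2} (which in turn relies on the CGFP and Kremer's embedding $\T_2^+\hookrightarrow{\bf L}_2^+$) and in the construction of the interior map $f:{\bf C}\to{\bf L}_2$ (Theorem~\ref{LalphaInteriorImgCantor}/Corollary~\ref{L2InteriorImgCantor}). The only thing one should double-check is that Lemma~\ref{GenTopFromGFAndIntMap} applies verbatim to an onto interior map between topological spaces with a distinguished subalgebra on the codomain, which it does: $f^{-1}$ restricted to $\mathcal P$ is a closure-algebra isomorphism onto $\mathcal Q$, so validity and refutation transfer on the nose. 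Thus the proof fits comfortably in a couple of sentences.
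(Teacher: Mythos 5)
Your proposal is correct and matches the paper's proof essentially verbatim: the paper likewise combines Lemma~\ref{RefuteOneFormInGenSpOnL_2} with Corollary~\ref{L2InteriorImgCantor} to pull the subalgebra $\mathcal P$ of ${\bf L}_2^+$ back to a subalgebra $\mathcal Q$ of $\mathbf C^+$, which is exactly the content of Lemma~\ref{GenTopFromGFAndIntMap}.
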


\begin{proof}
By Lemma~\ref{RefuteOneFormInGenSpOnL_2}, there is a general space $({\bf L}_2,\mathcal P)$ for $L$ refuting $\varphi$. By Corollary~\ref{L2InteriorImgCantor}, ${\bf L}_2$ is an interior image of $\mathbf C$, so ${\bf L}_2^+$ is isomorphic to a subalgebra of $\mathbf C^+$. Therefore, $\mathcal P$ is isomorphic to some $\mathcal Q\in{\bf S}(\mathbf C^+)$. Thus, there is a general space $(\mathbf C,\mathcal Q)$ for $L$ refuting $\varphi$.
\end{proof}

We are ready to prove the main result of this section.

\begin{theorem}\label{CompletenessForAnyLwrtCantor}
Let $L$ be a normal modal logic. The following conditions are equivalent.
\begin{enumerate}
\item $L$ is a logic above {\bf S4}.
\item $L$ is the logic of a general space over ${\bf C}$.
\item $L=L(\mathfrak A)$ for some $\mathfrak A\in{\bf S}({\bf C}^+)$.
\end{enumerate}
\end{theorem}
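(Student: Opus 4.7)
The strategy follows the pattern of Theorem~\ref{OneLogicOneGenTopOnQ}. Implications (2)$\Rightarrow$(3) and (3)$\Rightarrow$(1) are immediate: if $L=L(\mathbf C,\mathcal P)$ then $\mathcal P\in{\bf S}(\mathbf C^+)$ and $L=L(\mathcal P)$, and conversely every subalgebra of $\mathbf C^+$ is a closure algebra, so its logic lies above $\mathbf{S4}$. The content is (1)$\Rightarrow$(2).

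For (1)$\Rightarrow$(2), I would enumerate the non-theorems $\{\varphi_n:n\in\omega\}$ of $L$ and, by Lemma~\ref{RefuteOneFormInGenSpOnC}, pick for each $n$ a general space $\X_n=(X_n,\mathcal P_n)$ over a pairwise disjoint copy of $\mathbf C$ that validates $L$ and refutes $\varphi_n$. Let $\X=(X,\mathcal P)$ be the sum of the $\X_n$. Unlike the $\mathbf Q$ case, a countable topological sum of copies of $\mathbf C$ is not compact, so $X$ is not homeomorphic to $\mathbf C$. I would instead pass to the one-point compactification $X^*=X\cup\{\infty\}$; since $X$ is a countable topological sum of Cantor spaces, $X^*$ is compact, metrizable, totally disconnected, and perfect, hence homeomorphic to $\mathbf C$ (as recorded in the introduction).

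The crux is to equip $X^*$ with an appropriate $\mathcal Q\subseteq(X^*)^+$ so that $L(X^*,\mathcal Q)=L$. The intended definition is
\[
\mathcal Q=\left\{A\subseteq X^*:\begin{array}{l} A\cap X_n\in\mathcal P_n\text{ for all }n,\text{ and either}\\ \infty\in A\text{ with }X\setminus A\subseteq X_0\cup\cdots\cup X_N,\\ \text{or }\infty\notin A\text{ with }A\subseteq X_0\cup\cdots\cup X_N,\text{ for some }N\end{array}\right\}.
\]
Using that each $X_n$ is clopen in $X^*$ and that $\infty\in{\bf c}(B)$ for $B\subseteq X$ precisely when $B$ meets infinitely many $X_n$, one checks that $\mathcal Q$ is a Boolean subalgebra of $\wp(X^*)$ closed under the $X^*$-closure, so $(X^*,\mathcal Q)$ is a general space. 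The restriction $A\mapsto A\cap X$ is then an injective closure-algebra homomorphism from $\mathcal Q$ into $\mathcal P\cong\prod_n\mathcal P_n$ (the two clauses above forbid distinct members of $\mathcal Q$ from agreeing on $X$), exhibiting $\mathcal Q$ as a subalgebra of this product; hence $L$ is valid in $\mathcal Q$ because it is valid in each $\mathcal P_n$. For refutation, a valuation $\nu_n$ on $\mathcal P_n$ with $\nu_n(\varphi_n)\ne X_n$ lifts to a valuation $\nu$ on $\mathcal Q$ by $\nu(p)=\nu_n(p)$ viewed as a subset of $X^*$ lying in $X_n$ and missing $\infty$; clopenness of $X_n$ yields the inductive identity $\nu(\psi)\cap X_n=\nu_n(\psi)$ for every $\psi$, so $\nu(\varphi_n)\ne X^*$. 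Thus $L(X^*,\mathcal Q)=L$, and the homeomorphism $X^*\cong\mathbf C$ completes (1)$\Rightarrow$(2).

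The main obstacle is calibrating $\mathcal Q$ to satisfy three competing demands at once: closure under the $X^*$-closure, an embedding into $\prod_n\mathcal P_n$ (so $L$ is validated), and containment of each $\mathcal P_n$ as a relatively embedded piece (so each $\varphi_n$ can be refuted). The two-branch definition above reconciles all three, essentially by forcing each element of $\mathcal Q$ to be decided by its finitely many non-trivial components together with a consistent "behavior at $\infty$"; once this is in hand, the remainder is routine bookkeeping in the spirit of Theorem~\ref{OneLogicOneGenTopOnQ}.
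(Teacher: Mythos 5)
Your proposal is correct and follows essentially the same route as the paper: the same sum-then-one-point-compactification step, and your $\mathcal Q$ is exactly the paper's weak product of the $\mathcal P_n$ (finitely many nonempty traces when $\infty\notin A$, finitely many proper traces when $\infty\in A$), with the same embedding $A\mapsto A\cap X$ into $\mathcal P$. The only cosmetic difference is that you refute $\varphi_n$ by lifting the valuation $\nu_n$ along the clopen $X_n$, whereas the paper phrases this as each $\mathcal P_n$ being a homomorphic image of $\mathcal Q$ under $A\mapsto A\cap X_n$.
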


\begin{proof}
(1)$\Rightarrow$(2): Suppose $L$ is a logic above {\bf S4}. Let $\{\varphi_n:n\in\omega\}$ be an enumeration of the non-theorems of $L$. By Lemma~\ref{RefuteOneFormInGenSpOnC}, for each $n\in\omega$, there is a general space over $\mathbf C$ validating $L$ and refuting $\varphi_n$. Let $\X_n=(X_n,\mathcal P_n)$ be a copy of this general space, and without loss of generality we assume that $X_n\cap X_m=\varnothing$ whenever $n\not= m$. Let $\X=(X,\mathcal P)$ be the sum of the $\X_n$. Then $L(\X)=\bigcap_{n\in\omega}L(\X_n)=L$. Although $X$ is not homeomorphic to $\mathbf C$, the one-point compactification of $X$ is homeomorphic to $\mathbf C$ (see, e.g., \cite[Lem.~7.2]{BG11}).\footnote{We may realize $X$ geometrically as $\bigcup\nolimits_{n\in\omega}X_n$, where $X_n=\mathbf C\cap\left[\frac{2}{3^{n+1}},\frac{1}{3^n}\right]$. Note each $X_n$ is the portion of ${\bf C}$ that is contained in the right closed third of the iteration of constructing ${\bf C}$ as the `leftovers' of removing open middle `thirds' and hence each $X_n$ is homeomorphic to ${\bf C}$. The only point of ${\bf C}$ not in $X$ is $0$, which is clearly a limit point of $X$ as viewed as a subset of $\mathbf R$ (depicted below). So it is intuitively clear that the one-point compactification of the sum of $\omega$ copies of ${\bf C}$ is homeomorphic to ${\bf C}$.
\begin{center}
  \begin{picture}(270,15)(0,0)
    \put(270,5){\makebox(0,0){$]$}}
    \put(270,-10){\makebox(0,0){$1$}}
    \put(180,5){\makebox(0,0){$[$}}
    \put(180,-10){\makebox(0,0){$\frac23$}}
    \put(225,15){\makebox(0,0){$X_0$}}
    \put(180,5){\dashbox(90,0){}}
    \put(90,5){\makebox(0,0){$]$}}
    \put(90,-10){\makebox(0,0){$\frac13$}}
    \put(60,5){\makebox(0,0){$[$}}
    \put(60,-10){\makebox(0,0){$\frac29$}}
    \put(75,15){\makebox(0,0){$X_1$}}
    \put(60,5){\dashbox(30,0){}}
    \put(30,5){\makebox(0,0){$]$}}
    \put(30,-10){\makebox(0,0){$\frac19$}}
    \put(20,5){\makebox(0,0){$[$}}
    \put(20,-10){\makebox(0,0){$\frac2{27}$}}
    \put(25,15){\makebox(0,0){$X_2$}}
    \put(20,5){\dashbox(10,0){}}
    \put(10,5){\makebox(0,0){$\dots$}}
    \put(0,5){\makebox(0,0){$\bullet$}}
    \put(0,-10){\makebox(0,0){$0$}}
  \end{picture}
\end{center}
}

Let $\alpha X=X\cup\{\infty\}$ be the one-point compactification of $X$. Then $\alpha X$ is homeomorphic to ${\bf C}$. Define $\mathcal Q$ on $\alpha X$ by $A\in\mathcal Q$ iff $A\cap X_n\in\mathcal P_n$ and either $\infty\notin A$ and $\{n\in\omega:A\cap X_n\ne\varnothing\}$ is finite or $\infty\in A$ and $\{n\in\omega:A\cap X_n\ne X_n\}$ is finite.

\medskip

\noindent{\bf Claim 1:} $\mathcal Q$ is a closure algebra.

\smallskip

\noindent{\em Proof:} Let $A,B\in \mathcal Q$. Then $A\cap X_n,B\cap X_n\in \mathcal P_n$ for each $n\in\omega$. Clearly we have $(A\cap B)\cap X_n=(A\cap X_n)\cap (B\cap X_n)\in \mathcal P_n$. Suppose $\infty\not\in A$ or $\infty\not\in B$. Then $\infty\not\in A\cap B$ and
$$
\{n:(A\cap B)\cap X_n\ne\varnothing\}\subseteq\{n:A\cap X_n\ne\varnothing\}\cap\{n:B\cap X_n\ne\varnothing\}
$$
is finite since either $\{n:A\cap X_n\ne\varnothing\}$ is finite or $\{n:B\cap X_n\ne\varnothing\}$ is finite. Suppose $\infty\in A$ and $\infty \in B$. Then $\infty\in A\cap B$ and
$$
\{n:(A\cap B)\cap X_n\ne X_n\}\subseteq\{n:A\cap X_n\ne X_n\}\cup\{n:B\cap X_n\ne X_n\}
$$
is finite since both $\{n:A\cap X_n\ne X_n\}$ and $\{n:B\cap X_n\ne X_n\}$ are finite. Thus, $\mathcal Q$ is closed under $\cap$.

For complement, we clearly have $(-A)\cap X_n=X_n-(A\cap X_n)\in\mathcal P_n$. For every $C\in\mathcal Q$, we have
$$
\{n:(-C)\cap X_n\ne X_n\}=\{n:C\cap X_n\ne\varnothing\}.
$$
So if $\infty\not\in A$, then $\infty\in-A$ and $\{n:(-A)\cap X_n\ne X_n\}$ is finite since $\{n:A\cap X_n\ne\varnothing\}$ is finite. On the other hand, if $\infty\in A$, then $\infty\not\in -A$ and $\{n:(-A)\cap X_n\ne\varnothing\}$ is finite as $\{n:A\cap X_n\ne X_n\}$ is finite. Thus, $\mathcal Q$ is closed under complement.

Let {\bf c} be closure in $\alpha X$. It is left to show that $\mathcal Q$ is closed under {\bf c}. For $A\subseteq\alpha X$, we show that $\mathbf c(A)\cap X_n=\mathbf c_n(A\cap X_n)$, where $\mathbf c_n$ is closure in $X_n$. As $\mathbf c_n(A\cap X_n)=\mathbf c(A\cap X_n)\cap X_n$, one inclusion is clear. For the other inclusion, let $x\in\mathbf c(A)\cap X_n$ and let $U$ be an open neighborhood of $x$ in $X_n$. Since $X_n$ is a clopen subset of $\alpha X$, we see that $U$ is open in $\alpha X$. As $x\in\mathbf c(A)$, we have $A\cap U\ne\varnothing$. Therefore, $(A\cap X_n)\cap U=A\cap(U\cap X_n)=A\cap U\ne\varnothing$, and so $x\in\mathbf c_n(A\cap X_n)$. Now, since $\mathbf c_n(A\cap X_n)\in\mathcal P_n$, we see that $\mathbf c(A)\cap X_n\in\mathcal P_n$. Suppose $\infty\not\in A$. Then $\{n:\mathbf c(A)\cap X_n\ne\varnothing\}=\{n:A\cap X_n\ne\varnothing\}$ is finite, and $\infty\not\in\mathbf c(A)$ because $\{\infty\}\cup\bigcup_{\{n:A\cap X_n=\varnothing\}}X_n$ is an open neighborhood of $\infty$ disjoint from $A$. Suppose $\infty\in A$. Then $\{n:A\cap X_n\ne X_n\}$ is finite. Clearly $\infty\in\mathbf c(A)$. Since $\{n:\mathbf c(A)\cap X_n\ne X_n\}\subseteq\{n:A\cap X_n\ne X_n\}$, it follows that $\{n:\mathbf c(A)\cap X_n\ne X_n\}$ is finite. Thus, $\mathbf c(A)\in\mathcal Q$, and hence $\mathcal Q$ is a closure algebra.

\medskip

\noindent{\bf Claim 2:} $\mathcal Q$ is isomorphic to a subalgebra of the closure algebra $\mathcal P$.

\smallskip

\noindent{\em Proof:} Define $\eta:\mathcal Q\to\mathcal P$ by $\eta(A)=A\cap X=A-\{\infty\}$. Note that $\eta$ is the identity map when $\infty\not\in A$. Clearly $\eta$ is well defined. Let $A,B\in \mathcal Q$. Then
$$
\eta(A\cap B)=(A\cap B)\cap X=(A\cap X)\cap(B\cap X)=\eta(A)\cap\eta(B).
$$
Moreover,
$$
\eta(\alpha X-A)=(\alpha X-A)\cap X=X-A=X-(A\cap X)=X-\eta(A).$$
Therefore, $\eta$ is a Boolean homomorphism. We recall that {\bf c} is closure in $\alpha X$. Let ${\bf c}_X$ be closure in $X$. If $\infty\not\in A$, then $A\subseteq X$ and ${\bf c}(A)={\bf c}_X(A)$, so
$$
\eta(\mathbf c(A))=\eta(\mathbf c_X(A))=\mathbf c_X(A)=\mathbf c_X(A\cap X)=\mathbf c_X\eta(A).
$$
Suppose $\infty\in A$. Then $\mathbf c(A)=\mathbf c_X(A\cap X)\cup\{\infty\}$. Therefore,
$$
\eta(\mathbf c(A))=\eta\left(\mathbf c_X(A\cap X)\cup\{\infty\}\right)=\mathbf c_X(A\cap X)=\mathbf c_X(\eta(A)).
$$
Thus, $\eta$ is a closure algebra homomorphism.

To see that $\eta$ is an embedding, let $\eta(A)=X$. Then $\{n:A\cap X_n\ne X_n\}=\varnothing$, so $\infty\in A$, and hence $A=\alpha X$. Thus, $\eta:\mathcal Q\to\mathcal P$ is a closure algebra embedding, and so $\mathcal Q$ is isomorphic to a subalgebra of $\mathcal P$.

\medskip

Now, since $\mathcal P$ validates $L$, we have that $\mathcal Q$ validates $L$. Furthermore, since $\alpha_n:\mathcal Q\to\mathcal P_n$ given by $\alpha_n(A)=A\cap X_n$ is an onto closure algebra homomorphism, each $\mathcal P_n$ is a homomorphic image of $\mathcal Q$. Thus, $\mathcal Q$ refutes each $\varphi_n$, and hence $L=L(\mathcal Q)$. Consequently, $L$ is the logic of the general space $(\alpha X,\mathcal Q)$, and as $\alpha X$ is homeomorphic to ${\bf C}$, we conclude that $L$ is the logic of a general space over {\bf C}.

(2)$\Rightarrow$(3): If $L=L(\mathbf C,\mathcal Q)$, then $L=L(\mathcal Q)$ and $\mathcal Q\in{\bf S}({\bf C}^+)$.

(3)$\Rightarrow$(1): This is obvious since $L$ is the logic of a closure algebra.
\end{proof}

\begin{remark}
{\em
The closure algebra $\mathcal Q$ constructed in the proof of Theorem~\ref{CompletenessForAnyLwrtCantor} is a weak product \cite[Appendix, \S 3]{Esa85} of the closure algebras $\mathcal P_n$.
}
\end{remark}

Most of the remainder of the paper is dedicated to logics associated with the real line $\mathbf R$.

\section{Completeness for general spaces over open subspaces of $\mathbf R$}\label{SecOpenSubspacesOfRealLine}

As we have seen, general spaces over both $\mathbf Q$ and $\mathbf C$ characterize all logics above {\bf S4}. This is no longer true for $\mathfrak T_2$ and ${\bf L}_2$. In fact, general frames over $\mathfrak T_2$ and general spaces over ${\bf L}_2$ characterize all well-connected logics above {\bf S4}, and in order to characterize all logics above {\bf S4}, we need to work with general frames over generated subframes of $\mathfrak T_2$ or general spaces over open subspaces of ${\bf L}_2$. In this section we show that a similar result is also true for the reals. Namely, we prove that a normal modal logic is a logic above {\bf S4} iff it is the logic of a general space over an open subspace of $\mathbf R$. In the next section we address the logics above {\bf S4} that arise from general spaces over $\mathbf R$ and show that each connected logic arises this way.

We recall that in proving that a logic $L$ above {\bf S4} is the logic of a general space over $\mathbf Q$, we enumerated all the non-theorems of $L$ as $\{\varphi_n:n\in\omega\}$, used the CGFP to find a countable rooted general {\bf S4}-frame $\mathfrak F_n=(W_n,R_n,\mathcal P_n)$ for $L$ that refuted $\varphi_n$, and obtained each $\mathfrak F_n$ as an interior image of $\mathbf Q$ via an onto interior map $f_n:\mathbf Q\to\mathfrak F_n$. We then used $f_n^{-1}$ to obtain $\mathcal Q_n\in{\bf S}(\mathbf Q^+)$ isomorphic to $\mathcal P_n$, thus producing a general space $(\mathbf Q,\mathcal Q_n)$. Finally, we took the sum of disjoint copies of the general spaces $(\mathbf Q,\mathcal Q_n)$ to obtain a general space whose underlying topological space was homeomorphic to $\mathbf Q$, and whose logic was indeed $L$.

What can go wrong with this technique when we switch to the reals? One obvious obstacle is that the sum of countably many copies of $\mathbf R$ is no longer homeomorphic to $\mathbf R$ because it is no longer connected. However, even before this `summation' stage, we have no guarantee that a rooted countable {\bf S4}-frame is an interior image of $\mathbf R$. Quite the contrary, it is a consequence of the Baire category theorem that rooted {\bf S4}-frames with infinite ascending chains cannot be obtained as interior images of the reals \cite[Sec.~6]{BG02}. We overcome this obstacle by switching from general frames $\F_n$ to general spaces over ${\bf L}_2$. This can be done as follows. As we saw in Section~\ref{BinaryTreeSection}, we can realize each $\F_n$ as a general frame over $\mathfrak T_2$. By Kremer's result \cite[Lem.~6.4]{Kre13}, $\mathfrak T_2^+$ is embeddable in ${\bf L}_2^+$, hence each general frame over $\mathfrak T_2$ can be realized as a general space over ${\bf L}_2$. We next build an interior map from any (non-trivial) real interval onto ${\bf L}_2$. Such maps have already appeared in the literature; see \cite{Lan12,Kre13}. The sum of disjoint open intervals produces an open subspace $X$ of $\mathbf R$ and a general space over $X$ whose logic is $L$. Thus, general spaces over open subspaces of $\mathbf R$ give rise to all logics above {\bf S4}.

In realizing ${\bf L}_2$ as an interior image of a non-trivial real interval $I$, our method differs from both Kremer's \cite{Kre13} and Lando's \cite{Lan12} methods. Kremer utilizes a decomposition of $I$ (indeed of any complete dense-in-itself metrizable space) into equivalence classes indexed by $L_2$, and maps each element in a class to its index. Lando defines an interior mapping of $I$ onto ${\bf L}_2$ by successively labeling and relabeling the points of $I$ by points in $L_2$. Our construction is similar to Lando's construction in that we utilize a labeling scheme; although we make a sequence of labels that form a directed set. In line with earlier comments in Section~\ref{CantorSection}, our method utilizes the DCPO structure of $\mathfrak L_2$. All three methods are similar in that each utilizes a dissection of intervals into nowhere dense `borders' that separate two collections of intervals to be used in the next stage of the construction.

We start by recalling the construction of the Cantor set in a (non-trivial) real interval $I\subseteq\mathbf R$ with endpoints $x<y$ (note $I$ may be closed, open, or neither).

\smallskip

{\bf Step $0$:} Set $C_{0,1}=I$.

\smallskip

{\bf Step $n>0$:} Start with $2^{n-1}$ intervals $C_{n-1,1},\dots,C_{n-1,2^{n-1}}$, each of which is a closed subset of $I$ of length $\frac{y-x}{3^{n-1}}$. For each $m\in\{1,\dots,2^{n-1}\}$, remove the open middle third $U_{n-1,m}$ of $C_{n-1,m}$. Thus, we end step $n$ with $2^{n-1}$ removed open intervals $U_{n-1,1},\dots,U_{n-1,2^{n-1}}$, each of length $\frac{y-x}{3^n}$, and the remaining portion of $I$, specifically $2^n$ intervals $C_{n,1},\dots,C_{n,2^n}$, each of which is closed in $I$ and of length $\frac{y-x}{3^n}$.

\smallskip

The Cantor set in $I$ is
$$
{\bf C}^I=\bigcap\limits_{n\in\omega}\bigcup\limits_{m=1}^{2^n}C_{n,m}.
$$
Note that ${\bf C}={\bf C}^{[0,1]}$ and, as subspaces of $\mathbf{R}$, $\mathbf{C}^I$ is homeomorphic to ${\bf C}$ whenever $I$ is closed. When we need to keep track of $I$, we write $C^I_{n,m}$ and $U^I_{n,m}$ for the intervals involved in the construction of ${\bf C}^I$.

We now define a map $f:[0,1]\rightarrow L_2$. The technique is similar to that of Section~\ref{CantorSection}, when we built an interior map from the Cantor space $\mathbf C$ onto ${\bf L}_2$. For each $x\in[0,1]$, define recursively a strictly increasing (but possibly finite) sequence in $\T_2$; we refer to the values in the sequence as labels and write $\{L_n(x)\}$ for the sequence of labels associated with $x$. The following generalizes the main construction of \cite{BG05a}.

\smallskip

{\bf Step $0$:} For $x\in[0,1]$, set $L_0(x)$ equal to the root of $\T_2$, $\mathcal C_0=\{{\bf C}^{[0,1]}\}$, and
$$
\mathcal U_0=\{U^{[0,1]}_{i,j}:i\in\omega,j=1,\dots,2^i\}.
$$

\smallskip

{\bf Step $n+1$:} For $x\in\bigcup\mathcal U_n$, set
$$
L_{n+1}(x)=\left\{
\begin{array}{cl}
lL_n(x) & \text{if } x\in U^I_{i,j}\text{ for some }U^I_{i,j}\in\mathcal U_n \text{ with }j\text{ even},\\
rL_n(x) & \text{if } x\in U^I_{i,j}\text{ for some }U^I_{i,j}\in\mathcal U_n \text{ with }j\text{ odd}.
\end{array}\right.
$$
Note that $L_{n+1}(x)$ is undefined if $x\not\in\bigcup\mathcal U_n$. Set $\mathcal C_{n+1}=\{{\bf C}^I:I\in\mathcal U_n\}$, and
$$
\mathcal U_{n+1}=\{U^I_{i,j}:I\in\mathcal U_n,\ i\in\omega, j=1,\dots,2^i\}.
$$
We define $f:[0,1]\rightarrow L_2$ by $f(x)=\sup\{L_n(x)\}$. By the construction, $L_{n+1}(x)$ is a child of $L_n(x)$ (whenever $L_{n+1}(x)$ is defined), so $\{L_n(x)\}$ is a strictly increasing (possibly finite) sequence in $\T_2$. Since $\LL_2$ is a DCPO, it follows that $f$ is a well-defined map.

The following is the intuitive idea of the construction. We send the Cantor set to the root of $\T_2$. In the remaining open intervals, we send `half' of the Cantor set occuring in these intervals to the left child of the root and the other `half' to the right child. In the next remaining open intervals, we again send the Cantor set to appropriate left or right children. So any point occurring in one of these Cantor sets is sent to an element of $T_2$. But there are only countably many such Cantor sets and by the Baire category theorem, there must be something left over in $[0,1]$ (see Fact~4 below). This `left over' portion of the interval is sent to $L_2-T_2$.

We present some useful facts concerning the construction. The proof is straightforward.

\smallskip

\noindent{\bf Facts:}
\begin{enumerate}
\item $\forall n\in\omega$, $\mathcal U_n$ is a countable pairwise disjoint family of open intervals and the maximum length of an interval in $\mathcal U_n$ is $\frac{1}{3^{n+1}}$.
\item $\forall n\in\omega$, $\forall C\in\mathcal C_{n+1}$, $\exists I\in\mathcal U_n$, $C\subset I$.
\item $\mathcal C=\{C:\exists n\in\omega,\ C\in\mathcal C_n\}$ is a countable pairwise disjoint family.
\item $\bigcap\nolimits_{n\in\omega}\left(\bigcup\mathcal U_n\right)=[0,1]-\bigcup\mathcal C$.
\item $\{L_n(x)\}$ is an infinite sequence iff $x\in \bigcap\nolimits_{n\in\omega}\left(\bigcup\mathcal U_n\right)$ iff $x\not\in \bigcup\mathcal C$ iff $f(x)\in L_2-T_2$.
\item $\{L_n(x)\}$ is a finite sequence iff $x\in \bigcup\mathcal C$ iff $f(x)\in T_2$.
\item $\forall x\in[0,1]$, if $L_n(x)$ is defined, then $L_n(x)\in T_2$ is a sequence in $\{0,1\}$ consisting of $n$ values; more specifically, $\mathrm{dom}(L_0(x))=\varnothing$ and $\forall n\in\omega$, $\mathrm{dom}(L_{n+1}(x))=\downn$.
\item $\forall x\in[0,1]$, if $\{L_n(x)\}$ is infinite, then for all $n\in\omega$ there is a unique $I_{x,n}\in\mathcal U_n$ such that $x\in I_{x,n}$ and the length of $I_{x,n}$ is at most $\frac{1}{3^{n+1}}$.
\item $\forall x\in[0,1]$, if $\{L_n(x)\}$ is infinite, then $\{I_{x,n}:n\in\omega\}$ is a local basis for $x$.
\item The set $\bigcup\mathcal C$ is dense in $[0,1]$.
\end{enumerate}

\smallskip

\begin{lemma}\label{RealsOntoL2isOpen}
The map $f$ into $\mathbf L_2$ is open.
\end{lemma}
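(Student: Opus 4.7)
The plan is to verify the open-map condition pointwise: for each $x\in[0,1]$ and each open neighborhood $W$ of $x$, I will show that $f(W)$ contains a basic Scott open neighborhood of $b:=f(x)$, that is, a set ${\uparrow}a$ with $a\in T_2$ and $a\le b$. Since $\{{\uparrow}a:a\in T_2\}$ is a basis for the Scott topology on ${\bf L}_2$, this gives that $f(W)$ is Scott open, and hence that $f$ is open.

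The main preliminary step is to prove the formula $f(J)={\uparrow}a_J$ for every $J\in\mathcal U_n$, where $a_J\in T_2$ denotes the common value of $L_{n+1}$ on $J$ (and analogously $f([0,1])=L_2$). The inclusion $f(J)\subseteq{\uparrow}a_J$ is immediate because every label $L_k(y)$ with $y\in J$ and $k\ge n+1$ extends $a_J$. For the reverse inclusion, given any $c\ge a_J$, I recursively descend into $J$ by choosing at each step a removed middle third whose parity (even subindex for a left-step, odd for a right-step) matches the next symbol of $c$ past $a_J$. If $c\in T_2$ this process is finite and any point of the Cantor set of the deepest chosen interval has $f$-value $c$. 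If $c\in L_2-T_2$ the process is infinite, the diameters of the chosen intervals tend to $0$ by Fact~1, and the intersection of their closures is a single point $y$ whose label sequence converges to $c$, giving $f(y)=c$.

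With this formula in hand, I split on whether $b$ lies in $L_2-T_2$ or in $T_2$. If $b\in L_2-T_2$, then $\{L_n(x)\}$ is infinite and by Fact~9 the intervals $I_{x,n}$ form a local basis at $x$, so some $I_{x,n}\subseteq W$; then $f(I_{x,n})={\uparrow}a_{I_{x,n}}$ is a basic Scott open that contains $b$ and lies inside $f(W)$. If $b\in T_2$, then by Facts~5 and~6 there is a unique $J$ with $x\in{\bf C}^J$ and $a_J=b$ (namely $J=[0,1]$ when $b$ is the root, otherwise $J\in\mathcal U_k$ for the appropriate $k$). Here the delicate point is that I must establish $f(W)\supseteq{\uparrow}b$ itself, since no strict ancestor of $b$ need yield a basic open contained in $f(W)$, as the example $x=0$, $W=[0,\varepsilon)$ illustrates. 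To this end I pick $\varepsilon>0$ with $(x-\varepsilon,x+\varepsilon)\cap[0,1]\subseteq W$ and then $n$ so large that the level-$n$ interval $C^J_{n,m}$ containing $x$ has diameter less than $\varepsilon$; then $C^J_{n,m}\subseteq W$ and contains both $U^J_{n+1,2m-1}$ (odd subindex, image ${\uparrow}r(b)$) and $U^J_{n+1,2m}$ (even subindex, image ${\uparrow}l(b)$). Combined with $b\in f(W)$ (since $x\in W$), this yields $f(W)\supseteq\{b\}\cup{\uparrow}l(b)\cup{\uparrow}r(b)={\uparrow}b$.

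The principal obstacle is this second case: ensuring that arbitrarily small neighborhoods of $x$ contain removed middle thirds of \emph{both} parities, so that both children of $b$ appear in $f(W)$. This rests on the self-similar structure of the Cantor set in $J$---inside every $C^J_{n,m}$ around $x$, the next level of the construction produces exactly one odd-subindexed middle third $U^J_{n+1,2m-1}$ and one even-subindexed middle third $U^J_{n+1,2m}$---together with the left/right parity convention in the definition of $L_{n+1}$ that makes odd subindices map to right-children and even ones to left-children.
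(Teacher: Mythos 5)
Your proof is correct, and its core construction coincides with the paper's: the recursive descent through removed middle thirds whose parity (even for a left child, odd for a right child) matches the successive symbols of the target sequence, with a nested-closed-intervals argument supplying the limit point when the target lies in $L_2-T_2$. The difference is one of decomposition. You first prove $f(J)={\uparrow}L(J)$ for $J\in\mathcal U_n$ and then verify openness pointwise, exhibiting for each $x\in W$ a basic Scott open ${\uparrow}a$ with $f(x)\in{\uparrow}a\subseteq f(W)$; the paper instead shows that $f(U)$ is an $\le$-upset and that every point of $f(U)\cap(L_2-T_2)$ has a finite approximant in $f(U)$, and only afterwards derives $f(I)={\uparrow}L(I)$ as Lemma~\ref{fImgIinUn}, using this openness lemma. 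Your ordering makes that identity available up front and cleanly isolates the one delicate point, which you diagnose correctly: when $f(x)\in T_2$ one must capture all of ${\uparrow}f(x)$, and the two oppositely-paritied middle thirds $U^J_{n+1,2m-1}$ and $U^J_{n+1,2m}$ inside a small $C^J_{n,m}\subseteq W$ are exactly what the paper also uses to launch its descent in that case. The only step you state loosely is that the limit point of the descent lies in the chosen open intervals themselves rather than merely in their closures (needed so that all its labels are defined); this holds because the closure of each chosen middle third sits inside the preceding open interval, a point the paper makes explicit via the auxiliary intervals $K_m$.
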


\begin{proof}
Let $U\subseteq[0,1]$ be an open interval. First we show that $f(U)$ is an $\le$-upset. Let $x\in U$, $a\in L_2$, and $f(x)\le a$. We show that $a\in f(U)$. If $f(x)\in L_2-T_2$, then $a=f(x)\in f(U)$. So assume that $f(x)\in T_2$. Then the sequence of labels $\{L_n(x)\}$ is finite, so $f(x)=L_n(x)$, where $n$ is the largest integer for which $L_n(x)$ is defined. Therefore, there is a unique $I\in\mathcal U_{n-1}$ (note $I=[0,1]$ in case $n=0$) such that $x\in{\bf C}^I=\bigcap\nolimits_{i\in\omega}\bigcup\nolimits_{j=1}^{2^i}C^I_{i,j}$. Thus, there are $i\in\omega$ and $j\in\{1,\dots,2^i\}$ such that $x\in C^I_{i,j}\subseteq U$.

Assume $a\in L_2-T_2$. Put $a_n=a|_{\downarrow(n-1)}$ (note if $n=0$, then $a_n$ is the root). Then $a_n=f(x)=L_n(x)$. Define recursively a sequence of open intervals $\{I_m:m\in\omega\}$. Let $x_0<y_0$ be the endpoints of $C^I_{i,j}$ and put $I_0=(x_0,y_0)$. There is an odd $k\in\{1,\dots,2^{i+1}\}$ such that $U^I_{i+1,k},U^I_{i+1,k+1}$ are the open middle thirds removed in the two closed thirds $C^I_{i+1,k},C^I_{i+1,k+1}\subset C^I_{i,j}$ which remain after removing $U^I_{i,j}$ from $C^I_{i,j}$. Set
$$
I_1=
\left\{
\begin{array}{ll}
U^I_{i+1,k}&\mbox{ if }a_{n+1}=ra_n,\\
U^I_{i+1,k+1}&\mbox{ if }a_{n+1}=la_n.
\end{array}
\right.
$$
For $m\ge 1$, set
$$
I_{m+1}=
\left\{
\begin{array}{ll}
U^{I_m}_{1,1}&\mbox{ if }a_{n+m+1}=ra_{n+m},\\
U^{I_m}_{1,2}&\mbox{ if }a_{n+m+1}=la_{n+m}.
\end{array}
\right.
$$
It follows by induction that for $m\ge 0$, we have $I_{m+1}\in\mathcal U_{n+m}$ and for all $y\in I_m$, we have $L_{n+m}(y)=a_{n+m}$. Therefore, for $m\ge 1$, we have $f(y)=a_{n+m}$ for all $y\in {\bf C}^{I_m}\subset U$. Moreover, if $y\in\bigcap\nolimits_{m\in\omega}I_m$, then
$$
f(y)=\sup\{L_i(y):i\in\omega\}=\sup\{L_{n+m}(y):m\in\omega\}=\sup\{a_{n+m}:m\in\omega\}=a.
$$
To see that $\bigcap\nolimits_{m\in\omega}I_m\not=\varnothing$, let $x_m<y_m$ be the endpoints of $I_m$. Set
$$
K_m=\left[x_m+\frac{y_m-x_m}{9},y_m-\frac{y_m-x_m}{9}\right].
$$
Then $I_{m+1}\subset K_m\subset I_m$. Since $\{K_m:m\in\omega\}$ is a strictly decreasing family of closed intervals,
$$
\{y\}=\bigcap\limits_{m\in\omega}K_m\subset\bigcap\limits_{m\in\omega}I_m\subset I_0\subset U
$$
for some $y\in[0,1]$. Thus, $a,a_{n+m}\in f(U)$ for all $m\in\omega$. Now, since for all $b\in {\uparrow}f(x)\cap T_2$ there is $a\in L_2-T_2$ such that $b\le a$ (and hence $b=a_{n+m}$ for some $m$), it follows that $f(U)$ is an $\le$-upset.

Next, if $f(x)\in L_2-T_2$, then there is $b\in f(U)\cap T_2$ such that $b\le f(x)$. To see this, since $\{I_{x,n}\in\mathcal U_n:n\in\omega,\ x\in I_{x,n}\}$ is a local basis for $x$, there is $n\in\omega$ such that $I_{x,n}\subset U$. By the construction, each $y\in I_{x,n}$ has the same $(n+1)^{\text{th}}$-label, namely $L_{n+1}(y)=L_{n+1}(x)$. Taking $y\in{\bf C}^{I_{x,n}}(\not=\varnothing)$, we get $y\in U$, $f(y)=L_{n+1}(y)=L_{n+1}(x)\le f(x)$, and $f(y)\in T_2$. Thus, $f(U)$ is Scott open, which completes the proof that $f$ is an open map.
\end{proof}

\begin{lemma}\label{RealsOntoL2}
The map $f$ is onto.
\end{lemma}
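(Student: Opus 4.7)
The plan is to leverage Lemma~\ref{RealsOntoL2isOpen}, whose proof in fact establishes a bit more than openness: it shows that for each open interval $U\subseteq[0,1]$, the image $f(U)$ is a $\le$-upset (and Scott open) in $\LL_2$. Given this, the onto claim will reduce to exhibiting a single point $x_0\in(0,1)$ such that $f(x_0)$ equals the root of $\T_2$, because the root is the $\le$-minimum of $\LL_2$, and hence an upset containing the root must equal all of $L_2$.

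To produce such a witness, I would take any $x_0\in {\bf C}^{[0,1]}\cap(0,1)$; this intersection is nonempty since ${\bf C}^{[0,1]}$ is uncountable while only the endpoints $0$ and $1$ are excluded. By the very construction, ${\bf C}^{[0,1]}$ is disjoint from $\bigcup\mathcal U_0$ (the collection of removed open middle thirds at the first level), and since $\mathcal U_{n+1}$ consists of open middle thirds inside members of $\mathcal U_n$, we also have $\bigcup\mathcal U_{n+1}\subseteq\bigcup\mathcal U_n$; consequently $x_0\notin\bigcup\mathcal U_n$ for every $n\in\omega$. By the recursive definition of the labels, this means $L_n(x_0)$ is undefined for $n\ge 1$, so the associated sequence is just the singleton $\{L_0(x_0)\}$ consisting of the root of $\T_2$, and therefore $f(x_0)=\sup\{L_0(x_0)\}$ is the root of $\T_2$.

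Now since $(0,1)$ is an open interval in $[0,1]$, Lemma~\ref{RealsOntoL2isOpen} gives that $f\left((0,1)\right)$ is a $\le$-upset of $\LL_2$. Because it contains $f(x_0)$, which is the minimum of $\LL_2$, we conclude $f\left((0,1)\right)=L_2$, and hence $f\left([0,1]\right)=L_2$.

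I do not expect any substantive obstacle in this lemma; the real content of surjectivity was already embedded in the proof of Lemma~\ref{RealsOntoL2isOpen}, where arbitrary $a\in L_2$ with $a\ge f(x)$ were explicitly realized as values of $f$ on points near $x$. The present lemma just records the cleanest consequence of that fact: ``upset plus minimum witness equals everything.''
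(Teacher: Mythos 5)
Your proof is correct and takes essentially the same route as the paper: both arguments invoke the fact, established in the proof of Lemma~\ref{RealsOntoL2isOpen}, that $f$ maps open intervals to $\le$-upsets, and both exhibit a point of the Cantor set ${\bf C}^{[0,1]}$ that is sent to the root of $\LL_2$, whence the image is all of $L_2$. Your explicit verification that such a point receives only the label $L_0$ is just a careful unpacking of what the paper treats as immediate from the construction.
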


\begin{proof}
Let $U$ be an open subset of $[0,1]$ with $U\cap{\bf C}\not=\varnothing$. Then the root of $L_2$ belongs to $f(U)$. By the proof of Lemma~\ref{RealsOntoL2isOpen}, $f(U)$ is an $\le$-upset. Thus, $f(U)=L_2$, and so $f$ is onto.
\end{proof}

Let $I\in\mathcal U_n$ for some $n\in\omega$. Since each $x\in I$ has the same label, namely $L_{n+1}(x)$, setting $L(I)=L_{n+1}(x)$ for some $x\in I$ is well defined. An inductive argument gives that for all $a\in T_2$ except the root, there is $n\in\omega$ such that $\mathrm{dom}(a)=\downn$ and there is $I\in\mathcal U_n$ such that $L(I)=a$. (In fact, $\mathrm{dom}(L(I))=\downn$ iff $I\in\mathcal U_n$.)

\begin{lemma}\label{fImgIinUn}
For each $n\in\omega$ and $I\in \mathcal U_n$, we have $f(I)={\uparrow}L(I)$.
\end{lemma}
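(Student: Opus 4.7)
My plan is to prove the lemma by double containment, leveraging the construction carefully.

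For the forward containment $f(I)\subseteq {\uparrow}L(I)$, the argument is essentially by unpacking definitions. Given $x\in I$ with $I\in\mathcal U_n$, Step~$n+1$ of the construction assigns $L_{n+1}(x)\in\{lL_n(x),rL_n(x)\}$ in a way that depends only on the index of $I$ within $\mathcal U_n$, not on $x$; hence $L_{n+1}(x)=L(I)$. Since $f(x)=\sup\{L_m(x)\}$ and the sequence $\{L_m(x)\}$ is $\le$-increasing in $\mathfrak L_2$, I get $L(I)=L_{n+1}(x)\le f(x)$, so $f(x)\in{\uparrow}L(I)$.

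For the reverse containment ${\uparrow}L(I)\subseteq f(I)$, my strategy is first to invoke Lemma~\ref{RealsOntoL2isOpen}: since $I$ is an open interval in $[0,1]$, $f(I)$ is Scott open in $\mathbf L_2$, in particular an $\le$-upset. Thus it suffices to exhibit a single $x\in I$ with $f(x)=L(I)$. I will take $x\in{\bf C}^I$, which is nonempty as it is a Cantor-like set in $I$. The key claim is that the label sequence $\{L_m(x)\}$ terminates precisely at $L_{n+1}(x)$, so that $f(x)=L_{n+1}(x)=L(I)$.

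To verify termination, I need $x\notin\bigcup\mathcal U_{n+1}$. Suppose $x\in J$ for some $J\in\mathcal U_{n+1}$. By construction $J=U^{I'}_{i,j}$ for some $I'\in\mathcal U_n$, so $J\subseteq I'$ and hence $x\in I'$. By pairwise disjointness of $\mathcal U_n$ (Fact~1) and $x\in I\in\mathcal U_n$, this forces $I'=I$, so $J=U^I_{i,j}$. But $x\in{\bf C}^I=\bigcap_{k}\bigcup_m C^I_{k,m}$ is disjoint from every removed middle third $U^I_{i,j}$, a contradiction. Thus $L_{n+2}(x)$ is undefined, the sequence $\{L_m(x)\}$ stops at $L_{n+1}(x)=L(I)$, and $f(x)=L(I)\in f(I)$. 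Since $f(I)$ is an $\le$-upset, ${\uparrow}L(I)\subseteq f(I)$ follows. No step here is a serious obstacle; the only place requiring care is tracking how the pairwise disjointness of the $\mathcal U_n$ interacts with the nested Cantor-set construction to ensure the label sequence terminates at exactly the right stage.
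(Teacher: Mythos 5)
Your proof is correct and follows essentially the same route as the paper's: both directions rest on the facts that every $x\in I$ satisfies $L_{n+1}(x)=L(I)\le f(x)$, that points of ${\bf C}^I$ have label sequence terminating at $L_{n+1}$ so $f$ maps them to $L(I)$, and that $f(I)$ is an $\le$-upset by Lemma~\ref{RealsOntoL2isOpen}. Your extra verification that the label sequence of $x\in{\bf C}^I$ stops at stage $n+1$ is a detail the paper absorbs into the construction, but it is accurate and welcome.
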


\begin{proof}
Since $I$ is an open interval containing ${\bf C}^I$ and $f(x)=L_{n+1}(x)=L(I)$ for each $x\in{\bf C}^I$, we have $L(I)\in f(I)$. Lemma~\ref{RealsOntoL2isOpen} gives that $f(I)$ is an $\le$-upset, hence ${\uparrow}L(I)\subseteq f(I)$. Conversely, $L(I)=L_{n+1}(x)\le f(x)$ for every $x\in I$. Thus, $f(I)\subseteq {\uparrow}L(I)$.
\end{proof}

\begin{lemma}\label{RealsOntoL2isCTS}
The map $f$ onto $\mathbf L_2$ is continuous.
\end{lemma}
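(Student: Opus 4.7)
The plan is to verify continuity by checking that $f^{-1}({\uparrow}a)$ is open in $[0,1]$ for every $a \in T_2$, since $\{{\uparrow}a : a \in T_2\}$ is a basis for the Scott topology on $\mathbf L_2$. The case when $a$ is the root of $\T_2$ is trivial, as then ${\uparrow}a = L_2$ and $f^{-1}({\uparrow}a) = [0,1]$.

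Assume from now on that $a$ is not the root, and let $n \in \omega$ be the unique integer with $\mathrm{dom}(a) = \downn$. Set $\mathcal I_a = \{I \in \mathcal U_n : L(I) = a\}$. The main claim is that $f^{-1}({\uparrow}a) = \bigcup \mathcal I_a$. The inclusion $\bigcup \mathcal I_a \subseteq f^{-1}({\uparrow}a)$ is immediate from Lemma~\ref{fImgIinUn}, which gives $f(I) = {\uparrow}L(I) = {\uparrow}a$ for each $I \in \mathcal I_a$. For the reverse inclusion, I would take $x$ with $f(x) \ge a$ and argue that the chain $\{L_k(x)\}$ must extend at least to level $n+1$, with $L_{n+1}(x) = a$: if $f(x) \in T_2$ then $f(x) = L_k(x)$ for the maximal $k$, and $a$ must be the ancestor of $L_k(x)$ at depth $n+1$; if $f(x) \in L_2 - T_2$ then $a$ is a proper prefix of the sup of an infinite increasing chain whose level-$k$ term has domain $\downarrow(k-1)$ by Fact~7, so again $a$ occurs at level $n+1$. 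But $L_{n+1}(x)$ being defined means $x$ lies in some (necessarily unique, by Fact~1) $I \in \mathcal U_n$, and by the recursive definition $L_{n+1}(x) = L(I)$, forcing $L(I) = a$ and hence $I \in \mathcal I_a$.

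Since every element of $\mathcal U_n$ is an open interval of $[0,1]$ by Fact~1, the union $\bigcup \mathcal I_a$ is open, so $f^{-1}({\uparrow}a)$ is open, which finishes the proof. The only subtle point will be the uniform treatment of the two cases $f(x) \in T_2$ and $f(x) \in L_2 - T_2$ when showing that $a \le f(x)$ forces $L_{n+1}(x)$ to be defined and equal to $a$; once the domain bookkeeping from Fact~7 is in hand this is essentially a one-line observation, so no serious obstacle is anticipated.
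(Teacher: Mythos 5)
Your proposal is correct and follows essentially the same route as the paper: reduce to the basic Scott opens ${\uparrow}a$, establish $f^{-1}({\uparrow}a)=\bigcup\{I\in\mathcal U_n:L(I)=a\}$ using Lemma~\ref{fImgIinUn} for one inclusion and the identity $L_{n+1}(x)=f(x)|_{\downarrow n}$ for the other, and conclude by openness of the intervals in $\mathcal U_n$. The paper merely treats the two cases $f(x)\in T_2$ and $f(x)\in L_2-T_2$ in a single line rather than separately.
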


\begin{proof}
It is sufficient to show that $f^{-1}({\uparrow}a)$ is open for each $a\in T_2$. If $a$ is the root, then $f^{-1}({\uparrow}a)=[0,1]$. Therefore, we may assume that $a$ is not the root. Then $\mathrm{dom}(a)=\downn$ for some $n\in\omega$. We show that
$$
f^{-1}({\uparrow}a)=\bigcup\{I\in\mathcal U_n:L(I)=a\}.
$$
The $\supseteq$ direction follows from Lemma~\ref{fImgIinUn} as
\begin{eqnarray*}
f\left(\bigcup\{I\in\mathcal U_n:L(I)=a\}\right) &=& \bigcup\{f(I):I\in\mathcal U_n \ \& \ L(I)=a\}\\
&=&\bigcup\{{\uparrow}L(I): I\in\mathcal U_n \ \& \ L(I)=a\}\\
&=& {\uparrow}a.
\end{eqnarray*}
Let $x\in f^{-1}({\uparrow}a)$. Then $a\le f(x)$ (and so $f(x)$ extends $a$), which gives $\mathrm{dom}(f(x))\supseteq\downn$ and $f(x)|_{\downarrow n}=a$, giving $L_{n+1}(x)=a$. Since $L_{n+1}(x)$ is defined, $x\in\bigcup\mathcal U_n$. So there is $I\in\mathcal U_n$ such that $x\in I$. Furthermore, $L(I)=L_{n+1}(x)=a$. So $x\in\bigcup\{I\in\mathcal U_n:L(I)=a\}$, and the $\subseteq$ direction holds. Thus, $f$ is continuous.
\end{proof}

Putting together Lemmas~\ref{RealsOntoL2isOpen}, \ref{RealsOntoL2}, and~\ref{RealsOntoL2isCTS} yields:

\begin{theorem}\label{L2isIntrImageR}
$\mathbf L_2$ is an interior image of any (non-trivial) interval in $\mathbf R$.
\end{theorem}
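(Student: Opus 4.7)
The case $I = [0,1]$ is immediate: by Lemmas~\ref{RealsOntoL2isOpen}, \ref{RealsOntoL2}, and~\ref{RealsOntoL2isCTS}, the constructed map $f : [0,1] \to \mathbf L_2$ is continuous, open, and onto, hence an onto interior map. My plan is to reduce the general case to this one.

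First I would dispose of unbounded intervals. Every non-trivial unbounded interval in $\mathbf R$ is homeomorphic to a non-trivial bounded interval via a standard homeomorphism (e.g.\ $\mathbf R \cong (0,1)$ and $[a,\infty) \cong [0,1)$), and being an interior image is invariant under homeomorphism of the source, so it suffices to handle non-trivial bounded intervals. Next, for a closed bounded non-trivial interval $[a,b]$, an affine homeomorphism with $[0,1]$ transports $f$ to an onto interior map $g\colon [a,b] \to \mathbf L_2$. Each remaining non-trivial bounded interval is one of $(a,b)$, $[a,b)$, or $(a,b]$ with $a<b$, and each of these is an open subspace of $[a,b]$. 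Restricting $g$ to such a subspace yields a continuous and open map into $\mathbf L_2$ (these properties descend to any open subspace), so only the surjectivity of the restriction is left to verify.

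This surjectivity is a routine inspection of the construction: for any $c \in L_2$, the fiber $g^{-1}(c)$ meets the open interior $(a,b)$, which is contained in each of the three non-closed bounded sub-intervals. Over the root of $\mathbf L_2$, the fiber contains the Cantor set $\mathbf C^{[a,b]}$, which has uncountably many points in $(a,b)$. Over any non-root $c \in T_2$, by Lemma~\ref{RealsOntoL2isCTS} the fiber is contained in $g^{-1}({\uparrow}c) = \bigcup\{I \in \mathcal U_n : L(I) = c\}$, a union of open middle-third intervals each lying inside $(a,b)$. For $c \in L_2 \setminus T_2$, any preimage has an infinite label sequence and hence lies in $\bigcap_n \bigcup \mathcal U_n \subseteq \bigcup \mathcal U_0 \subseteq (a,b)$. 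The main obstacle is this final fiber analysis, but it is a bookkeeping step entirely contained within the already-performed construction, and it closes out the reduction to the $[0,1]$ case.
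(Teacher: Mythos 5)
Your proposal is correct and follows essentially the same route as the paper: the paper likewise observes that the whole Cantor set $\mathbf C^{[0,1]}$ is sent to the root, so that the restrictions of $f$ to $(0,1)$ and $[0,1)$ remain onto interior maps, and then invokes the fact that every non-trivial interval is homeomorphic to one of $(0,1)$, $[0,1)$, $[0,1]$. Your fiber-by-fiber verification of surjectivity just spells out the detail the paper labels ``straightforward.''
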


\begin{proof}
Since $f$ sends the entire Cantor set ${\bf C}^{[0,1]}$ to the root of $\LL_2$, it is straightforward that both $f|_{(0,1)}$ and $f|_{[0,1)}$ are interior maps from $(0,1)$ and $[0,1)$ onto $\mathbf L_2$, respectively. The result follows since any (non-trivial) real interval is homeomorphic to either $(0,1)$, $[0,1)$, or $[0,1]$.
\end{proof}

In order to prove the main result of this section, we need the following lemma.

\begin{lemma}\label{RefuteOneFormInGenSpOnR}
Let $L$ be a logic above ${\bf S4}$. If $\varphi\notin L$, then there is a general space over any (non-trivial) interval in $\mathbf R$ validating $L$ and refuting $\varphi$.
\end{lemma}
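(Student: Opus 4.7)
The plan is to mirror the proof of Lemma~\ref{RefuteOneFormInGenSpOnC} almost verbatim, simply replacing the role of Corollary~\ref{L2InteriorImgCantor} (interior image from $\mathbf C$) with Theorem~\ref{L2isIntrImageR} (interior image from any non-trivial real interval). The underlying machinery is already in place: Lemma~\ref{RefuteOneFormInGenSpOnL_2} produces, for each non-theorem $\varphi$ of $L$, a general space over ${\bf L}_2$ that validates $L$ and refutes $\varphi$, and Lemma~\ref{GenTopFromGFAndIntMap} lets us pull such a general space back along any onto interior map.

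Concretely, first I would invoke Lemma~\ref{RefuteOneFormInGenSpOnL_2} to obtain a subalgebra $\mathcal P$ of ${\bf L}_2^+$ such that the general space $({\bf L}_2,\mathcal P)$ satisfies $\mathcal P\models L$ while $\varphi$ fails under some valuation into $\mathcal P$. Next, given a (non-trivial) interval $I\subseteq \mathbf R$, Theorem~\ref{L2isIntrImageR} yields an onto interior map $f:I\to {\bf L}_2$. Setting $\mathcal Q=\{f^{-1}(A):A\in\mathcal P\}$, Lemma~\ref{GenTopFromGFAndIntMap} tells us that $(I,\mathcal Q)$ is a general space over $I$ and that $L(I,\mathcal Q)=L({\bf L}_2,\mathcal P)$. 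Consequently $(I,\mathcal Q)$ validates every theorem of $L$ and refutes $\varphi$, which is exactly what we need.

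There is essentially no obstacle here: all the substantive work has already been done in establishing Theorem~\ref{L2isIntrImageR} and in the transfer lemma (Lemma~\ref{GenTopFromGFAndIntMap}). The only thing to double-check is that the statement is claimed for \emph{any} non-trivial interval, but Theorem~\ref{L2isIntrImageR} was proved at that level of generality (it covers $(0,1)$, $[0,1)$, and $[0,1]$, and every non-trivial real interval is homeomorphic to one of these), so pulling $\mathcal P$ back along the corresponding interior map works uniformly. The lemma then follows in a couple of lines.
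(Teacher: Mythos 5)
Your proposal is correct and is essentially identical to the paper's own proof: the paper likewise applies Lemma~\ref{RefuteOneFormInGenSpOnL_2} to get a general space $({\bf L}_2,\mathcal P)$ for $L$ refuting $\varphi$, then uses Theorem~\ref{L2isIntrImageR} to realize ${\bf L}_2$ as an interior image of any non-trivial interval $I$ and pulls $\mathcal P$ back to a subalgebra $\mathcal Q$ of $I^+$. No gaps.
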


\begin{proof}
By Lemma~\ref{RefuteOneFormInGenSpOnL_2}, there is a general space $({\bf L}_2,\mathcal P)$ for $L$ refuting $\varphi$. By Theorem~\ref{L2isIntrImageR}, ${\bf L}_2$ is an interior image of any (non-trivial) interval $I$ in $\mathbf R$, so ${\bf L}_2^+$ is isomorphic to a subalgebra of $I^+$. Therefore, $\mathcal P$ is isomorphic to some $\mathcal Q\in{\bf S}(I^+)$. Thus, there is a general space $(I,\mathcal Q)$ for $L$ refuting $\varphi$.
\end{proof}

We are ready to prove the main result of this section.

\begin{theorem}\label{OneLogicOneGenSpaceInSubsOfReals}
For a normal modal logic $L$, the following conditions are equivalent.
\begin{enumerate}
\item $L$ is a logic above {\bf S4}.
\item $L$ is the logic of a general space over an open subspace of $\mathbf R$.
\item There is a closure algebra $\mathfrak A\in{\bf SH}(\mathbf R^+)$ such that $L=L(\mathfrak A)$.
\end{enumerate}
\end{theorem}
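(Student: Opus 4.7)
The plan is to follow the template already used for Theorem~\ref{OneLogicOneGenTopOnQ} and Theorem~\ref{CompletenessForAnyLwrtCantor}, now leveraging Lemma~\ref{RefuteOneFormInGenSpOnR} as the per-non-theorem refutation step. For $(1)\Rightarrow(2)$, I would enumerate the non-theorems of $L$ as $\{\varphi_n:n\in\omega\}$ and, for each $n$, apply Lemma~\ref{RefuteOneFormInGenSpOnR} to a bounded open interval $I_n\subseteq\mathbf R$ to obtain a general space $\X_n=(I_n,\mathcal P_n)$ validating $L$ and refuting $\varphi_n$. The only twist compared with the $\mathbf Q$ and $\mathbf C$ cases is that the countable topological sum of copies of $\mathbf R$ is not homeomorphic to $\mathbf R$; this is the main obstacle and the reason the statement says ``open subspace of~$\mathbf R$'' rather than~$\mathbf R$ itself. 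I would get around it by simply choosing the $I_n$ to be pairwise disjoint bounded open intervals already sitting inside $\mathbf R$, say $I_n=(n,n+1)$, so that $X=\bigcup_{n\in\omega}I_n$ is literally an open subspace of~$\mathbf R$.

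Next I would define $\X=(X,\mathcal P)$ to be the sum of the $\X_n$ in the sense of the definition given at the end of Section~\ref{SecGenTopSem}, i.e. $\mathcal P=\{A\subseteq X:A\cap I_n\in\mathcal P_n\text{ for all }n\}$. Because sums of general spaces preserve validity we have $L(\X)=\bigcap_{n\in\omega}L(\X_n)\supseteq L$, while each $\varphi_n$ is refuted in the summand $\X_n$ and hence in $\X$, so $L(\X)\subseteq L$. Therefore $L=L(\X)$, realizing $L$ as the logic of a general space over the open subspace $X$ of $\mathbf R$.

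For $(2)\Rightarrow(3)$, suppose $L=L(X,\mathcal P)$ with $X$ an open subspace of $\mathbf R$. The inclusion $X\hookrightarrow\mathbf R$ is an open subspace inclusion, hence by the paragraph preceding Section~\ref{SecRationals} the induced map $X^+$ is a homomorphic image of $\mathbf R^+$, i.e.\ $X^+\in\mathbf H(\mathbf R^+)$. Since $\mathcal P$ is a subalgebra of $X^+$ we obtain $\mathcal P\in\mathbf{SH}(\mathbf R^+)$, and of course $L=L(\X)=L(\mathcal P)$, so we may take $\mathfrak A=\mathcal P$. Finally $(3)\Rightarrow(1)$ is immediate because any $\mathfrak A\in\mathbf{SH}(\mathbf R^+)$ is a closure algebra, so $L(\mathfrak A)$ is automatically a logic above~{\bf S4}.

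The whole argument is routine once Lemma~\ref{RefuteOneFormInGenSpOnR} (and behind it Theorem~\ref{L2isIntrImageR}) is in hand: all the real work was done in constructing the interior map $f\colon[0,1]\to\mathbf L_2$ and combining it with Kremer's embedding $\T_2^+\hookrightarrow\mathbf L_2^+$ and the CGFP. The only conceptual subtlety to flag in the write-up is precisely the one already mentioned: we cannot upgrade ``open subspace of $\mathbf R$'' to ``$\mathbf R$'' here, because connectedness of $\mathbf R$ would force $L$ to be connected, which is handled separately in Section~\ref{SecRealLineAndConnLogs} via the gluing technique.
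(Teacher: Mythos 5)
Your proposal is correct and follows essentially the same route as the paper: enumerate the non-theorems, apply Lemma~\ref{RefuteOneFormInGenSpOnR} to the pairwise disjoint intervals $(n,n+1)$, take the sum to obtain a general space over the open subspace $\bigcup_{n\in\omega}(n,n+1)$ of $\mathbf R$, and note that open subspaces yield homomorphic images of $\mathbf R^+$ for $(2)\Rightarrow(3)$. The paper's proof is exactly this argument, down to the choice of intervals.
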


\begin{proof}
(1)$\Rightarrow$(2): Let $L$ be a logic above {\bf S4}, and let $\{\varphi_n:n\in\omega\}$ be an enumeration of the non-theorems of $L$. By Lemma~\ref{RefuteOneFormInGenSpOnR}, for each $n\in\omega$, there is a general space $\X_n=((n,n+1),\mathcal P_n)$ for $L$ refuting $\varphi_n$. Taking the sum of $\X_n$ gives the general space $\X=(\bigcup_{n\in\omega}(n,n+1),\mathcal P)$, where $A\in\mathcal P$ iff $A\cap(n,n+1)\in\mathcal P_n$. Clearly $\bigcup_{n\in\omega}(n,n+1)$ is an open subspace of $\mathbf R$ and $L(\X)=L$.

(2)$\Rightarrow$(3): Suppose that $X$ is an open subspace of $\mathbf R$, $\X=(X,\mathcal P)$ is a general space over $X$, and $L=L(\X)$. Since $X$ is an open subspace of $\mathbf R$, we have $X^+\in{\bf H}(\mathbf R^+)$. As $\mathcal P$ is a subalgebra of $X^+$, we have $\mathcal P\in{\bf SH}(\mathbf R^+)$. Finally, $L=L(\X)$ yields $L=L(\mathcal P)$.

(3)$\Rightarrow$(1): This is clear since $\mathfrak A$ is a closure algebra and $L=L(\mathfrak A)$.
\end{proof}

Putting together what we have established so far yields:

\begin{corollary}
Let $L$ be a normal modal logic. The following are equivalent.
\begin{enumerate}
\item $L$ is a logic above {\bf S4}.
\item $L=L(\mathfrak A)$ for some $\mathfrak A\in{\bf S}(\mathbf Q^+)$.
\item $L=L(\mathfrak B)$ for some $\mathfrak B\in{\bf S}({\bf C}^+)$.
\item $L=L(\mathfrak C)$ for some $\mathfrak C\in{\bf SH}(\mathbf R^+)$.
\item $L=L(\mathfrak D)$ for some $\mathfrak D\in{\bf SH}({\bf L}_2^+)$.
\item $L=L(\mathfrak E)$ for some $\mathfrak E\in{\bf SH}(\T_2^+)$.
\end{enumerate}
\end{corollary}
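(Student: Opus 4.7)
The plan is to observe that this corollary is essentially a bookkeeping assembly of five equivalences already established in the paper, one for each of the spaces $\mathbf Q$, $\mathbf C$, $\mathbf R$, ${\bf L}_2$, and $\T_2$. Each equivalence has $(1)$ as one of its sides, so stringing them together through condition $(1)$ yields the six-way equivalence at once. There is no new mathematical content to produce, only a careful citation of prior results.

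Concretely, I would prove the corollary as follows. The equivalence $(1) \Leftrightarrow (2)$ is exactly Theorem~\ref{OneLogicOneGenTopOnQ}. The equivalence $(1) \Leftrightarrow (3)$ is exactly Theorem~\ref{CompletenessForAnyLwrtCantor}. The equivalence $(1) \Leftrightarrow (4)$ is Theorem~\ref{OneLogicOneGenSpaceInSubsOfReals}, after noting that an open subspace $X$ of $\mathbf R$ gives $X^+ \in {\bf H}(\mathbf R^+)$, so subalgebras of such $X^+$ are precisely the members of ${\bf SH}(\mathbf R^+)$. The equivalence $(1) \Leftrightarrow (5)$ is Theorem~\ref{OneLogicOneGenSpaceOnL2Scott}, again using that a Scott open subspace of ${\bf L}_2$ yields a homomorphic image of ${\bf L}_2^+$. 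Finally, $(1) \Leftrightarrow (6)$ is Theorem~\ref{OneLogicOneGenSubInT2}, where generated subframes of $\T_2$ correspond to homomorphic images of $\T_2^+$. The one direction common to every equivalence that requires no citation at all is that each of $(2)$--$(6)$ implies $(1)$: in every case $L$ is presented as the logic of a closure algebra, and the logic of any closure algebra is automatically a logic above $\mathbf{S4}$.

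There is no genuine obstacle here; the corollary is a summary. The only potential pitfall is to keep straight which theorems give equivalences with $\mathbf S$ (subalgebras) versus with $\mathbf{SH}$ (subalgebras of homomorphic images): for $\mathbf Q$ and $\mathbf C$ the countable sum (respectively, one-point compactification of the countable sum) is homeomorphic to the space itself, so sums of general spaces stay inside $\mathbf S(\cdot)$; for $\mathbf R$, ${\bf L}_2$, and $\T_2$ this self-reproduction fails and one must pass to an open (respectively Scott open, generated) subspace, thereby landing in $\mathbf{SH}(\cdot)$ rather than in $\mathbf S(\cdot)$. Once this distinction is respected, the proof is a single chain of references: $(2) \Leftrightarrow (1) \Leftrightarrow (3)$, $(1) \Leftrightarrow (4)$, $(1) \Leftrightarrow (5)$, $(1) \Leftrightarrow (6)$, whence all six are mutually equivalent.
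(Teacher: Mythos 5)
Your proposal is correct and matches the paper's own treatment exactly: the paper offers no proof beyond ``putting together what we have established so far,'' and the intended argument is precisely the chain of citations you give, with each of (2)--(6) implying (1) because the logic of any closure algebra lies above $\mathbf{S4}$. The only quibble is your phrase that subalgebras of $X^+$ for $X$ open in $\mathbf R$ are ``precisely'' the members of ${\bf S}{\bf H}(\mathbf R^+)$ --- they form only a subcollection --- but this is harmless since Theorem~\ref{OneLogicOneGenSpaceInSubsOfReals} (and likewise the theorems for ${\bf L}_2$ and $\T_2$) already states the ${\bf S}{\bf H}$ condition verbatim as one of its equivalent clauses.
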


In the proof of Theorem \ref{OneLogicOneGenSpaceInSubsOfReals}, instead of taking $X=\bigcup_{n\in\omega}(n,n+1)$, we could have taken $X$ to be larger. But in general, the `largest' $X$ can get is a countably infinite union of pairwise disjoint open intervals that is dense in $\mathbf R$. In the next section we characterize the logics above {\bf S4} that arise from general spaces over the entire real line.

\section{Connected logics and completeness for general spaces over $\mathbf R$}\label{SecRealLineAndConnLogs}

In this section we characterize the logics above {\bf S4} that arise from general spaces over $\mathbf R$. Since $\mathbf R^+$ is connected, so is each subalgebra of $\mathbf R^+$, so if $L$ is the logic of a general space over $\mathbf R$, then $L$ is connected. In \cite{BG11} it was shown that if $L$ is a connected logic above {\bf S4} that has the FMP, then $L$ is the logic of some subalgebra of $\mathbf R^+$. We strengthen this result by proving that a logic $L$ above {\bf S4} is connected iff $L$ is the logic of some general space over $\mathbf R$, which is equivalent to $L$ being the logic of some subalgebra of $\mathbf R^+$. This is the main result of the paper and solves \cite[p.~306, Open Problem 2]{BG11}. The proof requires several steps. For a connected logic $L$, we show that each non-theorem $\varphi_n$ of $L$ is refuted on a general space over an interior image $X_n$ of ${\bf L}_2$. For this we use the CGFP and Kremer's embedding of $\T_2^+$ into ${\bf L}_2^+$. We also utilize that ${\bf L}_2$ is an interior image of $\mathbf R$, which allows us to obtain each $X_n$ as an interior image of $\mathbf R$. We generalize the technique of `gluing' from \cite{BG11} to glue the $X_n$ accordingly, and design an interior map from $\mathbf R$ onto the glued copies of the $X_n$, which yields a general space over $\mathbf R$ whose logic is $L$.

For the readers' convenience, we first state the main result and provide the sketch of the proof. The various technical tools that are utilized in the proof, as well as the rigorous definitions of some of the constructions are provided later in the section.

\begin{theorem}[Main Result]\label{MainTheorem}
Let $L$ be a logic above ${\bf S4}$. The following are equivalent.
\begin{enumerate}
\item $L$ is connected.
\item $L$ is the logic of a countable path-connected general {\bf S4}-frame.
\item $L$ is the logic of a countable connected general space.
\item $L$ is the logic of a general space over $\mathbf R$.
\item $L=L(\mathfrak A)$ for some $\mathfrak A\in{\bf S}(\mathbf R^+)$.
\end{enumerate}
\end{theorem}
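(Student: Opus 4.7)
The plan is to close the cycle via $(5)\Rightarrow(4)\Rightarrow(1)\Rightarrow(2)\Rightarrow(3)\Rightarrow(5)$. The easy directions: $(4)\Leftrightarrow(5)$ is immediate from the definition of a general space over $\mathbf R$, and $(4)\Rightarrow(1)$ holds because $\mathbf R^+$ is a connected closure algebra (since $\mathbf R$ is connected) and every subalgebra of a connected closure algebra is connected, so its logic is connected.

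For $(1)\Rightarrow(2)$, I would take a connected closure algebra $\mathfrak A$ with $L=L(\mathfrak A)$ and work in its dual descriptive frame $\mathfrak F=(W,R,\mathcal P)$. Connectedness of $\mathfrak A$ translates, via Esakia-style duality, to path-connectedness of $\mathfrak F$ in the sense that any two points are joined by a finite $R\cup R^{-1}$-zigzag. I would then run the CGFP construction of Theorem~\ref{CGFP}, enumerating the non-theorems of $L$, but augment each stage by also adding a finite zigzag path in $\mathfrak F$ between every pair of previously selected points. Only countably many points are added in total, so the resulting countable general frame $\mathfrak G$ is path-connected, validates $L$, and refutes each non-theorem of $L$. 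For $(2)\Rightarrow(3)$, pass from $(W,R,\mathcal Q)$ to $(W,\tau_R,\mathcal Q)$ where $\tau_R$ is the Alexandroff topology: any clopen of $(W,\tau_R)$ is simultaneously an $R$-upset and an $R$-downset, so path-connectedness forces such a set to be $\varnothing$ or $W$, yielding topological connectedness; the logic is preserved via the Alexandroff correspondence.

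The hard implication $(3)\Rightarrow(5)$ is the heart of the theorem. Enumerate the non-theorems $\{\varphi_n:n\in\omega\}$ of $L$. For each $n$, applying the previous implications gives a countable connected general frame refuting $\varphi_n$ and validating $L$; via Lemma~\ref{ImagesOfT2} together with Kremer's embedding $\T_2^+\hookrightarrow{\bf L}_2^+$ (combined as in Lemma~\ref{RefuteOneFormInGenSpOnL_2}), each piece becomes a general space over ${\bf L}_2$. My plan is then to replace each piece by an appropriate quotient of ${\bf L}_2$ that is an interior image of $\mathbf R$ and still refutes $\varphi_n$, and to glue these quotients along $\mathbf R$ using a generalization of the gluing technique of \cite{BG11}. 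Concretely, one constructs an onto interior map $g:\mathbf R\to Y$ where $Y$ is built from the quotient pieces arranged consecutively; then $g^{-1}(\mathcal Q)$, where $\mathcal Q$ is the algebra on $Y$ assembled from the piece algebras, is a subalgebra of $\mathbf R^+$ whose logic is $L$.

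The principal obstacle will be this final gluing step. The \cite{BG11} construction handles only FMP logics, where the glued pieces are finite; extending to arbitrary connected logics requires handling countable-but-infinite pieces. The delicate issues are: (i) choosing the quotients of ${\bf L}_2$ and the boundary identifications so the assembled $Y$ is an interior image of $\mathbf R$ (not merely of an open subspace of $\mathbf R$); (ii) ensuring the pullback algebra is closed under the $\mathbf R$-closure operator across the boundary points where successive pieces meet, which forces the interior maps to send boundary neighborhoods to controlled points (e.g., roots) of the ${\bf L}_2$-pieces; (iii) verifying that validity of $L$ is preserved by a subdirect-product argument using the coordinate projections onto each piece algebra, while refutation of $\varphi_n$ is inherited from the $n$-th piece. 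Designing the quotients and boundary conditions so that all three conditions hold simultaneously is where the bulk of the technical work will lie.
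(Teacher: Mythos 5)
Your proposal breaks at a specific false claim in the step $(1)\Rightarrow(2)$: connectedness of a closure algebra $\mathfrak A$ does \emph{not} translate into path-connectedness of its dual descriptive frame $\F=(W,R,\mathcal P)$. Connectedness only says that no element of the subalgebra $\mathcal P$ is a nontrivial clopen (simultaneously an $R$-upset and an $R$-downset); the underlying Kripke frame $(W,R)$ can still decompose into many $R\cup R^{-1}$-components, none of which happens to belong to $\mathcal P$. The paper notes this explicitly (citing \cite[Sec.~3]{BG11}) and is forced into a case split: when $L\supseteq{\bf S4.2}$ one proves that $\F$ has a unique maximal cluster accessible from every point, so $\F$ \emph{is} path-connected and a selection argument like yours goes through; when $L\not\supseteq{\bf S4.2}$ one must manufacture path-connectedness artificially, by showing the forks $\F_\alpha$ validate $L$ (which uses $L\not\supseteq{\bf S4.2}$ in an essential way), gluing each countable rooted refutation frame to a matching fork along a maximal cluster, and then gluing the resulting frames at their maximal points. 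Without this device your zigzag-augmented CGFP has nothing to select: the finite zigzags you want to add between previously chosen points simply need not exist in $\F$.

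A second, related shortfall is in the hard direction toward $(5)$. You correctly list the three delicate points (getting an interior image of all of $\mathbf R$, closure of the pulled-back algebra at the seams, and the validity/refutation bookkeeping), but the proposal stops at naming them, and the structural idea that resolves all three is absent: the quotient pieces $X_n$ of ${\bf L}_2$ must be glued along a \emph{common open subspace} --- the image of a shared maximal cluster $C$ whose closure in each $X_n$ is all of $X_n$ (the ${\bf S4.2}$ case), or the maximal points of interposed forks (the general case) --- and the interior map from $\mathbf R$ then alternates between intervals mapped onto that common piece and intervals mapped onto the $X_n$ with endpoints sent to roots. Note also that the paper derives $(5)$ from $(1)$, reusing the rooted frames and maximal clusters built in the proof of $(1)\Rightarrow(2)$; starting instead from an arbitrary countable connected general space, as your cycle $(3)\Rightarrow(5)$ requires, hands you no such rooted decomposition with shared maximal clusters. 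The easy directions of your proposal ($(4)\Leftrightarrow(5)$, $(4)\Rightarrow(1)$, and $(2)\Rightarrow(3)$ via the Alexandroff correspondence) are correct and agree with the paper.
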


Some of the implications of the Main Result are easy to prove. Indeed, to see that (2)$\Rightarrow$(3), if $(W,R,\mathcal P)$ is a general {\bf S4}-frame, then $(W,\tau_R,\mathcal P)$ is a general space. Now since $(W,\tau_R)$ is connected iff $(W,R)$ is path-connected (see, e.g., \cite[Lem.~3.4]{BG11}), the result follows. The implications (3)$\Rightarrow$(1) and (5)$\Rightarrow$(1) are clear because a subalgebra of a connected closure algebra is connected and $X^+$ is connected iff $X$ is connected \cite[Thm.~3.3]{BG11}. The equivalence (4)$\Leftrightarrow$(5) is obvious. Thus, to complete the proof of the Main Result it is sufficient to establish (1)$\Rightarrow$(2) and (1)$\Rightarrow$(5). Below we give an outline of the proof of these implications. Full details are given in Sections~\ref{ss:1->2} and~\ref{ss:1->5}. In both proof sketches we will distinguish two cases depending on whether the logic $L$ is above ${\bf S4.2}$ or not, where we recall that ${\bf S4.2}={\bf S4}+\Diamond\Box\varphi\to\Box\Diamond\varphi$.

\medskip

\noindent{\bf Proof sketch of (1)$\Rightarrow$(2):}

Since $L$ is connected, it is the logic of a connected closure algebra $\mathfrak A$. Let $\F=(W,R,\mathcal P)$ be the dual descriptive frame of $\mathfrak A$. In general, $\F$ does not have to be path-connected \cite[Sec.~3]{BG11}.

\smallskip

\noindent\underline{{\bf Case 1:} $L$ is above ${\bf S4.2}$.}

\smallskip

\textbf{Step 1.1:} Show that $\F$ contains a unique maximal cluster accessible from each point of~$W$, so $\F$ is path-connected.

\textbf{Step 1.2:} Extract a countable path-connected general frame $\G$ from $\F$ by using a modified version of the CGFP alluded to in Remarks~\ref{EnlargingV0forL} and \ref{EnlargingV0forCountableSet} so that $L=L(\G)$.

\smallskip

\noindent\underline{{\bf Case 2:} $L$ is not above ${\bf S4.2}$.} In this case $\F$ may not be path-connected, so we employ a different strategy.

\smallskip

\textbf{Step 2.1:} Introduce a family of auxiliary frames, which we refer to as forks; see Figure~\ref{f:forks}.

\textbf{Step 2.2:} Choose a countable family of countable rooted `refutation frames' for $L$ via the modification of CGFP that yields for each non-theorem $\varphi_n$ of $L$ a countable rooted general {\bf S4}-frame $\mathfrak G_n$ for $L$ that refutes $\varphi_n$ at a root and contains a maximal cluster.

\textbf{Step 2.3:} For each refutation frame $\G_n$ there is a corresponding fork that has a maximal cluster isomorphic to a maximal cluster of $\G_n$. Gluing the two frames along the maximal clusters gives a countable family of `attached frames', say $\HH_n$; see Figure~\ref{f:glue-1}.

\textbf{Step 2.4:} Each of the attached frames $\HH_n$ has a maximal point. Gluing the family $\HH_n$ along their maximal points yields a countable path-connected general frame $\HH$ such that $L=L(\HH)$; see Figure~\ref{f:gluing-2}.

\medskip

\noindent{\bf Proof sketch of (1)$\Rightarrow$(5):}

We utilize the frames occurring in the proof sketch of (1)$\Rightarrow$(2).

\smallskip

\noindent\underline{{\bf Case 1:} $L$ is above ${\bf S4.2}$.} Let $\G$ be as in Step 1.2 of (1)$\Rightarrow$(2).

\smallskip

\textbf{Step 1.1:} For each non-theorem $\varphi_n$, select a rooted generated subframe $\G_n$ of $\G$ so that $\G_n$ refutes $\varphi_n$.

\textbf{Step 1.2:} Construct an interior image $X_n$ of ${\bf L}_2$ so that there is a general space $\X_n=(X_n,\mathcal Q_n)$ satisfying $L(\X_n)=L(\G_n)$. Gluing the family $\X_n$ yields a general frame $\X$ whose logic is $L$.

\textbf{Step 1.3:} Realize each $X_n$ as an interior image of any non-trivial real interval.

\textbf{Step 1.4:} Produce an interior map $f:\mathbf R\to\X$ via the interior mappings of Step 1.3 and, utilizing $f^{-1}$, obtain a subalgebra of $\mathbf R^+$ whose logic is $L$.

\smallskip

\noindent\underline{{\bf Case 2:} $L$ is not above ${\bf S4.2}$.} Let $\G_n$ be as in Step 2.2 of (1)$\Rightarrow$(2).

\smallskip

\textbf{Step 2.1:} Build general spaces $\X_n$ as described in Step 1.2 so that $L(\X_n)=L(\G_n)$.

\textbf{Step 2.2:} Gluing $\X_n$ with the corresponding fork gives the general space $\Y_n$. Gluing the $\Y_n$ along the appropriate isolated points yields the general space $\Y$ whose logic is $L$.

\textbf{Step 2.3:} Produce an interior map $f:\mathbf R\to\Y$ using that each $\X_n$ and each fork is an interior image of any non-trivial real interval and, utilizing $f^{-1}$, obtain a subalgebra of $\mathbf R^+$ whose logic is $L$.

\medskip

The next two subsections are dedicated to developing in full detail the two proof sketches just presented. We end the section with an easy but useful corollary of the Main Result.

\subsection{Proof of (1)$\Rightarrow$(2)}\label{ss:1->2}

Let $L$ be a connected logic above {\bf S4}. Then $L=L(\mathfrak A)$ for some connected closure algebra $\mathfrak A$. Let $\F=(W,R,\mathcal P)$ be the dual descriptive frame of $\mathfrak A$. We distinguish two cases.

\smallskip

\noindent\underline{{\bf Case 1:} $L$ is above {\bf S4.2}.}

\smallskip

\noindent{\bf Step 1.1:} We show that in this case $\F$ contains a unique maximal cluster. It is important that $L$ is connected and above {\bf S4.2}.

\begin{lemma}\label{ConnectedAboveS42}
$\F$ has a unique maximal cluster.
\end{lemma}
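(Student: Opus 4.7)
The plan is to derive a contradiction with the connectedness of $\mathfrak A$ by producing a nontrivial clopen element from two distinct maximal clusters in $\F$. I will proceed in three steps: upgrade the {\bf S4.2}-axiom to pointwise directedness of $R$, use this to show each point sees a unique maximal cluster, and finally extract an admissible clopen set from the resulting partition into basins.

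First I would promote the validity of $\Diamond\Box\varphi\to\Box\Diamond\varphi$ in $\mathfrak A$ to pointwise directedness of $R$: whenever $xRy$ and $xRz$, there is $w$ with $yRw$ and $zRw$. Arguing by contradiction, if $R(y)\cap R(z)=\varnothing$, then tightness supplies, for each $w\in R(y)$, some $A_w\in\mathcal P$ with $w\in A_w$ and $R(z)\cap A_w=\varnothing$. Because $R(y)$ is closed (hence compact) in the Stone topology on $W$, finitely many $A_{w_1},\dots,A_{w_n}$ already cover $R(y)$, and their union $A\in\mathcal P$ satisfies $y\in\Box A$ while $z\notin\Diamond A$. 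This makes $x$ a witness to $\Diamond\Box A\not\le\Box\Diamond A$ in $\mathfrak A$, contradicting {\bf S4.2}.

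Second, a standard Zorn-plus-compactness argument in descriptive {\bf S4}-frames shows that $R(x)$ contains a maximal cluster for every $x\in W$. Pointwise directedness then forces this maximal cluster to be unique: if $c_i\in C_i\cap R(x)$ for $i=1,2$ with $C_1,C_2$ both maximal, then a common successor $w$ of $c_1,c_2$ obtained from directedness must lie in both clusters by maximality, so $C_1=C_2$. Writing $V_C=R^{-1}(C)$ for the basin of a maximal cluster $C$, a routine verification shows that each $V_C$ is simultaneously an $R$-upset and an $R$-downset, and that $W=\bigsqcup_C V_C$.

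The main obstacle, and the heart of the argument, is the third step: showing that each $V_C$ is admissible, i.e.\ lies in $\mathcal P$. For any $c\in C$, maximality gives $V_C=R^{-1}(c)$, and tightness provides, for every $y\notin V_C$, some $A_y\in\mathcal P$ with $c\in A_y$ and $R(y)\cap A_y=\varnothing$. I would combine this family with the $R$-saturation of the complement $W\setminus V_C$ (itself a union of other basins) and a compactness argument applied to the closed cluster $C$ in the Stone topology to collapse the family into a single admissible set witnessing $V_C\in\mathcal P$. Once $V_C\in\mathcal P$, it corresponds to a clopen element of $\mathfrak A$; being nonempty and proper, as $C$ is one of at least two maximal clusters, this contradicts the connectedness of $\mathfrak A$ and completes the proof.
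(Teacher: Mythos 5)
Your Steps 1 and 2 are correct (and standard), but Step 3 contains a genuine gap, and the claim you are trying to establish there --- that the basin $V_C=R^{-1}(C)$ of each maximal cluster lies in $\mathcal P$ --- does not follow from the ingredients you cite. Here is a descriptive frame validating {\bf S4.2} in which it fails: let $W$ consist of two convergent sequences $a_n\to a_\infty$ and $b_n\to b_\infty$ ($n\in\omega$), topologized as the sum of two copies of the one-point compactification of $\omega$, with $R$ the reflexive closure of $\{(a_n,b_n):n\in\omega\cup\{\infty\}\}$ and $\mathcal P$ the clopen sets. This frame is descriptive and pointwise directed, each $\{b_n\}$ is a maximal cluster, yet $V_{\{b_\infty\}}=\{a_\infty,b_\infty\}$ is not clopen, hence not in $\mathcal P$. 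Your proposed compactness argument cannot close the gap: the family $\{A_y\}$ yields $V_C=\bigcap_{y\notin V_C}R^{-1}(A_y)$, which shows $V_C$ is closed, but extracting a finite subfamily would require $W\setminus V_C$ to be compact, and it is merely open. Since the only hypothesis left to exploit is connectedness --- precisely what you are trying to contradict --- Step 3 is unprovable as stated.

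The repair is to aim at a different admissible witness. Descriptiveness lets you separate the two maximal clusters by an admissible $R$-upset $U$ with $C_1\subseteq U$ and $U\cap C_2=\varnothing$; then $R^{-1}(U)=\Diamond U$ is admissible, is a downset, contains $C_1$, and misses $C_2$. Your directedness from Step 1 shows it is also an upset (if $xRy$ and $xRu$ with $u\in U$, a common successor of $y$ and $u$ lies in the upset $U$), so $\Diamond U$ is a nontrivial clopen element of $\mathfrak A$, contradicting connectedness; algebraically, this is the observation that $\Diamond\Box a$ is clopen in any {\bf S4.2}-algebra. The paper's own proof is a variant of this: it takes the same separating upset $U$ and, instead of producing a clopen element, uses connectedness to see that the nontrivial admissible downset $R^{-1}(U)$ cannot be an upset, whence $\Diamond\Box p\to\Box\Diamond p$ is refuted under $\nu(p)=U$.
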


\begin{proof}
Suppose $C_1$ and $C_2$ are distinct maximal clusters of $\F$. As $\F$ is a descriptive {\bf S4}-frame, $C_1,C_2$ are closed, and $R(C_1)\cap R^{-1}(C_2)=\varnothing$, there is an $R$-upset $U\in\mathcal P$ such that $C_1\subseteq U$ and $U\cap C_2=\varnothing$. Since $\mathfrak A$ is connected and $U$ is neither $\varnothing$ nor $W$, it cannot be simultaneously an $R$-upset and an $R$-downset. As $U$ is an $R$-upset, $U$ then is not an $R$-downset. Set $\nu(p)=U$. Since $U$ is an $R$-upset, we have $\nu(\Box p)=U$, so $\nu(\Diamond\Box p)=R^{-1}(U)$. On the other hand, $C_1\subseteq R^{-1}(U)$ and $C_2\cap R^{-1}(U)=\varnothing$ imply that $R^{-1}(U)$ is neither $\varnothing$ nor $W$. Therefore, as $R^{-1}(U)$ is an $R$-downset, $R^{-1}(U)$ is not an $R$-upset. Thus, since $\nu(\Box\Diamond p)$ is the largest $R$-upset contained in $R^{-1}(U)$, it is strictly contained in $R^{-1}(U)$. Consequently, $\Diamond\Box p\rightarrow\Box\Diamond p$ is refuted in $\F$, and hence in $\mathfrak A$. The obtained contradiction proves that $\F$ has a unique maximal cluster.
\end{proof}

Since the unique maximal cluster of $\F$ is accessible from every point of $\F$, it follows that $\F$ is path-connected.

\smallskip

\noindent{\bf Step 1.2:} As indicated in the proof sketch, we develop the modified version of the CGFP (Theorem~\ref{CGFP}) outlined in Remarks~\ref{EnlargingV0forL} and \ref{EnlargingV0forCountableSet}.

\begin{theorem}\label{LowSklWithSet}
Let $L$ be a normal modal logic, $\F=(W,R,\mathcal P)$ be a general frame for $L$, and $U$ be a countable subset of $W$.
\begin{enumerate}
\item If $\F$ refutes a non-theorem $\varphi$ of $L$, then there is a countable general frame $\G=(V,S,\mathcal Q)$ such that $\G$ is a subframe of $\F$, $U\subseteq V$, $\G$ is a frame for $L$, and $\G$ refutes~$\varphi$.
\item If $L=L(\F)$, then $\G$ may be selected so that $U\subseteq V$ and $L=L(\G)$.
\end{enumerate}
\end{theorem}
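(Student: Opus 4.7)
The plan is to adapt the L\"owenheim--Skolem-type construction used to prove Theorem~\ref{CGFP}, enlarging the initial set $V_0$ of the selection procedure. Recall that in Theorem~\ref{CGFP}, $V_0$ was taken to be the singleton $\{w\}$ consisting of the refuting point; the key observation is that the procedure stays countable as long as $V_0$ is countable, since at each stage we add only countably many witnesses $u_{v,\Diamond\psi}$ for the existential modal formulas at points already selected.

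For part~(1), let $\nu$ be a valuation on $\F$ and $w\in W$ such that $w\notin \nu(\varphi)$. Start the selection procedure with $V_0 = U\cup\{w\}$; since $U$ is countable, $V_0$ is countable. Proceed exactly as in Theorem~\ref{CGFP}: for every $v\in V_n$ and every formula $\Diamond\psi$ with $v\in\nu(\Diamond\psi)$, pick a witness in $R(v)\cap\nu(\psi)$, and let $V_{n+1}$ collect the chosen witnesses. Set $V=\bigcup_n V_n$, let $S$ and $\mu$ be the restrictions of $R$ and $\nu$ to $V$, and put $\mathcal Q=\{\mu(\psi):\psi\in\mathfrak{Form}\}$. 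The same induction on the complexity of formulas shows that $v\in\nu(\psi)$ iff $v\in\mu(\psi)$ for $v\in V$, so $\G=(V,S,\mathcal Q)$ refutes $\varphi$ at $w$; the argument from Theorem~\ref{CGFP} also gives that $\G$ is a frame for $L$. By construction $U\subseteq V_0\subseteq V$, which finishes part~(1).

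For part~(2), enumerate the non-theorems of $L$ as $\{\varphi_n:n\in\omega\}$. Since $L=L(\F)$, for each $n$ there are a valuation $\nu_n$ on $\F$ and a world $w_n\in W$ with $w_n\notin\nu_n(\varphi_n)$. The crucial step is to collapse these infinitely many refuting valuations into a single valuation by applying Lemma~\ref{revariablizing}: this yields one valuation $\nu$ on $\F$ and substitution instances $\widehat{\varphi_n}$ (obtained by renaming propositional letters so that distinct $\widehat{\varphi_n}$ share no letters) such that $w_n\notin\nu(\widehat{\varphi_n})$ for every $n$. Now start the selection procedure with the countable set $V_0=U\cup\{w_n:n\in\omega\}$ and use the valuation $\nu$. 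Form $V$, $S$, $\mu$, $\mathcal Q$, and $\G$ as in part~(1). Then $w_n\notin\mu(\widehat{\varphi_n})$ for each $n$, and the argument of Theorem~\ref{CGFP} shows $\G\models L$. For the reverse inclusion, given a non-theorem $\varphi_n$, let $\sigma$ be the renaming used in Lemma~\ref{revariablizing} and define a valuation $\lambda_n$ on $\G$ by $\lambda_n(p)=\mu(\sigma(p,\varphi_n))$ for each propositional letter $p$ of $\varphi_n$; then $\lambda_n(\varphi_n)=\mu(\widehat{\varphi_n})$, so $w_n\notin\lambda_n(\varphi_n)$ and $\varphi_n$ is refuted on $\G$. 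Hence $L(\G)\subseteq L$, so $L=L(\G)$, while $U\subseteq V$ by construction.

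The only non-routine point is the simultaneous refutation in part~(2): distinct non-theorems are originally refuted by different valuations, so the countable submodel chosen by the selection procedure must encode all of them at once. This is precisely what the revariablizing device of Lemma~\ref{revariablizing} permits, in the same way it was used in the proof of Theorem~\ref{WellConLogAndT2}; the rest of the argument is just the careful bookkeeping from Theorem~\ref{CGFP} with a countable seed set.
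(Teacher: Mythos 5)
Your proposal is correct and follows essentially the same route as the paper: part (1) reruns the selection procedure of Theorem~\ref{CGFP} with seed set $V_0=U\cup\{w\}$, and part (2) uses Lemma~\ref{revariablizing} to merge the refuting valuations into one before seeding with $U\cup\{w_n:n\in\omega\}$, recovering refutations of the original $\varphi_n$ by undoing the letter substitution. The only point worth noting is that Lemma~\ref{revariablizing} is stated for a single refutation point $w$, but (as you implicitly use, and as the paper does too) its proof gives $v\in\nu_n(\varphi_n)$ iff $v\in\nu(\widehat{\varphi_n})$ for every $v\in W$, so it applies verbatim with the varying points $w_n$.
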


\begin{proof}
(1) The proof of the CGFP given in Theorem~\ref{CGFP} needs to be adjusted only slightly. Since $\varphi$ is refuted in $\F$, there is a valuation $\nu$ and $w\in W$ such that $w\not\in\nu(\varphi)$. Now proceed as in the proof of Theorem~\ref{CGFP} but set the starting set $V_0$ to be equal to $U\cup\{w\}$. The resulting countable general frame $\G=(V,S,\mathcal Q)$ is a subframe of $\F$, $U\subseteq V$, $\G$ is a frame for $L$, and $\G$ refutes $\varphi$. Thus, (1) is established.

(2) For each non-theorem $\varphi_n$ of $L$, there is a valuation $\nu_n$ and $w_n\in W$ such that $w_n\notin\nu_n(\varphi_n)$. We use Lemma~\ref{revariablizing} to make the propositional letters occurring in substitution instances of $\varphi_n$ and $\varphi_m$ distinct whenever $n\neq m$. This gives the set $\{\widehat{\varphi_n}:n\in\omega\}$. Let $\nu$ be a single valuation that refutes all the $\widehat{\varphi_n}$, and let $V_0=\{w_n\in W:w_n\notin\nu(\widehat{\varphi_n})\}\cup U$. Then proceed precisely as in the proof of (1) to obtain a countable general frame $\G=(V,S,\mathcal Q)$ such that $\G$ is a subframe of $\F$, $U\subseteq V$, and $\G$ is a frame for $L$. To see that each $\varphi_n$ is refuted in $\G$, note that by our construction, $\widehat{\varphi_n}$ is refuted in $\G$, and $\widehat{\varphi_n}$ is obtained from $\varphi_n$ by substituting propositional letters with other propositional letters. By a straightforward adjustment of the valuation according to the substitution, we obtain a refutation of $\varphi_n$. Thus, $L=L(\G)$.
\end{proof}

We use the modified CGFP. Take any point $m$ from the unique maximal cluster of $\F$ provided by Lemma~\ref{ConnectedAboveS42}, and set $U=\{m\}$. By Theorem~\ref{LowSklWithSet}(2), there is a countable general frame $\G$ such that $\G$ is a subframe of $\F$, it contains $U$, and its logic is $L$. Moreover, $\G$ is path-connected since $m$ is accessible from every point of $\G$, thus finishing the proof of Case~1.

\medskip

\noindent\underline{{\bf Case 2:} $L$ is not above {\bf S4.2}.}

\smallskip

\noindent{\bf Step 2.1:} We introduce some very simple auxiliary frames that will be used later on for gluing refutation frames for $L$ into one connected general frame whose logic is $L$. Let $\alpha\in\omega+1$ be nonzero. We let $\mathfrak C_\alpha=(W,R)$ denote the {\em $\alpha$-cluster}; that is, $\mathfrak C_\alpha$ is the {\bf S4}-frame consisting of a single cluster of cardinality $\alpha$, so $W=\{w_n:n\in\alpha\}$ and $R=W\times W$. We also let $\F_\alpha=(W_\alpha,R_\alpha)$ denote the {\em $\alpha$-fork}; that is, the {\bf S4}-frame obtained by adding two points to $\mathfrak C_\alpha=(W,R)$, a root below the cluster and a maximal point unrelated to the cluster. So $W_\alpha=\{r_\alpha,m_\alpha\}\cup W$ and
$$
R_\alpha=\{(r_\alpha,r_\alpha),(m_\alpha,m_\alpha),(r_\alpha,m_\alpha),(r_\alpha,w_n),(w_n,w_m):n,m\in\alpha\}.
$$
How $\mathfrak C_\alpha$ sits inside $\F_\alpha$ is depicted in Figure~\ref{f:forks} below.

\begin{figure}[h]
\centering
\begin{tikzpicture}
\draw[rounded corners]  (-4,2) rectangle (-3,1.5);
\node (C0) at (-3.5,1.75) {\tiny $\mathfrak C_{\alpha}$};
\node[circle, fill, inner sep=1.5] (r1) at (-2.5,0) {};
\node[circle, fill, inner sep=1.5] (m) at (-1.5,1.5) {};
\node at (-2.5,-0.4) {\small $r_\alpha$};
\node (mnew) at (-1.5,1.8) {\small $m_\alpha$};
\draw[->=latex, shorten <=2pt, shorten >=2pt] (r1) -- (m);
\draw[->=latex, shorten <=2pt, shorten >=2pt] (r1) -- (-3.5,1.47);
\end{tikzpicture}
\caption{The $\alpha$-fork $\F_\alpha$}\label{f:forks}
\end{figure}
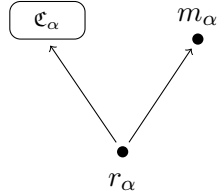

\begin{lemma}\label{lem:8.6}
Let $L$ be a logic above {\bf S4}, $\F=(W,R,\mathcal P)$ be a descriptive {\bf S4}-frame for $L$, and $\F$ have an infinite maximal cluster $\mathfrak C$. Then
\begin{enumerate}
\item $\mathfrak C_n\models L$ for each nonzero $n\in\omega$.
\item $\mathfrak C_\omega\models L$.
\end{enumerate}
\end{lemma}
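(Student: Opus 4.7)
The plan is to exploit the fact that the infinite maximal cluster $\mathfrak C$ of $\F$ is a generated subframe and that, by descriptivity, the restricted algebra $\mathcal P|_{\mathfrak C}=\{A\cap\mathfrak C : A\in\mathcal P\}$ is an \emph{infinite} Boolean algebra lying in $\mathbf H(\mathcal P)$. Since $\mathcal P\models L$, this already gives $\mathcal P|_{\mathfrak C}\models L$; the remaining work is to recover the full-powerset algebras $\mathfrak C_n^+$ and $\mathfrak C_\omega^+$ from $\mathcal P|_{\mathfrak C}$.

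First I would verify that the restriction $h:\mathcal P\to\mathcal P|_{\mathfrak C}$, $h(A)=A\cap\mathfrak C$, is a closure algebra homomorphism. Boolean preservation is immediate; $\Diamond$-preservation uses that $\mathfrak C$ is final, so for $w\in\mathfrak C$ one has $R(w)=\mathfrak C$, and hence $R^{-1}(A)\cap\mathfrak C$ equals $\mathfrak C$ when $A\cap\mathfrak C\ne\varnothing$ and $\varnothing$ otherwise, matching the cluster $\Diamond$ on $\mathfrak C$. Because $\F$ is differentiated and $\mathfrak C$ is infinite, $\mathcal P|_{\mathfrak C}$ must be an infinite Boolean algebra (a finite reduced field of sets could distinguish only finitely many points). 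A routine induction then shows that any infinite Boolean algebra admits, for every $n\ge 1$, a partition of its top element into $n$ nonzero disjoint parts.

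For part (1): fix such a partition $B_1,\dots,B_n\in\mathcal P|_{\mathfrak C}$ of $\mathfrak C$ and define $f:\mathfrak C\to\mathfrak C_n$ by $f(x)=w_i$ when $x\in B_i$. Since both frames are single clusters, the forth and back conditions for a p-morphism are automatic; $f$ is onto because each $B_i$ is nonzero; and the preimage of any $A\subseteq\mathfrak C_n$ is a finite union of the $B_i$, hence lies in $\mathcal P|_{\mathfrak C}$. Thus $f^{-1}$ embeds $\mathfrak C_n^+$ as a closure subalgebra of $\mathcal P|_{\mathfrak C}$, so $\mathfrak C_n^+\in\mathbf{SH}(\mathcal P)\subseteq\mathcal V(L)$, giving $\mathfrak C_n\models L$.

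For part (2): I would \emph{not} try to realize $\mathfrak C_\omega^+$ as a subalgebra of $\mathcal P|_{\mathfrak C}$, since $\mathcal P|_{\mathfrak C}$ need not be closed under the arbitrary countable unions needed to recover all of $\wp(\omega)$; instead, transfer the problem back to the finite case. Suppose $\varphi\in L$ fails in $\mathfrak C_\omega^+$ under some valuation; only the values $A_1,\dots,A_k\in\wp(\omega)$ of the finitely many propositional letters occurring in $\varphi$ are used. These generate a finite Boolean subalgebra $\mathfrak S\subseteq\wp(\omega)$ containing $\omega$. Its atoms partition $\omega$ into some $m\ge 1$ nonempty blocks, and because $\omega\in\mathfrak S$, the cluster $\Diamond$ of $\mathfrak C_\omega^+$ restricts to $\mathfrak S$, making $\mathfrak S$ a closure subalgebra of $\mathfrak C_\omega^+$ isomorphic to $\mathfrak C_m^+$. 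The same valuation refutes $\varphi$ in $\mathfrak C_m^+$, contradicting part~(1). The main obstacle is exactly this passage from finite to countably infinite: the partition-based p-morphism argument of (1) has no obvious countable analogue, and the right move is to reverse the direction, using that every refutation in $\mathfrak C_\omega^+$ is already visible inside some $\mathfrak C_m^+$.
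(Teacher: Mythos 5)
Your proposal is correct and follows essentially the same route as the paper: part (1) is the paper's partition argument (the paper phrases it as a partition of the infinite Stone space $\mathfrak C$ into $n$ clopens, you phrase it as a partition of the top of the infinite reduced algebra $\mathcal P|_{\mathfrak C}$ into $n$ nonzero elements), and part (2) is the same reduction of $\mathfrak C_\omega$ to the finite case established in (1). The only difference in (2) is one of perspective: the paper quotients $\mathfrak C_\omega$ by the relation of agreeing on all subformulas of $\varphi$, obtaining a p-morphism onto some $\mathfrak C_m$ that still refutes $\varphi$, whereas you dually locate a copy of $\mathfrak C_m^+$ as the finite closure subalgebra of $\mathfrak C_\omega^+$ generated by the values of the refuting valuation; both yield the same finite partition of $\omega$ and the same contradiction with part (1).
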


\begin{proof}
(1) Since $\mathfrak C$ is a maximal cluster of $\F$, it is clear that $\mathfrak C$ is an $R$-upset of $\F$. In fact, $\mathfrak C=R(w)$ for each $w$ in $\mathfrak C$, and as $R(w)$ is closed, $\mathfrak C$ is a closed $R$-upset of $\F$. Therefore, $\mathfrak C$ is a descriptive {\bf S4}-frame for $L$ (see, e.g, \cite[Lem.~III.4.11]{Esa85}). Since $\mathfrak C$ is an infinite Stone space, for each nonzero $n\in \omega$, there is a partition of $\mathfrak C$ into $n$-many clopens $U_0,\dots,U_{n-1}$. Define $f:\mathfrak C\to\mathfrak C_n$ by sending all points of the clopen $U_i$ to $w_i$ in $\mathfrak C_n$. It is straightforward to verify that $f$ is a p-morphism. Thus, as $\mathfrak C\models L$, we have $\mathfrak C_n\models L$.

(2) If $\mathfrak C_\omega\not\models L$, then there are $\varphi\in L$, $n\in\omega$, and a valuation $\nu$ on $\mathfrak C_\omega$ such that $w_n\not\in\nu(\varphi)$. Define an equivalence relation $\equiv$ on $\mathfrak C_\omega$ by
$$
w_i\equiv w_j\text{ iff }(\forall\psi\in\mathrm{Sub}\varphi)\left(w_i\in\nu(\psi)\Leftrightarrow w_j\in\nu(\psi)\right),
$$
where $\mathrm{Sub}\varphi$ is the set of subformulas of $\varphi$. Since $\mathrm{Sub}\varphi$ is finite, so is the set of equivalence classes, and we let $\{C_k:k\in m\}$ be this set. Then $f:\mathfrak C_\omega\rightarrow\mathfrak C_m$, given by $f(w_i)=w_k$ whenever $w_i\in C_k$, is an onto p-morphism. Moreover, $\mathfrak C_m$ refutes $\varphi$ at $f(w_n)$ under the valuation $\mu=f\circ\nu$. But this contradicts (1), completing the proof of (2).
\end{proof}

\begin{lemma}\label{OmegaFork}
Let $L$ be a logic above ${\bf S4}$. If $\F_k\models L$ for each $k\in\omega$, then $\F_\omega\models L$.
\end{lemma}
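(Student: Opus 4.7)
The plan is to argue by contradiction, mimicking the filtration-style collapse used in Lemma~\ref{lem:8.6}(2), but adapted to handle the extra root and isolated maximal point of the fork. Suppose $\F_\omega\not\models L$. Then there exist $\varphi\in L$, a valuation $\nu$ on $\F_\omega$, and a world $x\in W_\omega$ with $x\notin\nu(\varphi)$. Define an equivalence relation $\equiv$ on the cluster part $\mathfrak C_\omega=\{w_n:n\in\omega\}$ by declaring $w_i\equiv w_j$ iff $w_i\in\nu(\psi)\Leftrightarrow w_j\in\nu(\psi)$ for every $\psi\in\mathrm{Sub}\varphi$. Since $\mathrm{Sub}\varphi$ is finite, this produces finitely many non-empty classes $C_0,\dots,C_{k-1}$ for some nonzero $k\in\omega$.

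Next, I would define $f:W_\omega\to W_k$ by $f(r_\omega)=r_k$, $f(m_\omega)=m_k$, and $f(w)=w_j$ whenever $w\in C_j$. The verification that $f$ is a surjective p-morphism from $\F_\omega$ onto $\F_k$ is routine: the root maps to the root, the isolated maximal point to the isolated maximal point, and the $\omega$-cluster surjects onto the $k$-cluster, so the forward and back conditions for $R_\omega$ and $R_k$ follow directly from the very simple shape of the forks (root sees everything, $m_\alpha$ sees only itself, cluster sees itself).

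I would then transfer the valuation along $f$. For each propositional letter $p$ appearing in $\varphi$, set $\mu(p)=\{z\in W_k:f^{-1}(z)\subseteq\nu(p)\}$; for all other letters let $\mu$ be arbitrary. By the definition of $\equiv$, and since atoms of $\varphi$ lie in $\mathrm{Sub}\varphi$, the classes $C_j$ are $\nu$-homogeneous on these atoms, which yields $\nu(p)=f^{-1}(\mu(p))$ for every $p$ occurring in $\varphi$. The standard induction on complexity (using that $f$ is a p-morphism) then gives $\nu(\psi)=f^{-1}(\mu(\psi))$ for every $\psi\in\mathrm{Sub}\varphi$; in particular, $f(x)\notin\mu(\varphi)$, so $\F_k$ refutes $\varphi$ under $\mu$. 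This contradicts the hypothesis $\F_k\models L$, so $\F_\omega\models L$.

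The only mild point of care is handling the three types of points of $\F_\omega$ uniformly in the back condition of the p-morphism check and ensuring $k\ge 1$, but both are immediate from the structure of the forks and the fact that $\mathfrak C_\omega$ is non-empty; the rest of the argument is a direct finite-filtration style collapse, so I do not anticipate any serious obstacle.
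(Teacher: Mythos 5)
Your proposal is correct and follows essentially the same route as the paper: both collapse the $\omega$-cluster via the finite equivalence relation induced by $\mathrm{Sub}\,\varphi$, map root to root and isolated maximal point to isolated maximal point to get an onto p-morphism onto some $\F_k$ with $k\ge 1$, and transfer the refuting valuation along $f^{-1}$ to contradict $\F_k\models L$. The paper merely states the pushed-forward valuation more tersely (as $\mu=f\circ\nu$) where you spell out the fiber-wise definition and the induction on subformulas, but the argument is the same.
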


\begin{proof}
Suppose that $\F_\omega\not\models L$. Then there are $\varphi\in L$, a valuation $\nu$, and $v\in W_\omega$ such that $v\not\in\nu(\varphi)$. Let ${\equiv}$ be the equivalence relation on $\mathfrak C_\omega$ defined in the proof of Lemma~\ref{lem:8.6}(2), and let $\{C_k:k\in n\}$ be the set of $\equiv$-equivalence classes. Define $f:\F_\omega\rightarrow\F_n$ by
$$f(w)=\left\{
\begin{array}{ll}
r_n&\mbox{ if }w=r_\omega,\\
m_n&\mbox{ if }w=m_\omega,\\
w_k&\mbox{ if }w\in C_k.
\end{array}\right.
$$
Then $f$ is an onto p-morphism. Moreover, $\F_n$ refutes $\varphi$ at $f(v)$ under the valuation $\mu=f\circ\nu$. This contradicts to $\F_n\models L$. Thus, $\F_\omega\models L$.
\end{proof}

\smallskip

\noindent{\bf Step 2.2:} For each non-theorem $\varphi_n$ of $L$, there is a valuation $\nu_n$ and $w_n\in W$ such that $w_n\notin\nu_n(\varphi_n)$. Let $m_n\in R(w_n)$ be a maximal point of $\F$. (Such $m_n$ exists because $\F$ is a descriptive {\bf S4}-frame; see, e.g., \cite[Sec.~III.2]{Esa85}.) By Theorem~\ref{LowSklWithSet}(1), there is a countable general frame $\G_n=(W_n,R_n,\mathcal P_n)$ containing $\{w_n,m_n\}$ that validates $L$ and refutes $\varphi_n$. Clearly $w_n$ is a root of $\G_n$. Let $C_n$ be the maximal cluster of $\G_n$ generated by $m_n$. If $\alpha_n$ is the cardinality of $C_n$, then we identify $C_n$ with $\mathfrak C_{\alpha_n}$. Let $C$ be the maximal cluster of $\F$ from which $C_n$ was selected. If $C$ is finite, then $C\models L$, so $\mathfrak C_{\alpha_n}\models L$. If $C$ is infinite, then as $\alpha_n\in\omega+1$ is nonzero, by Lemma~\ref{lem:8.6}, $\mathfrak C_{\alpha_n}\models L$.

\smallskip

\noindent{\bf Step 2.3:} Since $L$ is not above {\bf S4.2}, it is well known (see, e.g., \cite[Sec.~6.1]{ZWC01}) that $\F_1\models L$. If $\alpha_n$ is finite, \cite[Lem.~4.2]{BG11} gives that $\F_{\alpha_n}\models L$. If $\alpha_n=\omega$, we get that $\mathfrak C_m\models L$ for each nonzero $m\in\omega$ because each $\mathfrak C_m$ is a p-morphic image of $\mathfrak C_\omega$. By \cite[Lem.~4.2]{BG11}, each $\F_m\models L$. Therefore, by Lemma~\ref{OmegaFork}, $\F_\omega\models L$. Thus, $\F_{\alpha_n}\models L$.

For our next move, we need to introduce the operation of gluing for general {\bf S4}-frames, which generalizes the gluing of finite {\bf S4}-frames introduced in \cite{BG11}. However, later on we will also need to glue general spaces. Because of this, we introduce the operation of gluing for general spaces, which is similar to the operation of \emph{attaching space} or \emph{adjunction space}, a particular case of which is the \emph{wedge sum}. Both constructions are used in algebraic topology. Since general {\bf S4}-frames are a particular case of general spaces, we will view gluing of general {\bf S4}-frames as a particular case of gluing of general spaces. We start by defining gluing of topological spaces.

\begin{definition}\label{def:gluing}
{\em
Let $X_i$ be a family of topological spaces indexed by $I$. Without loss of generality we may assume that $\{X_i:i\in I\}$ is pairwise disjoint. Let $Y$ be a topological space disjoint from each $X_i$ and such that for each $i\in I$ there is an open subspace $Y_i$ of $X_i$ homeomorphic to $Y$. Let $f_i:Y\to Y_i$ be a homeomorphism. Define an equivalence relation $\equiv$ on $\bigcup_{i\in I}X_i$ by
$$
x\equiv z\text{ iff } x=z \text{ or } x\in Y_i, \ z\in Y_j, \text{ and } (\exists y\in Y)(x=f_i(y) \text{ and } z=f_j(y)).
$$
We call the quotient space $X=\bigcup_{i\in I}X_i/{\equiv}$ the {\em gluing of the $X_i$ along $Y$}.
}
\end{definition}

\begin{lemma}\label{lem:gluing}
Let the $X_i$ and $Y$ be as in Definition~\ref{def:gluing}, and let $X$ be the gluing of the $X_i$ along $Y$. We let $\rho:\bigcup_{i\in I}X_i\to X$ be the quotient map. Then $\rho$ is an onto interior map.
\end{lemma}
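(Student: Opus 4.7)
The plan is to verify each of the three conditions in turn: surjectivity, continuity, and openness. Surjectivity holds by construction, since every element of $X$ is an $\equiv$-class and hence has a representative in $\bigcup_{i\in I} X_i$. Continuity is automatic from the definition of the quotient topology: $U\subseteq X$ is open iff $\rho^{-1}(U)$ is open in $\bigcup_{i\in I} X_i$.

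The substantive step is showing that $\rho$ is open. By the quotient topology, $\rho(V)$ is open in $X$ iff its $\equiv$-saturation $\rho^{-1}(\rho(V))$ is open in $\bigcup_{i\in I} X_i$. So the plan is to fix an open $V\subseteq \bigcup_{i\in I} X_i$ and compute this saturation explicitly using the definition of $\equiv$. A point $z$ lies in $\rho^{-1}(\rho(V))$ iff either $z\in V$, or there exist $j\in I$ and $y\in Y$ with $z=f_j(y)$ and $f_i(y)\in V$ for some $i\in I$. Setting $W:=\bigcup_{i\in I} f_i^{-1}(V\cap Y_i)\subseteq Y$, this gives
\[
\rho^{-1}(\rho(V)) \;=\; V\cup\bigcup_{j\in I} f_j(W).
\]

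To conclude, I would chase openness through the three layers $X_i\supseteq Y_i\cong Y$. Since $V$ is open in $\bigcup_{i\in I} X_i$ and $Y_i$ carries the subspace topology from $X_i$, each $V\cap Y_i$ is open in $Y_i$; since $f_i:Y\to Y_i$ is a homeomorphism, $f_i^{-1}(V\cap Y_i)$ is open in $Y$, so $W$ is open in $Y$. Reapplying the fact that each $f_j$ is a homeomorphism gives that $f_j(W)$ is open in $Y_j$, and finally the hypothesis that $Y_j$ is an \emph{open} subspace of $X_j$ promotes $f_j(W)$ to an open subset of $X_j$, and hence of $\bigcup_{i\in I}X_i$. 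Thus $\rho^{-1}(\rho(V))$ is a union of open sets, hence open, which establishes that $\rho$ is an open map.

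I do not anticipate a genuine obstacle here: the argument is mostly bookkeeping around the saturation of $V$ under $\equiv$. The one place where care is needed is ensuring that the added points $f_j(W)$ are open not merely in the subspace $Y_j$ but in the ambient $X_j$, which is exactly where the hypothesis that each $Y_i$ is an open subspace of $X_i$ is used in an essential way.
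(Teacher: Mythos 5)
Your argument is correct and is essentially identical to the paper's proof: both reduce to showing that the saturation $\rho^{-1}(\rho(V))$ of an open set is open, compute it as $V$ together with the $f_j$-images of $\bigcup_{i\in I} f_i^{-1}(V\cap Y_i)$, and use the openness of each $Y_i$ in $X_i$ at exactly the step you flag. No gaps.
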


\begin{proof}
Since a quotient map is always continuous and onto, we only need to check that $\rho$ is open. By \cite[Cor.~2.4.10]{Eng89}, it is sufficient to show that $\rho^{-1}\rho(U)$ is open in $\bigcup_{i\in I}X_i$ for each $U$ open in $\bigcup_{i\in I}X_i$. We have $U=\bigcup_{i\in I}U_i$, where each $U_i$ is open in $X_i$. Therefore, $U_i\cap Y_i$ is open in $Y_i$, and hence $f_i^{-1}(U_i\cap Y_i)$ is open in $Y$. Thus, $V=\bigcup_{i\in I} f_i^{-1}(U_i\cap Y_i)$ is open in $Y$. This implies $V_i=f_i(V)$ is open in $X_i$, yielding $\rho^{-1}(\rho(U))=\bigcup_{i\in I}(U_i\cup V_i)$. This shows that $\rho$ is indeed open.
\end{proof}

We note in passing that the gluing operation is actually a pushout in the category of topological spaces with interior maps as morphisms. We next generalize Definition~\ref{def:gluing} to general spaces.

\begin{definition}\label{def:gluing1}
{\em
Let $\X_i=(X_i,\mathcal P_i)$ be a family of general spaces indexed by $I$. Without loss of generality we may assume that $\{X_i:i\in I\}$ is pairwise disjoint. Let $\Y=(Y,\mathcal Q)$ be a general space such that $Y$ is disjoint from each $X_i$ and for each $i\in I$ there is an open subspace $\Y_i=(Y_i,\mathcal Q_i)$ of $\X_i$ homeomorphic to $\Y$. Suppose $f_i:Y\to Y_i$ is a homeomorphism. Let $X$ be the gluing of the $X_i$ along $Y$, and let $\rho:\bigcup_{i\in I}X_i\to X$ be the quotient map. Define $\mathcal P=\{A\subseteq X:\rho^{-1}(A)\cap X_i\in\mathcal P_i \ \ \forall i\}$. Lemma~\ref{lem:gluing} yields that $\mathcal P$ is a subalgebra of $X^+$, hence $\X=(X,\mathcal P)$ is a general space. We call $\X$ the {\em gluing of the $\X_i$ along $\Y$}.
}
\end{definition}

We now produce a new frame by gluing $\F_{\alpha_n}$ and $\G_n$ along the cluster $\mathfrak C_{\alpha_n}$. Since $\mathfrak C_{\alpha_n}$ is a maximal cluster in both $\G_n$ and $\mathfrak F_{\alpha_n}$, if we view $\G_n$ and $\mathfrak F_{\alpha_n}$ as general Alexandroff spaces, $\mathfrak C_{\alpha_n}$ becomes an open subspace of both. Let $\HH_n$ be the general {\bf S4}-frame obtained by gluing $\G_n$ and $\mathfrak F_{\alpha_n}$ along $\mathfrak C_{\alpha_n}$; see Figure~\ref{f:glue-1}. Then $\HH_n$ is a p-morphic image of the disjoint union of $\G_n$ and $\mathfrak F_{\alpha_n}$. As both validate $L$, so does $\HH_n$. Also, since $\G_n$ is (isomorphic to) a generated subframe of $\HH_n$ and $\G_n$ refutes $\varphi_n$, so does $\HH_n$.

\begin{figure}[h]
\centering
\begin{tikzpicture}[x=1mm, y=1mm, inner xsep=0pt, inner ysep=-1.2pt]
\path[line width=0mm] (34.79,-254.01) rectangle +(121.81,37.12);
\definecolor{L}{rgb}{0,0,0}
\path[line width=0.30mm, draw=L] (114.04,-223.49) -- (123.18,-242.61) -- (132.29,-223.49) -- cycle;
\definecolor{F}{rgb}{1,1,1}
\path[line width=0.30mm, draw=L, fill=F] (127.93,-224.82) .. controls (127.93,-225.10) and (128.15,-225.32) .. (128.43,-225.32) .. controls (131.60,-225.32) and (131.60,-225.32) .. (134.77,-225.32) .. controls (135.05,-225.32) and (135.27,-225.10) .. (135.27,-224.82) .. controls (135.27,-223.28) and (135.27,-223.28) .. (135.27,-221.73) .. controls (135.27,-221.46) and (135.05,-221.23) .. (134.77,-221.23) .. controls (131.60,-221.23) and (131.60,-221.23) .. (128.43,-221.23) .. controls (128.15,-221.23) and (127.93,-221.46) .. (127.93,-221.73) .. controls (127.93,-223.28) and (127.93,-223.28) .. (127.93,-224.82) -- cycle;
\draw(129.11,-224) node[anchor=base west]{\fontsize{7.4}{10.24}\selectfont $\mathfrak C_{\alpha_n}$\strut};
\definecolor{F}{rgb}{0,0,0}
\path[line width=0.30mm, draw=L, fill=F] (149.80,-224.23) circle (0.40mm);
\path[line width=0.30mm, draw=L, fill=F] (140.99,-241.81) circle (0.40mm);
\path[line width=0.30mm, draw=L] (140.26,-240.91) -- (132.78,-225.81);
\path[line width=0.30mm, draw=L, fill=F] (132.78,-225.81) -- (132.69,-227.21) -- (132.78,-225.81) -- (133.94,-226.59) -- (132.78,-225.81) -- cycle;
\path[line width=0.30mm, draw=L] (141.67,-240.91) -- (149.15,-225.81);
\path[line width=0.30mm, draw=L, fill=F] (149.15,-225.81) -- (147.99,-226.59) -- (149.15,-225.81) -- (149.24,-227.21) -- (149.15,-225.81) -- cycle;
\path[line width=0.30mm, draw=L] (37.74,-223.49) -- (46.89,-242.61) -- (55.99,-223.49) -- cycle;
\definecolor{F}{rgb}{1,1,1}
\path[line width=0.30mm, draw=L, fill=F] (51.63,-224.98) .. controls (51.63,-225.26) and (51.86,-225.48) .. (52.13,-225.48) .. controls (55.31,-225.48) and (55.31,-225.48) .. (58.48,-225.48) .. controls (58.76,-225.48) and (58.98,-225.26) .. (58.98,-224.98) .. controls (58.98,-223.44) and (58.98,-223.44) .. (58.98,-221.89) .. controls (58.98,-221.61) and (58.76,-221.39) .. (58.48,-221.39) .. controls (55.31,-221.39) and (55.31,-221.39) .. (52.13,-221.39) .. controls (51.86,-221.39) and (51.63,-221.61) .. (51.63,-221.89) .. controls (51.63,-223.44) and (51.63,-223.44) .. (51.63,-224.98) -- cycle;
\draw(52.82,-224) node[anchor=base west]{\fontsize{7.4}{10.24}\selectfont $\mathfrak C_{\alpha_n}$\strut};
\path[line width=0.30mm, draw=L, fill=F] (62.97,-224.98) .. controls (62.97,-225.26) and (63.19,-225.48) .. (63.47,-225.48) .. controls (66.64,-225.48) and (66.64,-225.48) .. (69.82,-225.48) .. controls (70.09,-225.48) and (70.32,-225.26) .. (70.32,-224.98) .. controls (70.32,-223.44) and (70.32,-223.44) .. (70.32,-221.89) .. controls (70.32,-221.61) and (70.09,-221.39) .. (69.82,-221.39) .. controls (66.64,-221.39) and (66.64,-221.39) .. (63.47,-221.39) .. controls (63.19,-221.39) and (62.97,-221.61) .. (62.97,-221.89) .. controls (62.97,-223.44) and (62.97,-223.44) .. (62.97,-224.98) -- cycle;
\draw(64.35,-224) node[anchor=base west]{\fontsize{7.4}{10.24}\selectfont $\mathfrak C_{\alpha_n}$\strut};
\definecolor{F}{rgb}{0,0,0}
\path[line width=0.30mm, draw=L, fill=F] (84.84,-224.39) circle (0.40mm);
\path[line width=0.30mm, draw=L, fill=F] (76.03,-241.97) circle (0.40mm);
\path[line width=0.30mm, draw=L] (75.30,-241.07) -- (67.82,-225.97);
\path[line width=0.30mm, draw=L, fill=F] (67.82,-225.97) -- (67.73,-227.37) -- (67.82,-225.97) -- (68.98,-226.75) -- (67.82,-225.97) -- cycle;
\path[line width=0.30mm, draw=L] (76.71,-241.07) -- (84.20,-225.97);
\path[line width=0.30mm, draw=L, fill=F] (84.20,-225.97) -- (83.03,-226.75) -- (84.20,-225.97) -- (84.29,-227.37) -- (84.20,-225.97) -- cycle;
\path[line width=0.15mm, draw=L, dash pattern=on 0.60mm off 0.50mm] (49.00,-224.96) .. controls (49.00,-226.63) and (50.33,-227.96) .. (52.00,-227.96) .. controls (60.90,-227.96) and (60.90,-227.96) .. (69.79,-227.96) .. controls (71.46,-227.96) and (72.79,-226.63) .. (72.79,-224.96) .. controls (72.79,-223.43) and (72.79,-223.43) .. (72.79,-221.90) .. controls (72.79,-220.23) and (71.46,-218.90) .. (69.79,-218.90) .. controls (60.90,-218.90) and (60.90,-218.90) .. (52.00,-218.90) .. controls (50.33,-218.90) and (49.00,-220.23) .. (49.00,-221.90) .. controls (49.00,-223.43) and (49.00,-223.43) .. (49.00,-224.96) -- cycle;
\path[line width=0.1mm, draw=L] (36.79,-246.97) -- (59.12,-246.98);
\path[line width=0.1mm, draw=L][->] (36.87,-247.01) -- (36.87,-245.10);
\path[line width=0.1mm, draw=L][->] (59.03,-247.06) -- (59.03,-245.15);
\path[line width=0.1mm, draw=L] (62.97,-246.86) -- (85.30,-246.87);
\path[line width=0.1mm, draw=L][->] (63.05,-246.90) -- (63.05,-244.98);
\path[line width=0.1mm, draw=L][->] (85.21,-246.95) -- (85.21,-245.04);
\draw(46.16,-251) node[anchor=base west]{\fontsize{9}{10.24}\selectfont $\mathfrak G_n$\strut};
\draw(74.94,-251.36) node[anchor=base west]{\fontsize{9}{10.24}\selectfont $\mathfrak F_{\alpha_n}$\strut};
\draw(130,-251.14) node[anchor=base west]{\fontsize{9}{10.24}\selectfont $\mathfrak H_n$\strut};
\path[line width=0.15mm, draw=L] (114.89,-246.52) -- (150.02,-246.55);
\path[line width=0.15mm, draw=L][->] (114.96,-246.56) -- (114.96,-244.65);
\path[line width=0.15mm, draw=L] (149.89,-246.61) -- (149.89,-244.70);
\path[line width=0.15mm, draw=L][->] (149.89,-246.56) -- (149.89,-244.65);
\draw(75.20,-245) node[anchor=base west]{\fontsize{9}{13.66}\selectfont $r_{\alpha_n}$\strut};
\draw(86.39,-223.43) node[anchor=base west]{\fontsize{9}{13.66}\selectfont $m_{\alpha_n}$\strut};
\draw(151.49,-223.33) node[anchor=base west]{\fontsize{9}{13.66}\selectfont $m_{\alpha_n}$\strut};
\draw(140.30,-245) node[anchor=base west]{\fontsize{9}{13.66}\selectfont $r_{\alpha_n}$\strut};
\end{tikzpicture}
\caption{Gluing of $\mathfrak G_n$ and $\mathfrak F_{\alpha_n}$}\label{f:glue-1}
\end{figure}
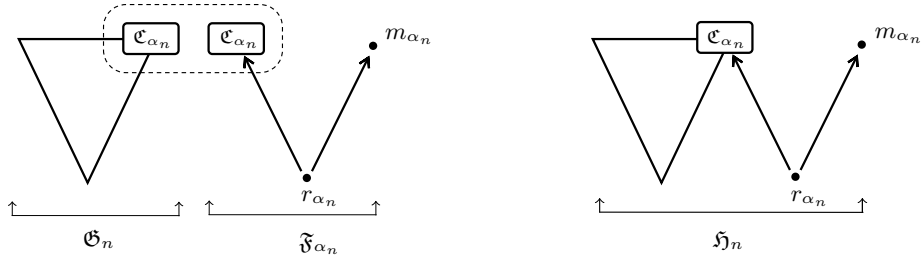

\smallskip

\noindent{\bf Step 2.4:} In this final step we glue the $\HH_n$ along the maximal element $m_{\alpha_n}$ as depicted in Figure~\ref{f:gluing-2}.

\begin{figure}[h]
\centering
\tikzstyle{trigl}=[
   isosceles triangle,
   draw,
   shape border rotate=90,
   inner sep=8,
   font=\small\sffamily\bfseries,
   shape border rotate =-90,
   isosceles triangle apex angle=45,
   isosceles triangle stretches,
   text depth=-3ex]
\tikzstyle{cluster}=[
   fill=white,
   draw,
   inner sep=5,
   rounded corners]
\tikzstyle{pnt}=[
	circle,
	inner sep=1,
	fill]
\begin{tikzpicture}
\node [trigl, anchor=right side] (f1)  at (16, 0) {$\mathfrak H_0$};
\node[cluster, right of=f1, xshift=-14, yshift=5](c1) {$\ \ \ $ };
\node[pnt, right of=f1, xshift=5, yshift=-25] (r1) {};
\node[pnt, right of=f1, xshift=25] (m1) {};
\draw[->, shorten <=1pt, shorten >=1pt] (r1) -- (m1);
\draw[->, shorten <=1pt, shorten >=1pt] (r1) -- (c1);
\node [trigl, anchor=right side] (f2)  at (16, -2) {$\mathfrak H_1$};
\node[cluster, right of=f2, xshift=-14, yshift=5](c2) {$\ \ \ $ };
\node[pnt, right of=f2, xshift=5, yshift=-25] (r2) {};
\node[pnt, right of=f2, xshift=25] (m2) {};
\draw[->, shorten <=1pt, shorten >=1pt] (r2) -- (m2);
\draw[->, shorten <=1pt, shorten >=1pt] (r2) -- (c2);
\node [trigl, anchor=right side] (f3)  at (16, -4) {$\mathfrak H_2$};
\node[cluster, right of=f3, xshift=-14, yshift=5](c3) {$\ \ \ $ };
\node[pnt, right of=f3, xshift=5, yshift=-25] (r3) {};
\node[pnt, right of=f3, xshift=25] (m3) {};
\draw[->, shorten <=1pt, shorten >=1pt] (r3) -- (m3);
\draw[->, shorten <=1pt, shorten >=1pt] (r3) -- (c3);
\node[draw=none, rectangle] at  (17,-5.3) {$\vdots$};
\node[draw=none, rectangle] at  (18.2,-4.3) {$\vdots$};
\draw[dashed, rounded corners] (17.7,-6) -- (17.7,1) -- (18.5,1) -- (18.5,-6);
\node[draw=none, rectangle] at  (19.5,-2.3) {$\twoheadrightarrow$};
\begin{scope}[xshift=140]
\node [trigl, anchor=right side] (f1)  at (16, 0) {$\mathfrak H_0$};
\node[cluster, right of=f1, xshift=-14, yshift=5](c1) {$\ \ \ $ };
\node[pnt, right of=f1, xshift=5, yshift=-25] (r1) {};
\node[pnt, right of=f1, xshift=45] (m1) {};
\draw[->, shorten <=1pt, shorten >=1pt] (r1) -- (m1);
\draw[->, shorten <=1pt, shorten >=1pt] (r1) -- (c1);
\node [trigl, anchor=right side] (f2)  at (16, -2) {$\mathfrak H_1$};
\node[cluster, right of=f2, xshift=-14, yshift=5](c2) {$\ \ \ $ };
\node[pnt, right of=f2, xshift=5, yshift=-25] (r2) {};
\draw[->, shorten <=1pt, shorten >=1pt] (r2) -- (m1);
\draw[->, shorten <=1pt, shorten >=1pt] (r2) -- (c2);
\node [trigl, anchor=right side] (f3)  at (16, -4) {$\mathfrak H_2$};
\node[cluster, right of=f3, xshift=-14, yshift=5](c3) {$\ \ \ $ };
\node[pnt, right of=f3, xshift=5, yshift=-25] (r3) {};
\draw[->, shorten <=1pt, shorten >=1pt] (r3) -- (m1);
\draw[->, shorten <=1pt, shorten >=1pt] (r3) -- (c3);
\node[draw=none, rectangle] at  (17,-5.3) {$\vdots$};
\end{scope}
\end{tikzpicture}
\caption{\label{f:gluing-2} Gluing of the frames $\HH_n$}
\end{figure}

This gluing is analogous to the wedge sum in algebraic topology. The resulting general {\bf S4}-frame $\HH$ is countable and path-connected. Moreover, since disjoint unions and p-morphic images of general frames preserve validity, $\HH$ validates $L$; and as each $\HH_n$ is a generated subframe of $\HH$, we see that $\HH$ refutes $\varphi_n$. Consequently, $L=L(\HH)$. This finishes the proof of (1)$\Rightarrow$(2).

\subsection{Proof of (1)$\Rightarrow$(5)}\label{ss:1->5}

As before we consider two cases.

\smallskip

\noindent\underline{{\bf Case 1:} $L$ is above ${\bf S4.2}$.}

\smallskip

Let $\G$ be the countable general {\bf S4}-frame constructed in Step 1.2 of the proof of (1)$\Rightarrow$(2). Then $\G$ has a unique maximal cluster $C$, which is accessible from each point $w$ in $\G$, and the logic of $\G$ is $L$.

\smallskip

\noindent{\bf Step 1.1:} For each non-theorem $\varphi_n$ of $L$, there are a valuation $\nu_n$ and a point $w_n$ in $\G$ such that $w_n\not\in\nu_n(\varphi_n)$. Let $\G_n=(W_n,R_n,\mathcal P_n)$ be the subframe of $\G$ generated by $w_n$. Then $\G_n$ is a general frame for $L$ that refutes $\varphi_n$. Furthermore, $\G_n$ has $C$ as its unique maximal cluster and $R_n(w)$ contains $C$ for each point $w$ in $\G_n$.

\smallskip

\noindent{\bf Step 1.2:}\label{SSsectionQuotientOfL2} For each $\G_n$ we construct a general space $\X_n=(X_n,\mathcal Q_n)$ such that $X_n$ is an interior image of $\mathbf L_2$ and $\mathcal P_n$ is isomorphic to $\mathcal Q_n$, yielding $L(\mathcal\G_n)=L(\X_n)$.

Consider a countable rooted {\bf S4}-frame, say $\F=(W,R)$, with a maximal cluster, say $C$. By Lemma~\ref{ImagesOfT2}, there is a p-morphism $f$ from $\T_2$ onto $\F$. Let $\alpha:\T_2^+\rightarrow{\bf L}_2^+$ be the closure algebra embedding defined in \cite[Lem~6.4]{Kre13}. We forego recalling the full details for $\alpha$ since we only need the existence of the embedding and the properties that $U\subseteq \alpha(U)$ and $\alpha(U)-U\subseteq L_2-T_2$ for each $U\subseteq T_2$. Since $C$ is a maximal cluster of $\F$, we have $f^{-1}(C)$ is an upset in $\T_2$. Therefore, $\alpha(f^{-1}(C))$ is open in ${\mathbf L}_2$. Consider the equivalence relation $\equiv$ on $L_2$ given by
$$
a\equiv b\mbox{ iff }a=b\mbox{ or }(\exists w\in C)(a,b\in\alpha(f^{-1}(w)).
$$
Let $X$ be the quotient space $\mathbf L_2/\equiv$ and let $\rho:L_2\rightarrow X$ be the quotient map.

\bigskip

\begin{figure}[h]

\definecolor{cffffff}{RGB}{255,255,255}

\begin{tikzpicture}
  [y=0.80pt,x=0.80pt,yscale=-0.9,xscale=0.9, inner sep=0pt, outer sep=0pt]
  \path[cm={{0.30752719,0.0,0.0,0.30866997,(31.799472,126.05593)}},draw=black,line
    join=miter,line cap=butt,miter limit=4.00,line width=1pt]
    (90.9137,343.4126) -- (11.2284,205.3937) -- (-68.4568,67.3747) --
    (90.9137,67.3747) -- (250.2843,67.3747) -- (170.5990,205.3936) -- cycle;
  \path[draw=black,fill=cffffff,line join=round,line cap=butt,miter
    limit=4.00,nonzero rule,line width=0.965pt,rounded corners=0.1145cm]
    (70.9756,136.7431) rectangle (116.3476,162.1076);
  \path[cm={{0.84624856,0.0,0.0,0.80799141,(-1.9317919,48.086333)}},draw=black,fill=black,line
    join=round,line cap=butt,miter limit=4.00,nonzero rule,line width=1.229pt]
    (111.6218,119.7389)arc(0.000:180.000:2.273)arc(-180.000:0.000:2.273) -- cycle;
  \path[cm={{0.84624856,0.0,0.0,0.80799141,(-2.0042748,57.37054)}},draw=black,fill=black,line
    join=round,line cap=butt,miter limit=4.00,nonzero rule,line width=1.229pt]
    (111.6218,119.7389)arc(0.000:180.000:2.273)arc(-180.000:0.000:2.273) -- cycle;
  \path[cm={{0.84624856,0.0,0.0,0.80799141,(-13.117196,57.574588)}},draw=black,fill=black,line
    join=round,line cap=butt,miter limit=4.00,nonzero rule,line width=1.229pt]
    (111.6218,119.7389)arc(0.000:180.000:2.273)arc(-180.000:0.000:2.273) -- cycle;
  \path[cm={{0.84624856,0.0,0.0,0.80799141,(-13.117196,48.273968)}},draw=black,fill=black,line
    join=round,line cap=butt,miter limit=4.00,nonzero rule,line width=1.229pt]
    (111.6218,119.7389)arc(0.000:180.000:2.273)arc(-180.000:0.000:2.273) -- cycle;
  \path[draw=black,line join=miter,line cap=butt,miter limit=4.00,line
    width=0.844pt] (250.9597,233.8577) -- (226.4543,191.2554) --
    (201.9489,148.6531) -- (250.9597,233.7413) -- (299.9704,148.6531) --
    (275.4651,191.2554) -- cycle;
  \path[draw=black,fill=cffffff,dash pattern=on 2.65pt off 0.66pt,line
    join=miter,line cap=butt,miter limit=4.00,line width=0.662pt]
    (202.2338,147.1808) -- (300.1130,147.1808);
  \path[draw=black,line join=miter,line cap=butt,line width=0.662pt]
    (219.9717,180.0327) -- (251.6008,179.8286) -- (258.6532,147.1809);
  \path[draw=black,line join=miter,line cap=butt,line width=0.662pt]
    (229.1612,147.7930) -- (229.1612,179.6246);
  \path[draw=black,line join=miter,line cap=butt,line width=0.662pt]
    (241.7701,147.9971) -- (241.7701,179.8287);
  \path[draw=black,line join=miter,line cap=butt,line width=0.617pt]
    (224.6214,162.0764) -- (255.0721,162.0764);
  \path[draw=black,line join=miter,line cap=butt,miter limit=4.00,line
    width=0.829pt] (427.6714,236.1755) -- (451.3511,193.5732) --
    (475.0308,150.9709) -- (427.6714,236.0592) -- (380.3119,150.9709) --
    (403.9916,193.5732) -- cycle;
  \path[draw=black,fill=cffffff,dash pattern=on 2.60pt off 0.65pt,line
    join=miter,line cap=butt,miter limit=4.00,line width=0.650pt]
    (474.7555,149.4987) -- (380.1742,149.4987);
  \path[draw=black,line join=miter,line cap=butt,line width=0.650pt]
    (457.6153,182.3506) -- (427.0518,182.1465) -- (420.2370,149.4987);
  \path[draw=black,line join=miter,line cap=butt,line width=0.650pt]
    (448.7353,150.1109) -- (448.7353,181.9425);
  \path[draw=black,line join=miter,line cap=butt,line width=0.650pt]
    (436.5513,150.3149) -- (436.5513,182.1465);
  \path[draw=black,line join=miter,line cap=butt,line width=0.606pt]
    (453.1222,164.3943) -- (423.6975,164.3943);
  \path[draw=black,line join=miter,line cap=butt,miter limit=4.00,line
    width=1.951pt] (476.6576,145.1716) -- (379.5981,145.1716);
  \path[draw=black,line join=miter,line cap=butt,miter limit=4.00,line
    width=0.844pt] (579.8185,233.6369) -- (555.3131,191.0346) --
    (543.1992,169.7864) -- (579.8185,233.5206) -- (628.8293,148.4323) --
    (604.3239,191.0346) -- cycle;
  \path[draw=black,fill=cffffff,dash pattern=on 1.46pt off 0.37pt,line
    join=miter,line cap=butt,miter limit=4.00,line width=0.366pt]
    (575.0293,146.9601) -- (629.1609,146.9601);
  \path[draw=black,line join=miter,line cap=butt,miter limit=4.00,line
    width=1.771pt] (573.3031,143.6532) -- (630.1193,143.6532);
  \path[draw=black,fill=cffffff,line join=round,line cap=butt,miter
    limit=4.00,nonzero rule,line width=0.965pt,rounded corners=0.1145cm]
    (529.5215,144.2608) rectangle (574.8935,169.6253);
  \path[cm={{0.84624856,0.0,0.0,0.80799141,(456.61414,55.604038)}},draw=black,fill=black,line
    join=round,line cap=butt,miter limit=4.00,nonzero rule,line width=1.229pt]
    (111.6218,119.7389)arc(0.000:180.000:2.273)arc(-180.000:0.000:2.273) -- cycle;
  \path[cm={{0.84624856,0.0,0.0,0.80799141,(456.54166,64.888246)}},draw=black,fill=black,line
    join=round,line cap=butt,miter limit=4.00,nonzero rule,line width=1.229pt]
    (111.6218,119.7389)arc(0.000:180.000:2.273)arc(-180.000:0.000:2.273) -- cycle;
  \path[cm={{0.84624856,0.0,0.0,0.80799141,(445.42874,65.092294)}},draw=black,fill=black,line
    join=round,line cap=butt,miter limit=4.00,nonzero rule,line width=1.229pt]
    (111.6218,119.7389)arc(0.000:180.000:2.273)arc(-180.000:0.000:2.273) -- cycle;
  \path[cm={{0.84624856,0.0,0.0,0.80799141,(445.42874,55.79167)}},draw=black,fill=black,line
    join=round,line cap=butt,miter limit=4.00,nonzero rule,line width=1.229pt]
    (111.6218,119.7389)arc(0.000:180.000:2.273)arc(-180.000:0.000:2.273) -- cycle;
  \path[draw=black,line join=miter,line cap=butt,miter limit=4.00,line
    width=0.198pt] (342.1361,133.2458) -- (349.4800,146.9768) --
    (342.1361,159.4835) -- (349.3807,172.0850) -- (342.3391,186.8508) --
    (349.2815,199.6461) -- (342.2094,212.0623) -- (349.3074,223.2747) --
    (342.4642,235.2786) -- (349.3333,249.1539);
  \path[->>][draw=black,line join=miter,line cap=butt,miter limit=4.00,line
    width=0.662pt] (190.1526,190.5842) -- (133.9375,190.5842);
  \path[->>][draw=black,line join=miter,line cap=butt,line width=0.662pt]
    (479.0860,195.2013) -- (528.0476,195.2013);
  \path[->][draw=black,line join=miter,line cap=butt,line width=0.425pt]
    (276.4395,78.7641) .. controls (276.4395,78.7641) and (339.7083,43.4858) ..
    (412.7464,78.7641);
  \path[xscale=1.023,yscale=0.977,fill=black] (50,200) node[above
    right] (text4186) {${\mathfrak F}$     };
   \path[xscale=1.023,yscale=0.977,fill=black] (155,190) node[above
    right] (text4186) {${f}$     };
  \path[xscale=1.023,yscale=0.977,fill=black] (490,190) node[above
    right] (text4186) {$\rho$     };
  \path[xscale=1.023,yscale=0.977,fill=black] (332,58) node[above
    right] (text4186) {$\alpha$     };
  \path[xscale=1.023,yscale=0.977,fill=black] (241.32503,101.13189) node[above
    right] (text4186) {${\mathfrak T}_2^+$     };
  \path[xscale=1.023,yscale=0.977,fill=black] (410,102.24284) node[above
    right] (text4186-2) {${\bf L}_2^+$     };
  \path[xscale=1.023,yscale=0.977,fill=black] (236.11348,200) node[above
    right] (text4186-3) {${\mathfrak T}_2$     };
  \path[xscale=1.023,yscale=0.977,fill=black] (404.31546,199.76852) node[above
    right] (text4186-3-1) {${\bf L}_2$     };
  \path[xscale=1.023,yscale=0.977,fill=black] (561.07819,192.6808) node[above
    right] (text4186-3-3) {$X$     };
  \path[fill=black] (99.621376,151.17986) node[above right] (text4080) {...     };
  \path[fill=black] (210.92415,163.40961) node[above right] (text4080-1) {...
    };
  \path[fill=black] (455.50732,165.80872) node[above right] (text4080-4) {...
    };
  \path[fill=black] (556.27002,157.60123) node[above right] (text4080-9) {...   };
  \node (C0) at (95,125) {$C$};
  \node (C1) at (230,130) {$f^{-1}(C)$};
  \node (C2) at (448,130) {\small$\alpha(f^{-1}(C))$};
  \node (C3) at (553,130) {\small$\rho(\alpha(f^{-1}(C)))$};
\end{tikzpicture}
\caption{\label{f:space_subst_for_frame} Constructing $X$ and $\rho:{\bf L}_2\to X$}
\end{figure}

\bigskip

\begin{lemma}\label{QuotOfL2isIntrImage}
The space $X$ is an interior image of $\mathbf L_2$ under $\rho$.
\end{lemma}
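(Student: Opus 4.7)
The plan is to verify that $\rho$ is an interior map by showing it is open; continuity and surjectivity follow immediately from $\rho$ being a quotient map. The non-trivial $\equiv$-classes are precisely the sets $\alpha(f^{-1}(w))$ for $w \in C$ (they are pairwise disjoint because $\alpha$ is a Boolean homomorphism and $\{f^{-1}(w) : w \in C\}$ partitions $f^{-1}(C)$), so for any open $U \subseteq L_2$ the saturation is
\[
\rho^{-1}\rho(U) = U \cup \bigcup\big\{\alpha(f^{-1}(w)) : w \in C,\ U \cap \alpha(f^{-1}(w)) \neq \varnothing\big\}.
\]
Since $C$ is a maximal cluster, $f^{-1}(C)$ is a $\le$-upset of $\T_2$ and hence open in the Alexandroff space; because $\alpha$ is a closure algebra embedding, it sends opens to opens, so $\alpha(f^{-1}(C)) = \bigsqcup_{w \in C}\alpha(f^{-1}(w))$ is open in $\mathbf L_2$.

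The heart of the argument is a single claim: if an open $U \subseteq L_2$ meets $\alpha(f^{-1}(w))$ for some $w \in C$, then $U$ meets $\alpha(f^{-1}(w'))$ for \emph{every} $w' \in C$. To prove it, pick $b \in U \cap \alpha(f^{-1}(w))$. Openness of $U$ yields $a_0 \in T_2$ with $a_0 \le b$ and ${\uparrow}a_0 \subseteq U$, while openness of $\alpha(f^{-1}(C))$ together with $b \in \alpha(f^{-1}(C))$ yields $a_1 \in T_2$ with $a_1 \le b$ and ${\uparrow}a_1 \subseteq \alpha(f^{-1}(C))$. The $T_2$-predecessors of $b$ form a chain, so $a_0$ and $a_1$ are comparable and $a := \max(a_0,a_1) \in T_2$ satisfies ${\uparrow}a \subseteq U \cap \alpha(f^{-1}(C))$; the property $\alpha(f^{-1}(C)) - f^{-1}(C) \subseteq L_2 - T_2$ then forces $a \in \alpha(f^{-1}(C)) \cap T_2 = f^{-1}(C)$, so $f(a) \in C$. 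Because $C$ is a cluster, $f(a) R w'$ for every $w' \in C$, and the p-morphism property of $f$ delivers some $b' \ge a$ in $T_2$ with $f(b') = w'$; thus $b' \in {\uparrow}a \cap f^{-1}(w') \subseteq U \cap \alpha(f^{-1}(w'))$, proving the claim.

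With the claim in hand, the saturation reduces to either $\rho^{-1}\rho(U) = U$ (when $U$ misses $\alpha(f^{-1}(C))$) or $\rho^{-1}\rho(U) = U \cup \alpha(f^{-1}(C))$, and both sets are open in $\mathbf L_2$; therefore $\rho$ is open, and hence an onto interior map. The main obstacle to anticipate is the case $b \in L_2 - T_2$, where ${\uparrow}b$ is \emph{not} a basic open, which is precisely why the argument takes a detour through two basic opens ${\uparrow}a_0$ and ${\uparrow}a_1$ inside $U$ and $\alpha(f^{-1}(C))$ respectively; the linearity of $T_2$-chains below $b$ is what allows merging them into a single basic open witnessing both containments, and this same device simultaneously covers the easier case $b \in T_2$ by simply taking $a_0 = a_1 = b$.
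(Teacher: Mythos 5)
Your proposal is correct and follows essentially the same route as the paper: reduce openness of $\rho$ to openness of the saturation $\rho^{-1}\rho(U)$, and prove the key claim that an open $U$ meeting $\alpha(f^{-1}(C))$ must meet $\alpha(f^{-1}(w'))$ for every $w'\in C$ by locating a point $a\in T_2$ with ${\uparrow}a\subseteq U\cap\alpha(f^{-1}(C))$, using $\alpha(f^{-1}(C))\cap T_2=f^{-1}(C)$ and the p-morphism property of $f$ on the cluster $C$. The only cosmetic difference is how $a$ is produced (the paper picks a $T_2$-point directly in the nonempty open intersection and uses that Scott opens are upsets, whereas you descend from an arbitrary point $b$ through two basic opens and merge them via the chain of predecessors), which changes nothing of substance.
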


\begin{proof}
Since a quotient map is always continuous and onto, we only need to show that $\rho$ is open. It is sufficient to show that $U\in\tau$ implies $\rho^{-1}(\rho(U))\in\tau$. Let $U\in\tau$. If $U\cap \alpha(f^{-1}(C))=\varnothing$, then $\rho^{-1}(\rho(U))=U\in\tau$. Suppose that $U\cap \alpha(f^{-1}(C))\not=\varnothing$.

\medskip

\noindent{\bf Claim:} $U\cap \alpha(f^{-1}(w))\not=\varnothing$ for each $w\in C$.

\smallskip

\noindent{\em Proof:} Since $\varnothing\not=U\cap \alpha(f^{-1}(C))\in\tau$, there is $a\in U\cap \alpha(f^{-1}(C)) \cap T_2$. As both $U$ and $\alpha(f^{-1}(C))$ are upsets in $\LL_2$, we have ${\uparrow}a\cap T_2 \subset {\uparrow}a\subseteq U\cap \alpha(f^{-1}(C))\subseteq \alpha(f^{-1}(C))$. Since $a\in \alpha(f^{-1}(C))$, there is $w\in C$ such that $a\in \alpha(f^{-1}(w))$. Moreover,
\begin{eqnarray*}
\alpha(f^{-1}(w))\cap T_2&=&\left(f^{-1}(w)\cup\left(\alpha(f^{-1}(w))-f^{-1}(w)\right)\right)\cap T_2\\
&=&\left(f^{-1}(w)\cap T_2\right)\cup\left(\left(\alpha(f^{-1}(w))-f^{-1}(w)\right)\cap T_2\right)\\
&=&f^{-1}(w)\cup\varnothing=f^{-1}(w).
\end{eqnarray*}
So $a\in f^{-1}(w)$, which implies that $a\in f^{-1}(C)$. Therefore, $f({\uparrow}a\cap T_2)\subseteq C$. In fact, $f({\uparrow}a\cap T_2)=C$ because $C$ is a cluster, ${\uparrow}a\cap T_2$ is an upset in $\T_2$, and $f$ is a p-morphism. Since ${\uparrow}a\cap T_2 \subset {\uparrow}a\subseteq U$, we see that $U\cap f^{-1}(v)\ne\varnothing$ for each $v\in C$. Thus, as $f^{-1}(v)\subseteq\alpha(f^{-1}(v))$, we conclude that $U\cap \alpha(f^{-1}(v))\ne\varnothing$ for each $v\in C$, proving the claim.

\medskip

Consequently, $\rho^{-1}(\rho(U))=U\cup\alpha(f^{-1}(C)) \in \tau$, completing the proof of the lemma.
\end{proof}

\begin{lemma}\label{SubaglebraOfQuotOfL2}
Let $\F$ and $X$ be as above. Then $\F^+$ is isomorphic to a subalgebra of $X^+$.
\end{lemma}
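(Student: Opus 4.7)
The plan is to exhibit the required embedding as the composition $\beta(A)=\rho(\alpha(f^{-1}(A)))$, which formally assembles the three closure algebra maps already at hand: the embedding $f^{-1}\colon\F^+\to\T_2^+$ dual to the p-morphism $f$, Kremer's embedding $\alpha\colon\T_2^+\to{\bf L}_2^+$, and (the inverse of) the embedding $\rho^{-1}\colon X^+\to{\bf L}_2^+$ dual to the onto interior map $\rho$ given by Lemma~\ref{QuotOfL2isIntrImage}. For this composition to make sense as a map landing in $X^+$, I need to check that $\alpha(f^{-1}(A))$ lies in the image of $\rho^{-1}$; equivalently, that $\alpha(f^{-1}(A))$ is saturated under the equivalence relation $\equiv$ defining $X$.

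This saturation step is the one genuinely substantive point. The non-singleton $\equiv$-classes are exactly the sets $\alpha(f^{-1}(w))$ for $w\in C$, and these are pairwise disjoint since $\alpha(f^{-1}(w))\cap\alpha(f^{-1}(w'))=\alpha(\varnothing)=\varnothing$ for distinct $w,w'\in C$ (using that $\alpha$ is a Boolean homomorphism). I will then argue by cases on whether $w\in A$: if $w\in A$, monotonicity of $\alpha$ yields $\alpha(f^{-1}(w))\subseteq\alpha(f^{-1}(A))$; if $w\notin A$, then $f^{-1}(w)\cap f^{-1}(A)=\varnothing$ and the same Boolean computation gives $\alpha(f^{-1}(w))\cap\alpha(f^{-1}(A))=\varnothing$. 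Hence each equivalence class is either contained in or disjoint from $\alpha(f^{-1}(A))$, proving saturation.

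With saturation established, $\beta$ factors through $\rho^{-1}$, and all remaining verifications are routine propagation along the three stages. Preservation of Boolean operations uses that, for $\equiv$-saturated $U,V\subseteq L_2$, one has $\rho(U\cap V)=\rho(U)\cap\rho(V)$ and $\rho(L_2\setminus U)=X\setminus\rho(U)$. Preservation of closure uses $f^{-1}R^{-1}={\bf c}_{\T_2}f^{-1}$ (since $f$ is a p-morphism), that $\alpha$ commutes with closure, and that $\rho({\bf c}_{{\bf L}_2}(U))={\bf c}_X(\rho(U))$ for $\equiv$-saturated $U$; the latter follows from $\rho^{-1}\circ{\bf c}_X={\bf c}_{{\bf L}_2}\circ\rho^{-1}$, which in turn forces ${\bf c}_{{\bf L}_2}(U)$ to be saturated whenever $U$ is. Injectivity cascades through the three stages: $\beta(A)=\varnothing$ forces $\alpha(f^{-1}(A))=\varnothing$, whence $f^{-1}(A)=\varnothing$ since $\alpha$ is injective, whence $A=\varnothing$ since $f$ is onto. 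The main obstacle throughout is the saturation check; once that is in hand, the proof is essentially a diagram chase along $\F^+\to\T_2^+\to{\bf L}_2^+\to X^+$.
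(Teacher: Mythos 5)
Your proof is correct and follows essentially the same route as the paper: both hinge on showing that $\alpha(f^{-1}(A))$ is $\equiv$-saturated (the paper phrases this as $\rho^{-1}(\rho(\alpha(f^{-1}(A))))=\alpha(f^{-1}(A))$), and then conclude that $\alpha\circ f^{-1}$ lands in the image of the embedding $\rho^{-1}:X^+\to{\bf L}_2^+$. Your explicit re-verification of the Boolean and closure operations for $\beta=\rho\circ\alpha\circ f^{-1}$ is harmless but unnecessary, since once saturation is established these facts follow abstractly from $\rho^{-1}$ being a closure algebra embedding.
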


\begin{proof}
Since both $f^{-1}:\F^+\rightarrow \T_2^+$ and $\alpha:\T_2^+\rightarrow\mathbf L_2^+$ are closure algebra embeddings, $\alpha\circ f^{-1}:\F^+\rightarrow\mathbf L_2^+$ is a closure algebra embedding. By Lemma~\ref{QuotOfL2isIntrImage}, $\rho:L_2\to X$ is an onto interior map. Therefore, $\rho^{-1}:X^+\to\mathbf L_2^+$ is a closure algebra embedding. We show that if $A\in\F^+$, then $\rho^{-1}(\rho(\alpha(f^{-1}(A))))=\alpha(f^{-1}(A))$. Clearly $\alpha(f^{-1}(A))\subseteq\rho^{-1}(\rho(\alpha(f^{-1}(A))))$. For the converse, recalling that $C$ is a maximal cluster of $\F$, since $A=(A\cap C)\cup (A-C)$, we have
$$
f^{-1}(A)=f^{-1}(A-C)\cup\bigcup\{f^{-1}(w):w\in A\cap C\}.
$$
Therefore,
$$
\alpha(f^{-1}(A))=\left(\alpha(f^{-1}(A))-\alpha(f^{-1}(C))\right)\cup\bigcup\{\alpha(f^{-1}(w)):w\in A\cap C\}.
$$
Now suppose $a\in\rho^{-1}(\rho(\alpha(f^{-1}(A))))$. Then there is $b\in \alpha(f^{-1}(A))$ such that $\rho(a)=\rho(b)$. If $\rho(a)$ is a singleton, then $b=a$, so $a\in \alpha(f^{-1}(A))$. If $\rho(a)$ is not a singleton, then there is $w\in A\cap C$ such that $b\in\alpha(f^{-1}(w))$. Therefore, $a\in\alpha(f^{-1}(w))$. Since $w\in A$, it follows that $a\in \alpha(f^{-1}(A))$. Thus, $\rho^{-1}(\rho(\alpha(f^{-1}(A))))=\alpha(f^{-1}(A))$.

Consequently, $\alpha\circ f^{-1}$ embeds $\F^+$ into the image of $X^+$ under $\rho^{-1}$. This implies that the image of $\F^+$ under $\alpha\circ f^{-1}$ is a subalgebra of the image of $X^+$ under $\rho^{-1}$. Thus, $\F^+$ is isomorphic to a subalgebra of $X^+$.
\end{proof}

\smallskip

\begin{figure}[h]
\begin{tikzpicture}[description/.style={fill=white,inner sep=2pt}]
\matrix (m) [matrix of math nodes, row sep=3em,
column sep=2.5em, text height=1.5ex, text depth=0.25ex]
{ & \mathbf L_2^+ &  \\
\F^+ & & X^+  \\ };
\path[-,font=\scriptsize]
(m-2-1) edge[right hook->] node[auto] {$ \alpha\circ f^{-1} $} (m-1-2)
(m-2-3) edge[left hook->] node[above] {$\ \ \ \ \rho^{-1} $} (m-1-2)
(m-2-1) edge[right hook->, dashed] node[below] {Lemma~\ref{SubaglebraOfQuotOfL2}} (m-2-3);
\end{tikzpicture}
\caption{An embedding of $\F^+$ into $X^+$\label{f:diagram_for_X+_F+}}
\end{figure}
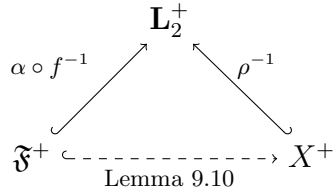

By the above construction, we may associate an interior image $X_n$ of ${\bf L}_2$ with each $\G_n$. Let $f_n:\T_2\to\G_n$ be the onto p-morphism used in defining $X_n$, and let $\rho_n:{\bf L}_2\to X_n$ be the quotient map. By Lemma~\ref{SubaglebraOfQuotOfL2}, each $\mathcal P_n$ is isomorphic to a subalgebra $\mathcal Q_n$ of $X_n^+$. So $\X_n=(X_n,\mathcal Q_n)$ is a general space satisfying $L(\G_n)=L(\X_n)$, and hence $\X_n$ is a general space for $L$ refuting $\varphi_n$. Moreover, the maximal cluster $C$ of $\G_n$ is realized as the open set $\rho_n(\alpha\circ f_n^{-1}(C))\subseteq X_n$. Note that since $L$ is above {\bf S4.2}, $C$ is a unique maximal cluster accessible from each point of $\G_n$, so the closure of $\rho_n(\alpha\circ f_n^{-1}(C))$ is $X_n$. We now perform the gluing of $\X_n$ along $\rho_n(\alpha\circ f_n^{-1}(C))$ to yield a general space $\X=(X,\mathcal Q)$. Since sums and interior images preserve validity, $\X\models L$. Moreover, since each $\X_n$ is an open subspace of $\X$ refuting $\varphi_n$, it follows that $\X$ refutes $\varphi_n$. Thus, $L=L(\X)$.

\smallskip

\noindent{\bf Step 1.3:} We next show that each $\X_n$ is an interior image of any non-trivial real interval.

\begin{lemma}\label{XisIntrImgOfClosedInterval}
Let $X$ be an interior image of ${\bf L}_2$ constructed above and let $I$ be a non-trivial interval in $\mathbf R$. Then there is an onto interior map $f:I\to X$ such that $f$ maps the endpoints of $I$ (if present) to the root of $X$.
\end{lemma}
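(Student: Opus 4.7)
The plan is to define the desired interior map $f:I\to X$ as a composition $\rho\circ g$, where $g:I\to\mathbf L_2$ is the onto interior map constructed in Theorem~\ref{L2isIntrImageR} and $\rho:\mathbf L_2\to X$ is the quotient map from Step~1.2, which is itself an onto interior map by Lemma~\ref{QuotOfL2isIntrImage}. Since compositions of onto interior maps are again onto interior maps, this immediately yields the surjectivity, continuity, and openness of $f$; the content of the lemma therefore reduces to controlling the images of the endpoints of $I$.

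To handle the endpoint condition, I would unpack the construction of $g$ given in the proof of Theorem~\ref{L2isIntrImageR}. By design, the map $g:[0,1]\to L_2$ sends the entire Cantor set $\mathbf C^{[0,1]}$ to the root $\mathrm r_2$ of $\LL_2$ (this is how the construction begins at Step~$0$). In particular, both $0$ and $1$ lie in $\mathbf C^{[0,1]}$ and are therefore sent to $\mathrm r_2$. The same description of $g$ restricted to $(0,1)$ or $[0,1)$ yields an onto interior map that still sends any present endpoint to $\mathrm r_2$. Since every non-trivial real interval is homeomorphic to one of $(0,1)$, $[0,1)$, or $[0,1]$ via an endpoint-preserving homeomorphism, $g$ can be transported to any such $I$ while retaining this endpoint property.

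Finally, identifying the root of $X$ with $\rho(\mathrm r_2)$ (so that the distinguished root of $X$ is inherited from the root of $\mathbf L_2$ via the quotient map), the composition $f=\rho\circ g$ sends every endpoint of $I$ (if present) to the root of $X$, completing the proof. I do not anticipate any substantive obstacle here: the deep ingredients—existence of an onto interior map from a real interval onto $\mathbf L_2$ collapsing the Cantor set to the root, and the openness of $\rho$—have already been established in Theorem~\ref{L2isIntrImageR} and Lemma~\ref{QuotOfL2isIntrImage}, and the remaining argument simply records that the endpoint behaviour of $g$ is preserved under the composition with $\rho$.
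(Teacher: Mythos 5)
Your proposal is correct and is essentially the paper's own proof: the paper likewise composes the interior map of Theorem~\ref{L2isIntrImageR} (which sends endpoints to the root of $\LL_2$ since the Cantor set collapses to the root) with the quotient map $\rho$ of Lemma~\ref{QuotOfL2isIntrImage}, noting that $\rho$ carries the root of $\LL_2$ to the root of $X$. No gaps.
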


\begin{proof}
By Theorem~\ref{L2isIntrImageR}, $\mathbf L_2$ is an interior image of $I$ and the endpoints get mapped to the root. By~Lemma~\ref{QuotOfL2isIntrImage}, $X$ is an interior image of $\mathbf L_2$, and the root of $\LL_2$ is mapped to the root of $X$. Taking the composition yields that $X$ is an interior image of $I$, and the endpoints are mapped to the root of $X$.
\end{proof}

\smallskip

\noindent{\bf Step 1.4:} As the final step, we produce an interior map from $\mathbf R$ onto $X$. Since $\X$ is obtained by gluing along the image of $C$ in $X_n$, we may identify $C$ as an open subset of $X$ that is countable.

\begin{lemma}\label{AlphaClusterAndForkIntrImgOfInvl}
Let $I$ be a non-trivial interval in $\mathbf R$ and let $\alpha\in\omega+1$ be nonzero. Then
\begin{enumerate}
\item $\mathfrak C_\alpha$ is an interior image of $I$.
\item $\F_\alpha$ is an interior image of $I$.
\end{enumerate}
\end{lemma}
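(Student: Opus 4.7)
The plan is to reduce to the case $I=[0,1]$, since up to homeomorphism any non-trivial real interval is either $[0,1]$, $[0,1)$, or $(0,1)$, and to exploit the very coarse topology of $\mathfrak C_\alpha$ and $\F_\alpha$ under the Alexandroff topology. Explicitly, the only $R$-upsets of $\mathfrak C_\alpha$ are $\varnothing$ and the full cluster, and the only $R_\alpha$-upsets of $\F_\alpha$ are $\varnothing$, $\{m_\alpha\}$, $\mathfrak C_\alpha$, $\mathfrak C_\alpha\cup\{m_\alpha\}$, and $W_\alpha$. This severely constrains what we need to check.

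For part (1), since the topology on $\mathfrak C_\alpha$ is indiscrete, continuity of any surjection $f:[0,1]\to\mathfrak C_\alpha$ is automatic, and openness reduces to the condition that $f(U)=\mathfrak C_\alpha$ for every non-empty open $U\subseteq[0,1]$, i.e., that $f^{-1}(w_n)$ is dense in $[0,1]$ for each $n\in\alpha$. I would partition $\mathbf Q\cap[0,1]$ into $\alpha$-many dense subsets $\{Q_n:n\in\alpha\}$ (easy for finite $\alpha$ via residues modulo $\alpha$ of any enumeration, and for $\alpha=\omega$ via a bijection $\omega\to\omega\times\omega$), send $q\in Q_n$ to $w_n$, and send irrationals arbitrarily into $\mathfrak C_\alpha$.

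For part (2), let $C\subseteq[0,1]$ be the standard Cantor set, and let $\{J_k\}$ be its open complementary intervals. I would two-color these intervals so that both colors accumulate at every point of $C$: specifically, give color $A$ to every interval removed at an even stage of the Cantor construction and color $B$ to every interval removed at an odd stage. Since removed intervals of every level appear arbitrarily close to each Cantor point, both colors accumulate at each point of $C$. Define $g:[0,1]\to\F_\alpha$ by sending $C$ to $r_\alpha$, sending every color-$A$ interval entirely to $m_\alpha$, and using part (1) to map each color-$B$ interval $J$ onto $\mathfrak C_\alpha$ via some interior map $g_J:J\to\mathfrak C_\alpha$. Note that the endpoints $0$ and $1$ lie in $C$, hence are mapped to $r_\alpha$.

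To verify that $g$ is onto and interior, continuity is immediate because the preimages of the opens $\{m_\alpha\}$, $\mathfrak C_\alpha$, $\mathfrak C_\alpha\cup\{m_\alpha\}$ are respectively the union of color-$A$ intervals, the union of color-$B$ intervals, and $[0,1]-C$. For openness, given open $U\subseteq[0,1]$, I would distinguish four cases: (i) if $U$ meets only color-$A$ intervals, $g(U)=\{m_\alpha\}$; (ii) if $U$ meets only color-$B$ intervals, then for any color-$B$ interval $J$ with $U\cap J\neq\varnothing$, part (1) gives $g_J(U\cap J)=\mathfrak C_\alpha$, so $g(U)=\mathfrak C_\alpha$; (iii) if $U\subseteq[0,1]-C$ meets both colors, $g(U)=\mathfrak C_\alpha\cup\{m_\alpha\}$; and (iv) if $U\cap C\ne\varnothing$, the accumulation property forces $U$ to meet intervals of both colors, so $g(U)=W_\alpha$. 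In every case $g(U)$ is an $R_\alpha$-upset, hence open in $\F_\alpha$. The main potential obstacle is the openness check in case (iv), but the Cantor-set level-coloring is tailored exactly to supply the required accumulation of both colors at each point of $C$, so the argument goes through.
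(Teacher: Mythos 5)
Your proof is correct, and for part (2) it takes a genuinely different route from the paper's. For part (1) you are essentially doing what the paper does: it partitions $I$ into $\alpha$-many dense (and nowhere dense) subsets $Z_n$ and sends $Z_n$ to $w_n$; your partition of the rationals of $I$ into dense pieces, with the irrationals assigned arbitrarily, achieves the same thing (nowhere-denseness is not needed here --- only density of each fibre matters for openness, and continuity is vacuous since the cluster carries the indiscrete topology). For part (2) the paper's argument is far lighter: pick an interior point $z$ of $I$, send $\{x<z\}$ to $m_\alpha$, send $z$ itself to $r_\alpha$, and map $\{x>z\}$ onto $\mathfrak C_\alpha$ by the interior map of part (1); openness is then immediate because any open set containing $z$ meets both sides of $z$, so its image is all of $W_\alpha$. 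Your construction replaces the single point $z$ by the Cantor set and replaces the two sides of $z$ by a parity two-colouring of the complementary intervals. The key accumulation claim is sound (every neighbourhood of a point of $C$ contains an entire remaining closed interval $C_{n,m}$ for large $n$, hence removed intervals of all stages beyond $n$, hence of both parities), so your case (iv) does go through. What your heavier version buys is that the preimage of the root is closed, nowhere dense, and contains both endpoints of $I$ --- a feature reminiscent of what the paper needs elsewhere (e.g.\ Lemma~\ref{XisIntrImgOfClosedInterval}) but not for this lemma, where the subsequent gluing instead wants designated subintervals mapping onto designated clusters as in Lemma~\ref{GluedForksIntrImgOfInterval}. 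One small point to tidy: ``reducing to $I=[0,1]$'' does not by itself cover $(0,1)$ and $[0,1)$, which are not homeomorphic to $[0,1]$; but both of your maps restrict to these subintervals without change (the restrictions remain onto, continuous, and open), so this is cosmetic rather than a gap.
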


\begin{proof}
(1) The case where $\alpha$ is finite is well known. If $\alpha=\omega$, then take any partition $\{Z_n:n\in\omega\}$ of $I$ into $\omega$-many dense and nowhere dense sets. It is routine to check that $f:I\rightarrow \mathfrak C_\omega$ is an onto interior map, where $f(x)=w_n$ whenever $x\in Z_n$; see \cite[Lem.~4.3]{JLB13}.

(2) Again the case where $\alpha$ is finite is well known. Let $\alpha=\omega$. Choose $z$ in $I$ such that $z$ is not an endpoint of $I$. Let $I_0=\{x\in I:x>z\}$. By (1), there is an onto interior mapping $f_0:I_0\rightarrow \mathfrak C_\omega$.
Define $f:I\rightarrow\F_\omega$ by
$$
f(x)=\left\{
\begin{array}{ll}
m_\omega&\mbox{ if }x<z,\\
r_\omega&\mbox{ if }x=z,\\
f_0(x)&\mbox{ if }x>z.
\end{array}\right.
$$
It is straightforward to check that $f$ is an onto interior map.
\end{proof}

For each $n\in\omega$, Lemma~\ref{AlphaClusterAndForkIntrImgOfInvl}(1) gives an onto interior map $f_n:(2n,2n+1)\rightarrow C$. By Lemma~\ref{XisIntrImgOfClosedInterval}, there is an onto interior map $g_n:[2n+1,2(n+1)]\rightarrow X_n$ that sends the endpoints $2n+1$ and $2(n+1)$ to the root of $X_n$. Define $f:(0,\infty)\rightarrow Y$ by
$$
f(x)=\left\{
\begin{array}{ll}
f_n(x)&\mbox{ if }x\in(2n,2n+1),\\
g_n(x)&\mbox{ if }x\in[2n+1,2(n+1)].
\end{array}\right.
$$

\begin{lemma}\label{Case2IntrOntoX}
The map $f:\mathbb (0,\infty)\rightarrow Y$ is an onto interior map.
\end{lemma}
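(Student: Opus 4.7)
The plan is to verify in turn that $f$ is onto, continuous, and open, all three of which will hinge on a single observation about the root of each $X_n$; I expect this observation to be the main obstacle. Onto-ness is immediate: by Lemma~\ref{XisIntrImgOfClosedInterval} each $g_n$ maps $[2n+1,2(n+1)]$ onto $X_n$, and since $Y$ is obtained by gluing the $X_n$ along $C$, every point of $Y$ lies in some $X_n$ and is therefore hit by some $g_n$.

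The crucial observation is that the root $r_n$ of $X_n$, namely $\rho_n(\varnothing)$ where $\varnothing$ is the empty sequence (the root of $\mathbf L_2$), has $X_n$ itself as its only open neighborhood inside $X_n$. Indeed, any Scott-open subset of $\mathbf L_2$ containing $\varnothing$ is an upper set and hence contains $\uparrow\varnothing=L_2$, so the only Scott-open neighborhood of $\varnothing$ in $\mathbf L_2$ is $L_2$; passing to the quotient via $\rho_n$ yields that the only open neighborhood of $r_n$ in $X_n$ is $X_n$ itself. Since $X_n$ is an open subspace of $Y$ by construction of the gluing (cf.~Lemma~\ref{lem:gluing}), any open $U\subseteq Y$ with $r_n\in U$ must satisfy $X_n\subseteq U$, and hence also $C\subseteq U$.

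With this in hand, continuity of $f$ is straightforward. At an interior point of $(2n,2n+1)$ or of $(2n+1,2(n+1))$ continuity is inherited from $f_n$ or $g_n$ together with the openness of $C$ and $X_n$ inside $Y$. At a transition point $x^\star\in\{2n+1,2(n+1)\}$ we have $f(x^\star)=r_n$; for any open $U\ni r_n$ in $Y$ the observation gives $X_n\cup C\subseteq U$, so the values of $f$ on both the $g_n$-side and the $f_n$- or $f_{n+1}$-side of $x^\star$ land in $U$.

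Openness is handled analogously. For an open interval $(a,b)\subseteq(0,\infty)$ lying strictly inside one of the pieces $(2n,2n+1)$ or $(2n+1,2(n+1))$, $f((a,b))$ is open by openness of $f_n$ or $g_n$ combined with the openness of $C$ and $X_n$ in $Y$. If $(a,b)$ crosses a transition point $x^\star\in\{2n+1,2(n+1)\}$, then $f((a,b))$ contains $r_n$ together with the $g_n$-image of a one-sided neighborhood of $x^\star$ in $[2n+1,2(n+1)]$; this image is open in $X_n$ and contains $r_n$, so by the key observation it equals $X_n$, while the remaining part of $f((a,b))$ is an open subset of $C\subseteq Y$. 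Hence $f((a,b))$ is open, which completes the proof once the observation about $r_n$ is in place.
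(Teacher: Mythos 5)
Your proof is correct and takes essentially the same route as the paper: both decompose $(0,\infty)$ into the intervals $(2n,2n+1)$ and $[2n+1,2(n+1)]$ and reduce everything to the facts that the image of $C$ has only $\varnothing$ and $C$ as relative opens and that the only open neighborhood of the root $r_n$ in $X_n$ is $X_n$ itself. The only difference is presentational: you make the root-neighborhood observation explicit and verify continuity pointwise at the transition points, whereas the paper computes $f^{-1}$ of the basic opens $\rho_n({\uparrow}a)$ globally and handles the endpoints by an interval-replacement trick.
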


\begin{proof}
Since each $g_n$ is onto, $f$ is onto. For an open interval $I\subseteq (0,\infty)$, we have
$$
f(I)=\bigcup\limits_{n\in\omega}\left(f_n(I\cap (2n,2n+1))\cup g_n(I\cap [2n+1,2(n+1)])\right).
$$
Each $f_n(I\cap (2n,2n+1))$ is either $C$ or $\varnothing$, both of which are open in $X_n$, and hence open in $X$. Since $I\cap [2n+1,2(n+1)]$ is open in $[2n+1,2(n+1)]$, we see that $g_n(I\cap [2n+1,2(n+1)])$ is open in $X_n$, and hence open in $X$. Thus, $f$ is open.

The basic open sets in $X$ arise from sets in $X_n$ of the form $\rho_n({\uparrow}a)$, where $a\in T_2$. We have
\begin{equation}\label{Eqn}
f^{-1}(\rho_n({\uparrow}a))=g_n^{-1}(\rho_n({\uparrow}a))\cup\bigcup\limits_{k\in\omega}(2k,2k+1) \cup \bigcup\limits_{k\in\omega} g_k^{-1}(C).
\end{equation}
Either $g_n^{-1}(\rho_n({\uparrow}a))$ is a proper subset of $[2n+1,2(n+1)]$ or not. If $g_n^{-1}(\rho_n({\uparrow}a))$ is proper, then the root of $X_n$ is not in $\rho_n({\uparrow}a)$, giving $g_n^{-1}(\rho_n({\uparrow}a))$ is open in $(2n+1,2(n+1))$, and hence $g_n^{-1}(\rho_n({\uparrow}a))$ is open in $(0,\infty)$. If $g_n^{-1}(\rho_n({\uparrow}a))=[2n+1,2(n+1)]$, then we may replace $g_n^{-1}(\rho_n({\uparrow}a))$ by $(2n,2(n+1)+1)$ in Equation~\ref{Eqn} and equality remains. Similarly, for each $k\in\omega$, either $g_k^{-1}(C)$ is a proper subset of $[2k+1,2(k+1)]$ or not. If $g_k^{-1}(C)$ is proper, then the root of $X_k$ is not in $C$, giving $g_k^{-1}(C)$ is open in $(2k+1,2(k+1))$, and hence $g_k^{-1}(C)$ is open in $(0,\infty)$. If $g_k^{-1}(C)=[2k+1,2(k+1)]$, then we may replace $g_k^{-1}(C)$ by $(2k,2(k+1)+1)$ in Equation~\ref{Eqn} and retain equality. Since $(2j,2(j+1)+1)$ is open in $(0,\infty)$ for any $j\in \omega$, when replacing as prescribed, we get that $f$ is continuous. Thus, $f$ is an onto interior map.
\end{proof}

Since $\X$ is an interior image of $(0,\infty)$ and $(0,\infty)$ is homeomorphic to $\mathbf R$, it follows that $\X$ is an interior image of $\mathbf R$. Since $L=L(\X)$ and $\X$ is an interior image of $\mathbf R$, the proof for the case $L\supseteq{\bf S4.2}$ is completed by applying Lemma~\ref{GenTopFromGFAndIntMap}.

\medskip

\noindent\underline{{\bf Case 2:} $L$ is not above {\bf S4.2}.} For each non-theorem $\varphi_n$ of $L$, let $\G_n=(W_n,R_n,\mathcal P_n)$ be the countable rooted general {\bf S4}-frame which was constructed in Step 2.2 of the proof of (1)$\Rightarrow$(2). Recall that $\G_n$ is a general frame for $L$ that refutes $\varphi_n$ at a root and that $\G_n$ has a maximal cluster $C_n$ that is isomorphic to $\mathfrak C_{\alpha_n}$.

\smallskip

\noindent{\bf Step 2.1:} By the construction in Step 1.2 (of (1)$\Rightarrow$(5)) and Lemma ~\ref{SubaglebraOfQuotOfL2}, there is a general space $\X_n=(X_n,\mathcal Q_n)$ such that $X_n$ is an interior image of $\mathbf L_2$ arising from $\G_n$ and $L(\X_n)=L(\G_n)$. Thus, $\X_n$ is a general space for $L$ refuting $\varphi_n$. We point out the maximal cluster $C_n$ of $\G_n$ is realized as the open subset $\rho_n(\alpha\circ f_n^{-1}(C_n))$, which we identify with $\mathfrak C_{\alpha_n}$.

\smallskip

\noindent{\bf Step 2.2:} We view the $\alpha_n$-fork $\F_{\alpha_n}$ as a general space. Let $\Y_n$ be the result of gluing $\X_n$ and $\F_{\alpha_n}$ along $\mathfrak C_{\alpha_n}$. In each $\Y_n$ there is the isolated point $m_{\alpha_n}$ coming from $\F_{\alpha_n}$. Let $\Y=(Y,\mathcal Q)$ be obtained by gluing the $\Y_n$ along a homeomorphic copy of $\{m_{\alpha_n}\}$. Then each $\Y_n$ is open in $\Y$ and hence $\Y$ refutes each $\varphi_n$. Moreover, since $\mathfrak F_{\alpha_n}\models L$ and $\Y_n\models L$ for each $n$, we have that $\Y\models L$. It follows that $L=L(\Y)$.

\smallskip

\noindent{\bf Step 2.3:} Lastly, we need to observe that $Y$ is an interior image of $\mathbf R$.

\begin{lemma}\label{GluedForksIntrImgOfInterval}
Let $\F$ be obtained by gluing the forks $\F_\alpha$ and $\F_\beta$ along their maximal points $m_\alpha$ and $m_\beta$. Let $I$ be a non-trivial interval and $y<z$ in $I$ be such that neither $y$ nor $z$ is an endpoint of $I$. There is an onto interior map $f:I\rightarrow \F$ such that $f(x)\in\mathfrak C_\alpha$ when $x<y$ and $f(x)\in\mathfrak C_\beta$ when $x>z$.
\end{lemma}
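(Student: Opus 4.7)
The plan is to define $f$ piecewise by splitting $I$ at the two interior points $y$ and $z$. Let $I_L = I \cap (-\infty, y)$ and $I_R = I \cap (z, +\infty)$; both are non-trivial intervals since neither $y$ nor $z$ is an endpoint of $I$. The glued fork $\F$ has underlying set $\{r_\alpha, r_\beta, m\} \cup \mathfrak C_\alpha \cup \mathfrak C_\beta$, where $m$ is the common image of $m_\alpha$ and $m_\beta$. With the Alexandroff (specialization) topology inherited from the quotient of $\F_\alpha \sqcup \F_\beta$, the basic open sets are $\{m\}$, the two clusters $\mathfrak C_\alpha$ and $\mathfrak C_\beta$, and the root neighborhoods $\{r_\gamma\} \cup \mathfrak C_\gamma \cup \{m\}$ for $\gamma \in \{\alpha, \beta\}$; in particular, $r_\alpha$ does not see $\mathfrak C_\beta$ and vice versa, while both roots see the common point $m$.

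Using Lemma~\ref{AlphaClusterAndForkIntrImgOfInvl}(1), I would choose onto interior maps $g_\alpha : I_L \to \mathfrak C_\alpha$ and $g_\beta : I_R \to \mathfrak C_\beta$ built from partitions of $I_L$ and $I_R$ into $\alpha$-many (respectively $\beta$-many) dense, nowhere dense sets; density guarantees the stronger property that every non-empty open subinterval of $I_L$ (respectively $I_R$) is still mapped onto the entire cluster. Then define
\begin{equation*}
f(x) = \begin{cases}
g_\alpha(x) & \text{if } x \in I_L,\\
r_\alpha    & \text{if } x = y,\\
m          & \text{if } x \in (y, z),\\
r_\beta     & \text{if } x = z,\\
g_\beta(x) & \text{if } x \in I_R.
\end{cases}
\end{equation*}
By construction $f$ is onto and satisfies $f(x) \in \mathfrak C_\alpha$ for $x < y$ and $f(x) \in \mathfrak C_\beta$ for $x > z$.

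The remainder is a verification of continuity and openness. Continuity is immediate from taking preimages of the five basic opens: for instance, $f^{-1}(\{m\}) = (y,z)$, $f^{-1}(\mathfrak C_\alpha) = I_L$, and $f^{-1}(\{r_\alpha\} \cup \mathfrak C_\alpha \cup \{m\}) = I \cap (-\infty, z)$, each of which is open in $I$. Openness is a case analysis on a basic open interval $U \subseteq I$ according to whether $y$ or $z$ lies in $U$: away from these two points $f$ locally lands in exactly one of $\mathfrak C_\alpha$, $\{m\}$, or $\mathfrak C_\beta$; if $U$ straddles $y$ (resp.\ $z$) but not the other, then $f(U) = \{r_\alpha\} \cup \mathfrak C_\alpha \cup \{m\}$ (resp.\ the mirror image); and if $U$ contains both, then $f(U) = \F$. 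The main obstacle is precisely this openness check at the transition points, which forces every neighborhood of $y$ to map onto $\{r_\alpha\} \cup \mathfrak C_\alpha \cup \{m\}$; this is exactly why the sub-interval surjectivity of $g_\alpha$ (and $g_\beta$) is needed, and it is automatic from the density of the partition classes used in the proof of Lemma~\ref{AlphaClusterAndForkIntrImgOfInvl}(1), so no machinery beyond what is already in the paper is required.
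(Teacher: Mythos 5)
Your proposal is correct and follows essentially the same route as the paper: split $I$ at the interior points $y$ and $z$, use Lemma~\ref{AlphaClusterAndForkIntrImgOfInvl}(1) on the two outer pieces, send $y,z$ to the roots and $(y,z)$ to the glued maximal point $m$. The paper leaves the interior-map verification as ``easy to check,'' whereas you spell out the continuity and openness computations (correctly, including the key point that every nonempty open subinterval of $I_L$ must map onto all of $\mathfrak C_\alpha$, which is automatic since the only nonempty open subset of a cluster is the whole cluster).
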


\begin{figure}[h]
\centering
\begin{tikzpicture}
\draw[rounded corners]  (-4,2) rectangle (-3,1.5);
\node (C0) at (-3.5,1.75) {\tiny $\mathfrak C_{\alpha}$};
\node[circle, fill, inner sep=1.5] (r1) at (-2.5,0) {};
\node[circle, fill, inner sep=1.5] (m) at (-1,1.5) {};
\node[circle, fill, inner sep=1.5] (r2) at (0.5,0) {};
\node at (-2.5,-0.5) {\small $r_\alpha$};
\node (mnew) at (-1,1) {\small $m$};
\node at (0.5,-0.5) {\small $r_\beta$};
\draw[rounded corners]  (2,2) rectangle (1,1.5);
\node (C1) at (1.5,1.75) {\tiny $\mathfrak C_{\beta}$};
\draw[->=latex, shorten <=2pt, shorten >=2pt] (r1) -- (m);
\draw[->=latex, shorten <=2pt, shorten >=2pt] (r2) -- (m);
\draw[->=latex, shorten <=2pt, shorten >=2pt] (r1) -- (-3.5,1.47);
\draw[->=latex, shorten <=2pt, shorten >=2pt] (r2) -- (1.5,1.47);
\node (l) at (-4,4) {\bf (};
\node (b1) at (-2.5,4) {$\bullet$};
\node at (-2.5,4.3) {\small $y$};
\node (b2) at (0.5,4) {$\bullet$};
\node at (0.5,4.3) {\small $z$};
\node (r) at (2,4) {\bf )};
\node (I0) at (-3.5,4.3) {\small $I_0$};
\node (M) at (-1,4.3) {\small $(y,z)$};
\node (I1) at (1.5,4.3) {\small $I_1$};
\draw[shorten <=-8pt, shorten >=-8pt, thick] (l) -- (r);
\draw[dashed, ->, shorten <=-2pt, shorten >=43pt ] (b1) -- (r1);
\draw[dashed, ->, shorten <=-2pt, shorten >=43pt] (b2) -- (r2);
\draw[dashed, ->, shorten <=0pt, shorten >=13pt ] (I0) -- (C0);
\draw[dashed, ->, shorten <=0pt, shorten >=13pt ] (I1) -- (C1);
\draw[dashed, ->, shorten <=0pt, shorten >=28pt ] (M) -- (mnew);
\end{tikzpicture}
\caption{\label{f:mapping_to_W} Mapping $I$ to $\F$}
\end{figure}

\begin{proof}
Let $I_0=\{x\in I:x<y\}$ and $I_1=\{x\in I:x>z\}$. By Lemma~\ref{AlphaClusterAndForkIntrImgOfInvl}(1), there are interior mappings $f_0:I_0\rightarrow \mathfrak C_\alpha$ and $f_1:I_1\rightarrow \mathfrak C_\beta$. Let $m\in \F$ be the image of $m_\alpha$ and $m_\beta$. Define $f:I\rightarrow \F$ by
$$
f(x)=\left\{
\begin{array}{ll}
m&\mbox{ if }x\in(y,z),\\
r_\alpha&\mbox{ if }x=y,\\
r_\beta&\mbox{ if }x=z,\\
f_i(x)&\mbox{ if }x\in I_i.
\end{array}\right.
$$
It is easy to check that $f$ is an onto interior map. Clearly $f(I_0)=\mathfrak C_\alpha$ and $f(I_1)=\mathfrak C_\beta$.
\end{proof}

We are ready to show that there is an interior map from {\bf R} onto $Y$. For each $n\in\omega$ we consider $I_{n,0}=(2n,2n+1)$ and $I_{n,1}=[2n+1,2(n+1)]$. Let $f_{0,0}:I_{0,0}\rightarrow \F_{\alpha_0}$ be the interior mapping as defined in Lemma~\ref{AlphaClusterAndForkIntrImgOfInvl}(2) with $z=\frac23$. For $n\in\omega-\{0\}$, let $f_{n,0}$ be the interior mapping of the interval $I_{n,0}$ onto the frame obtained by gluing $\F_{\alpha_{n{-}1}}$ and $\F_{\alpha_{n}}$ along the maximal point that is defined in the proof of Lemma~\ref{GluedForksIntrImgOfInterval}, where $y=2n+\frac13$ and $z=2n+\frac23$, such that $f_{n,0}(2n,2n+\frac13)=\mathfrak C_{\alpha_{n-1}}$ and $f_{n,0}(2n+\frac23,2(n+1))=\mathfrak C_{\alpha_n}$. Let $f_{n,1}:I_{n,1}\rightarrow X_n$ be given by Lemma~\ref{XisIntrImgOfClosedInterval}. Then the endpoints of $I_{n,1}$ are sent to the root of $X_n$. Define $f:(0,\infty)\rightarrow Y$ by $f(x)=f_{n,k}(x)$ when $x\in I_{n,k}$.

\begin{figure}[h]
\centering
\begin{tikzpicture}
\draw[rounded corners]  (-4,2) rectangle (-3,1.5);
\node at (-3.5,1.75) {};
\node[circle, fill, inner sep=1.5] (r1) at (-2.5,0) {};
\node[circle, fill, inner sep=1.5] (m) at (-1,1.5) {};
\node[circle, fill, inner sep=1.5] (r2) at (0.5,0) {};
\node[circle, fill, inner sep=1.5] (prer1) at (-2.5,4) {};
\node[circle, fill, inner sep=1.5] (prer2) at (0.5,4) {};
\draw[rounded corners]  (2,2) rectangle (1,1.5);
\node at (1.5,1.75) {};
\draw[->=latex, shorten <=2pt, shorten >=2pt] (r1) -- (m);
\draw[->=latex, shorten <=2pt, shorten >=2pt] (r2) -- (m);
\draw[->=latex, shorten <=2pt, shorten >=2pt] (r1) -- (-3.5,1.47);
\draw[->=latex, shorten <=2pt, shorten >=2pt] (r2) -- (1.5,1.47);
\path[draw] (-4.75,1.75) -- (-5.75,0) -- (-6.75,1.75) -- (-4.75,1.75);
\draw[rounded corners, fill=white]  (-5.25,2) rectangle (-4.25,1.5);
\path[draw] (2.75,1.75) -- (3.75,0) -- (4.75,1.75) -- (2.75,1.75);
\draw[rounded corners, fill=white]  (2.25,2) rectangle (3.25,1.5);
\node (2n-2) at (-7,4) {\bf [};
\node at (-4.2,4) {\bf ]};
\node at (-4.16,4) {\bf (};
\node at (2.06,4) {\bf )};
\node at (2.1,4) {\bf [};
\node (2n+2) at (5,4) {\bf ]};
\draw[shorten <=-6pt, shorten >=-6pt, thick] (2n-2) -- (2n+2);
\node (In1) at (-5.75,4.3) {\small $I_{n,1}$};
\node (Xn) at (-5.75,1) {\small $\mathfrak X_n$};
\node (In+10) at (-1.05,4.3) {\small $I_{n+1,0}$};
\node (In+11) at (3.75,4.3) {\small $I_{n+1,1}$};
\node (Xn+1) at (3.75,1) {\small $\mathfrak X_{n+1}$};
\node (XnCn) at (-4.75,1.75) {\small $\mathfrak C_n$};
\node (Cn) at (-3.5,1.75) {\small $\mathfrak C_n$};
\node (preCn) at (-3.5,4) {};
\node (Cn+1) at (1.5,1.75) {\small $\mathfrak C_{n+1}$};
\node (preCn+1) at (1.5,4) {};
\node (Xn+1Cn+1) at (2.75,1.75) {\small $\mathfrak C_{n+1}$};
\draw[dashed, ->, shorten <=0pt, shorten >=28pt ] (In1) -- (Xn);
\draw[dashed, ->, shorten <=-4pt, shorten >=18pt ] (preCn) -- (Cn);
\draw[dashed, ->, shorten <=0pt, shorten >=28pt ] (prer1) -- (r1);
\draw[dashed, ->, shorten <=0pt, shorten >=28pt ] (In+10) -- (m);
\draw[dashed, ->, shorten <=0pt, shorten >=28pt ] (prer2) -- (r2);
\draw[dashed, ->, shorten <=-4pt, shorten >=18pt ] (preCn+1) -- (Cn+1);
\draw[dashed, ->, shorten <=0pt, shorten >=28pt ] (In+11) -- (Xn+1);
\end{tikzpicture}
\caption{\label{f:mapping_intervals} Depiction of $f$}
\end{figure}
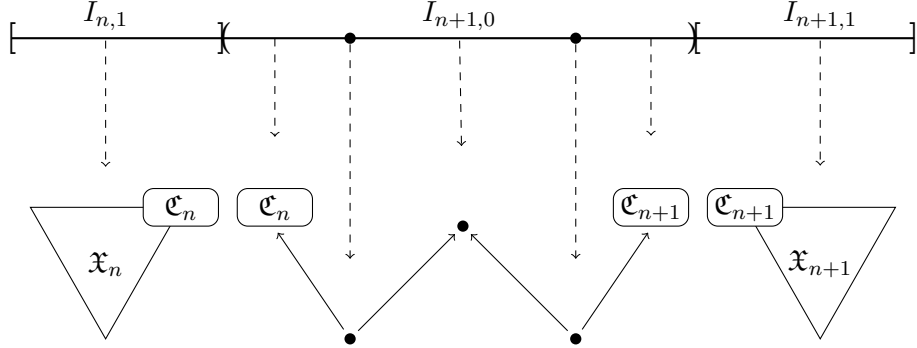

\begin{lemma}\label{Case1IntrOntoX}
The map $f:\mathbb (0,\infty)\rightarrow Y$ is an onto interior map.
\end{lemma}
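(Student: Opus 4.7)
The plan is to verify each of the three conditions — onto, open, and continuous — by analyzing $f$ piecewise on the intervals $I_{n,0}=(2n,2n+1)$ and $I_{n,1}=[2n+1,2(n+1)]$, with the subtleties arising only at the seam points where adjacent pieces meet. Surjectivity is immediate from the construction: the maps $f_{n,1}:I_{n,1}\to X_n$ are onto and together cover $\bigcup_{n}X_n$, while the maps $f_{n,0}$ together hit the remaining points — each root $r_{\alpha_n}$ (at $x=2/3$ via $f_{0,0}$ and at $x=2n+7/3$ via $f_{n+1,0}$) and the common maximal point $m$ (hit on the intermediate open segments of every $f_{n,0}$).

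For openness, let $V$ be an open interval in $(0,\infty)$. Then
\[
f(V)=\bigcup_{n,k}f_{n,k}\!\left(V\cap I_{n,k}\right).
\]
Since each $f_{n,k}$ is an onto interior map (by Lemmas~\ref{XisIntrImgOfClosedInterval}, \ref{AlphaClusterAndForkIntrImgOfInvl}, and \ref{GluedForksIntrImgOfInterval}), each summand is open in the appropriate subspace of $Y$. The gluing defining $Y$ has the property that a set is open iff its restriction to each $\Y_n$ is open; and because the boundary values of $f_{n,0}$ near $2n$ and $2n+1$ land in $\mathfrak C_{\alpha_{n-1}}$ and $\mathfrak C_{\alpha_n}$ respectively, while the endpoints of $I_{n,1}$ map to the root of $X_n$ (an interior point of the cluster under the gluing), the summands patch together into an open set of $Y$. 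So $f(V)$ is open.

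The main work — and the only real obstacle — is continuity at the seam points $x_0\in\{2n+1,2n+2:n\in\omega\}$, since continuity on each $I_{n,k}$ is inherited from the continuity of $f_{n,k}$. At any such seam, $f(x_0)=w_n$, the root of $X_n$. Here we exploit the crucial fact that the only Scott open of $\LL_2$ containing its root is $L_2$, which, together with the definition of $X_n$ as a quotient of ${\bf L}_2$, forces the only open of $X_n$ containing $w_n$ to be $X_n$ itself. Consequently, any open $U\subseteq Y$ with $w_n\in U$ contains all of $X_n$. Inspecting the piecewise definition, for $\epsilon<1/3$ the interval $(x_0-\epsilon,x_0+\epsilon)$ is mapped into $X_n$: at $x_0=2n+1$ the left part is sent by $f_{n,0}$ into $\mathfrak C_{\alpha_n}\subseteq X_n$ and the right part by $f_{n,1}$ into $X_n$; at $x_0=2n+2$ the left part is sent by $f_{n,1}$ into $X_n$ and the right part by $f_{n+1,0}$ into $\mathfrak C_{\alpha_n}\subseteq X_n$. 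Hence $(x_0-\epsilon,x_0+\epsilon)\subseteq f^{-1}(U)$, proving continuity. The conclusion that $f$ is an onto interior map then follows.
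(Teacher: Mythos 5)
Your proof is correct and follows essentially the same route as the paper's: the same piecewise decomposition, the same surjectivity and openness observations, and the same two key facts for continuity (the only open subset of $X_n$ containing its root is $X_n$ itself, and the $\tfrac13$-collars around the seam points are sent into $\mathfrak C_{\alpha_n}\subseteq X_n$), the only cosmetic difference being that you check continuity pointwise at the seams while the paper exhibits $f^{-1}(U)$ globally as a union of open intervals. One small slip: the root of $X_n$ is not ``an interior point of the cluster under the gluing'' (it lies strictly below the maximal cluster), but this parenthetical carries no weight, since openness of $f(V)$ already follows from each piece $f_{n,k}(V\cap I_{n,k})$ being open in the open subspaces $f_{n,0}(I_{n,0})$ and $X_n$ of $Y$.
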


\begin{proof}
It is clear that $f$ is onto since $f|_{I_{n,0}}=f_{n,0}$ is onto $\F_{\alpha_n}$ and $f|_{I_{n,1}}=f_{n,1}$ is onto $X_n$. Let $I\subseteq \mathbb (0,\infty)$ be open. Then $I\cap I_{n,k}$ is open in $I_{n,k}$, and hence $f(I\cap I_{n,k})=f_{n,k}(I\cap I_{n,k})$ is open in $f_{n,k}(I_{n,k})$. Therefore, $f(I\cap I_{n,k})$ is open in $Y$. Thus,
$$
f(I)=\bigcup\limits_{n\in\omega}f_{n,0}(I\cap I_{n,0})\cup f_{n,1}(I\cap I_{n,1})
$$
is open in $Y$. This implies that $f$ is an open map.

Let $U\subseteq Y$ be open. Then
\begin{equation}\label{fIsCon}f^{-1}(U)=\bigcup\limits_{n\in\omega}(f_{n,0})^{-1}(U\cap \F_{\alpha_n})\cup\bigcup\limits_{X_n\not\subseteq U}(f_{n,1})^{-1}(U\cap X_n)\cup\bigcup\limits_{X_n\subseteq U}(f_{n,1})^{-1}(U\cap X_n).\end{equation}
We have each $(f_{n,0})^{-1}(U\cap \F_{\alpha_n})$ is open in $(2n,2n+1)$ and hence open in $(0,\infty)$. For $X_n\not\subseteq U$ we have each $(f_{n,1})^{-1}(U\cap X_n)$ is open in $(2n+1,2n+2)$ and hence open in $(0,\infty)$. When $X_n\subseteq U$ we can replace the closed interval $(f_{n,1})^{-1}(U\cap X_n)$ by the open interval $(2n+1-\frac13,2n+2+\frac13)$  in Equation \ref{fIsCon} and still retain equality. This shows $f$ is continuous.
\end{proof}

Since $(0,\infty)$ is homeomorphic to $\mathbf R$, it follows that $Y$ is an interior image of $\mathbf R$. Applying Lemma~\ref{GenTopFromGFAndIntMap} finishes the proof of (1)$\Rightarrow$(5), and hence the proof of the Main Result.

We conclude this section by mentioning the following useful consequence of the Main Result. Recall that ${\bf S4.1}={\bf S4}+\Box\Diamond\varphi\to\Diamond\Box\varphi$. By \cite[Thm.~5.3]{BG11}, each logic above {\bf S4.1} is connected. Also, ${\bf S4.1}\subseteq{\bf S4.Grz}$, where ${\bf S4.Grz}={\bf S4}+\Box(\Box(\varphi\to\Box\varphi)\to\varphi)\to\varphi$ is the Grzegorczyk logic. As an immediate consequence of the Main Result, we obtain:

\begin{corollary}
If $L$ is a logic above {\bf S4.1}, then $L$ is the logic of a general space over $\mathbf R$ or equivalently $L$ is the logic of a subalgebra of $\mathbf R^+$. In particular, if $L$ is a logic above {\bf S4.Grz}, then $L$ is the logic of a general space over $\mathbf R$ or equivalently $L$ is the logic of a subalgebra of~$\mathbf R^+$.
\end{corollary}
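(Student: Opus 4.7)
The plan is to invoke the Main Result (Theorem~\ref{MainTheorem}) together with the already-cited fact that every logic above ${\bf S4.1}$ is connected. The corollary is essentially an immediate consequence once those two ingredients are assembled, so there is no real obstacle here; the work has all been done in Theorem~\ref{MainTheorem} and in \cite[Thm.~5.3]{BG11}.

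First I would observe that, by \cite[Thm.~5.3]{BG11}, any logic $L$ above ${\bf S4.1}$ is connected in the sense introduced at the beginning of Section~\ref{BinaryTreeSection} (namely, $L=L(\mathfrak A)$ for some connected closure algebra $\mathfrak A$). Hence $L$ is, in particular, a connected logic above ${\bf S4}$.

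Next I would apply the Main Result: since $L$ is a connected logic above ${\bf S4}$, the equivalence of conditions (1), (4), and (5) in Theorem~\ref{MainTheorem} yields that $L$ is the logic of a general space over $\mathbf R$, and equivalently that $L=L(\mathfrak A)$ for some $\mathfrak A\in{\bf S}(\mathbf R^+)$. This gives the first half of the corollary.

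Finally, since ${\bf S4.1}\subseteq{\bf S4.Grz}$, every logic above ${\bf S4.Grz}$ is in particular a logic above ${\bf S4.1}$, so the second statement is a direct specialization of the first. The only step that needs any care at all is to make sure that we are appealing to the \emph{same} notion of ``connected'' in \cite[Thm.~5.3]{BG11} and in Theorem~\ref{MainTheorem}, but this has already been aligned via the definition recalled at the start of Section~\ref{BinaryTreeSection}.
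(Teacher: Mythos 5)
Your proposal is correct and matches the paper's own argument exactly: the corollary is derived by citing \cite[Thm.~5.3]{BG11} for connectedness of logics above ${\bf S4.1}$, invoking the Main Result, and using ${\bf S4.1}\subseteq{\bf S4.Grz}$ for the second claim. Nothing further is needed.
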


\section{Intermediate logics}\label{SecIntLogs}

In this section we apply our results to intermediate logics. We recall that intermediate logics are the logics that are situated between the intuitionistic propositional calculus {\bf IPC} and the classical propositional calculus {\bf CPC}; that is, $L$ is an intermediate logic if ${\bf IPC}\subseteq L\subseteq{\bf CPC}$. There is a dual isomorphism between the lattice of intermediate logics and the lattice of non-degenerate varieties of Heyting algebras, where we recall that a Heyting algebra is a bounded distributive lattice $A$ equipped with an additional binary operation $\to$ that is residual to $\wedge$; that is, $a\wedge x\le b$ iff $x\le a\to b$.

There is a close connection between closure algebras and Heyting algebras. Each closure algebra $\mathfrak A=(A,\Box)$ gives rise to the Heyting algebra $\HH(\mathfrak A)=\{a\in A:a=\Box a\}$ of open elements of $\mathfrak A$, and each Heyting algebra $\HH=(H,\to)$ generates the closure algebra $\mathfrak A(\HH)=(B(H),\Box)$, where $B(H)$ is the free Boolean extension of $H$ and for $x\in B(H)$, if $x=\bigwedge_{i=1}^n(\neg a_i\vee b_i)$, then $\Box x=\bigwedge_{i=1}^n(a_i\to b_i)$ \cite{MT46}. Also, if $(W,R,\mathcal P)$ is a general {\bf S4}-frame and $R$ is a partial order, then $(W,R,\mathcal P_R)$ is a general intuitionistic frame, where we recall that $\mathcal P_R=\{A\in\mathcal P:A$ is an $R$-upset$\}$, and there is an isomorphism between partially ordered descriptive {\bf S4}-frames and descriptive intuitionistic frames (see, e.g., \cite[Sec.~8]{CZ97}).

This yields the well-known correspondence between intermediate logics and logics above {\bf S4}. Namely, each intermediate logic can be viewed as a fragment of a consistent logic above {\bf S4}, and this can be realized through the G\"odel translation (which translates each formula $\varphi$ of the language of {\bf IPC} to the modal language by adding $\Box$ to every subformula of $\varphi$). Then the lattice of intermediate logics is isomorphic to an interval in the lattice of logics above {\bf S4}, and the celebrated Blok-Esakia theorem states that this interval is exactly the lattice of consistent logics above {\bf S4.Grz} (see, e.g., \cite[Sec.~9]{CZ97}).

An element $a$ of a Heyting algebra $\HH$ is \emph{complemented} if $a\vee\neg a=1$, and $\HH$ is \emph{connected} if $0,1$ are the only complemented elements of $\HH$. Also, $\HH$ is \emph{well-connected} if $a\vee b=1$ implies $a=1$ or $b=1$. Then it is easy to see that a closure algebra $\mathfrak A$ is connected iff the Heyting algebra $\HH(\mathfrak A)$ is connected, and that $\mathfrak A$ is well-connected iff $\HH(\mathfrak A)$ is well-connected.

An intermediate logic $L$ is \emph{connected} if $L=L(\HH)$ for some connected Heyting algebra $\HH$, and $L$ is \emph{well-connected} if $L=L(\HH)$ for some well-connected Heyting algebra $\HH$. By \cite[Thm.~8.1]{BG11}, each intermediate logic is connected. Since a Heyting algebra $\mathfrak A$ is well-connected iff its dual descriptive intuitionistic frame is rooted (\cite{Esa79,BB08}), by \cite[Thm.~15.28]{CZ97}, $L$ is well-connected iff $L$ is Hallden complete, where we recall that $L$ is Hallden complete provided $\varphi\vee\psi\in L$ and $\varphi,\psi$ have no common propositional letters imply that $\varphi\in L$ or $\psi\in L$.

For a topological space $X$, let $\Omega(X)$ denote the Heyting algebra of open subsets of $X$. Similarly, for a partially ordered frame $\F=(W,\le)$, let $\mathrm{Up}(\F)$ denote the Heyting algebra of upsets of $\F$. For a general space $(X,\tau,\mathcal P)$, recall that $\mathcal P_\tau=\mathcal P\cap\tau$. Then $\mathcal P_\tau$ is a Heyting algebra, and for each Heyting algebra $\HH$, there is a descriptive space $(X,\tau,\mathcal P)$ such that $\HH$ is isomorphic to $\mathcal P_\tau$. For a general space $(X,\tau,\mathcal P)$, we call $(X,\tau,\mathcal P_\tau)$ a \emph{general intuitionistic space}.

The Blok-Esakia theorem together with the results obtained in this paper yield the following theorems.

\begin{theorem}
The following are equivalent.
\begin{enumerate}
\item $L$ is an intermediate logic.
\item $L$ is the logic of a countable path-connected general intuitionistic frame.
\item $L$ is the logic of a general intuitionistic space over $\mathbf R$.
\item $L$ is the logic of a general intuitionistic space over $\mathbf Q$.
\item $L$ is the logic of a general intuitionistic space over $\mathbf C$.
\item $L$ is the logic of a Heyting subalgebra of the Heyting algebra $\Omega(\mathbf R)$.
\item $L$ is the logic of a Heyting subalgebra of the Heyting algebra $\Omega(\mathbf Q)$.
\item $L$ is the logic of a Heyting subalgebra of the Heyting algebra $\Omega(\mathbf C)$.
\end{enumerate}
\end{theorem}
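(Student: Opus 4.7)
The plan is to derive the theorem as a direct corollary of the paper's main results via the Blok-Esakia correspondence between intermediate logics and consistent logics above $\mathbf{S4.Grz}$. The key bridge is that for any general space $\X = (X,\tau,\mathcal{P})$, its Heyting algebra of open elements $\mathcal{P}_\tau = \mathcal{P} \cap \tau$ satisfies $\mathcal{P}_\tau \models \varphi$ iff $\mathcal{P} \models T(\varphi)$ for every intuitionistic formula $\varphi$, where $T$ is the G\"odel translation. This follows by routine induction on $\varphi$ from the identification $\mathcal{P}_\tau = \{a \in \mathcal{P} : \Box a = a\}$ and is in essence the McKinsey-Tarski correspondence extended to general spaces. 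Consequently $L(\mathcal{P}_\tau)$ is exactly the intuitionistic fragment of $L(\mathcal{P})$.

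Given an intermediate logic $L$, let $\sigma(L)$ denote its greatest modal companion, which extends $\mathbf{S4.Grz} \supseteq \mathbf{S4.1}$ and is therefore connected by the corollary at the end of Section~\ref{SecRealLineAndConnLogs}. For (1)$\Rightarrow$(3)--(5), I apply the Main Result, Theorem~\ref{OneLogicOneGenTopOnQ}, and Theorem~\ref{CompletenessForAnyLwrtCantor} to $\sigma(L)$ to obtain witnessing general spaces $\X = (X,\tau,\mathcal{P})$ over $\mathbf{R}$, $\mathbf{Q}$, $\mathbf{C}$ respectively with $L(\X) = \sigma(L)$; the bridge then yields $L(\mathcal{P}_\tau) = L$, so $(X,\tau,\mathcal{P}_\tau)$ is a general intuitionistic space over the corresponding topological space whose logic is $L$. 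The implications (1)$\Rightarrow$(6)--(8) follow from the same data, noting that $\mathcal{P}_\tau$ is by construction a Heyting subalgebra of $\Omega(X)$. For (1)$\Rightarrow$(2), I apply the Main Result~(1)$\Rightarrow$(2) to $\sigma(L)$ to obtain a countable path-connected general $\mathbf{S4}$-frame $\F = (W,R,\mathcal{P})$, then pass to its skeleton (collapse $R$-equivalent points into single nodes), which remains countable and path-connected; using that $\sigma(L) \supseteq \mathbf{S4.Grz}$, the skeleton validates the same modal logic as $\F$, and its associated general intuitionistic frame (with $R$-upsets in $\mathcal{P}$ as possible values) has intuitionistic logic $L$.

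The converses are immediate: each of (2)--(8) expresses $L$ as the logic of a Heyting algebra (upsets of a general intuitionistic frame, $\mathcal{P}_\tau$ of a general intuitionistic space, or a Heyting subalgebra of $\Omega(X)$), and the logic of any non-degenerate Heyting algebra is consistent, contains $\mathbf{IPC}$, and is contained in $\mathbf{CPC}$, hence is an intermediate logic.

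The main obstacle is (1)$\Rightarrow$(2): one must verify both that the skeleton operation preserves path-connectedness (which is straightforward, as paths descend to paths in the quotient) and, more importantly, that it preserves modal validity for logics above $\mathbf{S4.Grz}$. The latter is a standard fact about the Grzegorczyk logic but must be explicitly invoked; alternatively, one can bypass the skeleton by tracing through the Main Result's proof of (1)$\Rightarrow$(2) starting from a differentiated descriptive frame of $\sigma(L)$ (which is already a partial order) and checking that the selection procedure in Case~1 and the fork-gluing in Case~2 of Section~\ref{ss:1->2} preserve antisymmetry throughout.
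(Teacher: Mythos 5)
Your derivation is correct and takes essentially the approach the paper intends: the paper gives no detailed argument, saying only that the theorem follows from the Blok--Esakia theorem together with the completeness results already established, and your route via the greatest modal companion $\sigma(L)\supseteq{\bf S4.Grz}\supseteq{\bf S4.1}$ (hence connected), the Main Result and the theorems for $\mathbf Q$ and $\mathbf C$, and the G\"odel-translation bridge $\mathcal P_\tau\models\varphi$ iff $\mathcal P\models T(\varphi)$ is precisely that derivation. The one soft spot, which you yourself flag, is the claim that passing to the skeleton preserves the \emph{modal} logic of a general frame above ${\bf S4.Grz}$ --- this is both doubtful as stated (refuting valuations need not take cluster-saturated values) and unnecessary, since only the Heyting algebra of admissible upsets matters and upsets are automatically unions of clusters; your alternative observation that the descriptive frame of a Grz-algebra is already a partial order is the cleaner way to dispose of item (2).
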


\begin{theorem}
Let $L$ be an intermediate logic. The following are equivalent.
\begin{enumerate}
\item $L$ is well-connected.
\item $L$ is Hallden complete.
\item $L$ is the logic of a general intuitionistic frame over $\T_2$.
\item $L$ is the logic of a general intuitionistic frame over $\LL_2$.
\item $L$ is the logic of a Heyting subalgebra of the Heyting algebra $\mathrm{Up}(\T_2)$.
\item $L$ is the logic of a Heyting subalgebra of the Heyting algebra $\mathrm{Up}(\LL_2)$.
\end{enumerate}
\end{theorem}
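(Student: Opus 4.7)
The plan is to reduce the statement to the modal theorems already established for logics above $\mathbf{S4}$. The equivalence (1) $\Leftrightarrow$ (2) is immediate from the discussion just before the theorem: the descriptive intuitionistic frame dual to a Heyting algebra $\HH$ is rooted iff $\HH$ is well-connected (\cite{Esa79,BB08}), and by \cite[Thm.~15.28]{CZ97} this is equivalent to Hallden completeness of $L$.

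For the remaining equivalences I would pass through the Blok-Esakia isomorphism. Write $\sigma L$ for the greatest modal companion of $L$, so that $\sigma L$ is a consistent normal extension of $\mathbf{S4.Grz}$ (hence a logic above $\mathbf{S4}$). Using that the functor $\HH$ sends a closure algebra $\mathfrak A$ to its Heyting algebra of open elements, together with the standard facts $L(\HH) = L$ iff $L(\mathfrak A(\HH)) = \sigma L$ and ``$\mathfrak A$ is well-connected iff $\HH(\mathfrak A)$ is well-connected'', we obtain that $L$ is a well-connected intermediate logic iff $\sigma L$ is a well-connected logic above $\mathbf{S4}$. Now applying Theorem~\ref{WellConLogAndT2} to $\sigma L$ gives that $\sigma L$ is well-connected iff $\sigma L = L(\mathfrak A)$ for some $\mathfrak A \in {\bf S}(\T_2^+)$, and applying the corresponding theorem of Section~\ref{SecL2} (the one identifying well-connected logics above $\mathbf{S4}$ with $L(\mathfrak A)$ for $\mathfrak A \in {\bf S}(\mathbf L_2^+)$) gives the analogue for $\mathbf L_2^+$.

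To convert these modal statements into the intuitionistic statements (3)--(6), I would use that $\HH(\T_2^+) = \mathrm{Up}(\T_2)$ and that the assignment $\mathfrak A \mapsto \HH(\mathfrak A)$ sets up a bijective, logic-preserving correspondence (via $\sigma$) between subalgebras of $\T_2^+$ and Heyting subalgebras of $\mathrm{Up}(\T_2)$, with inverse sending a Heyting subalgebra $\HH$ to the closure subalgebra of $\T_2^+$ generated (in the sense of $\mathfrak A(\HH)$) by $\HH$. This yields (1) $\Leftrightarrow$ (5). Since a general intuitionistic frame over $\T_2$ is by definition a pair $(\T_2,\mathcal Q)$ with $\mathcal Q$ a Heyting subalgebra of $\mathrm{Up}(\T_2)$, the equivalence (3) $\Leftrightarrow$ (5) is tautological, giving (1) $\Leftrightarrow$ (3). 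The equivalences (1) $\Leftrightarrow$ (4) and (1) $\Leftrightarrow$ (6) are obtained by running the same argument with $\mathbf L_2^+$ in place of $\T_2^+$ and with $\mathrm{Up}(\LL_2)$ in place of $\mathrm{Up}(\T_2)$.

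The main subtlety I anticipate is in the $\mathbf L_2$ case: the modal theorem refers to $\mathbf L_2$ with the Scott topology, whereas statements (4) and (6) use the poset $\LL_2$ and the Heyting algebra $\mathrm{Up}(\LL_2)$ of all upsets. To bridge this gap I would use the fact that the Scott opens of $\LL_2$ form a Heyting subalgebra of $\mathrm{Up}(\LL_2)$ together with Kremer's embedding $\T_2^+ \hookrightarrow \mathbf L_2^+$ \cite[Lem.~6.4]{Kre13}: since any Heyting subalgebra of $\mathrm{Up}(\LL_2)$ whose modal completion realizes $\sigma L$ can already be realized inside $\HH(\mathbf L_2^+)$ (by tracking where $\HH(\T_2^+)$ lands under this embedding), the correspondence between subalgebras of $\mathbf L_2^+$ and Heyting subalgebras of $\mathrm{Up}(\LL_2)$ remains logic-preserving. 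Everything else is bookkeeping: once the modal case is in hand, the intuitionistic case follows by applying $\HH$ and its adjoint $\mathfrak A$ and invoking Blok-Esakia.
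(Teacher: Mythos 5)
The paper offers no proof of this theorem beyond the sentence ``The Blok--Esakia theorem together with the results obtained in this paper yield the following theorems,'' so your reduction --- (1)$\Leftrightarrow$(2) from the remarks preceding the theorem, and the rest via the Grzegorczyk companion $\sigma L$, Theorem~\ref{WellConLogAndT2}, its ${\bf L}_2$ analogue, and the passage $\mathfrak A\mapsto\HH(\mathfrak A)$ between closure algebras and their Heyting algebras of opens --- is exactly the intended route. For the $\T_2$ items this works: $\HH(\T_2^+)=\mathrm{Up}(\T_2)$ on the nose, $L(\HH(\mathfrak A))$ is the $\rho$-fragment of $L(\mathfrak A)$, and every Heyting subalgebra of $\mathrm{Up}(\T_2)$ is the algebra of opens of the closure subalgebra of $\T_2^+$ it generates. (Your word ``bijective'' overstates this correspondence --- distinct closure subalgebras can share the same opens --- but nothing in the argument needs injectivity.)

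The genuine gap is in your bridge for (4) and (6): the claim that the Scott opens of $\LL_2$ form a Heyting subalgebra of $\mathrm{Up}(\LL_2)$ is false. Let $z\in L_2-T_2$ be the all-zero branch and let $U=\bigcup_{n\in\omega}{\uparrow}(0^n1)=L_2-{\downarrow}z$, a union of basic Scott opens. Its pseudocomplement in $\mathrm{Up}(\LL_2)$ is $\neg U=\{y:{\uparrow}y\cap U=\varnothing\}=\{z\}$, which is an upset but not Scott open, since the directed set $\{z|_{\downarrow n}:n\in\omega\}$ has supremum $z$ yet misses $\{z\}$; so the Scott opens are not closed under the operations of $\mathrm{Up}(\LL_2)$. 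More to the point, what the modal theorem of Section~\ref{SecL2} delivers is $\sigma L=L(\mathfrak A)$ with $\mathfrak A\in{\bf S}({\bf L}_2^+)$, whence $\HH(\mathfrak A)$ is a Heyting subalgebra of the frame of Scott opens with implication $\mathrm{int}_\tau(-U\cup V)$ --- not of $\mathrm{Up}(\LL_2)$, whose implication is $-{\downarrow}(U-V)$ --- and the two genuinely disagree on sets in the image of Kremer's embedding (the example $U$ above is $\alpha(T_2-\{0^k:k\in\omega\})$, and $\{z\}$ does not lie in $\alpha(\HH(\mathfrak B))$ for any $\mathfrak B\in{\bf S}(\T_2^+)$). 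So your argument establishes (1)$\Leftrightarrow$(3)$\Leftrightarrow$(5) and the Scott-topological analogue of (4) and (6), but conditions (4) and (6) as literally stated, with the Kripke Heyting algebra $\mathrm{Up}(\LL_2)$, require a further idea: either a separate construction of a Heyting subalgebra of $\mathrm{Up}(\LL_2)$ with logic $L$, or a reading of (4) and (6) in terms of the Heyting algebra $\HH({\bf L}_2^+)$ of Scott opens. You correctly sensed a subtlety here, but the fact you propose to resolve it with does not hold.
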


\bigskip

\noindent {\bf Acknowledgement:} The first two authors acknowledge the support of the grant \# FR/489/5-105/11 of the Rustaveli Science Foundation of Georgia.

\bibliographystyle{amsplain}

\begin{thebibliography}{10}

\bibitem{ABB03}
M.~Aiello, J.~van Benthem, and G.~Bezhanishvili, \emph{Reasoning about space:
  The modal way}, J. Logic Comput. \textbf{13} (2003), no.~6, 889--920.

\bibitem{vBBG03}
J.~van Benthem, G.~Bezhanishvili, and M.~Gehrke, \emph{Euclidean hierarchy in
  modal logic}, Studia Logica \textbf{75} (2003), no.~3, 327--344.

\bibitem{BBCS06}
J.~van Benthem, G.~Bezhanishvili, B.~ten Cate, and D.~Sarenac, \emph{Multimodal
  logics of products of topologies}, Studia Logica \textbf{84} (2006), no.~3,
  369--392.

\bibitem{BB08}
G.~Bezhanishvili and N.~Bezhanishvili, \emph{{P}rofinite {H}eyting algebras},
  Order \textbf{25} (2008), no.~3, 211--227.

\bibitem{BG11}
G.~Bezhanishvili and D.~Gabelaia, \emph{Connected modal logics}, Arch. Math.
  Logic \textbf{50} (2011), no.~2, 287--317.

\bibitem{BG02}
G.~Bezhanishvili and M.~Gehrke, \emph{A new proof of completeness of {S}4 with
  respect to the real line}, PP-2002-06, University of Amsterdam, 2002.

\bibitem{BG05a}
\bysame, \emph{Completeness of {S}4 with respect to the real line: revisited},
  Ann. Pure Appl. Logic \textbf{131} (2005), no.~1-3, 287--301.

\bibitem{BLB12a}
G.~Bezhanishvili and J.~Lucero-Bryan, \emph{More on d-logics of subspaces of
  the rational numbers}, Notre Dame Journal of Formal Logic \textbf{53} (2012),
  no.~3, 319--345.

\bibitem{BMM08}
G.~Bezhanishvili, R.~Mines, and P.~J. Morandi, \emph{Topo-canonical completions
  of closure algebras and {H}eyting algebras}, Algebra Universalis \textbf{58}
  (2008), no.~1, 1--34.

\bibitem{BM11}
G.~Bezhanishvili and P.~J. Morandi, \emph{Priestley rings and {P}riestley
  order-compactifications}, Order \textbf{28} (2011), 399--413.

\bibitem{BRV01}
P.~Blackburn, M.~de~Rijke, and Y.~Venema, \emph{Modal logic}, Cambridge
  University Press, 2001.

\bibitem{BS81}
S.~Burris and H.~P. Sankappanavar, \emph{A course in universal algebra},
  Graduate Texts in Mathematics, vol.~78, Springer-Verlag, New York, 1981.

\bibitem{CZ97}
A.~Chagrov and M.~Zakharyaschev, \emph{Modal logic}, Oxford Logic Guides,
  vol.~35, The Clarendon Press Oxford University Press, New York, 1997.

\bibitem{Eng89}
R.~Engelking, \emph{General topology}, second ed., Sigma Series in Pure
  Mathematics, vol.~6, Heldermann Verlag, Berlin, 1989.

\bibitem{Esa79}
L.~Esakia, \emph{On the theory of modal and superintuitionistic systems},
  Logical inference, ``Nauka'', Moscow, 1979, (In Russian), pp.~147--172.

\bibitem{Esa85}
\bysame, \emph{Heyting algebras {I}. {D}uality theory}, ``Metsniereba'',
  Tbilisi, 1985, (In Russian).

\bibitem{Gab01}
D.~Gabelaia, \emph{Modal definability in topology}, Master's Thesis, 2001.

\bibitem{Ger75}
M.~Gerson, \emph{An extension of {$S4$} complete for the neighbourhood
  semantics but incomplete for the relational semantics}, Studia Logica
  \textbf{34} (1975), no.~4, 333--342.

\bibitem{GHKLMS03}
G.~Gierz, K.~H. Hofmann, K.~Keimel, J.~D. Lawson, M.~Mislove, and D.~S. Scott,
  \emph{Continuous lattices and domains}, Cambridge University Press,
  Cambridge, 2003.

\bibitem{Gol76a}
R.~I. Goldblatt, \emph{Metamathematics of modal logic}, Rep. Math. Logic
  (1976), no.~6, 41--77.

\bibitem{gol:dio80}
\bysame, \emph{Diodorean modality in {M}inkowski space-time}, Studia Logica
  \textbf{39} (1980), 219--236.

\bibitem{Kre13}
P.~Kremer, \emph{Strong completeness of {S}4 for any dense-in-itself metric
  space}, Rev. Symb. Log. (2013), to appear.

\bibitem{Kri59}
S.~A. Kripke, \emph{A completeness theorem in modal logic}, J. Symb. Logic
  \textbf{24} (1959), 1--14.

\bibitem{Kri63}
\bysame, \emph{Semantical analysis of modal logic. {I}. {N}ormal modal
  propositional calculi}, Z. Math. Logik Grundlagen Math. \textbf{9} (1963),
  67--96.

\bibitem{KM76}
K.~Kuratowski and A.~Mostowski, \emph{Set theory}, revised ed., North-Holland
  Publishing Co., Amsterdam, 1976.

\bibitem{Lan12}
T.~Lando, \emph{Completeness of {$S4$} for the {L}ebesgue measure algebra}, J.
  Philos. Logic \textbf{41} (2012), no.~2, 287--316.

\bibitem{JLB13}
J.~Lucero-Bryan, \emph{The d-logic of the real line}, J. Logic Comput.
  \textbf{23} (2013), no.~1, 121--156.

\bibitem{MT44}
J.~C.~C. McKinsey and A.~Tarski, \emph{The algebra of topology}, Annals of
  Mathematics \textbf{45} (1944), 141--191.

\bibitem{MT46}
\bysame, \emph{On closed elements in closure algebras}, Ann. of Math. (2)
  \textbf{47} (1946), 122--162.

\bibitem{Pel65}
A.~Pe{\l}czy{\'n}ski, \emph{A remark on spaces {$2^{X}$} for zero-dimensional
  {$X$}}, Bull. Acad. Polon. Sci. S\'er. Sci. Math. Astronom. Phys. \textbf{13}
  (1965), 85--89.

\bibitem{Sik60}
R.~Sikorski, \emph{Boolean algebras}, Ergebnisse der Mathematik und ihrer
  Grenzgebiete, N. F., Heft 25, Springer-Verlag, Berlin, 1960.

\bibitem{Tho72}
S.~K. Thomason, \emph{Semantic analysis of tense logics}, J. Symbolic Logic
  \textbf{37} (1972), 150--158.

\end{thebibliography}
\def\cprime{$'$}
\providecommand{\bysame}{\leavevmode\hbox to3em{\hrulefill}\thinspace}
\providecommand{\MR}{\relax\ifhmode\unskip\space\fi MR }
\providecommand{\MRhref}[2]{%
  \href{http://www.ams.org/mathscinet-getitem?mr=#1}{#2}
}
\providecommand{\href}[2]{#2}

\bigskip

\noindent Guram Bezhanishvili: Department of Mathematical Sciences, New Mexico State University, Las Cruces NM 88003, USA, gbezhani@math.nmsu.edu

\bigskip

\noindent David Gabelaia: A.~Razmadze Mathematical Institute, Tbilisi State University, University St.~2, Tbilisi 0186, Georgia, gabelaia@gmail.com

\bigskip

\noindent Joel Lucero-Bryan: Department of Applied Mathematics and Sciences, Khalifa University, Abu Dhabi, UAE, joel.lucero-bryan@kustar.ac.ae

\end{document}